\documentclass{amsart}[11pt]
\usepackage{hyperref}
\hypersetup{backref,colorlinks=true,allcolors=black}
\usepackage[foot]{amsaddr}
\usepackage{mathrsfs}
\usepackage{enumitem}
\usepackage{dsfont}
\usepackage{mathtools}
\usepackage{xcolor}
\usepackage[final]{graphicx}
\usepackage{upgreek}
\usepackage{amsmath,amssymb}

\newtheorem{proposition}{Proposition}
\newtheorem{lemma}{Lemma}
\newtheorem{corollary}{Corollary}
\newtheorem{theorem}{Theorem}


\theoremstyle{definition}
\newtheorem{definition}{Definition}

\newtheorem{remark}{Remark}




\newcommand{\ignore}[1]{}
\textwidth 6.25in \textheight 8.9in \evensidemargin -1pt
\oddsidemargin -1pt \topskip -2in \topmargin 0pt
\parindent0.5in


\newcommand{\R}{\mathbb{R}}
\newcommand{\N}{\mathbb{N}}

\newcommand{\eu}{{\epsilon,w^\epsilon}}

\newcommand{\norm}[1]{\left\lVert #1 \right\rVert}
\newcommand{\abs}[1]{\left\vert #1 \right\rvert}
\newcommand{\E}[1]{\mathbb{E}{\left[ #1\right]}}

\DeclarePairedDelimiter\autobracket{(}{)}
\newcommand{\brac}[1]{\autobracket*{#1}}
\newcommand{\inner}[1]{\left\langle #1 \right\rangle}

\theoremstyle{definition}

\makeatletter
\@addtoreset{proofcase}{theorem}
\makeatother

\newenvironment{customcondition}[1]
{\innercustomthm}
{\endinnercustomthm}

\theoremstyle{definition}

\makeatletter
\@addtoreset{proofstep}{theorem}
\makeatother

\usepackage{microtype}
\usepackage{parskip}
\usepackage[symbol]{footmisc}
\title[Moderate deviations for fractional multiscale systems]{Moderate deviation principle for multiscale systems driven by fractional Brownian motion}
\author{Solesne Bourguin}
\email[Solesne Bourguin]{bourguin@math.bu.edu}
\author{Thanh Dang}
\email[Thanh Dang]{ycloud77@bu.edu}
\author{Konstantinos Spiliopoulos}
\email[Konstantinos Spiliopoulos]{kspiliop@math.bu.edu}
\address{Boston University, Department of Mathematics and Statistics, 111 Cummington Mall, Boston, MA 02215, USA}
\thanks{\textit{Corresponding author}: Solesne Bourguin (\url{bourguin@math.bu.edu})}
\begin{document}
	
	\begin{abstract}
		In this paper we study the moderate deviations principle (MDP) for slow-fast stochastic dynamical systems where the slow motion is governed by small fractional Brownian motion (fBm) with Hurst parameter $H\in(1/2,1)$. We derive conditions on the moderate deviations scaling and on the Hurst parameter $H$ under which the MDP holds. In addition, we show that in typical situations the resulting action functional is discontinuous in $H$ at $H=1/2$, suggesting that the tail behavior of stochastic dynamical systems perturbed by fBm can have different characteristics than the tail behavior of such systems that are perturbed by standard Brownian motion.
	\end{abstract}
	
	\subjclass[2010]{60F10, 60G22, 60H10, 60H07}
	\keywords{Fractional Brownian motion, multiscale processes, small noise, moderate deviations}

	\bibliographystyle{amsalpha}
	\maketitle
	\section{Introduction}
	\label{section_intro}
	The goal of this paper is to study the asymptotic behavior, in the moderate deviations regime, of the following system of slow-fast dynamics
	\begin{align}
		\label{originalsystem}
		dX^\epsilon_t&=g\brac{X^\epsilon_t,Y^\epsilon_t}dt+\sqrt{\epsilon} f\brac{X^\epsilon_t,Y^\epsilon_t}d W^H_t, \quad X^{\epsilon}_{0}=x_{0}\nonumber\\		dY^\epsilon_t&=\frac{1}{\epsilon}c\brac{Y^\epsilon_t}+\frac{1}{\sqrt{\epsilon}}\sigma\brac{Y^\epsilon_t}dB_t, \quad Y^{\epsilon}_{0}=y_{0}.
	\end{align}
	
	Here $\epsilon$ is a small parameter that goes to zero. We assume that $t\in[0,1]$ and $(X^\epsilon,Y^\epsilon)\in \R^n\times \R^d$. Also, $B$ is a standard $m$-dimensional Brownian motion, while $W^{H}$ is a $p$-dimensional fractional Brownian motion (fBm) with Hurst parameter $H\in(1/2,1)$ independent of $B$. As is known, if $H=1/2$ then $W^{1/2}$ will be a standard Brownian motion. Moreover, the integral with respect to $W^H$ is a pathwise Riemann–Stieltjes integral and is commonly known as a Young integral (see Appendix \ref{A:Appendix} for a brief introduction).
	
	Since, $1/\epsilon\uparrow\infty$ as $\epsilon\downarrow 0$, we expect that under the appropriate conditions, the distribution of  $Y^\epsilon$ will be converging to its invariant distribution, while the equation that $X^\epsilon$ satisfies can be viewed as a perturbation of a dynamical system by small multiplicative noise of magnitude $\sqrt{\epsilon}$. We can think of $X^\epsilon$ as the slow component and of $Y^\epsilon$ as the fast component. Model (\ref{originalsystem}) is a prototypical dynamical system that exhibits multiple characteristic scales in  time and is perturbed by small noise to account for imperfect information and to capture random phenomena. Such systems arise naturally as models
	in a great variety of applied fields, including physics, chemistry, biology, neuroscience, meteorology, and
	mathematical finance, to name a few.

	The novelty of this paper lies in the consideration of the tail behavior of (\ref{originalsystem}) in the case where $H\neq 1/2$. In the case of $H=1/2$, i.e., when both the $X^{\epsilon}$ and $Y^{\epsilon}$ components are driven by Brownian motions, the asymptotic behavior of systems like (\ref{originalsystem}) have been extensively studied in the literature. We refer the interested reader to
	\cite{BaierFreidlin,DupuisSpiliopoulos,Freidlin1978,FS,FreWen,Guillin,KlebanerLiptser,MorseSpiliopoulosMDP,LiptserPaper,ParVer1,ParVer2,spiliopoulos2014fluctuation,Spiliopoulos2012}, which contain results on
	related typical averaging dynamics, central limit theorems, moderate and large
	deviations. Choosing the noise that perturbs the system to be standard Bronwian motion, we embed the Markov property and semimartingale structure of the standard Brownian motion in the system. However, many
	physical dynamical system exhibit long-range dependence or a particular sort of self-similarity that may not be amenable to accurate description by a model driven by standard Brownian noise.
	
	One way to account for this issue, is to perturb the dynamical system by fractional Brownian motion. Such practice has seen growing interest in literature, for example the references \cite{BFGH,BudSon,Che03,CR,Fuk, FZ,GJRS,HJL19,SV} to name a few. However, the corresponding literature for multiscale systems like (\ref{originalsystem}) in the case of perturbation
	by fractional Brownian motion is quite sparse and still in its infancy. We refer the interested reader to the very recent papers of \cite{BGS,HaiLi, PIX20} for results concerning typical averaging behavior, homogenization, and fluctuations corrections for multiscale models like (\ref{originalsystem}) under different sets of assumptions for the model coefficients. As discussed in these papers, replacing Brownian motion by fractional Brownian motion creates a number of issues that need to be overcome. These issues are mainly related to the partial loss of the Markovian structure as well as to the proper averaging of the integral with respect to the fractional Brownian motion $W^H$ which originates from the interaction of ergodicity and fBm.
	
	The intent of this paper is to study the tail behavior of $X^{\epsilon}$ in (\ref{originalsystem}) as $\epsilon\downarrow 0$ in the moderate deviation setting. To be more precise, letting $h(\epsilon)\rightarrow\infty$ such that $\sqrt{\epsilon}h(\epsilon)\rightarrow 0$ and defining $\bar{X}_{t}=\lim_{\epsilon\rightarrow 0}X^{\epsilon}_{t}$ (the limit in the appropriate sense), we define the moderate deviations process
	\begin{align}
		\label{MDprocess}	\eta^\epsilon_t=\frac{X^\epsilon_t-\bar{X}_t}{\sqrt{\epsilon}h\brac{\epsilon}}.
	\end{align}
	
	Moderate deviations for $X^{\epsilon}$ refer to large deviations for $\eta^{\epsilon}$. In fact the scaling by $\sqrt{\epsilon}h(\epsilon)$ implies that moderate deviations is in the regime between central limit theorem (corresponding to the choice $h(\epsilon)=1$) and large deviations (corresponding to the choice $h(\epsilon)=1/\sqrt{\epsilon}$). Moderate deviations for systems like (\ref{originalsystem}) and for $H=1/2$, i.e., when both slow and fast components are driven by standard Brownian motions, have been considered in \cite{Guillin,MorseSpiliopoulosMDP}. An interesting conclusion of our results for the case $H\neq 1/2$, which will be discussed in Remark \ref{R:BM_fBM_comparison} is that the resulting action functional is not continuous in $H$ at $H=1/2$. At this point we also mention the recent work \cite{SiraganGasteratos} that considers the large deviations counterpart for stochastic dynamical systems similar to (\ref{originalsystem}).
	
	In order to study the moderate deviations principle for $X^{\epsilon}$, we shall follow the weak convergence method of \cite{DupEll}. The core of this approach lies in the use of a variational representation of exponential functionals of the driving noise $(W^{H},B)$, see \cite{DupEll,Zha}. In our case, such a representation leads to a representation of the exponential functional of the moderate deviations process $\eta^{\epsilon}$ that appears in the Laplace principle (which is equivalent to the moderate deviations) as a variational infimum of a family of controlled moderate deviations processes $\eta^\eu$ together with a quadratic cost over a suitable family of stochastic controls $w^{\epsilon}$. To be more precise, letting $a$ be a bounded Borel
	function on $C\brac{[0,1];\R^n}$, we have the representation
	\begin{align}
		\label{equation_varrep0}
		-\frac{1}{h^2(\epsilon)}\ln \E{\exp\brac{-h^2(\epsilon)a(\eta^\epsilon)}}&=\inf_{w^\epsilon\in \mathcal{S}}\E{\frac{1}{2}\norm{w^\epsilon}^2_{\mathcal{S}}+a\brac{\eta^\eu}},
	\end{align}
	where $\mathcal{S}$
	denotes the Cameron-Martin
	space associated with the process $\left\{ (W^H_t,B_t)\colon t
	\in [0,1]\right\}$ (see
	\eqref{def_Cameron_Martin_joint}) and the controlled deviations process $\eta^\eu$ is defined by
	\begin{align}
		\label{def_control_eta0}
		\eta^\eu_t=\frac{X^\eu_t-\bar{X}_t}{\sqrt{\epsilon}h\brac{\epsilon}}
	\end{align}
	with the controlled processes $(X^\eu,Y^\eu)$  defined by	
	(\ref{equation_controlled_XY}).
	
	Essentially, proving the moderate deviations principle for $X^{\epsilon}$ amounts to finding the limit as $\epsilon\rightarrow 0$ to (\ref{equation_varrep0}). When $H\neq 1/2$, i.e., when the standard Brownian motion in the slow component is replaced by fBm, a number of additional technical issues come up and the standard methodology needs to be modified. After we introduce proper notation, we explain in Remark \ref{R:CoreIdea} of Section 3 one of the core ideas that allow us to study the $H\neq 1/2$ case in a way that naturally extends the $H=1/2$ setting.
	
	The rest of the paper is organized as follows. In Section \ref{section_prelim} we establish necessary notation, go over our assumptions and present the main result, Theorem \ref{theorem_rate_function}, on the moderate deviations principle with an explicit representation of the action functional, as well as a corollary of the aforementioned theorem. Section \ref{section_proof_main} contains the details of the weak convergence approach for the problem at hand, introduces the appropriate controlled processes and presents Theorem \ref{theobeforevaria} which has a variational representation of the moderate deviations action functional. Theorem \ref{theorem_rate_function} can be viewed as a direct consequence of Theorem \ref{theobeforevaria}. In Section \ref{section_proof_main} we also go over one of the main ideas that essentially unlock the computation for the case $H\neq 1/2$, in a way that naturally extends the standard $H=1/2$ framework, see Remark \ref{R:CoreIdea}. Section \ref{S:Examples} contains examples that demonstrate our theoretical results.
	
	Section \ref{proofoftheo2} contains the proof of Theorem  \ref{theobeforevaria} and consequently of
	Theorem \ref{theorem_rate_function} as well. In particular, in Section  \ref{proofoftheo2} we prove tightness of the appropriate controlled processes and occupational measures introduced in Section \ref{section_proof_main}, we identify their weak limit which then allows to prove the limit Laplace principle lower and the upper bound of (\ref{equation_varrep0}). The proof of the Laplace upper bound leads to the exact representation of Theorem \ref{theorem_rate_function}. Next, Section \ref{section_proof_cor_g(x)} provides the proof of Corollary \ref{corollary_g(x)}. Section \ref{S:Conclusions} discusses avenues for future work on this topic.
	
	Appendix \ref{A:Appendix} recalls several aspects of fBm and necessary results on pathwise stochastic integration with respect to fBm used in this paper. In Appendix \ref{A:Appendix}, we also discuss the Cameron-Martin space of fBm and prove associated results that are potentially of independent interest as well. Appendix \ref{A:AppendixB} recalls regularity results of \cite{ParVer1,ParVer2} on Poisson equations, proves necessary a-priori bounds of the slow-fast process $(X^{\epsilon}, Y^{\epsilon})$ in (\ref{originalsystem}) as well as necessary a-priori estimates on $\eta^\eu$ from (\ref{def_control_eta0}) that allows to establish the necessary tightness results in Section \ref{section_proof_main}.



	
	
	\section{Notation, conditions and main results}
	\label{section_prelim}
	In this section, we introduce some notation, present the main
	assumptions we make, and state our main results. We work with a canonical probability space $(\Omega, \mathcal F, P)$ equipped with a filtration $\{ \mathcal F_t \}_{0 \leq t \leq T}$ satisfying the usual conditions (namely, $\{ \mathcal F_t \}_{0 \leq t \leq T}$ is right continuous and $\mathcal F_0$ contains all $P$-negligible sets).
	
	We will denote by $A:B$ the Frobenius inner product $\Sigma_{i,j}[a_{i,j}\cdot b_{i,j}]$ of matrices $A=(a_{i,j})$ and $B=(b_{i,j})$.
	We will use single bars $|\cdot|$ to denote the Frobenius (or Euclidean) norm of a matrix and double bars $||\cdot||$ to denote the operator norm. For $\alpha\in (0,1)$, $|\cdot|_\alpha$ is the standard H\"{o}lder semi-norm, i.e.
	\begin{align*}
		\abs{h}_\alpha=\sup_{0\leq s\neq t\leq 1}\frac{\abs{h(s)-h(t)}}{\abs{s-t}^\alpha}.
	\end{align*}
	For some set $A$ and $\alpha\in (0,1)$, $\mathcal{C}^\alpha(A)$ is the H\"{o}lder space for H\"{o}lder functions defined on $A$. Meanwhile, for $k\in \N$, $\mathcal{C}^k(A)$ denote the usual space of k-times continuously differentiable functions on $A$. In addition, for given sets $A,B$ and $i,j\in\N$ and $\zeta\in(0,1)$, $\mathcal{C}^{i,j+\zeta}(A\times B)$ is the space of functions on $A\times B$ with $i$ bounded derivatives in $x$ and $j$ derivatives in $y$, with all partial derivatives being $\zeta$-H\"{o}lder continuous with respect to $y$, uniformly in $x$.
	\subsection{Conditions}
	

	
	We start by stating the assumptions we make on the coefficients of
	$Y^\epsilon$ ensuring its ergodicity. Note that these assumptions are
	satisfied in the context of the multi-scale models studied in
	\cite[Theorem 1]{BGS} and \cite[Theorem A]{HaiLi}.
	
	
	\begin{customcondition}{H1}~
		\label{condH1}
		\begin{itemize}
			\item[-] $c(y)=-\Gamma y+\zeta(y)$, for which $\Gamma$ be a $d\times d$ positive matrix with bounded entries and $\zeta(y)$ is a uniformly Lipschitz function with Lipschitz coefficient $L_\zeta$. Moreover, $\zeta(y)\leq C\abs{y}$ and $\inner{(\Gamma-L_\zeta I)\xi,\xi}\geq \gamma_0 \abs{\xi}^2$ for some $\gamma_0>0$.
			\item[-] $c(y),\sigma(y)$ have first and second derivatives that are $\alpha$-H\"{o}lder continuous for some $\alpha>0$.
			\item[-] $\sigma(y)\sigma^\top(y)$ is uniformly continuous, bounded and non-degenerate.
			\item[-] There are positive constants $\beta_1,\beta_2$ such that $0<\beta_1\leq \frac{\inner{\sigma(y)\sigma^\top(y) y,y}}{\abs{y}^2}\leq \beta_2,\forall y\in \R^d \setminus \{0\}$.
		\end{itemize}
	\end{customcondition}
	\begin{remark}
		\label{remarkoninvmeasure}
		Condition \ref{condH1} guarantees that the fast process
		$Y^{\epsilon}$ has a unique invariant measure, which we denote
		by $\mu(dy)$ in the sequel.
	\end{remark}
	Denote by $\mathcal{L}$ the normalized infinitesimal generator of the fast motion
	$Y^{\epsilon}$ (with respect to which averaging is being performed). It is given by
	\begin{align}
		\label{def_generator_normalized_Y}
		\mathcal{L}F(y)&=\nabla_y F(y)^\top c(y) +\frac{1}{2}\sigma(y)\sigma^\top(y):\nabla^2_yF(y),
	\end{align}
	where $F\in \mathcal{C}^2(\R^d)$. Set $\mathcal{Y}=\R^d$. For any function $G(x,y)$, define the averaged function $\bar{G}$ by
	\begin{align*}
		\bar{G}(x) = \int_{\mathcal{Y}}^{}G(x,y)\mu(dy).
	\end{align*}
	In particular, the averaging of the drift term $g$ in the slow motion
	$X^{\epsilon}$ with respect to $\mu$ will be given by
	\begin{align*}
		\bar{g}(x) = \int_{\mathcal{Y}} g(x,y)\mu(dy).
	\end{align*}
	\begin{remark}
		Under the growth assumption on $g$ and its derivatives in either the upcoming Condition \ref{condH2-A} or \ref{condH2-B}, Theorem \ref{theorem_Poisson_equation} implies that the partial differential equation
		\begin{equation}
			\label{equation_Poisson_g}
			\mathcal{L}\phi(x,y)=g(x,y)-\bar{g}(x),\quad
			\int_\mathcal{Y}\phi(x,y)\mu(dy)=0
		\end{equation}
		has a unique, twice differentiable solution (that we denote by
		$\phi(x,y)$ in the sequel) in the class
		of functions that grow at most polynomially in $\abs{y}$.
	\end{remark}

	Finally, we provide two different sets of assumptions on the
	coefficients of $X^\epsilon$, each of which is based on the available
	averaging results for $X^\epsilon$ appearing in \cite{BGS} and
	\cite{HaiLi}, respectively. Depending on the specific multi-scale
	model at hand, one may choose to work with one set of assumptions or the other.
	\begin{customcondition}{H2-A}
		\label{condH2-A}
		The assumptions below relate to the setting of \cite{HaiLi}.
		\begin{itemize}
			\item[-]  $f(x,y)$ and $g(x,y)$ are uniformly bounded with bounded first and second partial derivatives.
			\item[-]  There exists $\beta$ in $[0,1]$ such that $\beta+H>1$ and $h(\epsilon)^{-1}\epsilon^{-\frac{\beta}{2}}\to 0$ as $\epsilon\to 0$.
		\end{itemize}
	\end{customcondition}	
	
	\begin{customcondition}{H2-B}
		\label{condH2-B}
		The assumptions below relate to the setting of
		\cite{BGS}. We assume that there are constants $D_f,D_g,M_f,M_k$ in $[0,1]$ and $\alpha$ in $(0,1]$ such that
		\begin{itemize}
			\item[-] $g\in \mathcal{C}^{2,\alpha}\brac{\R^n,\mathcal{Y}}$.
			\item[-] $f=f(y)$ and $g$ satisfy the growth assumption
			\\$\abs{f(y)}\leq C\brac{1+\abs{y}^{D_f}}$ and $\abs{g(x,y)+\nabla_xg(x,y)+\nabla^2_xg(x,y)}\leq C\brac{1+\abs{y}^{D_g}}.$
			\item[-]  $D_f$ and $D_g$ are related via $0\leq D_f+D_g<1$.
			\item[-]  $f(y)$ and $\nabla_x\phi(x,y) f(y)$ are respectively $M_f$ and $M_k$-H\"{o}lder continuous, where $\phi(x,y)$ is defined at \eqref{equation_Poisson_g}. Moreover, we have
			$\min\left\{{\frac{M_f}{2}+H},{\frac{M_k}{2}+H}\right\}>1$.
			\item[-] $h(\epsilon)^{-1}\epsilon^{-\frac{M_f}{2}}\to 0$ as $\epsilon\to 0$.
		\end{itemize}
	\end{customcondition}
	\begin{remark}
		Conditions \ref{condH2-A} and \ref{condH2-B} relate to the averaging
		results in \cite{HaiLi} and \cite{BGS} that state that the slow motion $X^{\epsilon}$ converges in
		probability, as $\epsilon$ goes to 0, to a deterministic limit $\bar{X}$ defined to be the
		solution to the integral equation
		\begin{align*}
			d\bar{X}_t = \bar{g}(\bar{X}_t)dt,\quad \bar{X}_0 = x_0.
		\end{align*}
	\end{remark}
	In addition, we need to assume uniqueness of a strong solution. Without having to refer to it again, this assumption is always in effect in this paper.
	\begin{customcondition}{H3}
		\label{condH3}
		The stochastic differential equation at \eqref{originalsystem} has a unique strong solution.
	\end{customcondition}
	\begin{remark}
		We direct readers to \cite{GueNua,MS,SLE} for existence and uniqueness of solutions to stochastic differential equations like \eqref{originalsystem}.
	\end{remark}

	Finally, define the operator $Q^{H}_{\bar{X}}$ by
	\begin{align}
		\label{operateurQ}
		Q^{H}_{\bar{X}}=\bar{f}\brac{\bar{X}}\dot{K}_H\brac{\bar{f}\brac{\bar{X}}\dot{K}_H}^*+\int_{\mathcal{Y}}{\nabla_y\phi\brac{\bar{X},y}
			\sigma\brac{y}}\brac{\nabla_y\phi\brac{\bar{X},y} \sigma\brac{y}}^\top \mu(dy),
	\end{align}
	where $\mu$ is the invariant measure defined in Remark
	\ref{remarkoninvmeasure}, $\dot{K}_H$ is the operator (related to the fractional Brownian
	motion) defined in \eqref{khdot} (see  Appendix \ref{SS:fBm_preliminaries}). Per the explanation in Section \ref{prooflaplaceupper}, both the domain and range of $Q^{H}_{\bar{X}}$ can be taken to be $L^2([0,1];\R^n)$. In fact, let $h\in L^2\brac{[0,1];\R^n}$. Then the operator $\bar{f}\brac{\bar{X}}\dot{K}_H\brac{\bar{f}\brac{\bar{X}}\dot{K}_H}^*$ admits the explicit representation
	\begin{align*}
		&\left[\bar{f}\brac{\bar{X}}\dot{K}_H\brac{\bar{f}\brac{\bar{X}}\dot{K}_H}^* h\right](t)=c_H^2\bar{f}\brac{\bar{X_t}}t^{H-1/2}\\&\qquad\qquad\qquad\qquad\int_0^t(t-z)^{H-3/2}z^{1-2H}\int_z^1 (s-z)^{H-3/2}s^{H-1/2}\bar{f}\brac{\bar{X_s}}^\top h(s)dsdz
	\end{align*}
	such that the constant $c_H$ equals $\brac{H(2H-1)/\beta(2-2H,H-1/2)}^{1/2}$, where
	$\beta(x,y)=\frac{\Gamma(x)\Gamma(y)}{\Gamma(x+y)}$ is the standard beta function.

	\begin{remark}
		In both this paper and the paper \cite{BGS}, the latter of which provides averaging results on multiscale models like \eqref{originalsystem}, one needs to bound terms that are Young integrals. However, each paper uses a different bounding technique which leads to different assumptions on \eqref{originalsystem}. For instance, the authors of \cite{BGS} use the maximal inequality in their Lemma 1 to bound
		\begin{align}
			\label{term_young_int_regular}
			\int_0^t f\brac{Y^\epsilon_s}dW^H_s,
		\end{align}
		an integral term which appears in \eqref{originalsystem}. Having the kernel $f(Y^\epsilon)$ independent from the driving process $W^H$ simplifies the application of the maximal inequality and yields the necessary bound on the integral \eqref{term_young_int_regular}. For this paper, we instead have to bound
		\begin{align}
			\label{term_young_int_control}
			\int_0^t f\brac{Y^\eu_s}dW^H_s,
		\end{align}
		an integral term which appears in \eqref{equation_controlled_XY}. In this case, the control process $w^\epsilon$ is dependent on $W^H$, implying the kernel $f\brac{Y^\eu}$ is dependent on $W^H$ as well. This lack of independence between the kernel and the driving process leads us to substitute the Young-Lo\'{e}ve inequality for the maximal inequality in order to bound \eqref{term_young_int_control}. The Young-Lo\'{e}ve inequality for Young integrals requires some kind of uniform H\"{o}lder continuity of the kernel, which explains why we impose certain uniform H\"{o}lder continuity condition on the coefficients of \eqref{originalsystem}, an assumption not made in \cite{BGS}.
		
	\end{remark}

	\subsection{Main results}
	\label{section_main_results}	
	The weak convergence approach to large deviations developed in
	\cite{DupEll} states that the large deviations principle for
	$\eta_t^{\epsilon}$ is equivalent to the Laplace principle which
	states that for any bounded continuous function $a \colon
	C\brac{[0,1];\R^n} \to \R$, there exists a rate function (also called
	action functional) $S^{H} \colon C\brac{[0,1];\R^n}\to \R$ that satisfies
	\begin{align*}
		\lim_{\epsilon\to\infty}-\frac{1}{h^2(\epsilon)}\ln \E{\exp\brac{-h^2(\epsilon)a(\eta^\epsilon)}}=\inf_{\Phi\in C\brac{[0,1];\R^n}}\{ S^{H}(\Phi)+a(\Phi)\}.
	\end{align*}	
	In this paper, we prove that the above Laplace principle holds and our
	main result, Theorem \ref{theorem_rate_function}, identifies that rate function $S^{H}(\Phi)$
	explicitly. The statement of this theorem is given below.

	\begin{theorem}
		\label{theorem_rate_function}
		Let Conditions \ref{condH1} and either \ref{condH2-A} or
		\ref{condH2-B} be satisfied. Moreover, assume that the operator $Q^{H}_{\bar{X}}$
		defined in \eqref{operateurQ} is invertible on $L^2([0,1];\R^n)$. Then, the process
		$\{X^\epsilon\colon \epsilon>0\}$ satisfies the moderate
		deviations principle, with the action functional
		$S^{H}(\Phi)$ given by
		\begin{align*}
			S^{H}(\Phi)= \int_0^1\brac{\dot{\Phi}_s-\nabla_x \bar{g} (\bar{X}_s)\Phi_s}^\top(Q^{H}_{\bar{X}_{s}})^{-1}\brac{\dot{\Phi}_s-\nabla_x \bar{g} (\bar{X_s})\Phi_s}ds  
		\end{align*}
		if $\Phi \in C\brac{[0,1];\R^n}$ is absolutely continuous, and $\infty$ otherwise.
	\end{theorem}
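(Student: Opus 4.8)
The plan is to establish the moderate deviations principle by the weak convergence method, taking the variational representation \eqref{equation_varrep0} as the starting point and computing the limit of its right-hand side as $\epsilon\to 0$. Concretely, I would first show that the infimum over controls $w^\epsilon\in\mathcal{S}$ converges to a limiting variational problem over a single limiting control and its induced limiting deviation path; this is the content of Theorem \ref{theobeforevaria}, from which Theorem \ref{theorem_rate_function} follows by evaluating the infimum explicitly. Throughout, the representation of the fBm through the operator $\dot{K}_H$ (Remark \ref{R:CoreIdea}) is the device that lets the $H\neq 1/2$ analysis run parallel to the $H=1/2$ case.

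The first block of work is tightness and identification of the limit. Using the a priori bounds on $\brac{X^\eu,Y^\eu}$ and on $\eta^\eu$ from Appendix \ref{A:AppendixB} (which is where Conditions \ref{condH2-A} and \ref{condH2-B} enter), I would prove tightness of the controlled deviations $\eta^\eu$ together with the occupation measures of the fast variable $Y^\eu$. Along a convergent subsequence, the occupation measures concentrate on the invariant measure $\mu$, so every average of a function of the fast variable is replaced by its $\mu$-average: the drift linearizes around $\bar{X}$ through $\nabla_x\bar{g}(\bar{X})$, and the fluctuations carried by the Poisson corrector $\phi$ of \eqref{equation_Poisson_g} contribute the term $\int_{\mathcal{Y}}\nabla_y\phi\,\sigma(\nabla_y\phi\,\sigma)^\top d\mu$ appearing in $Q^H_{\bar{X}}$. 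The delicate point here, and the step I expect to be the main obstacle, is the controlled Young integral $\int_0^t f\brac{Y^\eu_s}\,dW^H_s$: because the control $w^\epsilon$ depends on $W^H$, the integrand is no longer independent of the integrator, the maximal inequality is unavailable, and one must instead invoke the Young--Lo\`eve inequality together with uniform H\"older control of the coefficients. Rewriting the fBm contribution through $\dot{K}_H$ then converts this term, in the limit, into the action of the operator $\bar{f}\brac{\bar{X}}\dot{K}_H$ on the limiting fBm control, and the limiting deviation is seen to solve a linear equation of the form $\dot{\eta}_s=\nabla_x\bar{g}(\bar{X}_s)\eta_s+(\text{control-driven terms})$.

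With the limit identified, I would prove the Laplace lower and upper bounds. For the lower bound, take near-minimizing controls $w^\epsilon$, extract a weakly convergent subsequence of the pair (control, deviation), and use lower semicontinuity of the quadratic cost together with continuity of $a$ to bound the $\liminf$ from below by the limiting variational expression. For the upper bound, given any absolutely continuous target $\Phi$, construct controls steering $\eta^\eu$ toward $\Phi$ with cost converging to the associated minimal quadratic cost, producing the matching bound. Together these yield the precise variational form of the rate function, namely
\begin{align*}
S^H(\Phi)=\inf\left\{\tfrac{1}{2}\norm{u}^2_{\mathcal{S}}\colon Lu=\psi\right\},\qquad \psi_s=\dot{\Phi}_s-\nabla_x\bar{g}(\bar{X}_s)\Phi_s,
\end{align*}
where $L$ is the linear operator assembling the fBm contribution $\bar{f}\brac{\bar{X}}\dot{K}_H$ and the homogenization contribution $\nabla_y\phi\,\sigma$ acting on the joint limiting control.

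Finally, I would evaluate this infimum. Minimizing the Cameron--Martin cost subject to the linear constraint $Lu=\psi$ is a least-norm problem whose minimizer is $u^*=L^*(LL^*)^{-1}\psi$ and whose value is a quadratic form in $\psi$ governed by $(LL^*)^{-1}$. The key algebraic fact is the identification $LL^*=Q^H_{\bar{X}}$: its two summands are exactly $\bar{f}\brac{\bar{X}}\dot{K}_H\brac{\bar{f}\brac{\bar{X}}\dot{K}_H}^*$ and $\int_{\mathcal{Y}}\nabla_y\phi\,\sigma(\nabla_y\phi\,\sigma)^\top d\mu$ from \eqref{operateurQ}. The hypothesis that $Q^H_{\bar{X}}$ is invertible on $L^2([0,1];\R^n)$ guarantees that $(Q^H_{\bar{X}})^{-1}$ exists, and substitution yields the stated action functional $\int_0^1 \brac{\dot{\Phi}_s-\nabla_x\bar{g}(\bar{X}_s)\Phi_s}^\top (Q^H_{\bar{X}_s})^{-1}\brac{\dot{\Phi}_s-\nabla_x\bar{g}(\bar{X}_s)\Phi_s}\,ds$ for absolutely continuous $\Phi$. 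For $\Phi$ not absolutely continuous, no finite-cost control can reach it, since the limiting deviation always solves a linear ODE and is therefore absolutely continuous, giving $S^H(\Phi)=\infty$.
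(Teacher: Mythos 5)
Your proposal follows essentially the same route as the paper: the weak convergence method starting from the variational representation \eqref{equation_varrep0}, tightness of the controlled deviations together with occupation measures of the fast variable, identification of the limit (with the controlled Young integral handled via the Young-Lo\'{e}ve inequality and the $\dot{K}_H$ rewriting of Remark \ref{R:CoreIdea}), Laplace lower and upper bounds, and finally the least-norm evaluation in which your operator $L$ is the paper's $\Sigma_{\bar{X}}$ and your identity $LL^*=Q^H_{\bar{X}}$ is exactly the paper's $Q^{H}_{\bar{X}}=\Sigma_{\bar{X}}\Sigma_{\bar{X}}^*$ from \eqref{def_Q} and Proposition \ref{proposition_optimalcontrol}. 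The only ingredient of the paper's argument you do not mention is the compactness of the level sets of $S^{H}$ (Subsection \ref{proofcompactness}), which is needed for the Laplace principle to deliver the moderate deviations principle with a good rate function, but this is a standard supplement to an otherwise matching argument.
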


		
	
	\begin{remark}
		A sufficient condition for invertibility of $Q^{H}_{\bar{X}}$ on $L^2([0,1];\R^n)$ is
		that for all $x$ and non-zero $z\in \R^n$, we have
		\begin{align}
			\label{condition_suff_Q_inver}
			\inner{\int_{\mathcal{Y}}\nabla_y\phi\brac{x,y}\sigma\brac{y} \brac{\nabla_y\phi\brac{x,y}\sigma\brac{y}}^\top\mu(dy)z,z}>0.
		\end{align}
		The sufficiency of this condition is established in
		Lemma \ref{lemma_Q_invertible}. In most situations,
		Condition \eqref{condition_suff_Q_inver} proves easier
		to verify than the invertiblity of $Q^{H}_{\bar{X}}$ itself.
	\end{remark}

	\begin{remark}\label{R:BM_fBM_comparison}
		Let us now briefly compare the results in the $H=1/2$ and $H\neq 1/2$ case. As can be seen from the results of \cite{MorseSpiliopoulosMDP}, when $H=1/2$, i.e., when the slow motion in (\ref{originalsystem}) is driven by standard Brownian motion, the corresponding MDP is as in Theorem \ref{theorem_rate_function} but with $Q^{H}_{\bar{X}}$ defined in (\ref{operateurQ}) replaced by
		\begin{align}
			\label{operateurQ_BM}
			Q^{1/2}_{\bar{X}}=\int_{\mathcal{Y}} f(\bar{X},y)f(\bar{X},y)^\top \mu(dy)+\int_{\mathcal{Y}}{\nabla_y\phi\brac{\bar{X},y}
				\sigma\brac{y}}\brac{\nabla_y\phi\brac{\bar{X},y} \sigma\brac{y}}^\top \mu(dy).
		\end{align}
		
		It is interesting to note that the mapping $H\mapsto Q^{H}_{\bar{X}}$ is not, in general, continuous in $H$ at $H=1/2$. Indeed, if $H=1/2$, then the discussion of Appendix \ref{SS:fBm_preliminaries} shows that  $\dot{K}_{1/2}$ the operator  defined in \eqref{khdot}, will be the identity operator, so one would actually expect that
		\begin{align*}
			\lim_{H\rightarrow 1/2}Q^{H}_{\bar{X}}=\int_{\mathcal{Y}} f(\bar{X},y)\mu(dy)\int_{\mathcal{Y}}f(\bar{X},y)^\top \mu(dy)+\int_{\mathcal{Y}}{\nabla_y\phi\brac{\bar{X},y}
				\sigma\brac{y}}\brac{\nabla_y\phi\brac{\bar{X},y} \sigma\brac{y}}^\top \mu(dy).
		\end{align*}
		which is of course different from (\ref{operateurQ_BM}). Hence, we indeed have, unless of course $f(x,y)=f(x)$, that
		\begin{align*}
			Q^{1/2}_{\bar{X}}\neq \lim_{H\rightarrow 1/2}Q^{H}_{\bar{X}}.
		\end{align*}
		
		This result also immediately says that in general there is no continuity of the mapping $H\mapsto S^{H}$  at $H=1/2$.
		
		The lack of continuity does not come as a surprise. It is related to the fact that when averaging integrals with respect to fBm with $H\in(1/2,1)$, then one averages the integrand directly as opposed to the quadratic variation which is what happens when  $H=1/2$. We refer the interested reader to the recent papers \cite{BGS,HaiLi,LiSeiber2020} for further discussion on this.
		
	\end{remark}

	In certain circumstances, we can provide an explicit formula for $\brac{Q^{H}_{\bar{X}}}^{-1}$ as the following corollary of Theorem \ref{theorem_rate_function} shows. The proof will be presented in Section \ref{section_proof_cor_g(x)}.
	\begin{corollary}
		\label{corollary_g(x)}
		Let Conditions \ref{condH1} and either \ref{condH2-A} or \ref{condH2-B} be satisfied. Assume further that $1/2<H<3/4$, $g=g(x)$ and $f\brac{\bar{X}}$ is an invertible square matrix with a bounded inverse denoted by $L$. Then  $Q^{H}_{\bar{X}}$ is invertible and the process
		$\{X^\epsilon\colon \epsilon>0\}$ satisfies the moderate
		deviations principle as in Theorem \ref{theorem_rate_function}. In particular, given $\Psi\in L^2\brac{[0,1];\R^n}$, $\brac{Q^{H}_{\bar{X}}}^{-1}$ has the explicit form
		\begin{align*}
			\brac{Q^{H}_{\bar{X}}}^{-1}\Psi=c_H^{-2}\Gamma(H-1/2)^{-2}L^\top t^{1/2-H}D^{H-1/2}_{1^-}t^{2H-1}D^{H-1/2}_{0^+}t^{1/2-H}L\Psi
		\end{align*}
		or equivalently,
		\begin{align*}
			\brac{\brac{Q^{H}_{\bar{X}}}^{-1}\Psi}(t)&=c_H^{-2}\Gamma(H-1/2)^{-2}\Gamma(3/2-H)^{-2}L_t^\top t^{1/2-H}\\& \frac{d}{dt}\left[ \int_t^1 (z-t)^{1/2-H}z^{2H-1}\frac{d}{dz}\left[ \int_0^z (z-s)^{1/2-H}s^{1/2-H} L_s\Psi_sds\right]dz \right].
		\end{align*}
	\end{corollary}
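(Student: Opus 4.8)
The plan is to use the hypothesis $g=g(x)$ to collapse $Q^{H}_{\bar{X}}$ onto its fractional part, and then to invert the resulting operator by factoring $\dot{K}_H$ into weighted Riemann--Liouville integrals whose inverses are the advertised fractional derivatives. First I would note that when $g=g(x)$ one has $\bar{g}(x)=\int_{\mathcal{Y}}g(x)\mu(dy)=g(x)$, so the right-hand side of the Poisson equation \eqref{equation_Poisson_g} vanishes identically; by the uniqueness attached to \eqref{equation_Poisson_g} its centered solution is $\phi\equiv 0$, whence $\nabla_y\phi(\bar{X},y)=0$ and the second integral in \eqref{operateurQ} disappears. Writing $L_t$ for the inverse of the invertible matrix $\bar{f}(\bar{X}_t)=f(\bar{X}_t)$, this leaves
\begin{align*}
Q^{H}_{\bar{X}}=f(\bar{X})\dot{K}_H\brac{f(\bar{X})\dot{K}_H}^{*}=:MM^{*},\qquad M:=f(\bar{X})\dot{K}_H,
\end{align*}
where $f(\bar{X})$ denotes multiplication by the time-dependent invertible matrix $f(\bar{X}_t)$.

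Next I would establish the invertibility that Theorem \ref{theorem_rate_function} requires, noting that the sufficient Condition \eqref{condition_suff_Q_inver} is unavailable here since the second summand is now zero. If $Q^{H}_{\bar{X}}h=0$ then $\norm{M^{*}h}^{2}=\inner{MM^{*}h,h}=0$, hence $\dot{K}_H^{*}\brac{f(\bar{X})^{\top}h}=0$; since $\dot{K}_H^{*}$ is injective (being built from an injective right fractional integral, cf. \eqref{khdot} and Appendix \ref{SS:fBm_preliminaries}) and $f(\bar{X})^{\top}$ is invertible, we conclude $h=0$. Thus $Q^{H}_{\bar{X}}$ is invertible and the MDP follows directly from Theorem \ref{theorem_rate_function}, with
\begin{align*}
\brac{Q^{H}_{\bar{X}}}^{-1}=\brac{M^{-1}}^{*}M^{-1}=L^{\top}\brac{\dot{K}_H^{*}}^{-1}\dot{K}_H^{-1}L,
\end{align*}
the matrices $L,L^{\top}$ acting as multiplication operators.

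The core computation is the inversion of $\dot{K}_H$. Using the factorization recorded in Appendix \ref{SS:fBm_preliminaries} (consistent with the explicit kernel displayed after \eqref{operateurQ}), and adopting the convention that a power of $t$ adjacent to an operator denotes the associated multiplication operator, I would write
\begin{align*}
\dot{K}_H=c_H\Gamma(H-1/2)\,t^{H-1/2}\,I^{H-1/2}_{0^+}\,t^{1/2-H},\qquad \dot{K}_H^{*}=c_H\Gamma(H-1/2)\,t^{1/2-H}\,I^{H-1/2}_{1^-}\,t^{H-1/2},
\end{align*}
the second identity using $\brac{I^{\alpha}_{0^+}}^{*}=I^{\alpha}_{1^-}$. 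Inverting factor by factor via $\brac{I^{\alpha}_{0^+}}^{-1}=D^{\alpha}_{0^+}$ and $\brac{I^{\alpha}_{1^-}}^{-1}=D^{\alpha}_{1^-}$ with $\alpha=H-1/2$, and noting that the two central weights combine to $t^{2H-1}$, gives
\begin{align*}
\brac{Q^{H}_{\bar{X}}}^{-1}=c_H^{-2}\Gamma(H-1/2)^{-2}\,L^{\top}\,t^{1/2-H}\,D^{H-1/2}_{1^-}\,t^{2H-1}\,D^{H-1/2}_{0^+}\,t^{1/2-H}\,L,
\end{align*}
which is the first stated formula. The equivalent integral form then follows by substituting the defining expression $D^{H-1/2}_{0^+}\varphi=\Gamma(3/2-H)^{-1}\frac{d}{dt}\int_0^{t}(t-s)^{1/2-H}\varphi(s)\,ds$ together with its right-sided analogue, which produces the two factors $\Gamma(3/2-H)^{-1}$ and the nested $\frac{d}{dt}$, $\frac{d}{dz}$ operations appearing in the corollary.

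The step I expect to be the main obstacle is not this formal algebra but the verification that each composition is legitimate on the relevant function spaces. The weights $t^{1/2-H}$ and $t^{2H-1}$ are singular at the endpoints and the fractional derivatives $D^{H-1/2}$ are unbounded, so one must check that $L\Psi\in L^{2}$ is carried into the domain of $D^{H-1/2}_{0^+}$ after the weight $t^{1/2-H}$, that the intermediate function $t^{2H-1}D^{H-1/2}_{0^+}t^{1/2-H}L\Psi$ lies in the domain of $D^{H-1/2}_{1^-}$, and that the fractional-integral inverses used above are genuine two-sided inverses on these weighted fractional Sobolev spaces. It is precisely this endpoint and integrability bookkeeping that forces the restriction $1/2<H<3/4$ (equivalently $2H-1<1/2$), and I would handle it using the mapping properties of the weighted Riemann--Liouville operators from Appendix \ref{SS:fBm_preliminaries}, checking the identity $\dot{K}_H^{-1}\dot{K}_H=\mathrm{Id}$ on a dense subspace to justify the factorwise inversion.
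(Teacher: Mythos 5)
Your overall route is the paper's: use $g=g(x)$ to kill the $\nabla_y\phi$ term, factor $\dot{K}_H$ and $\dot{K}_H^*$ into weighted Riemann--Liouville integrals, and undo them with fractional derivatives, with $1/2<H<3/4$ entering through weighted $L^2$ bounds. But the passage from this formal algebra to an actual inverse has a genuine gap, in two related places. First, your invertibility argument proves only that $Q^{H}_{\bar{X}}$ is injective, and you then assert that $Q^{H}_{\bar{X}}$ is invertible and the MDP follows. On $L^2\brac{[0,1];\R^n}$ injectivity gives neither surjectivity nor a bounded inverse, and Theorem \ref{theorem_rate_function} needs invertibility on $L^2$. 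Second, the identity $\brac{Q^{H}_{\bar{X}}}^{-1}=\brac{M^{-1}}^*M^{-1}=L^\top\brac{\dot{K}_H^*}^{-1}\dot{K}_H^{-1}L$ and the factor-by-factor inversion $\brac{I^{H-1/2}_{0^+}}^{-1}=D^{H-1/2}_{0^+}$, $\brac{I^{H-1/2}_{1^-}}^{-1}=D^{H-1/2}_{1^-}$ presuppose that $M$, $\dot{K}_H$, $I^{H-1/2}_{0^+}$, $I^{H-1/2}_{1^-}$ are invertible operators on $L^2$; they are not (they are injective with proper range), so $M^{-1}$ and $\dot{K}_H^{-1}$ do not exist as operators on $L^2$, and the fractional derivatives are only left inverses defined on the ranges. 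Your proposed repair --- checking $\dot{K}_H^{-1}\dot{K}_H=\mathrm{Id}$ on a dense subspace --- cannot close this: the would-be inverses are unbounded, so an identity verified on a dense subspace does not extend, and no density argument supplies the missing surjectivity.

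The paper's proof avoids both issues by never inverting any factor. It takes your final expression as a candidate $W$ and proves directly that $WQ^{H}_{\bar{X}}\Psi=\Psi$ for every $\Psi\in L^2$, using only the one-sided composition identities $D^{H-1/2}_{0^+}I^{H-1/2}_{0^+}h=h$ and $D^{H-1/2}_{1^-}I^{H-1/2}_{1^-}h=h$, valid for all $h\in L^2$ by \cite[Lemma 2.4]{KST}. To apply these it first checks that the two intermediate functions $D_1=t^{H-1/2}\bar{f}\brac{\bar{X}}^\top\Psi$ and $D_2=t^{1-2H}I^{H-1/2}_{1^-}t^{H-1/2}\bar{f}\brac{\bar{X}}^\top\Psi$ lie in $L^2$; the membership of $D_2$ is exactly where $H<3/4$ is used, via Lemma \ref{lemma_Samko_bounded} with $p=2$, $\alpha=\gamma=H-1/2$, $\beta=1-2H$, whose constraint $(\alpha+\gamma)p<1$ reads $H<3/4$ (you correctly located the obstruction here, but left it unexecuted). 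Having the left inverse, the paper gets $\ker Q^{H}_{\bar{X}}=\{0\}$, uses self-adjointness of $Q^{H}_{\bar{X}}$ to conclude $\operatorname{Im} Q^{H}_{\bar{X}}=\left[\ker Q^{H}_{\bar{X}}\right]^\bot=L^2$, hence bijectivity, and then boundedness of $Q^{H}_{\bar{X}}$ (Proposition \ref{prop_dotKHinL2}) plus the inverse mapping theorem to obtain a bounded inverse, which must coincide with $W$. Supplying these steps --- the $L^2$ membership of $D_1,D_2$, the left-inverse computation on all of $L^2$, and the self-adjointness/inverse-mapping argument for bijectivity --- is what your proposal is missing.
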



	\section{The controlled processes}
	\label{section_proof_main}


	The proof of the Laplace principle is based on a variational
	formula established in \cite[Theorem 3.2]{Zha}, which can be
	regarded as an abstract Wiener space counterpart of the
	stochastic control representation from \cite[Theorem 3.1]{BouDup}
	for the classical Wiener space. Recall that $\mathcal{S}$
	denotes the Cameron-Martin
	space associated with the process $\left\{ (W^H_t,B_t)\colon t
	\in [0,1]\right\}$ defined in
	\eqref{def_Cameron_Martin_joint}. Let $a$ be a bounded Borel
	function on $C\brac{[0,1];\R^n}$. Then, the variational formula
	(applied to the framework of this paper) from \cite[Theorem
	3.2]{Zha} states that 	
	\begin{align}
		\label{equation_varrep}
		-\frac{1}{h^2(\epsilon)}\ln \E{\exp\brac{-h^2(\epsilon)a(\eta^\epsilon)}}&=\inf_{w^\epsilon\in \mathcal{S}}\E{\frac{1}{2}\norm{w^\epsilon}^2_{\mathcal{S}}+a\brac{\eta^\eu}}\nonumber\\
		&=\inf_{w^\epsilon = (K_H\hat{u}^{\epsilon},K_{1/2}\hat{v}^{\epsilon})\in \mathcal{S}}\E{\frac{1}{2}\int_0^1 \abs{\hat{u}^\epsilon_s}^2+\abs{\hat{v}^\epsilon_s}^2 ds+a\brac{\eta^\eu}},
	\end{align}
	where the controlled deviations process $\eta^\eu$ is defined by
	\begin{align}
		\label{def_control_eta}
		\eta^\eu_t=\frac{X^\eu_t-\bar{X}_t}{\sqrt{\epsilon}h\brac{\epsilon}}
	\end{align}
	and the controlled processes $X^\eu$ and $Y^\eu$ are defined by
	\begin{align}
		\label{equation_controlled_XY}
		\begin{split}
			X^\eu_t&=x_0 +\int_0^t
			g\brac{X^\eu_s,Y^\eu_s}+\sqrt{\epsilon}h(\epsilon)f\brac{X^\eu_s,Y^\eu_s}
			\dot{u}^\epsilon_s ds\\&\quad +\int_0^t\sqrt{\epsilon}f\brac{X^\eu_s,Y^\eu_s}d W^H_s\\
			Y^\eu_t&=y_0+\int_0^t\frac{1}{\epsilon}c\brac{Y^\eu_s}+\frac{h(\epsilon)}{\sqrt{\epsilon}}\sigma\brac{Y^\eu_s}\dot{v}^\epsilon_s
			ds +\int_0^t\frac{1}{\sqrt{\epsilon}}\sigma\brac{Y^\eu_s}dB_s.
		\end{split}
	\end{align}
	Note that, based on \eqref{def_control_eta} and \eqref{equation_controlled_XY}, we can rewrite $\eta^\eu$ in the form 	\begin{align}		
		\label{equation_eta}
		\eta^\eu_t
		&=\int_0^t\frac{1}{\sqrt{\epsilon}h(\epsilon)}\left[ g\brac{X^\eu_s,Y^\eu_s}-\bar{g}(\bar{X}_s) \right] ds+
		\int_0^t f\brac{X^\eu_s,Y^\eu_s} \dot{u}^\epsilon_s ds\nonumber\\&+\frac{1}{h(\epsilon)}\int_0^t f\brac{X^\eu_s,Y^\eu_s}d W^H_s.
	\end{align}
	Let  $\mathcal{U}=\R^m$ and $\mathcal{V}=\R^p$. These are the
	spaces in which the control processes $u^\epsilon$ and $v^\epsilon$
	take values in, respectively. Define
	$\theta\brac{x,\eta,y^{(1)},y^{(2)},u^{(1)},u^{(2)},v^{(1)},v^{(2)},s,r}\colon
	\R^n\times \R^n \times \mathcal{Y} \times \mathcal{Y} \times
	\mathcal{U}\times \mathcal{U}\times \mathcal{V}\times
	\mathcal{V}\times [0,1] \times [0,1]
	$ by
	\begin{align}
		\label{definitionoffunctiontheta}
		&\theta\brac{x,\eta,y^{(1)},y^{(2)},u^{(1)},u^{(2)},v^{(1)},v^{(2)},s,r}
		=\brac{\nabla_y\phi\brac{x,y^{(1)}}
			\sigma\brac{y^{(1)}}v^{(1)}+\nabla_x \bar{g}
			(x)\eta} \nonumber \\& \hspace{4cm} +c_H f(x,y^{(1)})(s-r)^{H-3/2}s^{H-1/2}r^{1/2-H}u^{(2)}\mathds{1}_{[0,s]}(r).
	\end{align}
	Condition \ref{condH1} and Theorem
	\ref{theorem_Poisson_equation} guarantee that the function $\theta$
	is bounded in $x$, affine in $\eta$, $u^{(2)}$ and $v^{(1)}$ and
	bounded polynomially in $\abs{y}$.
	
	Next, we introduce the occupation measure $P^{\epsilon}$. Let $A_1$,
	$A_2$, $B$ and $\Gamma$ be Borel sets of $\mathcal{U}$, $\mathcal{V}$,
	$\mathcal{Y}=\R^d$ and $[0,1]$, respectively. Let $(X^\eu,Y^\eu)$ solve
	\eqref{equation_controlled_XY}. Associate with
	$\brac{Y^\eu,\hat{u}^{\epsilon},\dot{v}^{\epsilon}}$ a family of occupation
	measures $P^{\epsilon}$ defined by
	\begin{align}
		\label{def_occumeasure}
		P^{\epsilon}\brac{A_1\times A_2 \times B\times \Gamma}=\int_\Gamma  \mathds{1}_{A_1}(\hat{u}^\epsilon_s)\mathds{1}_{A_2}(\dot{v}^\epsilon_s) \mathds{1}_{B}(Y^\eu_s)ds.
	\end{align}
	\begin{definition}
		\label{defviablepairs}
		Let
		$F\brac{x,\eta,y^{(1)},y^{(2)},u^{(1)},u^{(2)},v^{(1)},v^{(2)},s,r}
		\colon
		\R^n\times \R^n \times \mathcal{Y} \times \mathcal{Y} \times
		\mathcal{U}\times \mathcal{U}\times \mathcal{V}\times
		\mathcal{V}\times [0,1] \times [0,1]$ be a function that has at most
		polynomial growth in $\abs{y}$. Let $\mathcal{L}$ be a second order
		elliptic partial differential operator and denote
		its domain by $\mathcal{D}(\mathcal{L})$. A pair $(\psi,P) \in
		C\brac{[0,1];\R^n}\times
		\mathcal{P}(\mathcal{U}\times\mathcal{V}\times\mathcal{Y}\times[0,1])$
		is called a viable pair with respect to $(\theta,\mathcal{L})$ if
		\begin{itemize}
			\item[-] The function $\psi\in C([0,1];\mathbb{R}^{n})$ is absolutely continuous.
			\item[-] The measure $P$ is integrable in the sense that
			\begin{align*}
				\int_{\mathcal{U}\times \mathcal{V}\times \mathcal{Y}\times[0,1]}\left[ \abs{u}^2+\abs{v}^2+\abs{y}^2 \right] P(dudvdyds)<\infty.
			\end{align*}
			\item[-] For all $t \in [0,1]$,
			\begin{align*}
				&\psi_t=\int_{\mathcal{U}^2\times
					\mathcal{V}^2\times \mathcal{Y}^2\times
					[0,t]^2}F\brac{\bar{X}_s,\psi_s,y^{(1)},y^{(2)},u^{(1)},u^{(2)},v^{(1)},v^{(2)},s,r}
				\\
				& \qquad\qquad\qquad\qquad\qquad\qquad\qquad\qquad\qquad P\otimes
				P(du^{(1)}du^{(2)}dv^{(1)}dv^{(2)}dy^{(1)}dy^{(2)}dsdr).
			\end{align*}
			\item[-] For all $t \in [0,1]$, it holds that
			\begin{align}
				\label{decompdeP}
				P(dudvdydt)=\nu_{y,t}(dudv)\mu(dy)dt,
			\end{align}
			where  $\nu_{y,t}$ is a kernel on $\mathcal{U}\times
			\mathcal{V}$ dependent on $y\in \mathcal{Y}$ and $t\in
			[0,1]$, while $\mu$ is the unique invariant
			measure associated with the operator $\mathcal{L}$.
		\end{itemize}
		In order to indicate that the pair $(\psi,P)$ is viable with respect
		to $(F,\mathcal{L})$, we write $(\psi,P) \in \mathcal{V}(F,\mathcal{L})$.
	\end{definition}
	The controlled process \eqref{def_control_eta} and the definition of
	viable pairs (Definition \ref{defviablepairs}) will be used to prove the theorem below.
	\begin{theorem}
		\label{theobeforevaria}
		Let Conditions \ref{condH1} and either \ref{condH2-A} or
		\ref{condH2-B} be satisfied. Then, the process
		$\{X^\epsilon\colon \epsilon>0\}$ from \eqref{originalsystem} satisfies the moderate
		deviations principle, with the action functional
		$S^{H}(\Phi)$ given by
		\begin{align}
			\label{rate_function}
			S^{H}(\Phi)= \inf_{(\Phi,P)\in\mathcal{V}(\theta,\mathcal{L})}\left[ \frac{1}{2}  \int_{\mathcal{U}\times \mathcal{V}\times \mathcal{Y}\times[0,1]}\left[ \abs{u}^2+\abs{v}^2 \right]  P(dudvdyds)\right]
		\end{align}
		with the convention that the infimum over the empty set is $\infty$.
	\end{theorem}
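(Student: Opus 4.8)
The plan is to prove the moderate deviations principle through the weak convergence approach of \cite{DupEll}, taking the $\epsilon\to 0$ limit in the variational representation \eqref{equation_varrep} furnished by \cite{Zha}. As is standard, the argument decomposes into the Laplace principle lower bound (the large deviations upper bound) and the Laplace principle upper bound (the large deviations lower bound), and the formula \eqref{rate_function} will emerge as the value of the limiting variational problem over the viable pairs $\mathcal{V}\brac{\theta,\mathcal{L}}$ introduced in Definition \ref{defviablepairs}.

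For the lower bound, I would fix a bounded continuous $a$ and choose controls $w^\epsilon=(K_H\hat{u}^\epsilon,K_{1/2}\hat{v}^\epsilon)$ coming within $\delta>0$ of the infimum in \eqref{equation_varrep}. The resulting uniform bound on the cost $\frac{1}{2}\int_0^1\brac{\abs{\hat{u}^\epsilon_s}^2+\abs{\hat{v}^\epsilon_s}^2}ds$ feeds into the a priori estimates of Appendix \ref{A:AppendixB} for $Y^\eu$ and $\eta^\eu$, which then yield tightness of the pair $\brac{\eta^\eu,P^\epsilon}$ in $C\brac{[0,1];\R^n}\times\mathcal{P}\brac{\mathcal{U}\times\mathcal{V}\times\mathcal{Y}\times[0,1]}$, the superlinear moment control on $P^\epsilon$ supplying the needed tightness of the occupation measures. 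Extracting a weakly convergent subsequence with limit $(\psi,P)$, the crux is to verify $(\psi,P)\in\mathcal{V}\brac{\theta,\mathcal{L}}$, which amounts to identifying the limit of the three terms in \eqref{equation_eta}. The singular term $\frac{1}{\sqrt{\epsilon}h(\epsilon)}\brac{g-\bar{g}}$ is treated by inserting the Poisson solution $\phi$ of \eqref{equation_Poisson_g} and applying It\^o's formula; after averaging against $\mu$, this contributes both the linearized drift $\nabla_x\bar{g}(\bar{X})\psi$, arising from $\bar{g}(X^\eu)-\bar{g}(\bar{X})$, and the term $\nabla_y\phi\,\sigma\,v^{(1)}$, arising from the $v$-control acting on $Y^\eu$. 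The controlled drift $\int_0^t f\,\dot{u}^\epsilon_s\,ds$, once $\dot{u}^\epsilon$ is unfolded through the kernel $\dot{K}_H$ of \eqref{khdot} and the order of integration is interchanged, produces exactly the singular-kernel term of $\theta$ carrying $u^{(2)}$ over $[0,t]^2$, which is what forces the product measure $P\otimes P$ in Definition \ref{defviablepairs}. The remaining Young integral $\frac{1}{h(\epsilon)}\int_0^t f\,dW^H_s$ must be shown to vanish in the limit. Lower semicontinuity of the quadratic cost under weak convergence, together with Fatou's lemma, then closes the lower bound.

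The upper bound runs in the reverse direction. Given an absolutely continuous $\Phi$ and a viable pair $(\Phi,P)$ nearly attaining the infimum in \eqref{rate_function}, I would build near-optimal controls $\hat{u}^\epsilon,\hat{v}^\epsilon$ out of the disintegration kernel $\nu_{y,t}$ of \eqref{decompdeP}, show that the associated $\eta^\eu$ converges to $\Phi$ while the cost converges to the prescribed value, and conclude by the definition of viability. Combining the two bounds identifies $S^H$ as in \eqref{rate_function}; the explicit closed form of Theorem \ref{theorem_rate_function}, together with the operator $Q^{H}_{\bar{X}}$, is then read off from the Laplace upper bound by carrying out the minimization in \eqref{rate_function} explicitly.

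The main obstacle is the control of the two Young integrals appearing in \eqref{equation_eta}. In contrast with the averaging analysis of \cite{BGS}, once the control is switched on the integrand $f\brac{Y^\eu}$ is no longer independent of the driver $W^H$, so the maximal inequality is unavailable and must be replaced by the Young--Lo\'{e}ve estimate; this is precisely what dictates the uniform H\"older hypotheses on the coefficients in Conditions \ref{condH2-A} and \ref{condH2-B} and the constraints linking $H$, the H\"older exponents, and the rate $h(\epsilon)$. The most delicate point within the limit identification is the simultaneous passage to the limit in the ergodic averaging of $Y^\eu$ and in the singular fractional kernel: one has to show that the doubly-indexed occupation measure factorizes so that the outer time-$s$ data entering through $y^{(1)}$ and $f$ decouples from the inner time-$r$ control $u^{(2)}$. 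This factorization is the structural mechanism described in Remark \ref{R:CoreIdea} and is exactly what is reflected in the form of $\theta$ in \eqref{definitionoffunctiontheta}.
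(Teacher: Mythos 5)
Your overall skeleton---the variational representation \eqref{equation_varrep}, tightness of $(\eta^\eu,P^\epsilon)$, identification of limit points as viable pairs, Fatou's lemma for the Laplace lower bound, and a control construction for the upper bound---is the same weak-convergence architecture the paper uses, and you correctly isolate the key mechanisms: the Poisson equation \eqref{equation_Poisson_g} plus It\^o's formula for the singular drift, the unfolding of $\dot{u}^\epsilon=\frac{d}{ds}\left[K_H\hat{u}^\epsilon\right]_s$ through the kernel as in Remark \ref{R:CoreIdea} (which forces the $P\otimes P$ structure), the Young--Lo\'eve inequality replacing the maximal inequality of \cite{BGS}, and the delicacy of passing to the limit through the singular kernel $(s-r)^{H-3/2}$. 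However, there is a genuine omission: you never address the compactness of the level sets of $S^H$ (equivalently, that $S^H$ is a good, lower semicontinuous rate function). The two Laplace bounds alone do not yield the moderate deviations principle; the equivalence between the Laplace principle and the large deviations principle in \cite{DupEll} requires the rate function to have compact level sets. The paper devotes all of Subsection \ref{proofcompactness} to this point (relative compactness of level sets via the equicontinuity estimate resting on Proposition \ref{prop_dotKHinL2}, stability of viable pairs under limits in Lemma \ref{lemma_limitviablepair}, and lower semicontinuity in Lemma \ref{lemma_lower_semicontinuity}); without this step the conclusion of Theorem \ref{theobeforevaria} is not established.

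Your upper bound is also underspecified exactly where it departs from the paper. A viable pair $(\Phi,P)$ carries a relaxed, measure-valued control $\nu_{y,t}$, so you cannot literally build prelimit controls ``out of the disintegration kernel'': you must first replace $\nu_{y,t}$ by its mean to obtain an ordinary control $w(t,y)$ and verify via Jensen's inequality that this does not increase the cost---this is precisely the paper's relaxed-versus-ordinary equivalence $L^r\brac{\Phi,\bar{X}}=L^o\brac{\Phi,\bar{X}}$ of Proposition \ref{prop_control_problems_equi}---and you must then show that the feedback control evaluated along $Y^\eu$ produces occupation measures converging to the correct limit and that the realized cost actually converges, which requires more than ``the definition of viability.'' The paper takes a different route here: it solves the limiting ordinary control problem in closed form (Proposition \ref{proposition_optimalcontrol}, which is where the invertibility of $Q^{H}_{\bar{X}}$ enters) and substitutes the resulting optimal feedback control $\brac{\pi^*(Q^{H}_{\bar{X}})^{-1}\brac{\dot{\Phi}_0-\nabla_x\bar{g}(\bar{X})\Phi_0},\rho^*(Q^{H}_{\bar{X}})^{-1}\brac{\dot{\Phi}_0-\nabla_x\bar{g}(\bar{X})\Phi_0}}$ into the prelimit system, obtaining \eqref{eta_is_phi} and \eqref{prelimit_cost} simultaneously; this is also exactly what produces the explicit formula of Theorem \ref{theorem_rate_function}, which you defer to an unspecified ``explicit minimization.'' Your route can likely be completed along the lines of the $H=1/2$ literature, but as written both the upper bound's cost convergence and the goodness of the rate function are gaps.
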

	\begin{remark}
		As will be shown in the proof of Theorem \ref{theobeforevaria}, Theorem \ref{theorem_rate_function}
		follows directly from Theorem \ref{theobeforevaria}.
	\end{remark}
	
	\begin{remark}\label{R:CoreIdea}
		In this remark we discuss one of the key ideas that allows to naturally generalize the computations to the $H \neq 1/2$ case from the $H=1/2$ case. In the course of the proof, we will need to handle terms of the form $\int_0^t f\brac{X^\eu_s,Y^\eu_s} \dot{u}^\epsilon_s ds$, where $u^\epsilon$ is the control process introduced in the beginning of this section. Roughly speaking, if $H=1/2$ and $P^{\epsilon}$ is the occupational measure defined as in  (\ref{def_occumeasure}), then one has $\dot{u}^\epsilon=\hat{u}^\epsilon$ and thus
		\begin{align}
			\int_0^t f\brac{X^\eu_s,Y^\eu_s} \dot{u}^\epsilon_s ds&=\int_{\mathcal{U}\times\mathcal{V}\times\mathcal{Y}\times[0,t]}f(X^{\eu}_s,y)uP^{\epsilon}(d u dv dyds),\nonumber
		\end{align}
		and then after establishing tightness of $(X^\eu, P^{\epsilon})$ one can study  its limit. This approach does not work exactly like that in the case where the Hurst parameter $H\neq 1/2$. In order to generalize this idea for the case $H\neq 1/2$, we first notice that one can write that
		\[
		\frac{d}{ds} u^{\epsilon}_{s}=\frac{d}{ds} \left[K_{H}\hat{u}^\epsilon\right]_{s},
		\]
		where $K_{H}$ is the operator associated to fBm, see Appendix \ref{SS:fBm_preliminaries}.   With this observation at hand we then write
		\begin{align}
			&\int_0^t f\brac{X^\eu_s,Y^\eu_s} \dot{u}^\epsilon_s ds=\int_0^t f\brac{X^\eu_s,Y^\eu_s} \frac{d}{ds} \left[K_{H}\hat{u}^\epsilon\right]_{s} ds\nonumber\\
			&\quad=\int_0^t f\brac{X^\eu_s,Y^\eu_s}  \brac{c_Hs^{H-1/2}\int_0^s (s-r)^{H-3/2}r^{1/2-H}\hat{u}^{\epsilon}_rdr} ds\nonumber\\
			&\quad=c_H\int_{\mathcal{U}^2\times
				\mathcal{V}^2\times \mathcal{Y}^2\times
				[0,t]^2} f(X^\eu_s,y^{(1)})(s-r)^{H-3/2}s^{H-1/2}r^{1/2-H} u^{(2)} \mathds{1}_{[0,s]}(r)\nonumber\\
			& \qquad\qquad\qquad\qquad\qquad\qquad\qquad P^{\epsilon}\otimes
			P^{\epsilon}(du^{(1)}du^{(2)}dv^{(1)}dv^{(2)}dy^{(1)}dy^{(2)}dsdr),
		\end{align}
		which is what allows us then to take limits. The details are in Section \ref{proofoftheo2}.
	\end{remark}
	
	\section{Examples}\label{S:Examples}
	\subsection{Fractional financial model}
	In \cite[Chapter 4]{Shi99}, the author collects various empirical studies which observe persistence or long memory phenomena in financial data such as financial indexes and currency cross rates, among others. This motivates us, for the first example of this paper, to consider the multiscale volatility model
	\begin{align}
		\label{equation_fractional_vol}
		\begin{split}
			dX^\epsilon_t&=Y^\epsilon_tdt+\sqrt{\epsilon}\tau dW^H_t,\\
			dY^\epsilon_t&=\beta\brac{\theta-Y^\epsilon_t}dt+v\sqrt{Y_t}dB_t.
		\end{split}
	\end{align}
	We assume $\tau>0$ and $\beta,\theta,v$ are real constants such that $2\beta\theta\geq v^2$. $\brac{W^H,B}$ is an independent pair of one-dimensional fractional Brownian motion of Hurst parameter $H>1/2$ and one-dimensional Brownian motion. $X^\epsilon$ is a financial instrument with a perturbed fractional Brownian noise (in order to account for the long memory effect). The fast volatility process $Y^\epsilon$ follows the Cox-Ingersoll-Ross model of interest rate and the assumption $2\beta\theta\geq v^2$ ensures that $Y^\epsilon$ is strictly positive. It is also worth noting that adding fractional Brownian noise to financial models to simulate long memory has been an increasingly common practice in literature, see \cite{Che03,ComRen,ForZha17,HJL19,SotEsk,Shi99}.

	The stochastic differential equation of $Y^\epsilon$ in \eqref{equation_fractional_vol} has the Cox–Ingersoll–Ross process as its unique solution, which implies the process $X^\epsilon$ as an integral function of $Y^\epsilon$ plus a fractional Brownian noise term is well-defined. Moreover, in the context of the previous section, the invariant measure $\mu$ has the Gamma density (\cite[Section 33.4]{FPSS11})
	\begin{align*}
		\mu(y)=\frac{\brac{2\beta/v^2}^{2\beta\theta/v^2}}{\Gamma\brac{2\beta\theta/v^2}}y^{2\beta\theta/v^2-1}e^{-2\beta y/v^2},\text{ for $y\geq 0$.}
	\end{align*}
	Then according to \cite[Theorem 1]{BGS}, $X^\epsilon$ converges in probability in $C\brac{[0,1]}$ to
	\begin{align*}
		\bar{X}_t=\int_{\R}y\mu(dy)=\theta.
	\end{align*}
	The Poisson equation at \eqref{equation_Poisson_g} has an unique solution $\phi(y)$ due to Theorem \ref{theorem_Poisson_equation} and this solution satisfies $\phi'(y)=-\frac{1}{2\beta}$. This implies the operator $Q^H$ defined at \eqref{def_Q} is
	\begin{align*}
		Q^H=\tau^2\dot{K}_H \dot{K}_H^*+\brac{\frac{\tau}{2\beta}}^2,
	\end{align*}
	which is invertible since $\tau>0$ (see Lemma \ref{lemma_Q_invertible}). Here $\dot{K}_H \dot{K}_H^*$ is the operator
	\begin{align*}
		\left[\dot{K}_H \dot{K}_H^* h\right](t)&=c_H^2t^{H-1/2}\int_0^t(t-z)^{H-3/2}z^{1-2H}\int_z^1 (s-z)^{H-3/2}s^{H-1/2}h(s)dsdz.
	\end{align*}

	Let us now discuss moderate deviations of $X^\epsilon$. We have already established $Y^\epsilon$ has an invariant measure $\mu$, which makes Condition \ref{condH1} redundant for the model \eqref{equation_fractional_vol}.  Moreover, if we use the notation of Section \ref{section_intro} then the equation of $X^\epsilon$ at \eqref{equation_fractional_vol} has $f=\tau$ and $\brac{\frac{d}{dx}\phi(y)}f=0$. Therefore, based on the proofs of Lemma \ref{lemma_estimates_notyoung} and Lemma \ref{lemma_estimates_young}, Condition \ref{condH2-B} in this setting simplifies to $\abs{f(y)}\leq C\brac{1+\abs{y}^{D_f}}$ and $0\leq D_f< 1$, which is clearly satisfied for $f=\tau$. Then, as long as there exists $\beta\in[0,1]$ such that $\beta+H>1$ and $h(\epsilon)^{-1}\epsilon^{-\frac{\beta}{2}}\to 0$ as $\epsilon\to 0$, Theorem \ref{theorem_rate_function} asserts that for the moderate deviations process $\brac{X^\epsilon-\bar{X}}/h(\epsilon)\sqrt{\epsilon}$, its action functional, when finite, takes the form
	\begin{align*}
		S^H(\Phi)= \int_0^1\dot{\Phi}_s \brac{Q^H}^{-1}\dot{\Phi}_sds.
	\end{align*}
	
	\subsection{Fractional Langevin equation}
	For the second example, we consider the multiscale model
	\begin{align}
		\label{equation_fractional_langevin}
		\begin{split}
			dX^\epsilon_t&=\brac{-Q'\brac{Y^\epsilon_t}- V'\brac{X^\epsilon_t}}dt+\sqrt{\epsilon}\sqrt{2D} dW^H_t,\\
			dY^\epsilon_t&=-\frac{1}{\epsilon} Q'\brac{Y^\epsilon_t}dt+\frac{1}{\sqrt{\epsilon}}\sqrt{2D} dB_t.
		\end{split}
	\end{align}
	The equation of $X^\epsilon$ can be viewed as a rescaled Langevin equation with a fractional Brownian noise. A simpler version of this fractional Langevin equation that does not contain a fast process $Y^\epsilon$ was studied in \cite{AMP,CKM,GJR} among others. We assume
	that
	\begin{itemize}
		\item[-] $\mathcal{Y}$ is the one-dimensional unit torus.
		\item[-] There is a constant $C$ such that $
		\abs{Q'(y)}\leq C\brac{1+\abs{y}}
		$ and $\sup_{x\in\mathbb{R}}\sum_{k=1}^{3}\abs{ V^{(k)}(x)}\leq C$.
		\item[-] $Q'(y)$ is Lipschitz.
		\item[-] $V'''(x),Q'''(y)$ are continuous.
		\item[-] $D$ is a real-valued non-zero constant.
	\end{itemize}
	Our assumption implies that $Q'(y),V'(x)$ are Lipschitz and that $\abs{Q'(y)}+\abs{V'(x)}\leq C\brac{1+\abs{y}}$, so that there is a unique strong solution to \eqref{equation_fractional_langevin} based on \cite[Theorem 2.2]{GueNua}.
	
	Next, we consider averaging of $X^\epsilon$. Since $\mathcal{Y}$ is the unit torus, Condition 3 in \cite[Theorem 1]{BGS} is not needed for ergodicity of $Y^\epsilon$. Condition 1 in \cite[Theorem 1]{BGS} is met by our second and fourth assumptions for \eqref{equation_fractional_langevin} above. Thus, we conclude $X^\epsilon$ converges in probability on $C\brac{[0,1]}$ to
	\begin{align*}
		\bar{X}_t=\int_0^t\brac{-
			\int_{\mathcal{Y}}{Q'(y)}\mu(dy)-V'\brac{\bar{X}_s}}ds
	\end{align*}
	where, according to \cite{PavStu}, the invariant measure $\mu$ is the Gibbs measure
	\begin{align*}
		\mu(dy)=\frac{1}{Z}e^{-{Q(y)}/{D}},\qquad Z=\int_{\mathcal{Y}}e^{-{Q(y)}/{D}}dy.
	\end{align*}
	The Poisson equation at \eqref{equation_Poisson_g} becomes
	\begin{align*}
		-Q'(y)\phi'(y)+D \phi''(y)=\bar{Q'}-Q'(y), \qquad \int_\mathcal{Y}\phi(y)\mu(dy)=0
	\end{align*}
	such that $\bar{Q'}=\int_{\mathcal{Y}}{Q'(y)}\mu(dy)$. Its solution satisfies
	\begin{align*}
		\phi'(y)=\frac{\bar{Q'}}{D} e^{Q(y )/D}\int_0^y e^{-Q(\xi)/D}d\xi+M e^{Q(y )/D}+1
	\end{align*}
	where the constant $M$ is
	\begin{align*}
		M&=-\brac{\frac{\bar{Q'}}{D}\int_\mathcal{Y} e^{-Q(y )/D}\int_0^y e^{Q(\rho )/D}\int_0^\xi e^{-Q( \xi)/D}d\xi d\rho dy+\int_\mathcal{Y}y e^{-Q(y )/D}dy}\\
		&\hspace{18em}\brac{\int_\mathcal{Y}e^{-Q(y )/D}\int_0^y e^{Q(\xi)/D}d\xi dy}^{-1}.
	\end{align*}
	At this point, we can consider moderate deviations of $X^\epsilon$. In the notation of the previous section, we have $g(x,y)=-Q'(y)-V'(x)$, $c(y)=- Q'(y),f=\sigma=\sqrt{2D}$.  Since $\mathcal{Y}$ is the unit torus, the first recurrence assumption in Condition \ref{condH1} and $D_f+D_g<1$ in Condition \ref{condH2-B} are no longer needed. In addition, the fact that $\frac{d}{dx}\phi(y)=0$ makes redundant the assumption $M_k/2+H>1$ in Condition \ref{condH2-B}. Then the rest of Conditions \ref{condH1} and \ref{condH2-B} are satisfied by \eqref{equation_fractional_langevin}. In particular, we have $g(x,y)\in \mathcal{C}^{2,\alpha}(\R^d\times \mathcal{Y})$ since $V''(x), V'''(x)$ are bounded. Next, the operator $Q^H$ in \eqref{operateurQ} becomes
	\begin{align*}
		Q^H&=2D\dot{K}_H\dot{K}^*_H
		\\&+\frac{2D}{Z}\int_\mathcal{Y} e^{-Q(y )/D} \brac{\frac{\bar{Q'}}{D} e^{Q(y )/D}\int_0^y e^{-Q(\xi)/D}d\xi+M e^{Q(y )/D}+1}^2 dy
	\end{align*}
	where $\dot{K}_H \dot{K}_H^*$ is
	\begin{align*}
		\left[\dot{K}_H \dot{K}_H^* h\right](t)&=c_H^2t^{H-1/2}\\&\int_0^t(t-z)^{H-3/2}z^{1-2H}\int_z^1 (s-z)^{H-3/2}s^{H-1/2}h(s)dsdz.
	\end{align*}
	Thus, under the condition that $Q^H$ is invertible and there exists $\beta\in[0,1]$ such that $\beta+H>1$ and $h(\epsilon)^{-1}\epsilon^{-\frac{\beta}{2}}\to 0$ as $\epsilon\to 0$, Theorem \ref{theorem_rate_function} says for the moderate deviations process $\brac{X^\epsilon-\bar{X}}/h(\epsilon)\sqrt{\epsilon}$, its action functional, when finite, is
	\begin{align*}
		S^H(\Phi)= \int_0^1\brac{\dot{\Phi}_s+ V''\brac{\bar{X}_s}\Phi_s}\brac{Q^H}^{-1}\brac{\dot{\Phi}_s+ V''\brac{\bar{X}_s}\Phi_s} ds.
	\end{align*}

	\begin{remark}
		Here we compare the result above to the moderate deviations of $X^\epsilon$ in
		\begin{align}
			\label{equation_ito_langevin}
			\begin{split}
				dX^\epsilon_t&=\brac{-Q'\brac{Y^\epsilon_t}- V'\brac{X^\epsilon_t}}dt+\sqrt{\epsilon}\sqrt{2D} dW_t,\\
				dY^\epsilon_t&=-\frac{1}{\epsilon} Q'\brac{Y^\epsilon_t}dt+\frac{1}{\sqrt{\epsilon}}\sqrt{2D} dB_t.
			\end{split}
		\end{align}
		We assume $W$ is a Brownian motion independent from $B$ and $\mathcal{Y}$ is the one-dimensional unit torus. Under appropriate conditions, \cite[Theorem 2.1]{MorseSpiliopoulosMDP} says the moderate deviations action functional of \eqref{equation_ito_langevin}, when finite, is
		\begin{align*}
			S^{1/2}(\Phi)= \int_0^1\brac{\dot{\Phi}_s+ V''\brac{\bar{X}_s}\Phi_s}\brac{Q^{1/2}}^{-1}\brac{\dot{\Phi}_s+ V''\brac{\bar{X}_s}\Phi_s} ds.
		\end{align*}
		such that
		\begin{align*}
			Q^{1/2}=2D
			+\frac{2D}{Z}\int_\mathcal{Y} e^{-Q(y )/D} \brac{\frac{\bar{Q'}}{D} e^{Q(y )/D}\int_0^y e^{-Q(\xi)/D}d\xi+M e^{Q(y )/D}+1}^2 dy.
		\end{align*}
		Notice in this particular situation $f(x,y)=\sqrt{2D}$ (i.e. independent of $y$) and thus we have continuity of the mapping $H\mapsto S^{H}$  at $H=1/2$ (see Remark \ref{R:BM_fBM_comparison}).
	\end{remark}
	
	\section{Proof of Theorem \ref{theobeforevaria}}
	\label{proofoftheo2}
	The proof of Theorem \ref{theobeforevaria} will be divided into five
	subsections. In Subsections \ref{prooftightness} and \ref{proofviablepair}, we prove tightness and
	convergence of the pair $(\eta^{\eu},P^{\epsilon})$, respectively. In
	Subsection \ref{prooflaplacelower}, we prove the Laplace principle lower bound. In
	Subsection \ref{proofcompactness}, we prove that the level sets of $S(\cdot)$ are
	compact. Finally, in Subsection \ref{prooflaplaceupper}, we prove the Laplace principle
	upper bound and the representation formula of Theorem \ref{theorem_rate_function}. The main additional work that needs to be done due to the effect of the fBm is seen in the bounds that we need in order to prove tightness (see also Appendix \ref{A:AppendixB}) and in the proof of the upper bound  in Subsection \ref{prooflaplaceupper}.
	\subsection{Proof of tightness}
	\label{prooftightness}
	The main result of this section is the following proposition on
	tightness.
	\begin{proposition}
		\label{propontightness}
		Let Conditions \ref{condH1} and either \ref{condH2-A} or
		\ref{condH2-B} be satisfied. Consider any family
		$\left\{w^\epsilon\colon \epsilon >0  \right\}$ of controls in
		$\mathcal{S}$ satisfying, for some $N < \infty$,
		\begin{align*}
			\sup_{\epsilon>0}\norm{w^\epsilon}^2_{\mathcal{S}}=\sup_{\epsilon>0}\int_0^1 \abs{\hat{u}^\epsilon_s}^2+\abs{\hat{v}^\epsilon_s}^2 ds< N
		\end{align*}
		almost surely. Then, the family $\left\{ (\eta^{\eu},P^{\epsilon})\colon \epsilon >0
		\right\}$ is tight.

	\end{proposition}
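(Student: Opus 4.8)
The plan is to establish tightness of the two components of the pair $(\eta^{\eu}, P^{\epsilon})$ separately, since tightness of a product is implied by tightness of each marginal. For the occupation measures $\{P^{\epsilon}\colon \epsilon>0\}$, I would first verify tightness on the slice spaces by controlling the expected mass that escapes to infinity in each coordinate. The control coordinates are handled by the uniform bound $\int_0^1 \abs{\hat{u}^\epsilon_s}^2+\abs{\hat{v}^\epsilon_s}^2\,ds < N$: this immediately gives a uniform second-moment bound $\E{\int_{\mathcal{U}\times\mathcal{V}\times\mathcal{Y}\times[0,1]}(\abs{u}^2+\abs{v}^2)\,P^{\epsilon}(du\,dv\,dy\,ds)}\leq N$, so the $u$- and $v$-marginals are tight by a Markov/Chebyshev truncation argument. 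The $y$-coordinate requires a uniform-in-$\epsilon$ moment bound $\sup_{\epsilon>0}\E{\int_0^1 \abs{Y^\eu_s}^2\,ds}<\infty$; this is precisely the kind of a-priori estimate on the controlled fast process that Appendix \ref{A:AppendixB} is set up to provide, using Condition \ref{condH1} (the dissipativity coming from $\inner{(\Gamma-L_\zeta I)\xi,\xi}\geq\gamma_0\abs{\xi}^2$) together with the bound on the control cost. The time coordinate lives on the compact $[0,1]$ and is automatic.

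For tightness of $\{\eta^{\eu}\colon\epsilon>0\}$ in $C([0,1];\R^n)$, I would invoke a Kolmogorov-type / Arzelà–Ascoli criterion, showing a uniform Hölder or modulus-of-continuity estimate together with a bound at the initial time. Working from the representation \eqref{equation_eta}, there are three terms to control: the averaging term $\frac{1}{\sqrt{\epsilon}h(\epsilon)}\int_0^t[g(X^\eu_s,Y^\eu_s)-\bar g(\bar X_s)]\,ds$, the control-driven term $\int_0^t f(X^\eu_s,Y^\eu_s)\dot u^\epsilon_s\,ds$, and the Young-integral term $\frac{1}{h(\epsilon)}\int_0^t f(X^\eu_s,Y^\eu_s)\,dW^H_s$. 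The averaging term is handled by the Poisson-equation machinery of \eqref{equation_Poisson_g}: writing $g-\bar g=\mathcal{L}\phi$ and applying an Itô-type (or its fractional analogue) expansion to $\phi(X^\eu,Y^\eu)$ converts the singularly-scaled integral into boundary terms plus terms of manageable order, which is exactly where the scaling conditions $h(\epsilon)^{-1}\epsilon^{-\beta/2}\to 0$ (resp.\ $h(\epsilon)^{-1}\epsilon^{-M_f/2}\to 0$) in Conditions \ref{condH2-A}/\ref{condH2-B} are consumed. The control-driven term is bounded uniformly via Cauchy–Schwarz using $\int_0^1\abs{\dot u^\epsilon_s}^2\,ds$-type estimates and the boundedness (or controlled growth) of $f$.

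The step I expect to be the genuine obstacle is the Young-integral term $\frac{1}{h(\epsilon)}\int_0^t f(X^\eu_s,Y^\eu_s)\,dW^H_s$, and this is exactly the difficulty flagged in the remark preceding the main results: because the control $w^\epsilon$ is adapted to and hence dependent on $W^H$, the kernel $f(X^\eu,Y^\eu)$ is no longer independent of the driving fBm, so the maximal-inequality approach of \cite{BGS} is unavailable. The plan is to replace it with the Young–Loève inequality, which bounds the Hölder norm of a Young integral by the product of the Hölder norms of integrand and integrator. This forces me to propagate a uniform-in-$\epsilon$ Hölder estimate on the controlled slow process $X^\eu$ (and on $f(X^\eu,Y^\eu)$), which is why the uniform Hölder-continuity assumptions on the coefficients are imposed in Condition \ref{condH2-B}. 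Combining the Hölder regularity of $W^H$ (of order $H-\delta$) with the controlled Hölder regularity of the integrand yields a modulus-of-continuity bound for this term, and the extra factor $1/h(\epsilon)$ ensures it contributes at the correct asymptotic order. Assembling the three estimates gives equicontinuity and uniform boundedness of $\{\eta^\eu\}$, completing the tightness proof; these a-priori Hölder and moment bounds are precisely those established in Appendix \ref{A:AppendixB}.
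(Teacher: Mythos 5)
Your overall architecture matches the paper's proof: marginal tightness of $\{P^\epsilon\}$ via uniform second-moment bounds (the control bound for the $u,v$ coordinates, an a-priori bound $\sup_{\epsilon}\E{\int_0^1\abs{Y^\eu_s}^2 ds}<\infty$ for the $y$ coordinate, then Chebyshev), and tightness of $\{\eta^\eu\}$ in $C([0,1];\R^n)$ via a modulus-of-continuity criterion, with the Poisson equation absorbing the singular averaging term, Cauchy--Schwarz handling the control-driven term, and the Young--Lo\'{e}ve inequality (fed by uniform H\"older estimates on $X^\eu$ and $Y^\eu$) handling the fBm integral. This is exactly how Subsections \ref{tightnessofPepsilon} and \ref{tightnessofetacontrol} proceed.

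However, there is a genuine gap in your treatment of the averaging term. The Poisson equation \eqref{equation_Poisson_g} solves $\mathcal{L}\phi(x,y)=g(x,y)-\bar{g}(x)$, i.e.\ it centers $g$ at $\bar{g}$ evaluated \emph{at the same spatial argument}. Applying the It\^o expansion of $\phi(X^\eu,Y^\eu)$ therefore only converts
\begin{align*}
\frac{1}{\sqrt{\epsilon}h(\epsilon)}\int_0^t\left[ g\brac{X^\eu_s,Y^\eu_s}-\bar{g}\brac{X^\eu_s} \right]ds
\end{align*}
into the $O(1)$ term $\int_0^t\nabla_y\phi\,\sigma\,\dot{v}^\epsilon_s\,ds$ plus a vanishing remainder; it does \emph{not} touch the leftover piece
\begin{align*}
D_4^\epsilon(t)=\frac{1}{\sqrt{\epsilon}h(\epsilon)}\int_0^t\left[\bar{g}\brac{X^\eu_s}-\bar{g}\brac{\bar{X}_s}\right]ds,
\end{align*}
which your proposal silently lumps into ``terms of manageable order.'' This piece is not small: since $X^\eu_s-\bar{X}_s=\sqrt{\epsilon}h(\epsilon)\eta^\eu_s$, linearization gives $D_4^\epsilon(t)\approx\int_0^t\nabla_x\bar{g}(\bar{X}_s)\eta^\eu_s\,ds$, i.e.\ it is of the order of the very process whose tightness you are proving (and it is the source of the drift $\nabla_x\bar{g}(\bar{X}_s)\Phi_s$ in the limiting action functional, so it cannot vanish). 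Closing this loop requires an a-priori bound $\E{\sup_{0\leq t\leq 1}\abs{\eta^\eu_t}^2}\leq C$, which the paper obtains by a Gronwall argument (Lemma \ref{lemma_estimate_eta}) applied to the full decomposition of $\eta^\eu$; only then does one get the modulus estimate $\E{\sup_{\abs{t-r}<\rho}\abs{D_4^\epsilon(t)-D_4^\epsilon(r)}}\leq C\rho$. Without this self-referential estimate your equicontinuity argument does not close. A secondary, more easily repaired omission: your hypothesis bounds $\hat{u}^\epsilon$ (the Cameron--Martin coordinate), not $\dot{u}^\epsilon$, and for $H\neq 1/2$ these differ, $\dot{u}^\epsilon=\dot{K}_H\hat{u}^\epsilon$; the Cauchy--Schwarz step for the control-driven term needs the $L^2$-boundedness of $\dot{K}_H$ (Proposition \ref{prop_dotKHinL2}) to transfer the bound, which your write-up invokes implicitly but never justifies.
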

	The proof of Proposition \ref{propontightness} will be divided into two parts which
	are the subject of Subsections \ref{tightnessofPepsilon} and
	\ref{tightnessofetacontrol}.
	\subsubsection{Tightness of $\left\{P^{\epsilon}\colon \epsilon >0
		\right\}$ in $\mathcal{P}(\mathcal{U}\times \mathcal{V}\times
		\mathcal{Y} \times [0,1])$}
	\label{tightnessofPepsilon}
	The argument for tightness is similar to the argument
	for tightness in \cite{HuSalSpi}. As a first step, we claim that
	\begin{align*}
		\Lambda(P)=\int_{\mathcal{U}\times \mathcal{V}\times
			\mathcal{Y}\times[0,1]}\left[ \abs{u}^2+\abs{v}^2+\abs{y}^{2} \right] P(dudvdydt).
	\end{align*}
	is a tightness function from $\mathcal{P}(\mathcal{U}\times \mathcal{V}\times \mathcal{Y}\times[0,1])$ to $\R\cup \{\infty\}$. Since $\Lambda$ is bounded from below, it is sufficient to show that for every $k\in \N$, the level sets
	\begin{align*}
		L_k=\{P\in \mathcal{P}(\mathcal{U}\times \mathcal{V}\times \mathcal{Y}\times[0,1]): \Lambda(P)\leq k  \}
	\end{align*}
	are relatively compact.
	For $\epsilon>0$, let $M$ be a positive constant large
	enough so that $k/M < \epsilon$, define $\lambda(u,v,y,t)=\abs{u}^2+\abs{v}^2+\abs{y}^{2}$ and
	\begin{align*}
		A^\epsilon=\{(u,v,y,t)\in \mathcal{U}\times
		\mathcal{V}\times \mathcal{Y}\times[0,1] \colon \abs{\lambda(u,v,y,t)} >M \}.
	\end{align*} By Chebyshev's inequality,
	\begin{align*}
		\sup_{P\in L_k} P(A^\epsilon) &\leq
		\frac{1}{M}\int_{\left\{(u,v,y,t) \in \mathcal{U}\times
			\mathcal{V}\times \mathcal{Y}\times[0,1]\colon
			\abs{\lambda(u,v,y,t)}\geq
			k\right\}}\abs{\lambda(u,v,y,t)}P(dudvdydt)
		\\
		& \leq \frac{\Lambda(P)}{M}<\frac{k}{M}< \epsilon.
	\end{align*}
	Therefore, we get
	\begin{equation*}
		\sup_{P\in L_k}P\brac{\brac{\mathcal{U}\times \mathcal{V}\times \mathcal{Y}\times[0,1]}\setminus A^\epsilon}>1-\epsilon.
	\end{equation*}
	Since $\brac{\mathcal{U}\times \mathcal{V}\times
		\mathcal{Y}\times[0,1]}\setminus A^\epsilon$ is also compact, this
	implies that $L_k$ is a tight set of measures and $\Lambda$ is a tightness function on $\mathcal{P}(\mathcal{U}\times \mathcal{V}\times \mathcal{Y}\times[0,1])$.
	
	For the second step, define $G:\mathcal{P}(\mathcal{P}(\mathcal{U}\times \mathcal{V}\times \mathcal{Y}\times[0,1]))\to \R\cup \{\infty\}$ by
	\begin{align*}
		G(\nu)=\int_{\mathcal{P}(\mathcal{U}\times \mathcal{V}\times \mathcal{Y}\times[0,1])} \Lambda(x)\nu(dx).
	\end{align*}	
	Then, according to \cite[Theorem A.3.17]{DupEll}, $G$
	is a tightness function on
	$\mathcal{P}(\mathcal{P}(\mathcal{U}\times
	\mathcal{V}\times \mathcal{Y}\times[0,1]))$. Moreover,
	the same theorem states that
	$\{P^{\epsilon}:\epsilon>0\}$ is a tight family in $\mathcal{P}(\mathcal{U}\times \mathcal{V}\times \mathcal{Y}\times[0,1])$ as long as
	\begin{align*}
		\sup_{\epsilon>0}G\brac{\mathcal{L}\brac{P^{\epsilon}}}<\infty,
	\end{align*}
	which is equivalent to
	\begin{align*}
		\sup_{\epsilon>0}\E{\Lambda\brac{P^{\epsilon}}}< \infty.
	\end{align*}
	As the above holds by Lemma \ref{lemma_boundedcontrol}
	and Lemma \ref{lemma_estimatefromMorKos2}, we get that
	indeed $\left\{P^{\epsilon}\colon \epsilon >0
	\right\}$ is tight in $\mathcal{P}(\mathcal{U}\times \mathcal{V}\times
	\mathcal{Y} \times [0,1])$.
	\subsubsection{Tightness of $\left\{ \eta^{\eu}\colon \epsilon >0
		\right\}$ on $C\brac{[0,1];\R^n}$}
	\label{tightnessofetacontrol}
	Let  $\omega_f(\delta)=\sup_{\abs{s-t}<\delta} \abs{f(s)-f(t)}$ be the modulus of continuity of a function $f$ on $C\brac{[0,1];\R^n}$. According to \cite[Theorem 7.3]{Bil}, the family $\{\eta^\eu:\epsilon>0\}$ is tight on $C\brac{[0,1];\R^n}$ if and only if
	\begin{itemize}
		\item[-] For each positive $\delta$, there
		exist an $a,\delta_0 >0$ such that $P\brac{\abs{\eta^\eu_0}\geq a}\leq \delta $ for $\epsilon\leq \delta_0$.
		\item[-]  For all $a>0$, $
		\lim_{\delta\to 0} \limsup_{\epsilon \to 0} P\brac{\omega_{\eta^\eu}(\delta)\geq a}=0.$
	\end{itemize}
	We only need to check the second condition above since
	the first condition is automatically true as
	$\eta^\eu_0=0$. Recall from \eqref{equation_eta} that $\eta^\eu$ is given by
	\begin{align*}		
		\eta^\eu_t
		&=\frac{1}{\sqrt{\epsilon}h(\epsilon)}\int_0^t\left[ g\brac{X^\eu_s,Y^\eu_s}-\bar{g}(\bar{X}_s) \right] ds+
		\int_0^t f\brac{X^\eu_s,Y^\eu_s} \dot{u}^\epsilon_s ds\nonumber\\&+\frac{1}{h(\epsilon)}\int_0^t f\brac{X^\eu_s,Y^\eu_s}d W^H_s.
	\end{align*}
	A combination of the Poisson equation stated in \eqref{equation_Poisson_g} and It\^{o}'s formula yields
	\begin{align*}
		&\int_0^t\frac{1}{\sqrt{\epsilon}h(\epsilon)}\left[ g\brac{X^\eu_s,Y^\eu_s}-\bar{g}\brac{X^\eu_s} \right]ds=\int_0^t \nabla_y\phi\brac{X^\eu_s,Y^\eu_s} \sigma\brac{Y^\eu_s}\dot{v}^\epsilon_s ds+R^\epsilon_1(t),
	\end{align*}
	where
	\begin{align}
		\label{equation_R1}
		R^\epsilon_1(t)&=-\frac{\sqrt{\epsilon}}{h(\epsilon)}\brac{\phi\brac{X^\eu_t,Y^\eu_t}-\phi\brac{x_0,y_0}}+\frac{\sqrt{\epsilon}}{h(\epsilon)}\int_0^t
		\nabla_x\phi\brac{X^\eu_s,Y^\eu_s}
		g\brac{X^\eu_s,Y^\eu_s}ds\nonumber\\
		&  \quad
		+\epsilon\int_0^t\nabla_x\phi\brac{X^\eu_s,Y^\eu_s}f\brac{X^\eu_s,Y^\eu_s}\dot{u}^\epsilon_sds\nonumber
		\\
		& \quad
		+\frac{1}{h(\epsilon)}\int_0^t\nabla_y\phi\brac{X^\eu_s,Y^\eu_s}\sigma\brac{Y^\eu_s}dB_s\nonumber\\&
		\quad +\frac{{\epsilon}}{h(\epsilon)}\int_0^t\nabla_x\phi\brac{X^\eu_s,Y^\eu_s}f\brac{X^\eu_s,Y^\eu_s}dW^H_s.
	\end{align}
	Therefore, we can rewrite $\eta^\eu$ as
	\begin{align}
		\label{equation_eta_new}
		\eta^\eu_t &=\int_0^t \nabla_y\phi\brac{X^\eu_s,Y^\eu_s} \sigma\brac{Y^\eu_s}\dot{v}^\epsilon_sds+\int_0^tf\brac{X^\eu_s,Y^\eu_s}\dot{u}^\epsilon_s ds\nonumber\\& +\frac{1}{h(\epsilon)}\int_0^t f\brac{X^\eu_s,Y^\eu_s}d W^H_s
		+\frac{1}{\sqrt{\epsilon}h(\epsilon)}
		\int_0^t\left[\bar{g}\brac{X^\eu_s}-\bar{g}\brac{\bar{X}_s}
		\right] ds+R^\epsilon_1(t)\nonumber \\
		&= D_1^{\epsilon}(t) + D_2^{\epsilon}(t)+D_3^{\epsilon}(t)+D_4^{\epsilon}(t)+R^\epsilon_1(t).
	\end{align}
	In combination with Markov's inequality, Lemma
	\ref{lemma_estimates_notyoung} implies tightness of
	$\left\{ D_1^{\epsilon} \colon \epsilon >0 \right\}$
	and $\left\{ D_2^{\epsilon} \colon \epsilon >0
	\right\}$. Lemma \ref{lemma_estimates_young} implies
	tightness of $\left\{ D_3^{\epsilon} \colon \epsilon
	>0 \right\}$ and Lemma \ref{lemma_estimate_eta}
	implies tightness of $\left\{ D_4^{\epsilon} \colon
	\epsilon >0 \right\}$. It remains to prove the
	tightness of $\left\{ R^\epsilon_1 \colon \epsilon >0
	\right\}$. The estimates at
	\eqref{estimate_Poissonsolution_h1},
	\eqref{estimate_Poissonsolution_h2} combined with
	Lemma \ref{lemma_estimatefromMorKos2} and the fact
	that $\frac{\sqrt{\epsilon}}{h(\epsilon)}\to 0$ imply
	that the first term in equation \eqref{equation_R1}
	converges to zero in probability, which implies
	tightness on $C\brac{[0,1];\R^n}$. Furthermore, tightness
	of the remaining integral terms in \eqref{equation_R1}
	is implied by Markov's inequality, Lemma
	\ref{lemma_estimates_notyoung} and Lemma
	\ref{lemma_estimates_young}. This shows that $\left\{ R^\epsilon_1 \colon \epsilon >0
	\right\}$ is tight and hence that $\left\{ \eta^{\eu}\colon \epsilon >0
	\right\}$ is indeed tight on $C\brac{[0,1];\R^n}$.
	\subsection{Proof of existence of a viable pair}
	\label{proofviablepair}
	In the previous subsection, we have proved that the family of
	processes $\left\{ (\eta^{\eu},P^{\epsilon})\colon \epsilon >0
	\right\}$ is tight (see Proposition \ref{propontightness}). It follows that for any subsequence of $\epsilon$
	converging to 0, there exists a subsubsequence of $\left\{ (\eta^{\eu},P^{\epsilon})\colon \epsilon >0
	\right\}$ which is convergent in distribution to some limit
	$(\bar{\eta},\bar{P})$. The goal of this subsection is to show that
	$(\bar{\eta},\bar{P})$ is a viable pair with respect to
	$(\theta,\mathcal{L})$ according to Definition \ref{defviablepairs} (where
	$\mathcal{L}$ is the generator defined in \eqref{def_generator_normalized_Y}).
	
	By the Skorokhod Representation Theorem, we may assume that
	$\eta^\eu$ converges to $\bar{\eta}$ almost surely along any
	subsequence. This will allow us to obtain an equation satisfied by
	$\bar{\eta}$ since we can study the almost sure limits of each
	individual summand in the representation of $\eta^{\eu}$ we had obtained
	in \eqref{equation_eta_new}. Recall that we had
	\begin{align}
		\label{recallrepetacontrol}
		\eta^\eu_t &=\int_0^t
		\nabla_y\phi\brac{X^\eu_s,Y^\eu_s}
		\sigma\brac{Y^\eu_s}\dot{v}^\epsilon_sds+\int_0^tf\brac{X^\eu_s,Y^\eu_s}\dot{u}^\epsilon_s
		ds \nonumber \\& +\frac{1}{h(\epsilon)}\int_0^t f\brac{X^\eu_s,Y^\eu_s}d W^H_s
		+\frac{1}{\sqrt{\epsilon}h(\epsilon)}
		\int_0^t\left[\bar{g}\brac{X^\eu_s}-\bar{g}\brac{\bar{X}_s}
		\right] ds+R^\epsilon_1(t).
	\end{align}
	Note that we can write the term before last as
	\begin{align}
		\label{equation_eta_g_part}
		\int_0^t\frac{1}{\sqrt{\epsilon}h(\epsilon)}\brac{\bar{g}\brac{X^\eu_s}-\bar{g}\brac{\bar{X}_s}}ds=\int_0^t \nabla_x \bar{g} (\bar{X}_s)\eta^\eu_sds+R^\epsilon_2(t),
	\end{align}
	where the remainder term $R^\epsilon_2(t)$ is given by
	\begin{align*}
		R^\epsilon_2(t)=\frac{1}{2}\int_0^t\nabla^2_x\bar{g}(\zeta_s)\eta^\eu_s\brac{X^\eu_s-\bar{X}_s}^2ds
	\end{align*}
	with $\zeta_s$ being a point in between $X^\eu_s$ and
	$\bar{X}_s$. Under either Condition \ref{condH2-A} or
	Condition \ref{condH2-B}, $\nabla^2_x\bar{g}(x)$ is bounded,
	so that we can write
	\begin{align}
		\label{bornepourr2}
		R^\epsilon_2(t)\leq \int_0^1 \abs{\eta^\eu_s} \abs{X^\eu_s-\bar{X}_s}ds.
	\end{align}
	Lemma \ref{lemma_limit_R2} assesses the convergence to zero of
	$R^\epsilon_2(t)$, and \cite[Lemma 3.2]{DupuisSpiliopoulos} addresses the
	convergence of all the other terms at play in
	\eqref{recallrepetacontrol} and \eqref{equation_eta_g_part} except for
	one, namely
	\begin{align*}
		A^{\eu}(t)=\int_0^tf\brac{X^\eu_s,Y^\eu_s} \dot{u}^\epsilon_s ds.
	\end{align*}
	In order to deal with this last term, let us introduce the term
	\begin{align*}
		B^{\eu}(t)=\int_0^tf\brac{\bar{X}_s,Y^\eu_s} \dot{u}^\epsilon_s ds.
	\end{align*}
	and prove that it has the same limit as $A^{\eu}(t)$. First, note
	that if we assume that Condition \ref{condH2-B} holds, the
	assumption that $f$ does not depend on $x$ implies that
	$A^{\eu}(t)=B^{\eu}(t)$ even before taking limits. If instead we
	assume that Condition \ref{condH2-A} holds, we can
	use the Lipschitz continuity of $f(x,y)$ to write
	\begin{align*}
		\abs{A^{\eu}(t)-B^{\eu}(t)}
		\leq & \int_0^t \abs{X^\eu_s-\bar{X}_s} \abs{\dot{u}^\epsilon_s} ds
		\leq \sup_{0\leq s\leq 1}\abs{X^\eu_s-\bar{X}_s}\int_0^t\abs{\dot{u}^\epsilon_s} ds.
	\end{align*}
	Proposition \ref{prop_dotKHinL2} and the
	fact that $X^\eu$ converges to $\bar{X}$ in probability then
	imply that
	\begin{align}
		\label{convergenceAversB}
		\abs{A^{\eu}(t)-B^{\eu}(t)}\to 0
	\end{align}
	almost surely as $\epsilon$ goes to 0.
	Therefore identifying the limit of $A^{\eu}(t)$ is the same as
	identifying the weak limit of $B^{\eu}(t)$. Using the definition
	of our occupation measures given by \eqref{def_occumeasure}
	and Lemma \ref{lemma_derivative_control}, we can rewrite $B^{\eu}(t)$ as
	\begin{align*}
		B^{\eu}(t)&= \int_0^tf\brac{\bar{X}_s,Y^\eu_s} \brac{c_Hs^{H-1/2}\int_0^s (s-r)^{H-3/2}r^{1/2-H}\hat{u}_rdr} ds
		\\&=c_H\int_{\mathcal{U}^2\times
			\mathcal{V}^2\times \mathcal{Y}^2\times
			[0,t]^2} f(\bar{X}_s,y)(s-r)^{H-3/2}s^{H-1/2}r^{1/2-H} u^{(2)} \mathds{1}_{[0,s]}(r)\\
		& \qquad\qquad\qquad\qquad\qquad\qquad P\otimes
		P(du^{(1)}du^{(2)}dv^{(1)}dv^{(2)}dy^{(1)}dy^{(2)}dsdr).
	\end{align*}
	In order to somewhat compactify notation, let us introduce the function
	\begin{align*}
		&k\brac{y^{(1)},y^{(2)},u^{(1)},u^{(2)},v^{(1)},v^{(2)},s,r}\\&=c_H f(\bar{X}_s,y)(s-r)^{H-3/2}s^{H-1/2}r^{1/2-H}u^{(2)}\mathds{1}_{[0,s]}(r)
	\end{align*}
	as well as, for $0<\zeta<s$, the sequence
	\begin{align*}
		&k^\zeta\brac{y^{(1)},y^{(2)},u^{(1)},u^{(2)},v^{(1)},v^{(2)},s,r}&\\&=c_Hk\brac{y^{(1)},y^{(2)},u^{(1)},u^{(2)},v^{(1)},v^{(2)},s,r} \mathds{1}_{[\zeta,s-\zeta]}(r).
	\end{align*}
	With these definitions at hand, we can state the following convergence
	lemma.
	\begin{lemma}
		\label{lemma_limit_C}
		Assume Conditions \ref{condH1} and either \ref{condH2-A}
		or \ref{condH2-B} hold. Then, one has that
		\begin{enumerate}
			\item[(i)]  $\displaystyle \abs{\int_{\mathcal{U}^2\times
					\mathcal{V}^2\times \mathcal{Y}^2\times
					[0,t]^2} k^\zeta dP^\epsilon\otimes dP^\epsilon- \int_{\mathcal{U}^2\times
					\mathcal{V}^2\times \mathcal{Y}^2\times
					[0,t]^2} k dP^\epsilon\otimes
				dP^\epsilon}\to 0$ a.s. as $\zeta
			\to 0$;
			\item[(ii)]  $\displaystyle     \abs{\int_{\mathcal{U}^2\times
					\mathcal{V}^2\times \mathcal{Y}^2\times
					[0,t]}^2 k^\zeta dP^\epsilon\otimes dP^\epsilon-\int_{\mathcal{U}^2\times
					\mathcal{V}^2\times \mathcal{Y}^2\times
					[0,t]}^2 k^\zeta d\bar{P}\otimes
				d\bar{P}}\to 0 $ a.s. as $\epsilon
			\to 0$;
			\item[(iii)]  $\displaystyle   \abs{\int_{\mathcal{U}^2\times
					\mathcal{V}^2\times \mathcal{Y}^2\times
					[0,t]^2} k^\zeta d\bar{P}\otimes d\bar{P}-\int_{\mathcal{U}^2\times
					\mathcal{V}^2\times \mathcal{Y}^2\times
					[0,t]^2} k d\bar{P}\otimes d\bar{P}}\to 0$ a.s. as $\zeta
			\to 0$;
		\end{enumerate}
	\end{lemma}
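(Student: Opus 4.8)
The plan is to prove Lemma \ref{lemma_limit_C} by the standard three-step approximation scheme for the singular kernel $k$: parts (i) and (iii) control the error introduced by the cutoff $\mathds{1}_{[\zeta,s-\zeta]}(r)$, while part (ii) is the genuine weak-convergence step for the regularized kernel $k^\zeta$. The kernel $k$ carries two integrable singularities in $r$, one at $r=0$ coming from $r^{1/2-H}$ and one at the diagonal $r=s$ coming from $(s-r)^{H-3/2}$; both are integrable precisely because $H\in\brac{1/2,1}$, but the diagonal one is not square-integrable, which is what forces the argument below.

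For parts (i) and (iii) I would first reduce the integral against $P^\epsilon\otimes P^\epsilon$ (resp.\ $\bar P\otimes\bar P$) of $\abs{k-k^\zeta}$ to a purely deterministic time integral. Bounding $\abs{f\brac{\bar X_s,y^{(1)}}}\leq C\brac{1+\abs{y^{(1)}}^{D_f}}$ via the a.s.\ finiteness of $\sup_{s}\abs{Y^\eu_s}$, rewriting the occupation-measure integrals as path values, and using that the time-marginals of both $P^\epsilon$ and $\bar P$ are Lebesgue on $[0,1]$ together with the finite second moments of $\abs{y}$ and $\abs{u}$ (guaranteed for $\bar P$ by Definition \ref{defviablepairs} and for $P^\epsilon$ by Lemma \ref{lemma_boundedcontrol} and Lemma \ref{lemma_estimatefromMorKos2} with the a.s.\ bound $\int_0^1\abs{\hat u^\epsilon_s}^2ds<N$), reduces matters to
\[
\int_0^t s^{H-1/2}\int_{[0,s]\cap\brac{[0,\zeta]\cup[s-\zeta,s]}}(s-r)^{H-3/2}r^{1/2-H}\abs{\hat u^\epsilon_r}\,dr\,ds.
\]
Here lies the one delicate point. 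At the diagonal the weight $(s-r)^{H-3/2}$ is \emph{not} square-integrable (since $2H-3<-1$), so a direct Cauchy--Schwarz in $r$ against $\hat u^\epsilon\in L^2$ fails; this is the main obstacle. I would resolve it by Fubini: integrating in $s$ first gives $\int_r^{(r+\zeta)\wedge t}(s-r)^{H-3/2}ds\leq(H-1/2)^{-1}\zeta^{H-1/2}$ because $H-3/2>-1$, and the residual $\int_0^t r^{1/2-H}\abs{\hat u^\epsilon_r}dr$ is finite after Cauchy--Schwarz since $r^{1/2-H}\in L^2[0,1]$ (as $1-2H>-1$). The near-zero region $[0,\zeta]$ is handled the same way, the $s$-integral of $(s-r)^{H-3/2}$ over $[r,t]$ now being bounded, yielding a $\zeta^{1-H}$ factor. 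Both contributions vanish as $\zeta\to0$, proving (i); part (iii) is identical with $\bar P$ in place of $P^\epsilon$, using its second moments directly. Note that both constraints $H>1/2$ and $H<1$ are used, in opposite roles.

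For part (ii) the cutoff is frozen, so on $\{\zeta\leq r\leq s-\zeta\}$ the deterministic weight is bounded (by a constant depending on $\zeta$), and $k^\zeta$ is continuous in $\brac{u,v,y}$ with at most the growth $\abs{y^{(1)}}^{D_f}\abs{u^{(2)}}$; its only discontinuities sit on the sets $\{r=\zeta\}$, $\{r=s-\zeta\}$ and $\{r=s\}$, which are $\bar P\otimes\bar P$-null because the time-marginals of $\bar P$ are absolutely continuous. Since $\{P^\epsilon\}$ is tight (Proposition \ref{propontightness}) and, after the Skorokhod representation invoked in Subsection \ref{proofviablepair}, converges a.s.\ to $\bar P$, the product measures converge a.s.\ as well, and the mapping theorem gives $\int k^\zeta_R\,dP^\epsilon\otimes dP^\epsilon\to\int k^\zeta_R\,d\bar P\otimes d\bar P$ for the truncation $k^\zeta_R$ obtained by restricting to $\abs{y^{(1)}}\leq R$ and $\abs{u^{(2)}}\leq R$. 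Finally I would remove the $R$-truncation uniformly in $\epsilon$: the error $\int\abs{k^\zeta-k^\zeta_R}\,dP^\epsilon\otimes dP^\epsilon$ is controlled by $\int\abs{u}\mathds{1}_{\abs{u}>R}dP^\epsilon\leq N/R$ and by $\int\brac{1+\abs{y}^{D_f}}\mathds{1}_{\abs{y}>R}dP^\epsilon\lesssim R^{-(2-D_f)}$, both tending to zero as $R\to\infty$ uniformly in $\epsilon$ thanks to the uniform second-moment bounds and $D_f<1$ (the same estimate applies verbatim to $\bar P$). Letting $\epsilon\to0$ and then $R\to\infty$ yields (ii).
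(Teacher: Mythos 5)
Your proof is correct, but for parts (i) and (iii) it takes a genuinely different route from the paper's. The paper does not estimate the cutoff error at all: it observes that $\abs{k^\zeta}\leq\abs{k}$ and $k^\zeta\to k$ pointwise, so it suffices to show $k\in L^1(P^\epsilon\otimes P^\epsilon)$ and invoke dominated convergence. Integrability is obtained by recognizing the inner $r$-integral as $\left[\dot{K}_H\abs{\hat{u}^\epsilon}\right](s)$ (Lemma \ref{lemma_derivative_control}) and then applying H\"older's inequality together with the $L^2$-boundedness of $\dot{K}_H$ (Proposition \ref{prop_dotKHinL2}, i.e.\ the Samko-type Lemma \ref{lemma_Samko_bounded}); this sidesteps exactly the obstacle you identify --- that $(s-r)^{H-3/2}$ fails to be square-integrable across the diagonal --- by using the mapping property of the fractional integral operator rather than a pointwise Cauchy--Schwarz in $r$. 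Your alternative, Fubini with the $s$-integration performed first over the near-diagonal strip and over $[0,\zeta]$ separately, handles the same singularity by hand and yields explicit rates $\zeta^{H-1/2}$ and $\zeta^{1-H}$ that are uniform in $\epsilon$. What each buys: the paper's argument is shorter because Proposition \ref{prop_dotKHinL2} is already available and is reused elsewhere (e.g.\ in Lemma \ref{lemma_precompactness_level_set} and Proposition \ref{proposition_optimalcontrol}), whereas yours is elementary and quantitative, needing neither the operator bound nor dominated convergence, and its uniformity in $\epsilon$ is stronger than what the statement requires. For part (ii) you and the paper take the same route: the paper compresses it into one sentence (weak convergence of $P^\epsilon$ to $\bar{P}$ plus uniform integrability of the product measures), and your truncation argument via the mapping theorem is the standard unpacking of that sentence. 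One small ordering caveat: you justify the $\bar{P}\otimes\bar{P}$-nullity of the discontinuity sets of $k^\zeta$ by the Lebesgue time-marginal of $\bar{P}$, a fact the paper only records later (Proposition \ref{proposition_occu_decomp}); this is harmless, since it follows directly from $P^\epsilon(\mathcal{U}\times\mathcal{V}\times\mathcal{Y}\times[0,t])=t$ and weak convergence, with no appeal to the lemma being proved.
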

	
	\begin{proof}
		We first prove part $(i)$. Since $k^\zeta\to k$
		pointwise as $\zeta\to 0$, all we need to prove is that the function $k$ is integrable with respect to $P^\epsilon\otimes P^\epsilon$ on $\mathcal{U}^2\times
		\mathcal{V}^2\times \mathcal{Y}^2\times
		[0,1]^2$ as then, the Dominated Convergence
		Theorem applies and yields the desired limit. We have for some constant $C<\infty$
		\begin{align*}
			\\&\int_{\mathcal{U}^2\times
				\mathcal{V}^2\times \mathcal{Y}^2\times
				[0,1]^2}k dP^\epsilon\otimes dP^\epsilon \\&\leq c_H\int_0^1 \abs{f(\bar{X}_s,Y^\eu_s)} s^{H-1/2}\int_0^s(s-r)^{H-3/2}r^{1/2-H}\abs{\hat{u}^\epsilon_r}drds\\
			&\leq \int_0^1 \abs{f(\bar{X}_s,Y^\eu_s)} \left[\dot{K}_H \abs{\hat{u}^\epsilon}\right]_sds\\
			&\leq C \sqrt{\int_0^1 \abs{f(\bar{X}_s,Y^\eu_s)}^2 ds},
		\end{align*}
		where the second inequality follows by Lemma
		\ref{lemma_derivative_control} and the last inequality
		is a consequence of H\"{o}lder's inequality and
		Proposition \ref{prop_dotKHinL2}. Now, the boundedness
		of $f$ under Condition \ref{condH2-A} or the sublinear
		growth of $f$ under Condition \ref{condH2-B} together with Lemma \ref{lemma_estimatefromMorKos2} yield
		\begin{align*}
			\int_{\mathcal{U}^2\times
				\mathcal{V}^2\times \mathcal{Y}^2\times
				[0,1]^2}k dP^\epsilon\otimes dP^\epsilon
			\leq C \sqrt{\int_0^1 \abs{f(\bar{X}_s,Y^\eu_s)}^2 ds}
			< \infty.
		\end{align*}
		Part $(iii)$ is proven in the exact same way. Part
		$(ii)$ is a consequence of the weak convergence of
		$P^\epsilon$ to $\bar{P}$ and the uniform
		integrability of $\left\{ P^\epsilon\otimes
		P^\epsilon\colon \epsilon >0 \right\}$ (implied by the second
		point of Definition \ref{defviablepairs}). 
	\end{proof}
	The above results allow us to obtain an explicit representation of the
	limit points $(\bar{\eta},\bar{P})$, which is the object of the following proposition.
	\begin{proposition}
		\label{proposition_limit_eta}
		Let $(\bar{\eta},\bar{P})$ be a limit point of $\{(\eta^\eu,P^{\epsilon})\colon
		\epsilon>0\}$. Under Conditions \ref{condH1}
		and either \ref{condH2-A} or
		\ref{condH2-B}, it holds that
		\begin{align*}
			\bar{\eta}_t &=\int_{\mathcal{U}\times
				\mathcal{V}\times
				\mathcal{Y}\times [0,t]} \left[   \nabla_y\phi\brac{\bar{X}_s,y} \sigma\brac{y}v+\nabla_x \bar{g} (\bar{X}_s)\bar{\eta}_s \right] d\bar{P}\nonumber
			\\ &+c_H\int_{\mathcal{U}^2\times
				\mathcal{V}^2\times \mathcal{Y}^2\times
				[0,t]^2}f(\bar{X}_s,y^{(1)})(s-r)^{H-3/2}s^{H-1/2}r^{1/2-H}u^{(2)}\mathds{1}_{[0,s]}(r)d\bar{P}\otimes d\bar{P}.
		\end{align*}
	\end{proposition}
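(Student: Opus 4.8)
The plan is to identify $\bar\eta$ by passing to the limit term by term in the representation \eqref{recallrepetacontrol}. By the Skorokhod representation theorem we may work on a probability space on which $\eta^{\eu} \to \bar\eta$ almost surely and $P^\epsilon \to \bar P$ weakly along the chosen subsubsequence. Writing $\eta^{\eu}_t = D_1^{\epsilon}(t) + D_2^{\epsilon}(t) + D_3^{\epsilon}(t) + D_4^{\epsilon}(t) + R_1^\epsilon(t)$ as in \eqref{equation_eta_new} and using the Taylor expansion \eqref{equation_eta_g_part} for the averaged-drift term $D_4^{\epsilon}$, every summand except $D_2^{\epsilon}(t) = A^{\eu}(t)$ converges by arguments that parallel the $H=1/2$ case: the remainder $R_2^\epsilon$ vanishes by Lemma \ref{lemma_limit_R2} (via boundedness of $\nabla_x^2\bar g$ and the bound \eqref{bornepourr2}), while \cite[Lemma 3.2]{DupuisSpiliopoulos} gives the vanishing of $R_1^\epsilon$ and of the rescaled Young-integral term $D_3^{\epsilon}$ (the latter because of the prefactor $1/h(\epsilon) \to 0$), together with the convergence of $D_1^{\epsilon}$ to $\int_{\mathcal{U}\times\mathcal{V}\times\mathcal{Y}\times[0,t]}\nabla_y\phi\brac{\bar X_s, y}\sigma\brac{y}v\,d\bar P$ and of the first-order part of $D_4^{\epsilon}$ to $\int_0^t \nabla_x\bar g\brac{\bar X_s}\bar\eta_s\,ds$. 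For this last convergence one also invokes the almost sure convergence $\eta^{\eu} \to \bar\eta$ and the boundedness of $\nabla_x\bar g$ through bounded convergence.

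The only genuinely new term is $A^{\eu}(t) = \int_0^t f\brac{X^{\eu}_s, Y^{\eu}_s}\dot u^\epsilon_s\,ds$, and here I would follow the core idea of Remark \ref{R:CoreIdea}. First, $A^{\eu}$ may be replaced by its frozen-slow-variable counterpart $B^{\eu}(t) = \int_0^t f\brac{\bar X_s, Y^{\eu}_s}\dot u^\epsilon_s\,ds$: under Condition \ref{condH2-B} one has $A^{\eu} = B^{\eu}$ identically since $f = f(y)$, while under Condition \ref{condH2-A} the difference vanishes by \eqref{convergenceAversB}. Using $\dot u^\epsilon = \frac{d}{ds}\left[K_H\hat u^\epsilon\right]$ (Lemma \ref{lemma_derivative_control}) and the definition \eqref{def_occumeasure} of the occupation measure, I then rewrite $B^{\eu}(t)$ as the double integral $\int_{\mathcal{U}^2\times\mathcal{V}^2\times\mathcal{Y}^2\times[0,t]^2} k\, dP^\epsilon\otimes dP^\epsilon$ against the singular kernel $k$.

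The main obstacle is to pass to the limit in this double integral, which is delicate because the kernel $(s-r)^{H-3/2}$ is singular along $r=s$ and because the control $u^\epsilon$ is not independent of the driving noise $W^H$. The remedy is the truncated kernel $k^\zeta = k\,\mathds{1}_{[\zeta, s-\zeta]}(r)$ together with the three-part Lemma \ref{lemma_limit_C}, which justify the interchange of the limits $\zeta \to 0$ and $\epsilon \to 0$ by a triangle inequality: part $(i)$ approximates $\int k\,dP^\epsilon\otimes dP^\epsilon$ by $\int k^\zeta\,dP^\epsilon\otimes dP^\epsilon$ uniformly in $\epsilon$, part $(ii)$ passes $\epsilon \to 0$ for fixed $\zeta$ using the weak convergence of $P^\epsilon$ and the uniform integrability of $\left\{P^\epsilon\otimes P^\epsilon\colon\epsilon>0\right\}$, and part $(iii)$ removes the truncation in the limit. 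This yields $A^{\eu}(t) \to \int k\, d\bar P\otimes d\bar P$, which is precisely the second integral in the claimed identity. Assembling the limits of all summands, and using the disintegration $\bar P(du\,dv\,dy\,dt) = \nu_{y,t}(du\,dv)\mu(dy)\,dt$ to recast $\int_0^t \nabla_x\bar g\brac{\bar X_s}\bar\eta_s\,ds$ as the integral against $\bar P$ appearing in the first integral, produces the integral equation for $\bar\eta$ stated in the proposition.
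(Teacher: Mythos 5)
Your handling of the genuinely new term is correct and matches the paper: reducing $A^{\eu}$ to $B^{\eu}$ via \eqref{convergenceAversB}, rewriting it against $P^\epsilon\otimes P^\epsilon$ with Lemma \ref{lemma_derivative_control}, and passing to the limit through the truncated kernel $k^\zeta$ and the three parts of Lemma \ref{lemma_limit_C} is exactly the paper's argument, as is the term-by-term Skorokhod strategy and the use of Lemma \ref{lemma_limit_R2} for $R_2^\epsilon$.

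However, there is a genuine gap in your treatment of the fractional Young integral terms. You claim that \cite[Lemma 3.2]{DupuisSpiliopoulos} gives the vanishing of $D_3^\epsilon$ ``because of the prefactor $1/h(\epsilon)\to 0$,'' and that the same reference disposes of $R_1^\epsilon$. This fails on two counts. First, \cite{DupuisSpiliopoulos} treats the $H=1/2$ case, where $\int_0^t f\,dW_s$ is an It\^{o} integral and martingale tools (Burkh\"{o}lder--Davis--Gundy, Doob) give bounds uniform in $\epsilon$; for $H>1/2$ the term $\int_0^t f\brac{X^\eu_s,Y^\eu_s}dW^H_s$ is a pathwise Young integral and none of that machinery applies --- this is precisely one of the main new technical difficulties of the paper. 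Second, and more importantly, the prefactor reasoning is quantitatively wrong: the Young integral is not $O(1)$ in $\epsilon$. Its Young--Lo\'{e}ve bound involves the H\"{o}lder norm of the integrand, which blows up through the fast motion since $\abs{Y^\eu}_{\alpha}\leq C/\sqrt{\epsilon}$ (Lemma \ref{lemma_Holder_Y}); the paper's Lemma \ref{lemma_estimates_young} gives $\E{\sup_{t\in[0,1]}\abs{\int_0^t f\brac{X^\eu_s,Y^\eu_s}dW^H_s}}\leq C\epsilon^{-\beta/2}$ under Condition \ref{condH2-A} (resp.\ $C\epsilon^{-M_f/2}$ under Condition \ref{condH2-B}). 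Hence $D_3^\epsilon$ vanishes only because the moderate-deviations scaling hypotheses $h(\epsilon)^{-1}\epsilon^{-\beta/2}\to 0$, resp.\ $h(\epsilon)^{-1}\epsilon^{-M_f/2}\to 0$, are built into those conditions; the factor $1/h(\epsilon)$ alone is not enough. The same issue arises for the fBm integral inside $R_1^\epsilon$, which is bounded by $Ch(\epsilon)^{-1}\epsilon^{1/2}$, resp.\ $Ch(\epsilon)^{-1}\epsilon^{1-M_k/2}$. The fix is to replace your appeal to \cite[Lemma 3.2]{DupuisSpiliopoulos} for these two terms by the paper's Lemma \ref{lemma_estimates_young} (parts $(i)$ and $(ii)$), reserving that reference only for the remaining non-fBm terms, as the paper does.
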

	\begin{proof}
		As pointed out earlier, we can consider the limiting
		behavior of each individual summands in the
		representation \eqref{recallrepetacontrol} of $\eta^\eu$. First, under Conditions \ref{condH1} and either \ref{condH2-A} or
		\ref{condH2-B}, Lemma \ref{lemma_limit_C} and
		\eqref{convergenceAversB} guarantee that
		\begin{align*}
			&\lim_{\epsilon\to 0}
			\int_0^tf\brac{X^\eu_s,Y^\eu_s} \dot{u}^\epsilon_s
			ds=c_H\int_{\mathcal{U}^2\times
				\mathcal{V}^2\times \mathcal{Y}^2\times
				[0,t]^2}
			f(\bar{X}_s,y)(s-r)^{H-3/2}s^{H-1/2}r^{1/2-H}u^{(2)}\\
			& \qquad\qquad\qquad\qquad\qquad\qquad\qquad\qquad\qquad\qquad\qquad\qquad\qquad\qquad\qquad \qquad\mathds{1}_{[0,s]}(r) d\bar{P}\otimes d\bar{P}.
		\end{align*}
		Next, we consider the Young integral terms. Under Conditions \ref{condH1} and \ref{condH2-A}, part $(i)$ of Lemma \ref{lemma_estimates_young} implies that, as $\epsilon\to 0$,
		\begin{align*}
			\E{\sup_{0\leq t\leq 1}\abs{\frac{1}{h(\epsilon)}\int_0^t f\brac{X^\eu_s,Y^\eu_s}dW^H_s }}
			\leq Ch(\epsilon)^{-1}{\epsilon^{-\frac{\beta}{2}}}\to 0,
		\end{align*}
		and
		\begin{align*}
			\E{ \abs{\sup_{t\in [0,1]} \frac{\epsilon}{h(\epsilon)}\int_0^t\nabla_x\phi\brac{X^\eu_s,Y^\eu_s}f\brac{X^\eu_s,Y^\eu_s}d{W}^H_s
			}}&\leq Ch(\epsilon)^{-1}\epsilon^{\frac{1}{2}}\to 0.
		\end{align*}
		Likewise, under Conditions \ref{condH1} and \ref{condH2-B}, part $(ii)$ of Lemma \ref{lemma_estimates_young} implies that as $\epsilon\to 0$,
		\begin{align*}
			\E{\sup_{t\in [0,1]}\abs{\frac{1}{h(\epsilon)}\int_0^t f\brac{Y^\eu_s}dW^H_s}}
			\leq Ch(\epsilon)^{-1}\epsilon^{-\frac{M_f}{2}}\to 0,
		\end{align*}
		and
		\begin{align*}
			\E{\sup_{t\in[0,1]}\abs{\frac{\epsilon}{h(\epsilon)}\int_0^t \nabla_x\phi(X^\eu_s,Y^\eu_s)f(Y^\eu_s)dW^H_s}}\leq Ch(\epsilon)^{-1}\epsilon^{1-\frac{M_k}{2}}\to 0.
		\end{align*}
		Consequently, under Conditions \ref{condH1} and either \ref{condH2-A} or
		\ref{condH2-B}, we get
		\begin{align*}
			\lim_{\epsilon \to 0}\frac{1}{h(\epsilon)}\int_0^t f\brac{Y^\eu_s}dW^H_s=0
		\end{align*}
		and
		\begin{align*}
			\lim_{\epsilon \to 0}\frac{\epsilon}{h(\epsilon)}\int_0^t \nabla_x\phi(X^\eu_s,Y^\eu_s)f(Y^\eu_s)dW^H_s=0.
		\end{align*}
		For the limits of the remaining terms in the representation
		\eqref{recallrepetacontrol} of $\eta^\eu$, we refer to
		\cite[Lemma 3.2]{DupuisSpiliopoulos} in which these terms have
		already been addressed.
	\end{proof}
	The following proposition asserts that the invariant measure of
	$Y^\epsilon$ and the Lebesgue measure are among the marginals of $\bar{P}$.
	\begin{proposition}
		\label{proposition_occu_decomp}
		Recall that $\mu$ denotes the unique invariant measure
		associated with the generator $\mathcal{L}$ defined in \eqref{def_generator_normalized_Y}. Under Condition \ref{condH1}, we have the decomposition
		\begin{align*}
			\bar{P}(dudvdydt)=\nu_{y,t}(dudv)\mu(dy)dt,
		\end{align*}
		where $\nu_{y,t}$ is a kernel on $\mathcal{U}\times
		\mathcal{V}$ dependent on $y\in \mathcal{Y}$ and $t\in
		[0,1]$.
	\end{proposition}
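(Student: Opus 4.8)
The plan is to establish the product structure of $\bar P$ in three stages: first that the time marginal is Lebesgue measure, then that after disintegrating in time the $\mathcal Y$-marginal of each time-slice equals the invariant measure $\mu$, and finally that a further disintegration produces the kernel $\nu_{y,t}$. The first stage is immediate and in fact exact at the level of $P^\epsilon$: taking $A_1=\mathcal U$, $A_2=\mathcal V$, $B=\mathcal Y$ in \eqref{def_occumeasure} gives $P^\epsilon(\mathcal U\times\mathcal V\times\mathcal Y\times\Gamma)=\abs{\Gamma}$ for every Borel $\Gamma\subseteq[0,1]$, so the time marginal of the limit $\bar P$ is also Lebesgue measure. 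By the disintegration theorem I may then write $\bar P(dudvdydt)=\rho_t(dudvdy)\,dt$ for a measurable family of probability measures $\rho_t$ on $\mathcal U\times\mathcal V\times\mathcal Y$, and it remains only to identify the $\mathcal Y$-marginal $\mu_t$ of $\rho_t$ with $\mu$.

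The heart of the argument is an ergodic averaging identity coming from the dynamics of the fast variable. I would fix a test function $g\in\mathcal C^2_c(\mathcal Y)$ and apply It\^o's formula to $g(Y^\eu_t)$ using the second line of \eqref{equation_controlled_XY}; collecting the two $1/\epsilon$ contributions into the generator \eqref{def_generator_normalized_Y} and multiplying through by $\epsilon$ yields
\begin{align*}
\int_0^t\mathcal L g(Y^\eu_s)\,ds
&=\epsilon\brac{g(Y^\eu_t)-g(y_0)}-\sqrt{\epsilon}h(\epsilon)\int_0^t\nabla_y g(Y^\eu_s)\sigma(Y^\eu_s)\dot v^\epsilon_s\,ds\\
&\quad-\sqrt{\epsilon}\int_0^t\nabla_y g(Y^\eu_s)\sigma(Y^\eu_s)\,dB_s.
\end{align*}
I would then bound the three terms on the right separately. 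The boundary term is $O(\epsilon)$ since $g$ is bounded. For the control term, Cauchy--Schwarz together with the a.s.\ bound $\int_0^1\abs{\dot v^\epsilon_s}^2\,ds<N$ of Proposition \ref{propontightness} (recall $\dot v^\epsilon=\hat v^\epsilon$) and the boundedness of $\nabla_y g\,\sigma$ on the support of $g$ keep the integral bounded, so the prefactor $\sqrt{\epsilon}h(\epsilon)\to0$ forces this term to $0$. The stochastic integral is a martingale whose quadratic variation is $O(\epsilon)$, so by Doob's inequality it vanishes in $L^2$ uniformly in $t$. Hence $\int_0^t\mathcal L g(Y^\eu_s)\,ds\to0$ in probability, uniformly in $t$.

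Since $\mathcal L g(y)=\nabla_y g(y)^\top c(y)+\tfrac12\sigma\sigma^\top(y):\nabla^2_y g(y)$ is bounded and continuous for compactly supported $g$ (using the regularity of $c,\sigma$ from Condition \ref{condH1}), rewriting $\int_0^t\mathcal L g(Y^\eu_s)\,ds=\int_{\mathcal U\times\mathcal V\times\mathcal Y\times[0,t]}\mathcal L g(y)\,P^\epsilon(dudvdyds)$ and passing to the limit along the convergent subsequence (the weak convergence $P^\epsilon\Rightarrow\bar P$) gives $\int_{\mathcal U\times\mathcal V\times\mathcal Y\times[0,t]}\mathcal L g(y)\,\bar P(dudvdyds)=0$ for every $t\in[0,1]$. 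Because the time marginal of $\bar P$ is Lebesgue, the left-hand side is absolutely continuous in $t$, so differentiating yields $\int_{\mathcal Y}\mathcal L g\,d\mu_t=0$ for a.e.\ $t$. Running this over a countable determining subclass of $\mathcal C^2_c(\mathcal Y)$ shows that, outside a single Lebesgue-null set of times, $\mu_t$ is an invariant measure for $\mathcal L$; the uniqueness guaranteed by Condition \ref{condH1} (see Remark \ref{remarkoninvmeasure}) then forces $\mu_t=\mu$ for a.e.\ $t$. A final disintegration of $\rho_t$ over its $\mathcal Y$-marginal $\mu$ produces the kernel $\nu_{y,t}$ with $\rho_t(dudvdy)=\nu_{y,t}(dudv)\mu(dy)$, which is precisely the asserted decomposition.

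The step I expect to require the most care is the passage from the family of integrated-in-time identities to the pointwise-in-$t$ invariance of $\mu_t$: this hinges on having the Lebesgue time marginal in hand and on choosing the test class so that $\mathcal L g$ is simultaneously bounded (so that weak convergence applies with no uniform-integrability input) and rich enough to characterize $\mu$ uniquely. The genuinely new feature relative to uncontrolled averaging is the extra drift $\tfrac{h(\epsilon)}{\sqrt\epsilon}\sigma\dot v^\epsilon$ in the fast equation, but the scaling $\sqrt{\epsilon}h(\epsilon)\to0$ combined with the $L^2$-bound on the controls neutralizes it, so it does not interfere with the identification of the invariant measure.
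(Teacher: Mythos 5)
Your proof is correct and follows essentially the same route as the paper's: apply It\^{o}'s formula to a test function of the controlled fast process, use the scaling $\sqrt{\epsilon}h(\epsilon)\to 0$ together with the almost sure bound on the controls to eliminate the boundary, control, and martingale terms, pass to the weak limit to obtain $\int \mathcal{L}F\,d\bar{P}=0$, identify the time marginal as Lebesgue measure, and conclude by uniqueness of the invariant measure. Your two refinements --- taking compactly supported test functions so that $\mathcal{L}g$ is bounded and weak convergence applies without any uniform-integrability input (the paper's class of bounded $\mathcal{C}^2$ functions with bounded derivatives makes $\nabla_y F^\top c(y)$ potentially unbounded, since $c$ grows linearly), and the explicit disintegration with the differentiation-in-$t$ step over a countable determining class --- are more careful versions of steps the paper treats briefly, not a different argument.
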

	\begin{proof}
		Let $F$ be an element of a dense subset of $\mathcal{C}^2(\mathcal{Y})$ that consists of bounded functions with bounded first and second derivatives. By It\^{o}'s formula, we have
		\begin{align}
			\label{equation_generator_of_Y}
			&\int_{\mathcal{U}\times \mathcal{V}\times
				\mathcal{Y}\times[0,t]}\mathcal{L}
			F(y)dP^\epsilon=\epsilon
			\brac{F(Y^\eu_t)-F(y_0)}\nonumber\\& \qquad\qquad -\sqrt{\epsilon}\int_0^t \brac{\nabla_yF}^\top(Y^\eu_s)\sigma(Y^\eu_s)dB_s-\sqrt{\epsilon}h(\epsilon) \int_0^t \brac{\nabla_yF}^\top(Y^\eu_s) \sigma(Y^\eu_s)\dot{v}^\epsilon_sds
		\end{align}
		Let us consider each individual term on the right-hand
		side of the above equation. The first term converges
		to $0$ given that $F$ is bounded. For the second term,
		an application of the Burkh\"{o}lder-Davis-Gundy inequality yields
		\begin{align*}
			\sqrt{\epsilon}\E{\abs{\int_0^t \brac{\nabla_yF}^\top(Y^\eu_s)\sigma(Y^\eu_s)dB_s}}\leq C\sqrt{\epsilon}\sqrt{\E{ \int_0^1 \abs{\sigma(Y^\eu_s)}^2 ds}},
		\end{align*}
		which converges to zero due to the boundedness of $\sigma(y)\sigma^\top(y)$ in Condition \ref{condH1}. Similarly, by Lemma \ref{lemma_boundedcontrol}, we have
		\begin{align*}
			&\abs{\sqrt{\epsilon}h(\epsilon) \int_0^t
				\brac{\nabla_yF}^\top(Y^\eu_s)
				\sigma(Y^\eu_s)\dot{v}^\epsilon_sds}\\&
			\qquad\qquad\qquad \leq \sqrt{\epsilon}h(\epsilon)
			\sqrt{\int_0^t \abs{\brac{\nabla_yF}^\top(Y^\eu_s)
					\sigma(Y^\eu_s)\sigma^\top(Y^\eu_s)\nabla_yF(Y^\eu_s)}ds\int_0^t\abs{\dot{v}^\epsilon_s}^2ds}\\
			& \qquad\qquad\qquad \leq C\sqrt{\epsilon}h(\epsilon).
		\end{align*}
		Hence, \eqref{equation_generator_of_Y} becomes
		\begin{align}
			\label{equation_characterize_invariant}
			\int_{\mathcal{U}\times \mathcal{V}\times \mathcal{Y}\times[0,t]}\mathcal{L} F(y)d\bar{P}=0.
		\end{align}
		Moreover, it is immediate to see that
		$P^\epsilon\brac{\mathcal{U}\times \mathcal{V}\times
			\mathcal{Y}\times[0,t]}=t$, which implies that the
		last marginal of $\bar{P}$ is the Lebesgue measure. In
		other words, $\bar{P}$ is of the form
		$\bar{P}(dudvdydt)=\nu_{t,y}(dudv)m(dy)dt$. Moreover,
		since $\mathcal{L}$ is independent of the control
		$(u,v)$, \eqref{equation_characterize_invariant}
		implies that
		\begin{align*}
			\int_{ \mathcal{Y}}\mathcal{L} F(y)m(dy)=0,
		\end{align*}
		which implies that $m(dy)$ is the unique invariant measure $\mu(dy)$ associated with $\mathcal{L}$.
	\end{proof}
	The next proposition asserts that the pair $(\bar{\eta},\bar{P})$ is
	indeed a viable pair with respect to $(\theta,\mathcal{L})$, which was
	what this subsection was aimed at proving.
	\begin{proposition}
		\label{prop_viable}
		The pair $\brac{\bar{\eta},\bar{P}}$ is a viable pair
		with respect to $\brac{\theta,\mathcal{L}}$, where $\theta$ is the
		function defined in \eqref{definitionoffunctiontheta}
		and $\mathcal{L}$ is the generator defined in
		\eqref{def_generator_normalized_Y}.
	\end{proposition}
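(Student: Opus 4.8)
The plan is to verify directly the four requirements of Definition \ref{defviablepairs} for the pair $(\bar{\eta},\bar{P})$, taken with respect to $(\theta,\mathcal{L})$, where $\theta$ is the function \eqref{definitionoffunctiontheta} and $\mathcal{L}$ is the generator \eqref{def_generator_normalized_Y}. Two of these requirements are already in hand. The disintegration $\bar{P}(dudvdydt)=\nu_{y,t}(dudv)\mu(dy)dt$ is exactly the content of Proposition \ref{proposition_occu_decomp}, which gives the fourth bullet. The integral fixed-point equation is precisely Proposition \ref{proposition_limit_eta}: by the definition of $\theta$ in \eqref{definitionoffunctiontheta}, the first summand in the representation of $\bar{\eta}_t$ is $\nabla_y\phi(\bar{X}_s,y^{(1)})\sigma(y^{(1)})v^{(1)}+\nabla_x\bar{g}(\bar{X}_s)\bar{\eta}_s$ integrated against $\bar{P}$, while the second summand is exactly the fractional-kernel term $c_H f(\bar{X}_s,y^{(1)})(s-r)^{H-3/2}s^{H-1/2}r^{1/2-H}u^{(2)}\mathds{1}_{[0,s]}(r)$ integrated against $\bar{P}\otimes\bar{P}$; thus the representation of Proposition \ref{proposition_limit_eta} is the integral of $\theta$ against $\bar{P}\otimes\bar{P}$ demanded by the third bullet of Definition \ref{defviablepairs}. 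It therefore remains only to establish the integrability of $\bar{P}$ and the absolute continuity of $\bar{\eta}$.

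For the integrability requirement, I would invoke the uniform bound $\sup_{\epsilon>0}\E{\Lambda(P^\epsilon)}<\infty$ already obtained in the tightness argument of Subsection \ref{tightnessofPepsilon} (via Lemma \ref{lemma_boundedcontrol} and Lemma \ref{lemma_estimatefromMorKos2}), where $\Lambda(P)=\int[\abs{u}^2+\abs{v}^2+\abs{y}^2]\,dP$. Since $\lambda(u,v,y,t)=\abs{u}^2+\abs{v}^2+\abs{y}^2$ is nonnegative and continuous, the functional $P\mapsto\Lambda(P)$ is lower semicontinuous for weak convergence. Working on the Skorokhod realization along which $P^\epsilon\to\bar{P}$ almost surely, lower semicontinuity gives $\Lambda(\bar{P})\le\liminf_{\epsilon\to0}\Lambda(P^\epsilon)$, and Fatou's lemma then yields $\E{\Lambda(\bar{P})}\le\liminf_{\epsilon\to0}\E{\Lambda(P^\epsilon)}<\infty$. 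In particular $\Lambda(\bar{P})<\infty$ almost surely, which is the second bullet of Definition \ref{defviablepairs}.

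For absolute continuity, I would substitute the disintegration \eqref{decompdeP} of $\bar{P}$ into the representation of Proposition \ref{proposition_limit_eta} and exhibit $\bar{\eta}$ as the integral of an $L^1([0,1])$ density. The first summand becomes $\int_0^t\brac{\int_{\mathcal{U}\times\mathcal{V}\times\mathcal{Y}}[\nabla_y\phi(\bar{X}_s,y)\sigma(y)v+\nabla_x\bar{g}(\bar{X}_s)\bar{\eta}_s]\nu_{y,s}(dudv)\mu(dy)}ds$, whose inner integral is integrable in $s$ because $\bar{\eta}$ and $\bar{X}$ are bounded (being continuous on $[0,1]$) and because $\Lambda(\bar{P})<\infty$ controls the first $v$-moment. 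For the fractional summand, I would disintegrate $\bar{P}\otimes\bar{P}$, integrate out the control and state variables, and use $\mathds{1}_{[0,s]}(r)$ to restrict the inner time integral to $[0,s]$, so that it takes the form $\int_0^t\brac{c_H s^{H-1/2}\int_0^s(s-r)^{H-3/2}r^{1/2-H}(\cdots)\,dr}ds$; here the exponent $H-3/2\in(-1,-1/2)$ is integrable at $r=s$ precisely because $H>1/2$, and the inner integral is, up to constants, of the form $[\dot{K}_H(\cdot)]_s$, which lies in $L^2([0,1])\subset L^1([0,1])$ by Proposition \ref{prop_dotKHinL2}. Hence both summands are absolutely continuous functions of $t$, and so is their sum $\bar{\eta}$, giving the first bullet of Definition \ref{defviablepairs}.

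The only genuinely delicate point is this last one: because the kernel $(s-r)^{H-3/2}$ is singular, absolute continuity of the fractional term cannot be read off termwise and must instead be obtained from the $L^2$-mapping property of $\dot{K}_H$ together with the integrability threshold $H>1/2$. The remaining three conditions amount to assembling Propositions \ref{proposition_limit_eta} and \ref{proposition_occu_decomp} with a standard lower-semicontinuity argument, after which Proposition \ref{prop_viable} follows at once.
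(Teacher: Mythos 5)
Your proof is correct and takes essentially the same route as the paper's: the fixed-point equation and the decomposition of $\bar{P}$ are read off from Propositions \ref{proposition_limit_eta} and \ref{proposition_occu_decomp}, and the integrability of $\bar{P}$ follows from Lemmas \ref{lemma_boundedcontrol} and \ref{lemma_estimatefromMorKos2} together with Fatou's lemma, exactly as in the paper. Your additional explicit verification of the absolute continuity of $\bar{\eta}$, handling the singular kernel $(s-r)^{H-3/2}$ via the $L^2$-boundedness of $\dot{K}_H$ from Proposition \ref{prop_dotKHinL2}, is a point the paper's one-line proof leaves implicit, but it is consistent with, rather than a departure from, the paper's argument.
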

	\begin{proof}
		Lemmas \ref{lemma_boundedcontrol} and \ref{lemma_estimatefromMorKos2}
		together with Fatou's lemma ensure that $\bar{P}$ satisfies the first
		property in Definition \ref{defviablepairs}. The following two properties in
		Definition \ref{defviablepairs} have been established in Proposition
		\ref{proposition_limit_eta} and Proposition
		\ref{proposition_occu_decomp}.
	\end{proof}
	\subsection{Proof of the Laplace principle lower bound}
	\label{prooflaplacelower}
	The Laplace principle lower bound can be immediately derived from
	Fatou's lemma and Proposition \ref{proposition_limit_eta},
	which is shown in the following proposition.
	\begin{proposition}
		\label{proposition_laplace_lower}
		Assume Conditions \ref{condH1} and
		either \ref{condH2-A} or \ref{condH2-B} are
		satisfied. Then, for all bounded and continuous
		mappings $a \colon C\brac{[0,1];\R^n}\to \R$, the following Laplace principle lower bound holds.
		\begin{align*}
			\liminf_{\epsilon\to
				0}-\frac{1}{h^2(\epsilon)}\ln
			\E{\exp\brac{-h^2(\epsilon)a(\eta^\epsilon)}}\geq
			\inf_{\Phi\in C\brac{[0,1];\R^n}} S^{H}(\Phi)+a(\Phi),
		\end{align*}
		where the rate function $S^{H}$ is defined at \eqref{rate_function}.
	\end{proposition}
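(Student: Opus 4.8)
The plan is to pass to the limit in the variational representation \eqref{equation_varrep} along a sequence of near-optimal controls, leveraging the tightness and viable-pair results already established. Throughout, recall that by the definition \eqref{def_occumeasure} of the occupation measure one has $\frac{1}{2}\int_0^1 \abs{\hat{u}^\epsilon_s}^2+\abs{\hat{v}^\epsilon_s}^2\,ds=\frac{1}{2}\int_{\mathcal{U}\times\mathcal{V}\times\mathcal{Y}\times[0,1]}\brac{\abs{u}^2+\abs{v}^2}\,dP^\epsilon$, so the cost appearing in \eqref{equation_varrep} is a functional of $P^\epsilon$.

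First I would fix $\delta>0$ and, for each $\epsilon$, choose a control $w^\epsilon=(K_H\hat{u}^\epsilon,K_{1/2}\hat{v}^\epsilon)\in\mathcal{S}$ that is $\delta$-optimal for the right-hand side of \eqref{equation_varrep}, i.e.
\begin{align*}
\E{\frac{1}{2}\int_0^1 \abs{\hat{u}^\epsilon_s}^2+\abs{\hat{v}^\epsilon_s}^2\,ds+a\brac{\eta^\eu}}\leq -\frac{1}{h^2(\epsilon)}\ln\E{\exp\brac{-h^2(\epsilon)a(\eta^\epsilon)}}+\delta.
\end{align*}
Since $a$ is bounded, the left-hand side controls the expected cost, and a standard localization argument (truncate each control to zero once its accumulated cost exceeds a level $N$, and bound the probability of truncation via Chebyshev's inequality together with the boundedness of $a$) lets me replace $w^\epsilon$ by controls satisfying the almost sure bound $\sup_\epsilon\norm{w^\epsilon}^2_{\mathcal{S}}<N$, at the cost of an error that vanishes as $N\to\infty$. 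This places us in the setting of Proposition \ref{propontightness}.

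Next, invoking Proposition \ref{propontightness}, the family $\{(\eta^\eu,P^\epsilon)\colon\epsilon>0\}$ is tight; along any subsequence I extract a further subsequence converging in distribution to a limit $(\bar{\eta},\bar{P})$, which by the Skorokhod representation theorem I realize as an almost sure limit on a common probability space. By Proposition \ref{prop_viable}, $(\bar{\eta},\bar{P})$ is a viable pair with respect to $(\theta,\mathcal{L})$, so the definition \eqref{rate_function} of the rate function gives
\begin{align*}
S^{H}(\bar{\eta})\leq \frac{1}{2}\int_{\mathcal{U}\times\mathcal{V}\times\mathcal{Y}\times[0,1]}\brac{\abs{u}^2+\abs{v}^2}\,d\bar{P}.
\end{align*}

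The core limiting step is then to take $\liminf_\epsilon$ in the $\delta$-optimal inequality. For the terminal term, continuity and boundedness of $a$ give $\E{a(\eta^\eu)}\to\E{a(\bar{\eta})}$. For the cost, the map $P\mapsto\int\brac{\abs{u}^2+\abs{v}^2}\,dP$ is nonnegative and lower semicontinuous in the weak topology, so pathwise lower semicontinuity along the almost sure convergence $P^\epsilon\to\bar{P}$ followed by Fatou's lemma yields $\liminf_\epsilon\E{\frac{1}{2}\int\brac{\abs{u}^2+\abs{v}^2}\,dP^\epsilon}\geq\E{\frac{1}{2}\int\brac{\abs{u}^2+\abs{v}^2}\,d\bar{P}}$. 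Combining these with the bound on $S^{H}(\bar{\eta})$, I obtain
\begin{align*}
\liminf_{\epsilon\to 0}\brac{-\frac{1}{h^2(\epsilon)}\ln\E{\exp\brac{-h^2(\epsilon)a(\eta^\epsilon)}}}&\geq \E{S^{H}(\bar{\eta})+a(\bar{\eta})}-\delta\\
&\geq \inf_{\Phi\in C\brac{[0,1];\R^n}}\brac{S^{H}(\Phi)+a(\Phi)}-\delta,
\end{align*}
and letting $\delta\to 0$ completes the proof. I expect the main obstacle to be the cost limit: justifying the lower-semicontinuity-plus-Fatou passage for the \emph{unbounded} quadratic integrand, for which nonnegativity of the integrand and the almost sure cost bound $N$ (ensuring the relevant uniform integrability) are essential, together with the localization reduction to a.s. bounded controls that makes Proposition \ref{propontightness} applicable.
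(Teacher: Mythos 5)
Your proposal is correct and follows essentially the same route as the paper's proof: pass to $\delta$-optimal controls in the variational representation \eqref{equation_varrep}, reduce to almost surely bounded controls (the paper packages your truncation-plus-Chebyshev step as Lemma \ref{lemma_boundedcontrol}, citing the representation theorem of \cite{Zha}), invoke tightness and the viable-pair property of the limit (Propositions \ref{propontightness}, \ref{proposition_limit_eta}, \ref{prop_viable}), and conclude with Fatou's lemma and the definition \eqref{rate_function} of $S^{H}$. The only cosmetic difference is that you spell out the lower-semicontinuity and uniform-integrability justification for passing to the limit in the unbounded quadratic cost, which the paper leaves implicit in its appeal to Fatou's lemma.
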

	\begin{proof}
		We can write
		\begin{align*}
			\liminf_{\epsilon\to
				0}-\frac{1}{h^2(\epsilon)}\ln
			\E{\exp\brac{-h^2(\epsilon)a(\eta^\epsilon)}}&\geq
			\liminf_{\epsilon\to
				0}\E{\frac{1}{2}\int_0^1
				\left[\abs{\hat{u}^\epsilon_s}^2+\abs{\hat{v}^\epsilon_s}^2  \right] ds+a\brac{\eta^\eu}}-\delta\\
			&=  \liminf_{\epsilon\to 0} \E{ \frac{1}{2}\int_{\mathcal{U}\times \mathcal{V}\times \mathcal{Y}\times[0,1]}\left[\abs{u}^2+\abs{v}^2  \right] dP^\epsilon+a\brac{\eta^\eu}}\\
			&\geq \E{ \frac{1}{2}\int_{\mathcal{U}\times \mathcal{V}\times \mathcal{Y}\times[0,1]}\left[ \abs{u}^2+\abs{v}^2 \right] d\bar{P}+a\brac{\bar{\eta}}}\\
			&\geq \inf_{\Phi\in C\brac{[0,1];\R^n}} S^{H}(\Phi)+a(\Phi).
		\end{align*}
		The first inequality comes from the variational
		formula \eqref{equation_varrep}. The second line is a
		direct application of the definition of the occupation measure
		$P^\epsilon$. The third line follows from Fatou's lemma and the convergence result in
		Proposition \ref{proposition_limit_eta}. Finally, the
		last line is a consequence of Proposition
		\ref{prop_viable}.
	\end{proof}
	\subsection{Proof of the compactness of the level sets of $S^{H}(\cdot)$}
	\label{proofcompactness}
	We need to show that, for each $k \in \R$, the level sets of
	$S^{H}$ given by
	\begin{align*}
		L_k=\{\Phi\in C\brac{[0,1];\R^n}:S^{H}(\Phi)\leq k \},\quad k < \infty.
	\end{align*}
	are compact subsets of $C\brac{[0,1];\R^n}$, which indicates that
	$S^{H}$ is a good rate function. We will actually show that, for
	any $k\in \R$, $L_k$ is relatively compact and closed. We
	start with relative compactness, which the following lemma addresses.
	\begin{lemma}
		\label{lemma_precompactness_level_set}
		Let $\{\brac{\Phi_n,P_n}\colon n\in\N\}$ be a sequence
		such that for every $n\in\N$, $\brac{\Phi_n,P_n} \in
		\mathcal{V}\brac{\theta,\mathcal{L}}$ and $\Phi_n\in
		L_k$. Assuming Conditions \ref{condH1} and
		either \ref{condH2-A} or \ref{condH2-B} are
		satisfied, this sequence is relatively compact on $C\brac{[0,1];\R^n}$.
	\end{lemma}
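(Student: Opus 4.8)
The plan is to establish relative compactness through the Arzel\`a--Ascoli theorem, by proving that $\{\Phi_n\}$ is uniformly bounded and equicontinuous on $[0,1]$. The first observation is that, since $\Phi_n\in L_k$ forces $S^{H}(\Phi_n)\le k$ and $S^{H}$ is the infimum of the cost over viable $P$ in \eqref{rate_function}, we may assume without loss of generality that the measures $P_n$ in the sequence are near-optimal, so that $\tfrac12\int_{\mathcal{U}\times\mathcal{V}\times\mathcal{Y}\times[0,1]}[\abs{u}^2+\abs{v}^2]\,dP_n\le k+1$ uniformly in $n$. Without such a uniform cost bound there is no control on $\Phi_n$, so this reduction is the backbone of the argument.

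The next step is to rewrite the viable-pair equation of Definition \ref{defviablepairs} (with $F=\theta$ from \eqref{definitionoffunctiontheta}) in a tractable form using the disintegration $P_n(dudvdydt)=\nu_{y,t}(dudv)\mu(dy)dt$. The two affine terms of $\theta$ reduce to single time-integrals, while in the fractional term the product measure $P_n\otimes P_n$ factorizes: integrating the first copy against the $y$-marginal $\mu$ collapses $f(\bar{X}_s,y^{(1)})$ to $\bar{f}(\bar{X}_s)$, and integrating the second copy produces the averaged control $\hat{u}_n(r)=\int u\,\nu_{y,r}(dudv)\mu(dy)$. Recognizing the kernel as that of $\dot{K}_H$, this recasts the equation as
\[
\Phi_n(t)=\int_0^t\brac{\Sigma_n(s)+\nabla_x\bar{g}(\bar{X}_s)\Phi_n(s)}\,ds+\int_0^t\bar{f}(\bar{X}_s)\,[\dot{K}_H\hat{u}_n]_s\,ds,
\]
where $\Sigma_n(s)=\int\nabla_y\phi(\bar{X}_s,y)\sigma(y)v\,\nu_{y,s}(dudv)\mu(dy)$. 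By Jensen's inequality and the cost bound $\|\hat{u}_n\|_{L^2}^2\le\int\abs{u}^2\,dP_n\le 2(k+1)$; by Proposition \ref{prop_dotKHinL2} the operator $\dot{K}_H$ is bounded on $L^2$, so $\|\dot{K}_H\hat{u}_n\|_{L^2}\le C$; and using the polynomial growth of $\nabla_y\phi$, the boundedness of $\sigma\sigma^\top$ (Condition \ref{condH1}) and the finiteness of the moments of $\mu$, Cauchy--Schwarz gives $\|\Sigma_n\|_{L^2}\le C$, all uniformly in $n$.

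With these uniform $L^2$ estimates in hand, uniform boundedness follows from Gr\"onwall: since $\nabla_x\bar{g}$ and $\bar{f}(\bar{X}_\cdot)$ are bounded under Condition \ref{condH2-A} or \ref{condH2-B}, the inequality $\abs{\Phi_n(t)}\le C+C\int_0^t\abs{\Phi_n(s)}\,ds$ yields $\sup_{t\in[0,1]}\abs{\Phi_n(t)}\le M<\infty$ uniformly. For equicontinuity, I would estimate $\Phi_n(s_2)-\Phi_n(s_1)$ term by term over $[s_1,s_2]$: the drift term contributes at most $CM\abs{s_2-s_1}$, while Cauchy--Schwarz applied to the $\Sigma_n$ and $\dot{K}_H\hat{u}_n$ terms, together with the uniform $L^2$ bounds, contributes at most $C\abs{s_2-s_1}^{1/2}$. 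Hence the modulus of continuity of every $\Phi_n$ is dominated by $C\delta^{1/2}$, and Arzel\`a--Ascoli delivers relative compactness in $C\brac{[0,1];\R^n}$.

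The main obstacle is the fractional term $\int_0^t\bar{f}(\bar{X}_s)[\dot{K}_H\hat{u}_n]_s\,ds$, whose kernel $(s-r)^{H-3/2}$ is singular as $r\uparrow s$; a crude pointwise bound on $[\dot{K}_H\hat{u}_n]_s$ is unavailable. The resolution, and the point where the $H\neq 1/2$ analysis departs from the classical Brownian case, is the factorization of the $P_n\otimes P_n$ integral into a single application of $\dot{K}_H$ to the averaged control, after which the $L^2$-boundedness of $\dot{K}_H$ from Proposition \ref{prop_dotKHinL2} converts the singular kernel into the clean $\abs{s_2-s_1}^{1/2}$ modulus needed for equicontinuity.
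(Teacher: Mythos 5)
Your proof is correct, and its analytic core coincides with the paper's: both pass to the representation in which the fractional term reads $\int_0^t\bar{f}(\bar{X}_s)\left[\dot{K}_H\hat{u}_n\right]_s ds$ with $\hat{u}_n$ an $L^2$-bounded control, and both then obtain the uniform modulus $C\brac{\abs{t-r}+\abs{t-r}^{1/2}}$ from Cauchy--Schwarz together with the $L^2$-boundedness of $\dot{K}_H$ (Proposition \ref{prop_dotKHinL2}). The differences are organizational. The paper gets that representation at one stroke by citing Proposition \ref{prop_control_problems_equi} (the equivalence $L^r=L^o$), whereas you re-derive the relevant direction inline by disintegrating $P_n$ via \eqref{decompdeP} and factorizing the $P_n\otimes P_n$ integral; this is self-contained but duplicates part of that proposition's proof. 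Conversely, you are more explicit on two uniformity points the paper leaves tacit: the near-optimal choice of $P_n$ is genuinely needed, since viability alone places no bound on $\int\left[\abs{u}^2+\abs{v}^2\right]dP_n$ (because $\theta$ is affine in the controls, one can inflate $\nu_{y,t}$ symmetrically without altering the dynamics, blowing up the cost), and your Gr\"onwall step yields $\sup_n\sup_t\abs{\Phi_n(t)}<\infty$, whereas the paper's bound on its term $A_2$ appeals to boundedness of each individual $\Phi$, which is not by itself uniform over $L_k$ (there it would instead follow from $\Phi_n(0)=0$ combined with equicontinuity). The one item you omit is that the paper's proof also establishes relative compactness of the measure component $\{P_n\}$, by the tightness-function argument of Subsection \ref{tightnessofPepsilon}; that half is what Lemma \ref{lemma_lower_semicontinuity} later uses to extract a convergent subsequence of the pairs $(\Phi_n,P_n)$. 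With your uniform cost bound in hand (and the fixed $(y,t)$-marginal $\mu(dy)dt$), adding that step is immediate, so this is an omission of scope rather than a gap in what you proved.
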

	\begin{proof}
		We can show that the family $\{P_n\colon n\in\N\}$ is relatively
		compact in the same way as in Subsection \ref{tightnessofPepsilon}
		where we proved the tightness of $\left\{P^{\epsilon}\colon \epsilon >0
		\right\}$. To show the relative compactness of
		$\{\Phi_n\colon n\in\N\}$, it is sufficient to verify that
		\begin{align*}
			\lim_{\delta\to 0}\sup_{\Phi\in L_k}\omega_\Phi(\delta)=\lim_{\delta\to 0}\sup_{\Phi\in L_k}\sup_{\abs{t-r}\leq \delta}\abs{\Phi(t)-\Phi(r)}=0.
		\end{align*}
		By Proposition \ref{prop_control_problems_equi}, the fact that $\brac{\Phi_n,P_n}\in \mathcal{V}\brac{\theta,\mathcal{L}}$ implies there exists a pair of ordinary controls $\brac{u,v}\in L^2\brac{\mathcal{Y}^2\times [0,1]^2;\R^m\times \R^p}$ such that
		\begin{align*}
			\Phi_t=\int_{\mathcal{Y}\times[0,t]}
			\nabla_y\phi\brac{\bar{X}_s,y}
			\sigma\brac{y}v(s,y)\mu(dy)ds &+\int_0^t\nabla_x \bar{g} (\bar{X}_s)\Phi_sds\nonumber\\
			&+\int_{\mathcal{Y}\times[0,t]}f(\bar{X}_s,y)\brac{\dot{K}_Hu}(s,y)\mu(dy)ds.
		\end{align*}
		Then,
		\begin{align*}
			\Phi_t-\Phi_r&= \int_{\mathcal{Y}\times[r,t]}
			\nabla_y\phi\brac{\bar{X}_s,y}
			\sigma\brac{y}v(s,y)\mu(dy)ds +\int_0^t\nabla_x \bar{g} (\bar{X}_s)\Phi_sds\nonumber\\
			&\quad +\int_{\mathcal{Y}\times[r,t]}f(\bar{X}_s,y)\brac{\dot{K}_Hu}(s,y)\mu(dy)ds\\
			&=A_1+A_2+A_3.
		\end{align*}
		The term $A_1$ can be estimated by
		\begin{align*}
			\abs{A_1}&\leq \sqrt{\int_{\mathcal{Y}\times[r,t]}\abs{\nabla_y\phi\brac{\bar{X}_s,y} \sigma\brac{y}}^2\mu(dy)ds} \sqrt{\int_{\mathcal{Y}\times[0,1]}\abs{v(s,y)}^2\mu(dy)ds }\\
			&\leq C\sqrt{\int_{\mathcal{Y}\times[r,t]} \abs{y}^{2D_g}\mu(dy)ds }\\
			&\leq C\sqrt{\abs{t-r}}.
		\end{align*}
		Similarly, we have
		\begin{align*}
			\abs{A_2}\leq C\int_r^t\abs{\Phi_s }ds\leq C\abs{t-r},
		\end{align*}
		which is immediate as $\Phi\in C\brac{[0,1];\R^n}$ is bounded. For the final term $A_3$, we apply Proposition \ref{prop_dotKHinL2} to get
		\begin{align*}
			\abs{A_3}&\leq  \sqrt{\int_{\mathcal{Y}\times[r,t]} \abs{y}^{2D_f}\mu(dy)ds}\sqrt{\int_{\mathcal{Y}\times[0,1]} \abs{\brac{\dot{K}_Hu}(s,y)}^2 \mu(dy)ds}\\
			&\leq C \sqrt{\abs{t-r}}.
		\end{align*}
		Combining the previous estimates leads to
		\begin{align*}
			\abs{\Phi_t-\Phi_r}\leq C\brac{\abs{t-r}+\sqrt{\abs{t-r}}},
		\end{align*}
		which completes the proof.
	\end{proof}	
	The next step is to prove that the limit of a sequence of viable
	pairs is a viable pair. This is the object of the next lemma.
	\begin{lemma}
		\label{lemma_limitviablepair}
		Let $\{\brac{\Phi_n,P_n}\colon n\in\N\}$ be a sequence
		such that for every $n \in \N$, $\brac{\Phi_n,P_n} \in
		\mathcal{V}\brac{\theta,\mathcal{L}}$ and $\Phi_n\in
		L_k$. Furthermore, assume that the sequence
		$\{\brac{\Phi_n,P_n}\colon n\in\N\}$ converges to a limit $\brac{\Phi,P}$. Assuming Conditions \ref{condH1} and
		either \ref{condH2-A} or \ref{condH2-B} are
		satisfied, we have $\brac{\Phi,P} \in \mathcal{V}\brac{\theta,\mathcal{L}}$.
	\end{lemma}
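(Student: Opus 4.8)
The plan is to verify, one at a time, the four defining properties of a viable pair from Definition \ref{defviablepairs} for the limit $\brac{\Phi,P}$, exploiting that each $\brac{\Phi_n,P_n}$ already satisfies them, that $P_n\to P$ weakly, and that $\Phi_n\to\Phi$ uniformly on $[0,1]$. Throughout I would use the uniform second-moment bound $\sup_n\Lambda(P_n)<\infty$, which is available because the $P_n$ may be taken near-optimal for $\Phi_n\in L_k$ (so that $\tfrac12\int\brac{\abs{u}^2+\abs{v}^2}dP_n\leq k+1$) while Lemma \ref{lemma_estimatefromMorKos2} controls $\int\abs{y}^2dP_n$; this single bound drives both the uniform integrability and the passages to the limit below. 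The integrability property (second bullet of Definition \ref{defviablepairs}) is then immediate: since the integrand $\abs{u}^2+\abs{v}^2+\abs{y}^2$ is nonnegative and continuous, $P\mapsto\Lambda(P)$ is lower semicontinuous under weak convergence, whence $\Lambda(P)\leq\liminf_n\Lambda(P_n)<\infty$.

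For the decomposition property (fourth bullet), I would first note that the time-marginal of each $P_n$ is Lebesgue measure on $[0,1]$ and that time-marginals are preserved under weak convergence, so $P$ has Lebesgue time-marginal. To identify the $\mathcal{Y}$-marginal as $\mu$, I would pass to the limit in the invariance identity $\int\mathcal{L}F(y)\,dP_n=0$, which holds for each $n$ by the decomposition $P_n(dudvdydt)=\nu^n_{y,t}(dudv)\mu(dy)dt$ together with $\int\mathcal{L}F\,d\mu=0$. Since $\mathcal{L}F$ grows at most linearly in $\abs{y}$ for $F$ in the dense class of bounded functions with bounded first and second derivatives, the uniform $\abs{y}^2$-integrability established above legitimizes the passage $\int\mathcal{L}F\,dP_n\to\int\mathcal{L}F\,dP$, yielding $\int\mathcal{L}F(y)\,dP=0$; the conclusion $m(dy)=\mu(dy)$ then follows exactly as in Proposition \ref{proposition_occu_decomp}.

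The heart of the argument, and the step I expect to be the main obstacle, is passing to the limit in the representation equation (third bullet), namely establishing
\[
\Phi_t=\int_{\mathcal{U}^2\times\mathcal{V}^2\times\mathcal{Y}^2\times[0,t]^2}\theta\brac{\bar{X}_s,\Phi_s,y^{(1)},y^{(2)},u^{(1)},u^{(2)},v^{(1)},v^{(2)},s,r}\,dP\otimes dP.
\]
The difficulty is that $\theta$ contains the singular Young kernel $(s-r)^{H-3/2}s^{H-1/2}r^{1/2-H}\mathds{1}_{[0,s]}(r)$, which blows up as $r\uparrow s$ because $H-3/2<0$, so a naive weak-convergence argument fails. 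I would resolve this with the three-step truncation used in Lemma \ref{lemma_limit_C}: insert the cutoff $\mathds{1}_{[\zeta,s-\zeta]}(r)$ to form a truncated integrand; for fixed $\zeta$ this integrand is bounded and continuous in the relevant variables, so weak convergence of $P_n\otimes P_n\to P\otimes P$ passes the integral to the limit; then send $\zeta\to0$, controlling the two truncation errors uniformly in $n$ by the Dominated-convergence and Hölder estimates of Lemma \ref{lemma_limit_C} together with $\sup_n\Lambda(P_n)<\infty$. The path-dependent drift $\nabla_x\bar{g}(\bar{X}_s)\Phi^n_s$ is treated separately: since $\nabla_x\bar{g}$ is bounded and $\Phi_n\to\Phi$ uniformly, replacing $\Phi^n$ by $\Phi$ costs at most $C\sup_s\abs{\Phi^n_s-\Phi_s}\to0$, and the resulting limit is identified via weak convergence of $P_n$ as in Proposition \ref{proposition_limit_eta}.

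Finally, the absolute-continuity property (first bullet) follows from the limiting representation itself. Invoking Proposition \ref{prop_control_problems_equi} on the viable pair $\brac{\Phi,P}$ just obtained, I would rewrite $\Phi_t=\int_0^t\dot{\Phi}_s\,ds$ with an $L^2$ integrand, the integrability of which is guaranteed by the decomposition $P=\nu_{y,t}\mu\,dt$, the moment bound on $P$, and $\dot{K}_Hu\in L^2$ via Proposition \ref{prop_dotKHinL2}; this exhibits $\Phi$ as an absolutely continuous function. Collecting the four verified properties gives $\brac{\Phi,P}\in\mathcal{V}\brac{\theta,\mathcal{L}}$, which completes the proof.
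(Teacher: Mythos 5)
Your proof is correct and takes essentially the same approach as the paper: Fatou's lemma (lower semicontinuity under weak convergence) for the second criterion, the invariance-identity passage of Proposition \ref{proposition_occu_decomp} for the decomposition, and the truncation argument of Lemma \ref{lemma_limit_C} underlying Proposition \ref{proposition_limit_eta} for the representation equation --- precisely the ingredients the paper's very terse proof defers to. Your extra step establishing absolute continuity of $\Phi$ via Proposition \ref{prop_control_problems_equi} fills in a criterion the paper's proof leaves implicit, and your explicit appeal to near-optimality of the $P_n$ to get a uniform cost bound matches the implicit assumption the paper itself makes when it writes $\liminf_n \int \left[\abs{u}^2+\abs{v}^2\right] dP_n \leq k$.
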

	\begin{proof}
		Using Fatou's lemma, we can write
		\begin{align*}
			\int_{\mathcal{U}\times \mathcal{V}\times \mathcal{Y}\times[0,1]}\left[  \abs{u}^2+\abs{v}^2 \right] dP\leq \liminf_{n\to\infty}\int_{\mathcal{U}\times \mathcal{V}\times \mathcal{Y}\times[0,1]}\left[  \abs{u}^2+\abs{v}^2 \right] dP_n\leq k,
		\end{align*}
		so that the second criterion in Definition
		\ref{defviablepairs} is satisfied. The third and
		fourth criteria can be proved in a similar but simpler
		manner as in the proofs of Propositions \ref{proposition_limit_eta} and \ref{proposition_occu_decomp}.
	\end{proof}
	The final step is to prove that the map $S^{H}$ is lower semicontinuous,
	which is done in the lemma below.
	\begin{lemma}
		\label{lemma_lower_semicontinuity}
		Assume Conditions \ref{condH1} and
		either \ref{condH2-A} or \ref{condH2-B} hold. Then,
		$S^{H}(\Phi)$ is lower semicontinuous, which is equivalent to the statement that the level sets of $S^{H}$ are closed in $C\brac{[0,1];\R^n}$.
	\end{lemma}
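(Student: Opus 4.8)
The plan is to prove the stated equivalent: for each $k<\infty$, the level set $L_k=\{\Phi\in C\brac{[0,1];\R^n}\colon S^{H}(\Phi)\leq k\}$ is closed. First I would fix an arbitrary sequence $\{\Phi_n\colon n\in\N\}\subset L_k$ converging in $C\brac{[0,1];\R^n}$ to some $\Phi$, and aim to show $\Phi\in L_k$, i.e. $S^{H}(\Phi)\leq k$. Since $S^{H}(\Phi_n)\leq k<\infty$, the infimum defining $S^{H}(\Phi_n)$ in \eqref{rate_function} is over a nonempty family of viable pairs, so for each $n$ I can select a near-optimal pair $\brac{\Phi_n,P_n}\in \mathcal{V}\brac{\theta,\mathcal{L}}$ with
\begin{align*}
\frac{1}{2}\int_{\mathcal{U}\times \mathcal{V}\times \mathcal{Y}\times[0,1]}\left[\abs{u}^2+\abs{v}^2\right]P_n(dudvdyds)\leq S^{H}(\Phi_n)+\frac{1}{n}\leq k+\frac{1}{n}.
\end{align*}

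The next step is to extract a limiting viable pair. Because $\Phi_n\in L_k$ for every $n$, Lemma \ref{lemma_precompactness_level_set} ensures that $\{\brac{\Phi_n,P_n}\colon n\in\N\}$ is relatively compact in $C\brac{[0,1];\R^n}\times \mathcal{P}\brac{\mathcal{U}\times\mathcal{V}\times\mathcal{Y}\times[0,1]}$. Passing to a convergent subsequence (not relabeled), I obtain a limit $\brac{\Phi,P}$, whose first coordinate must be $\Phi$ since $\Phi_n\to\Phi$ by hypothesis, and whose second coordinate is the weak limit $P$ of $P_n$. Lemma \ref{lemma_limitviablepair} then guarantees that viability is preserved under limits, so $\brac{\Phi,P}\in \mathcal{V}\brac{\theta,\mathcal{L}}$.

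It then remains to control the cost along the limit. The key analytic ingredient is the weak lower semicontinuity of the quadratic cost functional $P\mapsto \tfrac{1}{2}\int\left[\abs{u}^2+\abs{v}^2\right]dP$: since the integrand $(u,v,y,s)\mapsto \abs{u}^2+\abs{v}^2$ is nonnegative and continuous, hence lower semicontinuous and bounded below, the portmanteau theorem yields
\begin{align*}
\frac{1}{2}\int_{\mathcal{U}\times \mathcal{V}\times \mathcal{Y}\times[0,1]}\left[\abs{u}^2+\abs{v}^2\right]dP\leq \liminf_{n\to\infty}\frac{1}{2}\int_{\mathcal{U}\times \mathcal{V}\times \mathcal{Y}\times[0,1]}\left[\abs{u}^2+\abs{v}^2\right]dP_n\leq k.
\end{align*}
Because $\brac{\Phi,P}$ is a viable pair, the infimum characterization \eqref{rate_function} of $S^{H}$ gives $S^{H}(\Phi)\leq \tfrac{1}{2}\int\left[\abs{u}^2+\abs{v}^2\right]dP\leq k$, so $\Phi\in L_k$ and $L_k$ is closed, which is the desired lower semicontinuity.

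I expect the main obstacle to be the weak lower semicontinuity step, since the portmanteau inequality is most cleanly stated for bounded integrands, whereas $\abs{u}^2+\abs{v}^2$ is unbounded. I would handle this by truncation: apply the portmanteau inequality to each bounded lower semicontinuous function $\brac{\abs{u}^2+\abs{v}^2}\wedge M$, obtaining $\int \brac{\abs{u}^2+\abs{v}^2}\wedge M\, dP\leq \liminf_n \int\left[\abs{u}^2+\abs{v}^2\right]dP_n$ for every $M$, and then let $M\to\infty$ via the monotone convergence theorem on the left-hand side. The remaining inputs — relative compactness and stability of viability — are already supplied by Lemmas \ref{lemma_precompactness_level_set} and \ref{lemma_limitviablepair}, so the entire argument reduces to this semicontinuity of the cost combined with the infimum representation of $S^{H}$.
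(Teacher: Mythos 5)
Your proposal is correct and follows essentially the same route as the paper's proof: near-optimal viable pairs $\brac{\Phi_n,P_n}$, relative compactness via Lemma \ref{lemma_precompactness_level_set}, stability of viability via Lemma \ref{lemma_limitviablepair}, lower semicontinuity of the quadratic cost under weak convergence, and the infimum representation \eqref{rate_function}. The only differences are cosmetic: you argue through closedness of the level sets while the paper proves $\liminf_{n\to\infty}S^{H}(\Phi_n)\geq S^{H}(\Phi)$ directly (the two are equivalent, as the statement itself notes), and your truncation-plus-monotone-convergence justification of the portmanteau step is a slightly more explicit version of the Fatou-type inequality the paper invokes.
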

	\begin{proof}
		Let $\Phi_n$ converge to $\Phi$ in $C\brac{[0,1];\R^n}$. We will show \begin{align*}
			\liminf_{n\to\infty}S^{H}(\Phi_n)\geq S^{H}(\Phi).
		\end{align*}
		When $S^{H}(\Phi_n)<\infty$, there exists $P_n$ such that $\brac{\Phi_n,P_n}\in\mathcal{V}\brac{\theta,\mathcal{L}}$ and
		\begin{align*}
			S^{H}(\Phi_n)\geq \frac{1}{2}\int_{\mathcal{U}\times \mathcal{V}\times \mathcal{Y}\times[0,1]}\left[   \abs{u}^2+\abs{v}^2\right] dP_n-\frac{1}{n}.
		\end{align*}
		By Lemma \ref{lemma_precompactness_level_set}, we can consider a subsequence along which $\brac{\Phi_n,P_n}$ converges to $\brac{\Phi,P}$. Moreover, Lemma \ref{lemma_limitviablepair} guarantees $\brac{\Phi,P}\in \mathcal{V}\brac{\theta,\mathcal{L}}$. Consequently,
		\begin{align*}
			\liminf_{n\to\infty}S^{H}(\Phi_n)=  &\liminf_{n\to\infty}\frac{1}{2}\int_{\mathcal{U}\times \mathcal{V}\times \mathcal{Y}\times[0,1]}\left[ \abs{u}^2+\abs{v}^2 \right]  dP_n-\frac{1}{n}\\
			&\geq\frac{1}{2}\int_{\mathcal{U}\times \mathcal{V}\times \mathcal{Y}\times[0,1]}\left[  \abs{u}^2+\abs{v}^2 \right] dP\\
			&\geq\frac{1}{2}\inf_{(\Phi,P)\in\mathcal{V}}\int_{\mathcal{U}\times \mathcal{V}\times \mathcal{Y}\times[0,1]}\left[\abs{u}^2+\abs{v}^2  \right]  dP=S^{H}(\Phi),
		\end{align*}
		which concludes the proof.
	\end{proof}
	We can now combine the preceding results in order to state the
	following proposition, which was the object of this subsection.
	\begin{proposition}
		\label{proposition_compactness_level_set}
		Assume Conditions \ref{condH1} and
		either \ref{condH2-A} or \ref{condH2-B} are
		satisfied. Then, for every $k\in \R$, $L_k$ is a compact subset of $C\brac{[0,1];\R^n}$.
	\end{proposition}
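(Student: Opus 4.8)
The plan is to deduce the compactness of $L_k$ by assembling the three preceding lemmas, using the fact that in the complete metric space $C\brac{[0,1];\R^n}$ a subset is compact precisely when it is both relatively compact and closed. The substantive analytic work has already been carried out in Lemmas \ref{lemma_precompactness_level_set}, \ref{lemma_limitviablepair} and \ref{lemma_lower_semicontinuity}, so this proposition amounts to combining them correctly.

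First I would establish relative compactness. Given an arbitrary sequence $\{\Phi_n\colon n\in\N\}\subset L_k$, the finiteness $S^{H}(\Phi_n)\leq k$ together with the representation \eqref{rate_function} of $S^{H}$ as an infimum over viable pairs allows me to select, for each $n$, a measure $P_n$ with $\brac{\Phi_n,P_n}\in\mathcal{V}\brac{\theta,\mathcal{L}}$ and cost bounded by $k+1/n$. Lemma \ref{lemma_precompactness_level_set} then applies directly to the sequence $\{\brac{\Phi_n,P_n}\colon n\in\N\}$ and yields that $\{\Phi_n\colon n\in\N\}$ admits a subsequence converging in $C\brac{[0,1];\R^n}$. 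Since the sequence was arbitrary, $L_k$ is relatively compact. Closedness is then immediate from Lemma \ref{lemma_lower_semicontinuity}, as the lower semicontinuity of $S^{H}$ is exactly the statement that each level set $L_k$ is closed.

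Combining relative compactness with closedness gives that $L_k$ is compact: any sequence in $L_k$ possesses a convergent subsequence by relative compactness, and its limit lies in $L_k$ by closedness, so $L_k$ is sequentially and hence (in this metric setting) topologically compact. I do not expect a genuine obstacle here, since the estimates driving precompactness and the viability of subsequential limits were handled in the lemmas; the only point requiring care is that the limit furnished by Lemma \ref{lemma_precompactness_level_set} indeed remains inside $L_k$, which is guaranteed precisely by the lower semicontinuity established in Lemma \ref{lemma_lower_semicontinuity} (equivalently, by Lemma \ref{lemma_limitviablepair} ensuring the limiting pair is again viable with cost at most $k$).
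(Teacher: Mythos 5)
Your proposal is correct and matches the paper's intent exactly: the paper states this proposition as an immediate combination of Lemma \ref{lemma_precompactness_level_set} (relative compactness via viable pairs with uniformly bounded cost), Lemma \ref{lemma_limitviablepair}, and Lemma \ref{lemma_lower_semicontinuity} (closedness of level sets), which is precisely your assembly. Your extra care in selecting $P_n$ with cost at most $k+1/n$ is the right way to invoke the precompactness lemma, and no further argument is needed.
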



	\subsection{Proof of the Laplace principle upper bound and
		representation formula}
	\label{prooflaplaceupper}
	We begin by introducing an alternate representation of $S^{H}(\Phi)$. By
	the definition of viable pairs (see Definition \ref{defviablepairs})
	and that of $S^{H}(\Phi)$ (see \eqref{rate_function}), we can write, for
	any $\Phi\in C\brac{[0,1];\R^n}$,
	\begin{align*}
		S^{H}(\Phi)&=
		\inf_{(\Phi,P)\in\mathcal{V}(\theta,\mathcal{L})}\left[ \frac{1}{2}
		\int_{\mathcal{U}\times \mathcal{V}\times
			\mathcal{Y}\times[0,1]}\left[ \abs{u}^2+\abs{v}^2 \right]
		P(dudvdyds)\right]\\
		&= L^{r}\brac{\Phi,\bar{X}},
	\end{align*}
	where
	\begin{align*}
		L^{r}\brac{\Phi,\bar{X}}= \inf_{P\in \mathcal{A}_{\Phi}^r} \frac{1}{2}\int_{\mathcal{U}\times \mathcal{V}\times \mathcal{Y}\times [0,1]} \left[ \abs{u}^2+\abs{v}^2 \right] P(dudvdy dt).
	\end{align*}
	The set $\mathcal{A}_{\Phi}^r$ consists of elements $P\in
	\mathcal{P}(\mathcal{U}\times \mathcal{V}\times
	\mathcal{Y}\times [0,1])$ for which the decomposition
	\eqref{decompdeP} holds and  such that
	\begin{align*}
		\int_{\mathcal{U}\times \mathcal{V}\times \mathcal{Y}\times[0,1]}\left[\abs{u}^2+\abs{v}^2+\abs{y}^2  \right]  P(dudvdyds)<\infty
	\end{align*}
	and (recalling the definition of the function $\theta$ given
	in \eqref{definitionoffunctiontheta})
	\begin{align*}
		&\int_{\mathcal{U}^2\times \mathcal{V}^2\times
			\mathcal{Y}^2\times[0,t]^2}\theta\brac{\bar{X}_s,\Phi_{s},y^{(1)},y^{(2)},u^{(1)},u^{(2)},v^{(1)},v^{(2)},s,r}dP\otimes
		dP = \Phi_t.
	\end{align*}
	Now, for any $\Phi\in C\brac{[0,1];\R^n}$, let us define
	\begin{align}
		\label{ordinarycontrolpb}
		L^{o}\brac{\Phi,\bar{X}}= \inf_{w\in \mathcal{A}^o_{\Phi}} \frac{1}{2}\int_{\mathcal{Y}\times [0,1]} \abs{w(t,y)}^2 \mu(dy) dt,
	\end{align}		
	where the set $\mathcal{A}^o_{\Phi}$ consists of elements
	$w=\brac{u,v}\colon \mathcal{Y}^2\times [0,1]^2\to \R^{m+p}$ of
	$\mathcal{S}$ such that, for any $t\in[0,1]$,
	\begin{align}
		\label{equation_ordinarycontrol}
		\int_{\mathcal{Y}\times[0,t]}\left[\nabla_y\phi\brac{\bar{X}_s,y}
		\sigma\brac{y}v(s,y)+\nabla_x \bar{g}
		(\bar{X}_s)\Phi_s +\bar{f}(\bar{X}_s)\brac{\dot{K}_Hu}(s,y) \right]
		\mu(dy)ds = \Phi_t
	\end{align}
	and
	\begin{align*}\int_{ \mathcal{Y}\times
			[0,1]}\left[\abs{u(t,y)}^2+\abs{v(t,y)}^2  \right] \mu(dy)dt<\infty.
	\end{align*}
	
	Our claim is that one actually has that
	$L^{r}\brac{\Phi,\bar{X}}=L^{o}\brac{\Phi,\bar{X}}$,
	which will provide us with the representation of $S^{H}(\Phi)$ we need to
	derive the upper bound of the Laplace principle. The equivalence
	between these two control systems is the object of the following proposition.
	\begin{proposition}
		\label{prop_control_problems_equi}
		Under Conditions \ref{condH1} and
		either \ref{condH2-A} or \ref{condH2-B}, it holds that $L^r\brac{\Phi,\bar{X}}=L^o\brac{\Phi,\bar{X}}$.
	\end{proposition}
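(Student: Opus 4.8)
The plan is to prove the two inequalities $L^r\brac{\Phi,\bar{X}}\le L^o\brac{\Phi,\bar{X}}$ and $L^o\brac{\Phi,\bar{X}}\le L^r\brac{\Phi,\bar{X}}$ separately, via the standard correspondence between relaxed and ordinary controls: Dirac kernels for the first inequality, and conditional means combined with Jensen's inequality for the second. The structural fact underlying both directions, which I would establish first, is that the constraint defining $\mathcal{A}^r_{\Phi}$ and the cost $\frac12\int\left[\abs{u}^2+\abs{v}^2\right]dP$ depend on an admissible $P$ only through the first conditional moments of its disintegration \eqref{decompdeP}. Writing $\nu_{y,t}$ for the kernel in $P(dudvdydt)=\nu_{y,t}(dudv)\mu(dy)dt$ and setting
\[
\bar{u}(y,t)=\int_{\mathcal{U}\times\mathcal{V}}u\,\nu_{y,t}(dudv),\qquad \bar{v}(y,t)=\int_{\mathcal{U}\times\mathcal{V}}v\,\nu_{y,t}(dudv),
\]
I would carry out the reduction of the $P\otimes P$ integral appearing in the viable-pair constraint. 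Since $\theta$ is affine in $u^{(2)}$ and $v^{(1)}$, and its fBm term couples $f(\bar{X}_s,y^{(1)})$ from the first factor with $u^{(2)}$ from the second, integrating out the variables of the factor that does not carry the active control (using that $\nu_{y,t}$ is a probability kernel and that $\int_{\mathcal{Y}}f(\bar{X}_s,y)\mu(dy)=\bar{f}(\bar{X}_s)$) collapses the double integral to the single-integral form of Proposition \ref{proposition_limit_eta}. Recognizing the inner time-convolution $c_Hs^{H-1/2}\int_0^s(s-r)^{H-3/2}r^{1/2-H}(\cdot)\,dr$ as the operator $\dot{K}_H$ exactly as in Remark \ref{R:CoreIdea}, this shows that the constraint on $P$ is precisely the ordinary constraint \eqref{equation_ordinarycontrol} written for the pair $(\bar{u},\bar{v})$.

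For the inequality $L^r\le L^o$, given any $w=(u,v)\in\mathcal{A}^o_{\Phi}$ I would take the Dirac kernel $\nu_{y,t}=\delta_{(u(t,y),v(t,y))}$ and set $P(dudvdydt)=\nu_{y,t}(dudv)\mu(dy)dt$. The decomposition \eqref{decompdeP} holds by construction; the moment bound $\int\left[\abs{u}^2+\abs{v}^2+\abs{y}^2\right]dP<\infty$ follows from the $L^2$ admissibility of $w$ together with the finiteness of the second moment of $\mu$ guaranteed by Condition \ref{condH1}; and by the reduction above the constraint defining $\mathcal{A}^r_{\Phi}$ becomes \eqref{equation_ordinarycontrol}, which $w$ satisfies. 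Hence $P\in\mathcal{A}^r_{\Phi}$, and because the conditional means of a Dirac kernel are the control values themselves, the two cost functionals agree for this pair, giving $L^r\le L^o$.

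For the reverse inequality, given $P\in\mathcal{A}^r_{\Phi}$ I would pass to the conditional means $(\bar{u},\bar{v})$ above, whose measurability in $(y,t)$ follows from a regular disintegration of $P$ on the underlying Polish space. By the structural reduction, $(\bar{u},\bar{v})$ satisfies \eqref{equation_ordinarycontrol}, and membership in $\mathcal{S}$ amounts to finiteness of $\int_{\mathcal{Y}\times[0,1]}\left[\abs{\bar{u}}^2+\abs{\bar{v}}^2\right]\mu(dy)dt$, so that $(\bar{u},\bar{v})\in\mathcal{A}^o_{\Phi}$. Applying Jensen's inequality to the convex map $z\mapsto\abs{z}^2$ inside each $\nu_{y,t}$ gives $\abs{\bar{u}(y,t)}^2\le\int\abs{u}^2\nu_{y,t}(dudv)$ and $\abs{\bar{v}(y,t)}^2\le\int\abs{v}^2\nu_{y,t}(dudv)$, and integrating against $\mu(dy)dt$ yields
\[
\frac12\int_{\mathcal{Y}\times[0,1]}\left[\abs{\bar{u}}^2+\abs{\bar{v}}^2\right]\mu(dy)dt\le\frac12\int_{\mathcal{U}\times\mathcal{V}\times\mathcal{Y}\times[0,1]}\left[\abs{u}^2+\abs{v}^2\right]dP.
\]
Taking the infimum over $P\in\mathcal{A}^r_{\Phi}$ on the right and bounding the left-hand side below by $L^o\brac{\Phi,\bar{X}}$ produces $L^o\le L^r$; combining the two inequalities establishes the claim.

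I expect the main obstacle to be the constraint-reduction step of the first paragraph: one must verify carefully that the $P\otimes P$ double integral collapses to the single $\mu$-averaged form of \eqref{equation_ordinarycontrol}, tracking which factor supplies $f(\bar{X}_s,y^{(1)})$ versus the active control $u^{(2)}$, exploiting the affine dependence of $\theta$ on the controls and the product structure to factorize the integral, and correctly identifying the resulting time-convolution as $\dot{K}_H$ and commuting it past the $\mu$-integration. Once this is in place, the Jensen bound and the Dirac construction are routine; the only additional technical point is the measurable selection of the conditional means $(\bar{u},\bar{v})$, which is supplied by the standard disintegration of a probability measure on a Polish space.
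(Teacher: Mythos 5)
Your proposal is correct and follows essentially the same route as the paper: the paper likewise maps an ordinary control to a relaxed one via Dirac kernels $P(dudvdydt)=\delta_{u(t,y)}(du)\delta_{v(t,y)}(dv)\mu(dy)dt$, and maps a relaxed control to an ordinary one by taking the conditional means of the disintegration kernel $\nu_{t,y}$ and comparing costs with Jensen's inequality. Your upfront constraint-reduction step (collapsing the $P\otimes P$ constraint to \eqref{equation_ordinarycontrol} by exploiting that $\theta$ is affine in the controls, that $\nu_{t,y}$ is a probability kernel, and that $\mu$-averaging $f$ produces the $\bar{f}\,\dot{K}_H u$ term) is precisely the verification the paper leaves implicit when it asserts that the conditional-mean control belongs to $\mathcal{A}^o_{\Phi}$.
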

	\begin{proof}
		Let us first show
		$L^r\brac{\Phi,\bar{X}}\geq
		L^o\brac{\Phi,\bar{X}}$. Choose any $P\in
		\mathcal{A}_{\Phi}^r$. Then, by definition of $\mathcal{A}_{\Phi}^r$, the
		decomposition
		$P(dudvdydt)=\nu_{t,y}(dudv)\mu(dy)dt$ holds. This
		allows us to define an element $w=\brac{u,v}$ with
		\begin{align}
			\label{eq:ordinarycontrol}
			u(t,y)=\int_{\mathcal{U}\times\mathcal{V}}
			v\nu_{t,y}(dudv)\quad \mbox{and}\quad v(t,y)=\int_{\mathcal{U}\times\mathcal{V}} v\nu_{t,y}(dudv).
		\end{align}
		We claim that $w\in \mathcal{A}^o_{\Phi}$. Jensen's inequality and the decomposition of $P$ imply
		\begin{align}
			\label{estimate_costuv_less_costP}
			&\int_{ \mathcal{Y}\times
				[0,1]}\left[\abs{u(t,y)}^2+\abs{v(t,y)}^2  \right] \mu(dy)dt=\int_{\mathcal{Y}\times
				[0,1]}\left[ \brac{\int_{\mathcal{U}\times\mathcal{V}}
				z_1\nu_{t,y}(dudv)}^2\right.\nonumber \\
			&\left. \qquad\qquad\qquad\qquad\qquad\qquad\qquad\quad +\brac{\int_{\mathcal{U}\times\mathcal{V}} z_2\nu_{t,y}(dudv)}^2+\brac{\int_{\mathcal{U}\times\mathcal{V}} v\nu_{t,y}(dudv)}^2\right]\mu(dy)dt\nonumber\\
			&\qquad\qquad\qquad\qquad\qquad\qquad\qquad\leq
			\int_{\mathcal{U}\times \mathcal{V}\times
				\mathcal{Y}\times [0,1]}\left[ \abs{u}^2+ \abs{v}^2 \right] \nu_{t,y}(dudv)\mu(dy)dt\nonumber\\&\qquad\qquad\qquad\qquad\qquad\qquad\qquad= \int_{\mathcal{U}\times \mathcal{V}\times \mathcal{Y}\times [0,1]}\left[ \abs{u}^2+ \abs{v}^2 \right] P(dudvdydt)<\infty.
		\end{align}
		Hence, the last property in the definition of
		$\mathcal{A}^o_{\Phi}$ is satisfied and based on
		\eqref{eq:ordinarycontrol}, so is the first one. This
		shows that $w^{(1)}(t,y)\in
		\mathcal{A}^o_{\Phi}$. Furthermore,
		\eqref{estimate_costuv_less_costP} yields
		\begin{align*}
			L^r\brac{\Phi,\bar{X}}&=\inf_{P\in \mathcal{A}_{\Phi}^r} \frac{1}{2}\int_{\mathcal{U}\times \mathcal{V}\times \mathcal{Y}\times [0,1]}\left[ \abs{u}^2+\abs{v}^2 \right]  P(dudvdydt)\\
			&\geq \inf_{P\in \mathcal{A}_{\Phi}^r}\frac{1}{2}\int_{\mathcal{Y}\times [0,1]}\left[\abs{u(t,y)}^2+\abs{v(t,y)}^2  \right]  \mu(dy) dt\\
			&\geq \inf_{w\in \mathcal{A}^o_{\Phi}} \frac{1}{2}\int_{\mathcal{Y}\times [0,1]} \abs{w(t,y)}^2 \mu(dy) dt=L^o\brac{\Phi,\bar{X}}.
		\end{align*}
		It remains to prove that $L^r\brac{\Phi,\bar{X}}\leq
		L^o\brac{\Phi,\bar{X}}$. To this end, choose any
		$w=(u,v)\in \mathcal{A}^o_{\Phi}$ and construct a measure $P\in
		\mathcal{A}_{\Phi}^r$ according to
		\begin{align*}
			P(dudvdydt)=\delta_{u(t,y)}(du)\delta_{v(t,y) }(dv)\mu(dy)dt.
		\end{align*}
		Checking that $P$ satisfies all the needed
		properties to belong to $\mathcal{A}_{\Phi}^r$ is
		similar to what was done above. We hence have a set
		$\mathcal{B}^r\subseteq \mathcal{A}_{\Phi}^r$ that
		corresponds to the set  $\mathcal{A}^o_{\Phi}$, from
		which we deduce that
		\begin{align*}
			L^o\brac{\Phi,\bar{X}}&=\inf_{(u,v)\in \mathcal{A}^o_{\Phi}} \frac{1}{2}\int_{\mathcal{Y}\times [0,1]}\left[ \abs{u(t,y)}^2+\abs{v(t,y)}^2 \right]  \mu(dy)dt\\
			&\geq \inf_{P\in \mathcal{A}_{\Phi}^r} \frac{1}{2}\int_{\mathcal{U}\times \mathcal{V}\times \mathcal{Y}\times [0,1]}\left[ \abs{u}^2+\abs{v}^2 \right]  P(dudvdy dt)=L^r\brac{\Phi,\bar{X}},
		\end{align*}
		which concludes the proof.
	\end{proof}
	The next step is to derive an explicit expression of
	$L^o\brac{\Phi,\bar{X}}$. The statement of this expression
	requires us to introduce some linear maps. For a given $x\in
	C\brac{[0,1];\R^n}$, let $\pi_x:L^2\brac{\mathcal{Y}\times [0,1];\R^m}\to L^2\brac{\mathcal{Y}\times [0,1];\R^n}$ and $\rho_x:L^2\brac{\mathcal{Y}\times [0,1];\R^p}\to L^2\brac{\mathcal{Y}\times [0,1];\R^n}$ be two operators defined by
	\begin{align*}
		\pi_x u(t)= \int_{\mathcal{Y}}\bar{f}(x)\dot{K}_Hu(t,y)\mu(dy),\qquad
		\rho_x v(t)=\int_{\mathcal{Y}} \nabla_y\phi\brac{x,y} \sigma\brac{y}v(t,y)\mu(dy).
	\end{align*}
	Under either Condition \ref{condH2-A} or \ref{condH2-B}, $\bar{f}(x)$ is bounded. This fact and Proposition \ref{prop_dotKHinL2} yield
	\begin{align*}
		\norm{\pi_x u}_{L^2\brac{[0,1];\R^n}}\leq C\sqrt{\int_{\mathcal{Y}\times [0,1]}\abs{\dot{K}_Hu(t,y)}^2\mu(dy)dt}\leq C\norm{u}_{L^2\brac{\mathcal{Y}\times [0,1];\R^m}},
	\end{align*}
	so that $\pi_x$ is bounded. The operator $\rho_x$ is also bounded via Lemma
	\ref{lemma_estimatefromMorKos2} and the estimates
	\eqref{estimate_Poissonsolution_h1}, \eqref{estimate_Poissonsolution_h2}. Therefore, the Hilbert
	adjoints $\pi_x^*$ and $\rho_x^*$ are well-defined and given
	by $\pi_x^* h=\dot{K}_H^*\brac{\bar{f}}^\top(x) h$ and
	$\rho_x^* h=\sigma\brac{\cdot}^\top\brac{\nabla_y\phi}^\top\brac{x,\cdot}
	h$, respectively. It follows from these facts that
	\begin{align*}
		\Sigma_x(u,v)=\pi_x u+\rho_x v
	\end{align*}
	is also a bounded operator. Thus, its
	Hilbert adjoint $\Sigma_x^*$ exists and is given by
	$\Sigma_x^*h=\brac{\pi_x^*h,\rho_x^*h}$. With these
	definitions at hand, let us finally define the operator $Q^{H}_x$ from
	$L^2\brac{[0,1];\R^n}$ to itself by
	\begin{align}
		\label{def_Q}
		Q^{H}_x&=\Sigma_x\Sigma_x^*=\bar{f}(x)\dot{K}_H\brac{\bar{f}(x)\dot{K}_H}^*+\int_{\mathcal{Y}}^{}\brac{\nabla_y\phi\brac{x,y}
			\sigma\brac{y}}\brac{\nabla_y\phi\brac{x,y} \sigma\brac{y}}^\top \mu(dy).
	\end{align}
	such that for $h\in L^2\brac{[0,1];\R^n}$,
	\begin{align*}
		&\left[\bar{f}\brac{\bar{X}}\dot{K}_H\brac{\bar{f}\brac{\bar{X}}\dot{K}_H}^* h\right](t)=c_H^2\bar{f}\brac{\bar{X_t}}t^{H-1/2}\\&\qquad\qquad\qquad\qquad\int_0^t(t-z)^{H-3/2}z^{1-2H}\int_z^1 (s-z)^{H-3/2}s^{H-1/2}\bar{f}\brac{\bar{X_s}}^\top h(s)dsdz.
	\end{align*}
	We are now ready to present the explicit expression of
	$L^o\brac{\Phi,\bar{X}}$, which is the object of the next proposition.
	\begin{proposition}
		\label{proposition_optimalcontrol}
		Assume Conditions \ref{condH1} and either
		\ref{condH2-A} or \ref{condH2-B}, and further that the operator $Q^{H}_{\bar{X}}$ is invertible. Then the ordinary control
		problem \eqref{ordinarycontrolpb} has a finite minimum
		cost if and only if $\Phi$ is absolutely continuous and $\dot{\Phi}$ (defined a.e.) is square integrable. In this case, the solution is given by
		\begin{align*}
			L^o\brac{\Phi,\bar{X}}= \int_0^1\brac{\dot{\Phi}_s-\nabla_x \bar{g} (\bar{X}_s)\Phi_s}^\top(Q^{H}_{\bar{X}_{s}})^{-1}\brac{\dot{\Phi}_s-\nabla_x \bar{g} (\bar{X_s})\Phi_s}ds
		\end{align*}
		and is achieved for the optimal control
		\begin{align*}
			\brac{\bar{u},\bar{v}}=\brac{\pi^* (Q^{H}_{\bar{X}})^{-1}\brac{\dot{\Phi}-\nabla_x \bar{g} \brac{\bar{X}}\Phi},\rho^*(Q^{H}_{\bar{X}})^{-1}\brac{\dot{\Phi}-\nabla_x \bar{g} \brac{\bar{X}}\Phi}}.
		\end{align*}
	\end{proposition}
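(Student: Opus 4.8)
The plan is to recognize the ordinary control problem \eqref{ordinarycontrolpb} as a minimum-norm problem over an affine subspace of $L^2\brac{\Y\times[0,1]}$ and to solve it by orthogonal projection onto $\brac{\ker\Sigma_{\bar X}}^\perp$. First I would rewrite the constraint \eqref{equation_ordinarycontrol} in operator form. Since $\mu$ is a probability measure and the term $\nabla_x\bar{g}(\bar X_s)\Phi_s$ does not depend on $y$, the left-hand side of \eqref{equation_ordinarycontrol} equals $\int_0^t\brac{\pi_{\bar X}u(s)+\rho_{\bar X}v(s)+\nabla_x\bar g(\bar X_s)\Phi_s}\,ds$, with $\pi_{\bar X},\rho_{\bar X}$ the bounded operators introduced just before the proposition and $\Sigma_{\bar X}=\pi_{\bar X}+\rho_{\bar X}$, $Q^{H}_{\bar X}=\Sigma_{\bar X}\Sigma_{\bar X}^*$. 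Because $u,v\in L^2$ and $\pi_{\bar X},\rho_{\bar X}$ map into $L^2\brac{[0,1];\R^n}$ (boundedness established via Proposition \ref{prop_dotKHinL2}, Lemma \ref{lemma_estimatefromMorKos2} and \eqref{estimate_Poissonsolution_h1}--\eqref{estimate_Poissonsolution_h2}), while $\nabla_x\bar g$ is bounded and $\Phi$ is continuous, the integrand lies in $L^2\brac{[0,1];\R^n}$. Hence membership of any $w$ in $\mathcal{A}^o_\Phi$ forces $\Phi$ to be absolutely continuous with $\dot\Phi\in L^2$. Conversely, when $\Phi$ is absolutely continuous with square-integrable derivative, differentiating the constraint in $t$ shows that \eqref{equation_ordinarycontrol} is equivalent to the single operator identity $\Sigma_{\bar X}(u,v)=\Psi$, where $\Psi:=\dot\Phi-\nabla_x\bar g(\bar X)\Phi\in L^2\brac{[0,1];\R^n}$.

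This reformulation immediately yields the finiteness dichotomy. The invertibility of $Q^{H}_{\bar X}=\Sigma_{\bar X}\Sigma_{\bar X}^*$ gives $\ran\Sigma_{\bar X}\supseteq\ran Q^{H}_{\bar X}=L^2\brac{[0,1];\R^n}$, so $\Sigma_{\bar X}$ is surjective and the feasible set $\{w\colon\Sigma_{\bar X}w=\Psi\}$ is nonempty exactly when $\Psi$ is well-defined, i.e.\ exactly when $\Phi$ is absolutely continuous with $\dot\Phi\in L^2$. In the contrary case $\mathcal{A}^o_\Phi=\emptyset$ and the infimum is $+\infty$, matching the convention in \eqref{rate_function}.

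For the explicit value I would solve $\min\{\tfrac12\norm{w}^2_{L^2}\colon\Sigma_{\bar X}w=\Psi\}$ using the decomposition $L^2=\ker\Sigma_{\bar X}\oplus\overline{\ran\Sigma_{\bar X}^*}$. The element $w^*=\Sigma_{\bar X}^*\brac{Q^{H}_{\bar X}}^{-1}\Psi$ is feasible, since $\Sigma_{\bar X}w^*=Q^{H}_{\bar X}\brac{Q^{H}_{\bar X}}^{-1}\Psi=\Psi$, and it lies in $\ran\Sigma_{\bar X}^*\subseteq\brac{\ker\Sigma_{\bar X}}^\perp$; any other feasible point is $w=w^*+k$ with $k\in\ker\Sigma_{\bar X}$, so $\norm{w}^2=\norm{w^*}^2+\norm{k}^2$ and $w^*$ is the unique minimizer. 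Unwinding $\Sigma_x^*h=\brac{\pi_x^*h,\rho_x^*h}$ reproduces precisely the claimed optimal control $\brac{\bar u,\bar v}=\brac{\pi^*\brac{Q^{H}_{\bar X}}^{-1}\Psi,\rho^*\brac{Q^{H}_{\bar X}}^{-1}\Psi}$, and, using that $Q^{H}_{\bar X}$ and hence $\brac{Q^{H}_{\bar X}}^{-1}$ are self-adjoint, the minimal cost $\tfrac12\norm{w^*}^2=\tfrac12\inner{\brac{Q^{H}_{\bar X}}^{-1}\Psi,\Sigma_{\bar X}\Sigma_{\bar X}^*\brac{Q^{H}_{\bar X}}^{-1}\Psi}=\tfrac12\inner{\Psi,\brac{Q^{H}_{\bar X}}^{-1}\Psi}$ reduces to the quadratic form displayed in the statement.

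I expect the reduction in the first paragraph to be the main obstacle: the constraint is an integral-in-$t$, pointwise-in-$y$ identity, and casting it as the clean equation $\Sigma_{\bar X}w=\Psi$ requires carefully absorbing the time-nonlocality of $\dot K_H$ and the path-dependence of $\bar f(\bar X)$ and $\nabla_y\phi(\bar X,\cdot)$ into the operators, as well as invoking the boundedness and adjoint formulas for $\pi_x,\rho_x$ (already recorded before the proposition) to ensure everything lives in the right $L^2$ spaces. Once the problem is framed abstractly, the remaining steps---surjectivity from invertibility of $Q^{H}_{\bar X}$ and the projection computation of $w^*$ and its cost---are the textbook pseudo-inverse argument and should go through routinely.
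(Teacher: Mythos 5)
Your proof is correct and shares the paper's overall skeleton: reduce the constraint \eqref{equation_ordinarycontrol} to the operator identity $\Sigma_{\bar X}(u,v)=\Psi$ with $\Psi=\dot\Phi-\nabla_x\bar g(\bar X)\Phi$, deduce the finiteness dichotomy from square-integrability of the integrand, exhibit the candidate control $w^*=\Sigma_{\bar X}^*(Q^{H}_{\bar X})^{-1}\Psi$, and compute its cost $\inner{\Psi,(Q^{H}_{\bar X})^{-1}\Psi}_{L^2}$. The genuine difference is the optimality (lower-bound) step. The paper invokes Lemma \ref{lemma_KostSalins} (quoted from \cite{HuSalSpi}): for $q=aa^*$ invertible, $\norm{a^*q^{-1}au}\leq\norm{u}$; applied to any feasible $(u,v)$, for which $\Sigma_{\bar X}(u,v)=\Psi$ is fixed, this yields $\norm{(u,v)}^2\geq\norm{\Sigma_{\bar X}^*(Q^{H}_{\bar X})^{-1}\Psi}^2$. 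You instead use the orthogonal decomposition $L^2=\ker\Sigma_{\bar X}\oplus\overline{\ran\Sigma_{\bar X}^*}$ and Pythagoras: every feasible point is $w^*+k$ with $k\in\ker\Sigma_{\bar X}$ orthogonal to $w^*$. The two arguments are mathematically equivalent---the cited lemma is exactly the statement that $a^*q^{-1}a$ is the orthogonal projection onto $\brac{\ker a}^\perp$---but your version is self-contained, makes the geometry explicit, and gives uniqueness of the minimizer for free, whereas the paper's is shorter by delegating to the external lemma. Your derivation of surjectivity of $\Sigma_{\bar X}$ from invertibility of $Q^{H}_{\bar X}=\Sigma_{\bar X}\Sigma_{\bar X}^*$ also handles the ``if'' direction a bit more transparently than the paper, which simply constructs $(\bar u,\bar v)$ and verifies feasibility (the same fact in disguise). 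One shared blemish: with the factor $\tfrac12$ in the definition \eqref{ordinarycontrolpb}, the minimum cost is $\tfrac12\inner{\Psi,(Q^{H}_{\bar X})^{-1}\Psi}_{L^2}$; both you and the paper silently drop this factor when matching the displayed formula, so this is not a defect of your argument relative to the paper's.
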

	\begin{proof}
		
		In one direction, let us assume \eqref{ordinarycontrolpb} has a finite minimum cost then Equation \eqref{equation_ordinarycontrol} is satisfied for some $(u,v)\in A^o_\Phi$. Hence, $\Phi$ is absolutely continuous. Furthermore, by Cauchy-Schwarz inequality,
		\begin{align}
			\label{estimate_norm_dotphi}
			\norm{\dot{\Phi}}_{L^2([0,1];\R^n)}&= \int_0^1 	\abs{\int_{\mathcal{Y}}\left[\nabla_y\phi\brac{\bar{X}_s,y}
				\sigma\brac{y}v(s,y)+\nabla_x \bar{g}
				(\bar{X}_s)\Phi_s +\bar{f}(\bar{X}_s)\brac{\dot{K}_Hu}(s,y) \right]
				\mu(dy)}^2ds\nonumber\\
			&\leq \int_{\mathcal{Y}\times [0,1]} \abs{\nabla_y\phi\brac{\bar{X}_s,y}
				\sigma\brac{y}v(s,y)}^2+\abs{\nabla_x \bar{g}
				(\bar{X}_s)\Phi_s}^2+\abs{\bar{f}(\bar{X}_s)\brac{\dot{K}_Hu}(s,y)}^2\mu(dy)ds
		\end{align}
		To see that $\dot{\Phi}$ is square integrable, we study the right hand side of this inequality. $\nabla_y\phi\brac{\bar{X}_s,y}
		\sigma\brac{y}v(s,y)\in L^2(\mathcal{Y}\times [0,1];\R^n)$ due to Lemma \ref{lemma_estimatefromMorKos2}, Remark \ref{remark_poisson_equation} and boundedness of $\sigma(y)\sigma(y)^T$ in Condition \ref{condH1}. Next, we have $u\in L^2(\mathcal{Y}\times [0,1];\R^m)$, $v\in L^2(\mathcal{Y}\times [0,1];\R^p)$ and it follows from Proposition \ref{prop_dotKHinL2} that $\dot{K}_Hu\in L^2(\mathcal{Y}\times [0,1];\R^m)$. This along with the fact that $\bar{f}(x),\nabla_x\bar{g}(x)$ is bounded under either Condition \ref{condH2-A} or \ref{condH2-B}, and the fact that $\Phi$ is bounded since its derivative exists a.e. on $[0,1]$, imply the remaining quantities on the right side of \eqref{estimate_norm_dotphi} are in $L^2(\mathcal{Y}\times [0,1];\R^n)$. Therefore, we can conclude $\dot{\Phi}$ is square integrable.
		
		In the other direction, let us assume that $\Phi$ is absolutely continuous and that the  a.e. defined derivative is square integrable. Then $\dot{\Phi}-\nabla_x \bar{g} \brac{\bar{X}}\Phi$
		is also square integrable. Then, we can construct a control
		$(\bar{u},\bar{v})$ as in the statement of the proposition so
		that the set $A^o_\Phi$ associated with the ordinary control
		problem \eqref{ordinarycontrolpb} is non-empty and therefore, the minimum cost $L^o$ is finite. This settles the first claim of this proposition.
		

		Next, let us derive an explicit formula for the
		minimum cost. When $\Phi$ is absolutely continuous and its a.e. defined derivative is square integrable, we have
		\begin{align*}
			\norm{(\bar{u},\bar{v})}^2_{L^2\brac{\mathcal{Y}\times
					[0,1];\R^m\times \R^p}}&= \norm{\bar{u}}_{L^2\brac{\mathcal{Y}\times
					[0,1];\R^m}}^2+\norm{\bar{v}}_{L^2\brac{\mathcal{Y}\times
					[0,1];\R^p}}^2
			\\&=
			\inner{(Q^{H}_{\bar{X}})^{-1}\brac{\dot{\Phi}-\nabla_x
					\bar{g} \brac{\bar{X}}\Phi},\pi\pi^*
				(Q^{H}_{\bar{X}})^{-1}\brac{\dot{\Phi}-\nabla_x
					\bar{g} \brac{\bar{X}}\Phi}}_{L^2\brac{[0,1];\R^n}}
			\\&\quad +\inner{(Q^{H}_{\bar{X}})^{-1}\brac{\dot{\Phi}-\nabla_x \bar{g} (X)\Phi},\rho\rho^* (Q^{H}_{\bar{X}})^{-1}\brac{\dot{\Phi}-\nabla_x \bar{g} (X)\Phi}}_{L^2\brac{[0,1];\R^n}}
			\\&=\inner{(Q^{H}_{\bar{X}})^{-1}\brac{\dot{\Phi}-\nabla_x \bar{g} ({\bar{X}})\Phi},Q^{H}_{\bar{X}} (Q^{H}_{\bar{X}})^{-1}\brac{\dot{\Phi}-\nabla_x \bar{g} ({\bar{X}})\Phi}}_{L^2\brac{[0,1];\R^n}}\\
			&=\inner{\dot{\Phi}-\nabla_x \bar{g} ({\bar{X}})\Phi,(Q^{H}_{\bar{X}})^{-1}\brac{\dot{\Phi}-\nabla_x \bar{g} ({\bar{X}})\Phi}}_{L^2\brac{[0,1];\R^n}}.
		\end{align*}
		Since we also know that $(\bar{u},\bar{v})\in
		A^o_\Phi$, this implies
		\begin{align*}
			L^o\brac{\Phi,\bar{X},\Phi}\leq \inner{\dot{\Phi}-\nabla_x \bar{g} \brac{\bar{X}}\Phi,(Q^{H}_{\bar{X}})^{-1}\brac{\dot{\Phi}-\nabla_x \bar{g} \brac{\bar{X}}\Phi}}_{L^2\brac{[0,1];\R^n}}.
		\end{align*}
		Furthermore, by Lemma \ref{lemma_KostSalins} and the
		fact that ${\dot{\Phi}-\nabla_x \bar{g}
			(X)\eta}=\Sigma_X(u,v)$, we can write
		\begin{align*}
			L^o\brac{\Phi,\bar{X}}&\geq\inf_{(u,v)\in \mathcal{A}^o_\Phi}\norm{\Sigma_{\bar{X}}^*(Q^{H}_{\bar{X}})^{-1}\Sigma\brac{u,v}}^2_{L^2\brac{\mathcal{Y}\times[0,1];\R^m\times \R^p}}\\
			&=\norm{\Sigma_{\bar{X}}^*(Q^{H}_{\bar{X}})^{-1}\brac{\dot{\Phi}-\nabla_x \bar{g} \brac{\bar{X}}\Phi}}^2_{L^2\brac{\mathcal{Y}\times[0,1];\R^m\times \R^p}}\\
			&=\inner{(Q^{H}_{\bar{X}})^{-1}\brac{\dot{\Phi}-\nabla_x \bar{g} \brac{\bar{X}}\Phi},\Sigma_{\bar{X}}\Sigma_{\bar{X}}^*(Q^{H}_{\bar{X}})^{-1}\brac{\dot{\Phi}-\nabla_x \bar{g} \brac{\bar{X}}\Phi}}_{L^2\brac{[0,1];\R^n}}\\&=\inner{\dot{\Phi}-\nabla_x \bar{g} \brac{\bar{X}}\Phi,(Q^{H}_{\bar{X}})^{-1}\brac{\dot{\Phi}-\nabla_x \bar{g} \brac{\bar{X}}\Phi}}_{L^2\brac{[0,1];\R^n}}.
		\end{align*}
		Thus, the minimum cost of the ordinary control problem
		\eqref{ordinarycontrolpb} is the quantity in the last line.
	\end{proof}
	We are now ready to prove the Laplace principle upper bound, which is
	the object of the next proposition.
	\begin{proposition}
		\label{proposition_laplace_upper}
		Assume Conditions \ref{condH1} and either
		\ref{condH2-A} or \ref{condH2-B}, and further that the operator $Q^{H}_{\bar{X}}$ is invertible. Then the following Laplace principle upper bound holds.
		\begin{align*}
			\limsup_{\epsilon\to 0}-\frac{1}{h^2(\epsilon)}\ln \E{\exp\brac{-h^2(\epsilon)a(\eta^\epsilon)}}\leq \inf_{\Phi\in C\brac{[0,1];\R^n}} S^{H}(\Phi)+a(\Phi), \end{align*}
		where the function $S$ is the one defined at \eqref{rate_function}.
	\end{proposition}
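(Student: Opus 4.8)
The plan is to combine the variational representation \eqref{equation_varrep} with the explicit cost identification from Propositions \ref{prop_control_problems_equi} and \ref{proposition_optimalcontrol}, complementing the lower bound of Proposition \ref{proposition_laplace_lower} by constructing nearly optimal controls in the prelimit. Fix $\delta>0$. Since $a$ is bounded and $S^{H}\geq 0$, the quantity $\inf_{\Phi}\{S^{H}(\Phi)+a(\Phi)\}$ is finite, so I may pick $\psi\in C\brac{[0,1];\R^n}$ with $S^{H}(\psi)<\infty$ and $S^{H}(\psi)+a(\psi)\leq \inf_{\Phi}\{S^{H}(\Phi)+a(\Phi)\}+\delta$. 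Because $S^{H}(\psi)=L^{o}(\psi,\bar{X})<\infty$, Proposition \ref{proposition_optimalcontrol} (which is precisely where invertibility of $Q^{H}_{\bar{X}}$ enters) guarantees that $\psi$ is absolutely continuous with square integrable derivative and furnishes an ordinary control $(u,v)\in\mathcal{A}^o_\psi$ realizing the cost. After mollifying and truncating, I would replace $(u,v)$ by a bounded continuous pair $(u^\delta,v^\delta)$ whose cost exceeds $S^{H}(\psi)$ by at most $\delta$ and whose associated path $\psi^\delta$ solving \eqref{equation_ordinarycontrol} satisfies $\norm{\psi^\delta-\psi}_{\infty}\leq\delta$; continuity of $a$ on the relevant compact set then keeps $a(\psi^\delta)$ within $o_\delta(1)$ of $a(\psi)$.

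Next I would turn this ordinary feedback control into a prelimit control by setting $\hat{u}^\epsilon_s=u^\delta(s,Y^\eu_s)$ and $\hat{v}^\epsilon_s=v^\delta(s,Y^\eu_s)$ and taking $w^\epsilon=(K_H\hat{u}^\epsilon,K_{1/2}\hat{v}^\epsilon)$. The key point is that the fast equation in \eqref{equation_controlled_XY} then closes in $Y^\eu$ alone, namely $dY^\eu_t=[\tfrac{1}{\epsilon}c(Y^\eu_t)+\tfrac{h(\epsilon)}{\sqrt\epsilon}\sigma(Y^\eu_t)v^\delta(t,Y^\eu_t)]dt+\tfrac{1}{\sqrt\epsilon}\sigma(Y^\eu_t)dB_t$, so existence and uniqueness of $(X^\eu,Y^\eu)$ follow from the Lipschitz and boundedness hypotheses. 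Since $u^\delta,v^\delta$ are bounded, $\sup_{\epsilon>0}\norm{w^\epsilon}^2_{\mathcal{S}}=\sup_{\epsilon>0}\int_0^1(\abs{\hat{u}^\epsilon_s}^2+\abs{\hat{v}^\epsilon_s}^2)ds$ is bounded almost surely, so Proposition \ref{propontightness} applies and, along any subsequence, $(\eta^\eu,P^\epsilon)$ converges in distribution to some $(\bar{\eta},\bar{P})$.

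The crux is identifying this limit. The extra control drift in $Y^\eu$ is of relative order $\sqrt\epsilon\,h(\epsilon)\to 0$ against the mean reverting drift $\tfrac{1}{\epsilon}c$, so $Y^\eu$ still equilibrates to $\mu$; combined with the continuity of $u^\delta,v^\delta$ and the ergodic averaging already exploited in Proposition \ref{proposition_occu_decomp}, this forces the limiting occupation measure to take the sharp product-of-Diracs form $\bar{P}(du\,dv\,dy\,dt)=\delta_{u^\delta(t,y)}(du)\delta_{v^\delta(t,y)}(dv)\mu(dy)dt$. Feeding this $\bar{P}$ into the limit equation of Proposition \ref{proposition_limit_eta}, integrating the $f$-factor over the first copy's $y$-variable (which yields $\bar{f}(\bar{X}_s)$) and, by linearity of $\dot{K}_H$ in the time variable, recovering over the second copy the term $\int_{\mathcal{Y}}\bar{f}(\bar{X}_s)(\dot{K}_Hu^\delta)(s,y)\mu(dy)$ of \eqref{equation_ordinarycontrol}, collapses that equation into exactly the ordinary control equation driven by $(u^\delta,v^\delta)$. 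Since the latter is a linear Volterra equation in $\bar{\eta}$, its solution is unique, whence $\bar{\eta}=\psi^\delta$. A parallel averaging argument gives the cost convergence $\E{\tfrac{1}{2}\int_0^1(\abs{\hat{u}^\epsilon_s}^2+\abs{\hat{v}^\epsilon_s}^2)ds}\to\tfrac{1}{2}\int_{\mathcal{Y}\times[0,1]}(\abs{u^\delta}^2+\abs{v^\delta}^2)\mu(dy)dt$.

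Assembling these pieces, I would bound $-\tfrac{1}{h^2(\epsilon)}\ln\E{\exp(-h^2(\epsilon)a(\eta^\epsilon))}$, which by \eqref{equation_varrep} equals an infimum over controls, from above by its value at the particular $w^\epsilon$ just built; passing to the $\limsup$ and using the cost convergence together with $a(\eta^\eu)\to a(\psi^\delta)$ (Skorokhod representation, continuity of $a$, and bounded convergence) yields $\limsup_{\epsilon\to0}[\cdots]\leq\tfrac{1}{2}\int_{\mathcal{Y}\times[0,1]}(\abs{u^\delta}^2+\abs{v^\delta}^2)\mu(dy)dt+a(\psi^\delta)\leq S^{H}(\psi)+\delta+a(\psi)+o_\delta(1)\leq\inf_{\Phi}\{S^{H}(\Phi)+a(\Phi)\}+2\delta+o_\delta(1)$. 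Letting $\delta\to0$ completes the proof, and the explicit form of the cost from Proposition \ref{proposition_optimalcontrol} simultaneously delivers the representation of Theorem \ref{theorem_rate_function}. I expect the main obstacle to be precisely the limit identification of the third paragraph: ensuring, through a mollification that preserves near-optimality and through the ergodic behavior of the perturbed fast process, that $\bar{P}$ is the sharp product measure rather than merely admitting the abstract disintegration of Proposition \ref{proposition_occu_decomp}, and that the $\dot{K}_H$ term is reproduced correctly in the limit.
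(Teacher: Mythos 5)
Your proposal is correct, and it shares the paper's overall strategy (choose a near-optimal path, build a near-optimal prelimit control out of Proposition \ref{proposition_optimalcontrol}, pass to the limit inside the variational representation \eqref{equation_varrep}), but the control construction and the limit identification are genuinely different from the paper's. The paper inserts the optimal control in \emph{state-feedback} form: it takes $w_0=\brac{\widetilde u\brac{\Phi_0,X^\eu,Y^\eu,\eta^\eu},\widetilde v\brac{\Phi_0,X^\eu,Y^\eu,\eta^\eu}}$ with $\widetilde{w}(\phi,x,y,\eta)=\brac{\pi_x^*Q_x^{-1}\brac{\dot\phi-\nabla_x\bar g(x)\eta},\rho_x^*Q_x^{-1}\brac{\dot\phi-\nabla_x\bar g(x)\eta}}$, so the controlled process $\eta^\eu$ itself is fed back into the control. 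In the limit the two control terms then recombine as $\pi\pi^*+\rho\rho^*=Q^{H}_{\bar X}$, and the algebraic cancellation $Q^{H}_{\bar X}(Q^{H}_{\bar X})^{-1}=I$ makes the limit equation collapse to $\bar\eta_t=\int_0^t\dot\Phi_0(s)\,ds=\Phi_0(t)$ outright (equation \eqref{eta_is_phi}): no mollification, no sharp identification of $\bar P$, and no uniqueness argument are needed. You instead freeze the control (open loop in $\eta$), mollify it to a bounded continuous pair, and evaluate it along $Y^\eu$ only; this forces exactly the two extra steps you flag, namely identifying $\bar P=\delta_{u^\delta(t,y)}(du)\,\delta_{v^\delta(t,y)}(dv)\,\mu(dy)dt$ (which does go through: $P^\epsilon$ is a.s.\ supported on the closed graph of the continuous pair $(u^\delta,v^\delta)$, hence so is any weak limit, and Proposition \ref{proposition_occu_decomp} pins down the $(y,t)$-marginal) and Gronwall-type uniqueness for the limiting linear Volterra equation to get $\bar\eta=\psi^\delta$. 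Each route buys something. Yours makes the controls a.s.\ uniformly bounded, so the hypothesis $\sup_{\epsilon>0}\norm{w^\epsilon}^2_{\mathcal{S}}<N$ of Proposition \ref{propontightness} holds trivially; in the paper's proof this admissibility point is actually delicate under Condition \ref{condH2-B}, because $\rho_x^*$ involves $\nabla_y\phi(x,y)$, which grows polynomially in $y$, so the feedback control is not obviously bounded. The paper's route avoids all of your approximation bookkeeping: the mollification (which, note, should be carried out so that $u^\delta,v^\delta$ are Lipschitz in $y$, in order to guarantee well-posedness of the closed fast SDE you write down), the continuity of the cost and of the control-to-path map in $L^2(\mu\otimes dt)$, and the $o_\delta(1)$ control of $a(\psi^\delta)-a(\psi)$. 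Both arguments finish identically with the chain of inequalities starting from \eqref{equation_varrep}.
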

	\begin{proof}
		We can assume, without loss of generality, that $\inf_{\Phi\in C\brac{[0,1];\R^n}}S^{H}(\Phi)<\infty$, so that for any $\zeta>0$, there exists an element $\Phi_0\in C\brac{[0,1];\R^n}$ for which
		\begin{align*}
			S^{H}(\Phi_0)+h(\Phi_0)\leq   \inf_{\Phi\in C\brac{[0,1];\R^n}} \brac{S^{H}(\Phi)+h(\Phi)}+\zeta.
		\end{align*}
		Let us also define
		\begin{align*}
			\widetilde{w}(\phi,x,y,\eta)&=\brac{\widetilde{u} (\phi,x,y,\eta),\widetilde{v} (\phi,x,y,\eta)}=\brac{\pi_x^* Q_x^{-1}\brac{\dot{\phi}-\nabla_x \bar{g} (x)\eta },\rho_x^*Q_x^{-1}\brac{\dot{\phi}-\nabla_x \bar{g} (x)\eta }}
		\end{align*}
		and
		\begin{align*}
			w_0=\brac{\widetilde{u} \brac{\Phi_0,X^\eu,Y^\eu,\eta^\eu},\widetilde{v}\brac{\Phi_0,X^\eu,Y^\eu,\eta^\eu}}.
		\end{align*}
		We can then substitute $w_0$ into the control variable
		of equation \eqref{equation_eta_new} and take the
		limit of $\eta^{\epsilon,w_0}$ as $\epsilon\to
		0$. This procedure is the same as the one that was carried out in Proposition \ref{proposition_limit_eta} and after which we obtained
		\begin{align}
			\label{eta_is_phi}
			\bar{\eta}_t&=\int_0^t \left[
			\rho\brac{\rho^*(Q^{H}_{\bar{X}})^{-1}\brac{\dot{\Phi_0}(s)-\nabla_x
					\bar{g} (\bar{X}_s)\bar{\eta}_s
			}}+\pi\brac{\pi^*
				(Q^{H}_{\bar{X}})^{-1}\brac{\dot{\Phi_0}(s)-\nabla_x
					\bar{g} (\bar{X}_s)\bar{\eta}_s
			}} \right]ds\nonumber \\
			&\quad +\int_0^t\nabla_x \bar{g} (\bar{X}_s)\bar{\eta}_s ds\nonumber \\
			&=\int_0^t
			Q^{H}_{\bar{X}}(Q^{H}_{\bar{X}})^{-1}\brac{\dot{\Phi_0}(s)-\nabla_x
				\bar{g} (\bar{X}_s)\bar{\eta}_s }ds+\int_0^t \nabla_x \bar{g} (\bar{X}_s)\bar{\eta}_s ds\nonumber
			\\
			&=\Phi_0(t).
		\end{align}
		In addition, we have
		\begin{align}
			\label{prelimit_cost}
			&\lim_{\epsilon\to 0} \E{\frac{1}{2}\int_0^1 \abs{\widetilde{u}(\Phi_0(s),X^\eu_s,Y^\eu_s,\eta^\eu_s)}^2+\abs{\widetilde{v}(\Phi_0(s),X^\eu_s,Y^\eu_s,\eta^\eu_s)}^2 ds}\nonumber \\
			&\qquad\qquad\qquad\qquad\qquad = \E{\frac{1}{2}\int_{\mathcal{Y}\times [0,1]} \abs{\widetilde{u}(\Phi_0(s),\bar{X}_s,y,\bar{\eta}_s)}^2+\abs{\widetilde{v}(\Phi_0(s),\bar{X}_s,y,\bar{\eta}_s)}^2\mu(dy) ds}\nonumber \\
			&\qquad\qquad\qquad\qquad\qquad =S^{H}(\Phi_0),
		\end{align}
		where the last equality is a consequence of Propositions \ref{prop_control_problems_equi}, \ref{proposition_optimalcontrol} and (\ref{eta_is_phi}). Therefore,
		\begin{align*}
			&\limsup_{\epsilon\to 0}-\frac{1}{h^2(\epsilon)}\ln \E{\exp\brac{-h^2(\epsilon)a(\eta^\epsilon)}}=\limsup_{\epsilon\to 0}\inf_{w^\epsilon\in \mathcal{S}}\E{\frac{1}{2}\int_0^1 \abs{\hat{u}^\epsilon_s}^2+\abs{\hat{v}^\epsilon_s}^2 ds+a\brac{\eta^\eu}}\\
			&\qquad\qquad\qquad \leq \limsup_{\epsilon\to
				0}\mathbb{E}\bigg[\frac{1}{2}\int_0^1
			\abs{\widetilde{u}\brac{\Phi_0(s),X^\eu_s,Y^\eu_s,\eta^\eu_s}}^2\\&\quad \qquad\qquad\qquad
			+\abs{\widetilde{v}\brac{\Phi_0(s),X^\eu_s,Y^\eu_s,\eta^\eu_s}}^2 ds+a\brac{\eta^{\epsilon,w_0}_s}\bigg]\\
			&\qquad\qquad\qquad = \E{\frac{1}{2}\int_{\mathcal{Y}\times [0,1]} \abs{\widetilde{u}(\Phi_0(s),\bar{X}_s,y,\bar{\eta}_s)}^2+\abs{\widetilde{v}(\Phi_0(s),\bar{X}_s,y,\bar{\eta}_s)}^2\mu(dy) ds+a\brac{\bar{\eta}_s}}\\
			&\qquad\qquad\qquad =S^{H}(\Phi_0)+a(\Phi_0)
			\leq \inf_{\Phi\in C\brac{[0,1];\R^n}} \brac{S^{H}(\Phi)+h(\Phi)}+\zeta,
		\end{align*}
		where the first equality is due to the variational
		formula \eqref{equation_varrep}, the second line is
		due to the choice of a particular control, and the
		last two equalities are consequences of
		\eqref{eta_is_phi} and \eqref{prelimit_cost}. Finally,
		the fact that $\zeta$ can be chosen arbitrarily yields
		the desired Laplace principle upper bound.
	\end{proof}
	
	\section{Proof of Corollary \ref{corollary_g(x)}}
	\label{section_proof_cor_g(x)}
	First, observe that given any $\Psi\in L^2\brac{[0,1];\R^n}$, the quantities
	\begin{align*}
		D_1=t^{H-1/2}\bar{f}\brac{\bar{X}}^\top\Psi,\qquad D_2=t^{1-2H}I^{H-1/2}_{1^-}t^{H-1/2}\bar{f}\brac{\bar{X}}^\top\Psi
	\end{align*}
	are in $L^2\brac{[0,1];\R^n}$. $D_1$ is square-integrable because $H>1/2$ and $\bar{f}$ is bounded under either Condition \ref{condH2-A} or \ref{condH2-B}. Regarding $D_2$, notice that the assumptions of Lemma \ref{lemma_Samko_bounded} are satisfied with $p=2,\alpha=H-1/2,\beta=1-2H$ and $\gamma=H-1/2$ for values of $H$ in the range $(1/2,3/4)$. Then the operator $t^{1-2H}I^{H-1/2}_{1^-}t^{H-1/2}$ is bounded on $L^2\brac{[0,1];\R^n}$, which implies $D_2$ is square-integrable.

	Next, $g=g(x)$ implies $\nabla_y\phi(x,y)=0$, where $\phi(x,y)$ is defined in \eqref{equation_Poisson_g}. Thus,
	\begin{align*}
		Q^{H}_{\bar{X}}=\bar{f}\brac{\bar{X}}\dot{K}_H \dot{K}_H^*\bar{f}\brac{\bar{X}}^\top.
	\end{align*}
	We also know
	\begin{align*}
		\dot{K}_H=c_H\Gamma(H-1/2)t^{H-1/2}I^{H-1/2}_{0^+}t^{1/2-H},\qquad \dot{K}_H^*=c_H\Gamma(H-1/2) t^{1/2-H}I^{H-1/2}_{1^-}t^{H-1/2}
	\end{align*}
	so that
	\begin{align*}
		Q^{H}_{\bar{X}}=c_H^2\Gamma(H-1/2)^2 \bar{f}\brac{\bar{X}}t^{H-1/2} I^{H-1/2}_{0^+}t^{1-2H}I^{H-1/2}_{1^-}t^{H-1/2}\bar{f}\brac{\bar{X}}^\top.
	\end{align*}
	Recalling that $Lf\brac{\bar{X}}=f\brac{\bar{X}}L=I$, at this point we want to show
	\begin{align}
		\label{def_inverse_Q}
		W=c_H^{-2}\Gamma(H-1/2)^{-2}L^\top t^{1/2-H}D^{H-1/2}_{1^-}t^{2H-1}D^{H-1/2}_{0^+}t^{1/2-H}L
	\end{align}
	is the left inverse of $Q^{H}_{\bar{X}}$. \cite[Lemma 2.4]{KST} says given any $h\in L^2\brac{[0,1];\R^n}$, we have
	\begin{align*}
		D^{H-1/2}_{0^+} I^{H-1/2}_{0^+} h=h,\qquad D^{H-1/2}_{1^-} I^{H-1/2}_{1^-} h=h.
	\end{align*}
	This combined with the fact that $D_1,D_2\in L^2\brac{[0,1];\R^n}$ implies for any $\Psi\in L^2\brac{[0,1];\R^n}$,
	\begin{align*}
		W Q^{H}_{\bar{X}}\Psi&=\brac{c_H^{-2}\Gamma(H-1/2)^{-2}L^\top t^{1/2-H}D^{H-1/2}_{1^-}t^{2H-1}D^{H-1/2}_{0^+}t^{1/2-H}L}\\&\qquad\brac{c_H^2\Gamma(H-1/2)^2 \bar{f}\brac{\bar{X}}t^{H-1/2} I^{H-1/2}_{0^+}t^{1-2H}I^{H-1/2}_{1^-}t^{H-1/2}\bar{f}\brac{\bar{X}}^\top\Psi}\\
		&=L^\top t^{1/2-H}D^{H-1/2}_{1^-}t^{2H-1}\brac{D^{H-1/2}_{0^+} I^{H-1/2}_{0^+}}t^{1-2H}I^{H-1/2}_{1^-}t^{H-1/2}\bar{f}\brac{\bar{X}}^\top\Psi\\
		&=L^\top t^{1/2-H}\brac{D^{H-1/2}_{1^-}I^{H-1/2}_{1^-}}t^{H-1/2}\bar{f}\brac{\bar{X}}^\top\Psi=\Psi.
	\end{align*}
	Therefore, $W$ is the left inverse of $Q^{H}_{\bar{X}}$ and $\ker Q^{H}_{\bar{X}}=\{0\}$. Moreover, we know $Q^{H}_{\bar{X}}$ is self-adjoint, hence
	\begin{align*}
		\operatorname{Im} Q^{H}_{\bar{X}}=\left[\ker \brac{Q^{H}_{\bar{X}}}^*\right]^\bot=\left[\ker Q^{H}_{\bar{X}}\right]^\bot=\{0\}^\bot=L^2\brac{[0,1];\R^n}.
	\end{align*}
	It follows that $Q^{H}_{\bar{X}}$ is bijective. It is also bounded on $L^2\brac{[0,1];\R^n}$ via Proposition
	\ref{prop_dotKHinL2},
	so we can conclude it has a bounded inverse by the inverse mapping theorem.
	
	Finally, the inverse of $Q^{H}_{\bar{X}}$ must coincide with the left inverse $W$ at \eqref{def_inverse_Q} and by using the formula for fractional derivatives in Appendix A, we get the second equation for $\brac{Q^{H}_{\bar{X}}}^{-1}$ in the statement of this lemma.
	
	\section{Conclusions and future work}\label{S:Conclusions}
	
	In this paper, we established the moderate deviations principle for slow-fast systems of the form (\ref{originalsystem}) where the slow component is driven by fractional Brownian motion. There are many interesting potential directions for future work on this topic.
	
	In this paper, the fast motion is driven by standard Brownian motion and is independent of the slow component. This was done in order to focus on the effect of fBm on the tail behavior of the slow component. If the fast component was driven by fBm as well, then one would first need to understand the proper ergodic behavior of the fast process, an issue still not fully resolved, see though \cite{LiSeiber2020} for some preliminary results in special cases. Feedback from the slow process into the fast process would also mean interaction of the ergodic behavior of the fast process with the fBm driving the slow process, see \cite{HaiLi} for partial preliminary results in this direction.
	
	Another interesting direction would be to include ``unbounded homogenization" terms in the slow component as done for similar systems driven by standard Brownian motion, see \cite{spiliopoulos2014fluctuation}.
	
	Lastly, establishing the MDP opens the door to the construction of provably-efficient accelerated Monte Carlo methods, like importance sampling, for the estimation of rare event probabilities. See \cite{MorKos2} for related work in the case where $H=1/2$.
	
	We plan to explore these avenues in future works on this topic.\\

\textbf{Funding\quad} Solesne Bourguin was partially supported by the Simons
Foundation Award 635136. Konstantinos Spiliopoulos was partially
supported by the National Science Foundation (DMS 1550918, DMS
2107856) and Simons Foundation Award 672441.
\\~\\
\textbf{Data Availability\quad} Data sharing is not applicable to this article as no datasets were generated or analyzed
during the current study.
\\~\\~\\
{\LARGE\textbf{Declarations}}
\\~\\~
\textbf{Conflict of interest\quad} The authors declare that they have no conflict of interest.

	\appendix
	
	\section{Fractional Brownian motion and pathwise stochastic integration}\label{A:Appendix}
	
	\subsection{Fractional Brownian motion: definition and main properties}
	
	A one-dimensional fractional Brownian motion (fBm) is a centered Gaussian process $W^H
	= \left\{ W^H_t \colon t \in [0,1] \right\}
	\subset L^2(\Omega)$, characterized by its
	covariance function
	\begin{equation*}
		R_H(t,s) = E (W^H_t W^H_s) = \frac{1}{2} \left( s^{2H} + t^{2H} -
		\left\vert t-s \right\rvert^{2H} \right).
	\end{equation*}
	It is straightforward to verify that increments of fBm are stationary. The parameter $H \in (0,1)$ is usually referred to as the Hurst exponent, Hurst parameter, or Hurst index.
	
	By Kolmogorov's continuity criterion, such a process admits a modification with continuous sample paths, and we always choose to work with such. In this case one may show in fact that almost every sample path is locally H\"older continuous of any order strictly less than $H$. It is this sense in which it is often said that the value of $H$ determines the regularity of the sample paths.
	
	Note that when $H = \frac{1}{2}$, the covariance function is $R_{\frac{1}{2}}(t,s) = t \wedge s$. Thus, one sees that $W^{\frac{1}{2}}$ is a standard Brownian motion, and in particular that its disjoint increments are independent. In contrast to this, when $H \neq \frac{1}{2}$, nontrivial increments are not independent. In particular, when $H > \frac{1}{2}$, the process exhibits long-range dependence.
	
	Note moreover that when $H \neq \frac{1}{2}$, the fractional Brownian motion is not a semimartingale, and the usual It\^o calculus therefore does not apply.
	
	Another noteworthy property of fractional Brownian motion is that it
	is self-similar in the sense that, for any constant $a >0$, the
	processes $\left\{ W^H_t \colon t \in [0,1]\right\}$ and $\left\{ a^{-H}
	W^H_{at}\colon t \in [0,1]\right\}$ have the same distribution.

        Finally, an $n$-dimensional fractional Brownian
          motion is a random vector where the components are
          independent one-dimensional fractional Brownian motions with
          the same Hurst parameter $H\in (0,1)$.
	
	The self-similarity and long-memory properties of the fractional
	Brownian motion make it an interesting and suitable input noise in
	many models in various fields such as analysis of financial time series,
	hydrology, and telecommunications. However, in order to develop
	interesting models based on fractional Brownian motion, one needs an
	integration theory with respect to it, which we present in the next subsection.

	\subsection{Pathwise stochastic integration with respect to fractional Brownian
		motion}\label{Eq:StochasticIntergation_prelim}
	~\\
	Stochastic integrals with respect to fractional Brownian motion can
	be understood, when $H \geq 1/2$, as generalized Stieltjes integral as
	introduced in the work of Z\"ahle \cite{Zahmain}. Let $f \in L^1([a,b])$
	and $\alpha > 0$. The left-sided and right-sided fractional
	Rienmann-Liouville integrals of $f$ of order $\alpha$ are defined for
	almost all $x \in [a,b]$ by
	\begin{align*}
		I_{a^+}^{\alpha}f(x) = \frac{1}{\Gamma(\alpha)}\int_a^x (x-y)^{\alpha-1}f(y)dy
	\end{align*}
	and
	\begin{align*}
		I_{b^-}^{\alpha}f(x) = \frac{1}{\Gamma(\alpha)}\int_x^b (y-x)^{\alpha-1}f(y)dy
	\end{align*}
	respectively, where $\Gamma(\alpha)$ is the Euler gamma function. This naturally leads to the definition of the function spaces
 \begin{align*}
     I_{a^+}^{\alpha}(L^p([a,b]))=\{g=I_{a^+}^{\alpha}(f):f\in L^p([a,b]) \}
 \end{align*}
 and
 \begin{align*}
    I_{b^-}^{\alpha}(L^p([a,b]))=\{g=I_{b^-}^{\alpha}(f):f\in L^p([a,b]) \}.
 \end{align*}

The following integration by parts formula holds
	\begin{align*}
		\int_a^b I_{a^+}^{\alpha}f(x) g(x)dx=\int_a^b f(x) I_{b^-}^{\alpha}g(x)dx
	\end{align*}
	for $f\in L^p([a,b]),g\in L^q([a,b])$ such that $1/p+1/q\leq 1+\alpha$.
	
	For $0 < \alpha <1$, we can define the fractional derivatives
	\begin{align*}
		D^\alpha_{a^+}f(x)=\frac{d}{dx}I^{1-\alpha}_{a^+} f(x)=\frac{1}{\Gamma(1-\alpha)}\frac{d}{dx}\int_a^x (x-t)^{-\alpha}f(t)dt
	\end{align*}
	and
	\begin{align*}
		D^\alpha_{b^-}f(x)=\frac{d}{dx}I^{1-\alpha}_{b^-} f(x)=\frac{1}{\Gamma(1-\alpha)}\frac{d}{dx}\int_x^b (t-x)^{-\alpha}f(t)dt
	\end{align*}
	as long as the right hand sides are well-defined. Furthermore, if $f \in I_{a^+}^{\alpha}\brac{L^p([a,b])}$ (respectively $f \in
	I_{b^-}^{\alpha}(L^p([a,b]))$ and $0 < \alpha <1$ then the previous fractional derivatives admit the Weyl representation
	\begin{align*}
		D_{a^+}^{\alpha}f(x) = \frac{1}{\Gamma(1-\alpha)} \left(
		\frac{f(x)}{(x-a)^{\alpha}} + \alpha \int_a^x \frac{f(x) - f(y)}{(x-y)^{\alpha-1}}dy \right)\mathds{1}_{(a,b)}(x)
	\end{align*}
	and
	\begin{align*}
		D_{b^-}^{\alpha}f(x) = \frac{1}{\Gamma(1-\alpha)} \left(
		\frac{f(x)}{(b-x)^{\alpha}} + \alpha \int_x^b \frac{f(x) - f(y)}{(y-x)^{\alpha-1}}dy \right)\mathds{1}_{(a,b)}(x),
	\end{align*}
	respectively, for almost all $x \in [a,b]$. There is also the integration by parts formula
	\begin{align*}
		\int_a^b D_{a^+}^{\alpha}f(x) g(x)dx=\int_a^b f(x) D_{b^-}^{\alpha}g(x)dx
	\end{align*}
	for $f \in I_{a^+}^{\alpha}(L^p([a,b])),g \in I_{b^-}^{\alpha}(L^q([a,b])) $ such that $1/p+1/q\leq 1+\alpha$.

	The upcoming lemma contains a useful technical result in \cite{KMS}.
	\begin{lemma}
		\label{lemma_Samko_bounded}
		Let $p\geq 1$ and $b>0$. Then the operator $t^\beta I_{0^+}^\alpha t^\gamma$ is bounded in $L^p([0,b])$ if $\alpha>0, \alpha+\beta+\gamma=0$ and $(\gamma+1)p>1$. Meanwhile, the operator $t^\beta I_{1^-}^\alpha t^\gamma$ is bounded in $L^p([0,b])$ if $\alpha>0, \alpha+\beta+\gamma=0$ and $(\alpha+\gamma)p<1$.
	\end{lemma}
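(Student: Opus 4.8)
The plan is to read this as a weighted $L^p$ boundedness statement of Hardy type and to exploit the fact that the scaling relation $\alpha+\beta+\gamma=0$ renders each of the two operators homogeneous of degree zero. The natural tool is then the Hardy--Littlewood--P\'olya theory of integral operators with homogeneous kernels, which I would implement concretely by rewriting each operator as an average of dilations and estimating it via Minkowski's integral inequality. Since the statement is quoted from \cite{KMS}, one could alternatively just cite it, but I record the self-contained argument.

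For the left-sided operator I would first unfold the definition and substitute $y=sx$ with $s\in[0,1]$. Because $x^{\beta}\cdot x^{\alpha-1}\cdot x^{\gamma}\cdot x=x^{\alpha+\beta+\gamma}=1$, every power of $x$ cancels and one obtains the dilation representation
\begin{align*}
	\brac{t^{\beta}I^{\alpha}_{0^+}t^{\gamma}f}(x)=\frac{1}{\Gamma(\alpha)}\int_0^1 (1-s)^{\alpha-1}s^{\gamma}f(sx)\,ds.
\end{align*}
Writing $T_sf(x)=f(sx)$, a change of variables gives $\norm{T_sf}_{L^p([0,b])}\le s^{-1/p}\norm{f}_{L^p([0,b])}$ for $s\in(0,1]$, and Minkowski's integral inequality then yields
\begin{align*}
	\norm{t^{\beta}I^{\alpha}_{0^+}t^{\gamma}f}_{L^p([0,b])}\le \frac{\norm{f}_{L^p([0,b])}}{\Gamma(\alpha)}\int_0^1 (1-s)^{\alpha-1}s^{\gamma-1/p}\,ds.
\end{align*}
The remaining integral is a Beta integral $B\brac{\gamma+1-1/p,\alpha}$, which is finite precisely when $\alpha>0$ and $\gamma+1-1/p>0$, i.e. $(\gamma+1)p>1$, exactly the stated hypothesis.

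For the right-sided operator the same idea applies, but the finite right endpoint (equal to $1$ in our applications) forces some care. Substituting $y=\tau x$ with $\tau\ge 1$ and extending $f$ by zero past the endpoint, I would arrive at a superposition $\frac{1}{\Gamma(\alpha)}\int_1^{\infty}(\tau-1)^{\alpha-1}\tau^{\gamma}(T_\tau f)(x)\,d\tau$ over dilations with $\tau\ge 1$; the zero-extension is precisely what guarantees the clean estimate $\norm{T_\tau f}_{L^p([0,b])}\le \tau^{-1/p}\norm{f}_{L^p([0,b])}$ in this regime. Minkowski's inequality again reduces everything to the convergence of $\int_1^{\infty}(\tau-1)^{\alpha-1}\tau^{\gamma-1/p}\,d\tau$: integrability at $\tau=1$ is governed by $\alpha>0$, while integrability at $\tau=\infty$ requires $\alpha-1+\gamma-1/p<-1$, i.e. $(\alpha+\gamma)p<1$, matching the hypothesis.

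I expect the main obstacle to be the right-sided case, precisely because the homogeneity argument is cleanest on the half-line, so the truncation to a finite interval must be absorbed through the zero-extension in order to keep the dilation-norm estimate valid for $\tau\ge 1$; verifying that the Minkowski step is legitimate despite the integrable singularity of $(\tau-1)^{\alpha-1}$ at $\tau=1$ is the only genuinely delicate point. Everything else is bookkeeping of exponents, all of which is forced by the scaling identity $\alpha+\beta+\gamma=0$.
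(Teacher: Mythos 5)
Your proposal is correct, but it takes a genuinely different route from the paper: the paper's entire proof of this lemma is a one-line citation to \cite[(5.45') and (5.46')]{KMS}, whereas you supply a self-contained argument. Your computation is sound in both cases. For the left-sided operator, the substitution $y=sx$ together with $\alpha+\beta+\gamma=0$ does yield the dilation representation, Minkowski's integral inequality applies to the nonnegative kernel $(1-s)^{\alpha-1}s^{\gamma}$ without any difficulty from the integrable singularity at $s=1$, and the resulting Beta integral $B(\gamma+1-1/p,\alpha)$ converges exactly under $\alpha>0$ and $(\gamma+1)p>1$. For the right-sided operator, the zero-extension past the right endpoint is the right device: it makes the identity $\norm{T_\tau f}_{L^p}=\tau^{-1/p}\norm{f}_{L^p}$ exact for $\tau\geq 1$ and converts the truncated integral into $\int_1^\infty(\tau-1)^{\alpha-1}\tau^{\gamma-1/p}\,d\tau$, whose convergence at infinity is precisely $(\alpha+\gamma)p<1$. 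What your approach buys is transparency: it shows the exponent conditions are forced by convergence of a single one-dimensional integral at each endpoint, and it makes the paper independent of the specific numbering in \cite{KMS}, which is the classical Hardy--Littlewood--P\'olya mechanism for kernels homogeneous of degree $-1$ that underlies the cited formulas anyway. What the citation buys is brevity and the assurance that the constants and hypotheses match a standard reference; if you wanted your version to fully replace the citation, the only point worth making explicit is the measurability justification for applying Minkowski's integral inequality (Fubini--Tonelli on the product $[0,b]\times[0,1]$ or $[0,b]\times[1,\infty)$), which is routine.
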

	\begin{proof}
		This is a consequence of \cite[(5.45') and (5.46')]{KMS}.
	\end{proof}
	We refer to \cite{KMS} for more detailed properties of fractional operators.
	
	Next, let $f(a+) = \lim_{x \searrow 0} f(a + x)$ and $g(b-) =
	\lim_{x \searrow 0} g(b-x)$ and define
	\begin{align*}
		& f_{a^+}(x) = (f(x) - f(a+))\mathds{1}_{(a,b)}(x) \\
		& g_{b^-}(x) = (g(x) - g(b-))\mathds{1}_{(a,b)}(x).
	\end{align*}
	We recall from \cite{Zahmain} the definition of generalized Stieltjes
	fractional integrals with respect to irregular functions (in the sense
	of which we view the stochastic integrals with respect to fractional
	Brownian motion appearing in this paper).
	\begin{definition}[Generalized Stieltjes integral]
		\label{def_gen_stie}
		Suppose that $f$ and $g$ are functions such that $f(a+)$, $g(a+)$
		and $g(b-)$ exist, $f_{a^+} \in I_{a^+}^{\alpha}(L^p([a,b]))$ and $g_{b^-}
		\in I_{b^-}^{1-\alpha}(L^p([a,b]))$ for some $p,q \geq 1$, $1/p + 1/q \leq
		1$, $0 < \alpha <1$. Then the integral of $f$ with respect to $g$ is
		defined by
		\begin{align*}
			\int_a^b f dg = (-1)^{\alpha} \int_a^b
			D_{a^+}^{\alpha}f(x)D_{b^-}^{1-\alpha}g_{b^-}(x)dx + f(a+)\left(
			g(b-) - g(a+) \right).
		\end{align*}
	\end{definition}
	\begin{remark}
		If $\alpha p <1$, under the assumptions of the preceding definition,
		we have that $f \in I_{a^+}^{\alpha}(L^p([a,b]))$ and we can write
		\begin{align*}
			\int_a^b f dg = (-1)^{\alpha} \int_a^b
			D_{a^+}^{\alpha}f_{a^+}(x)D_{b^-}^{1-\alpha}g_{b^-}(x)dx.
		\end{align*}
	\end{remark}
	In \cite{Zahmain}, it was further shown that if $f$ and $g$ are respectively $\lambda$ and $\mu$-H\"{o}lder continuous such that $\lambda+\mu>1$, then the conditions for the generalized Stieltjes integral $\int_a^b f dg$ are satisfied for $p=q=\infty$ and $\alpha<\lambda,1-\alpha<\mu$. In particular, this class of generalized Stieltjes integrals with H\"{o}lder continuous $f,g$ coincides with the class of Riemann–Stieltjes integrals studied in \cite{You} by Young. We note here that any Young integrals appearing in this paper are constructed from H\"{o}lder continuous paths of a fractional Brownian motion. Further details are given in \cite[Section 5.1]{Zahmain}.

	\subsection{The Cameron-Martin space of fractional Brownian motion}\label{SS:fBm_preliminaries}
	Consider the deterministic kernel
	\begin{align*}
		K_H(t,s)=c_Hs^{1/2-H}\brac{\int_s^t (u-s)^{H-3/2}u^{H-1/2}du}\mathds{1}_{\{t>s\}}
	\end{align*}
	for which $c_H=\brac{H(2H-1)/\beta(2-2H,H-1/2)}^{1/2}$. Slightly abusing notation, we also write $K_H$ for the integral operator
	\begin{align*}
		K_Hg (s)=\int_0^s K_H(s,r)g(r)dr.
	\end{align*}	
	For $H \geq 1/2$, the operator $K_H$ can
	be represented as
	\begin{align*}
		K_Hg=c_H\Gamma(H-1/2)I^1_{0^+}t^{H-1/2}I^{H-1/2}_{0^+}t^{1/2-H}g.
	\end{align*}
	Additionally, we denote by $\dot{K}_H$ the ``derivation'' of the
	operator $K_H$, i.e.,
	\begin{align}
		\label{khdot}
		\dot{K}_Hg = c_H\Gamma(H-1/2)t^{H-1/2}I^{H-1/2}_{0^+}t^{1/2-H}g.
	\end{align}
	The Cameron-Martin space $\mathcal{H}_H$ associated with $W^H$ is
	\begin{align*}
		\mathcal{H}_H=\{K_H\hat{g}\colon \hat{g}\in L^2\brac{[0,1];\R^m}\},
	\end{align*}
	equipped with the inner product
	$\inner{g,f}_{\mathcal{H}_H}=\inner{\hat{g},\hat{f}}_{L^2\brac{[0,1];\R^m}}$. Note
	that later on, we will alternate between $\hat{g}$ and
	$K_H^{-1}g$, which are equivalent ways of writing the same
	quantity.
	
	In this paper, the noise process we consider in the slow-fast
	systems we study is of the form
	\begin{align*}\left\{\brac{W^H(t),B(t)}\colon t\in [0,1]\right\},\end{align*}
	where $B$ is a $m$-dimensional standard Brownian motion, $W^H$ is a $p$-dimensional fractional Brownian motion of Hurst parameter $H$ and they are independent. We will hence
	need to work with the Cameron-Martin space associated with the
	proces $(W^H,B)$, which, based on the previous description, is
	defined to be the space $\mathcal{S}$ given by
	\begin{align}
		\label{def_Cameron_Martin_joint}
		\left\{\brac{K_H\hat{g}_1,K_{1/2}\hat{g}_2}\colon \brac{\hat{g}_1,\hat{g}_2}\in L^2\brac{[0,1];\R^{m+p}}\right\}.
	\end{align}
	As a Cameron-Martin space, $\mathcal{S}$ is a Hilbert space
	equipped with the inner product given by
	\begin{align*}
		\inner{\brac{g_1,g_2},\brac{f_1,f_2}}_{\mathcal{S}}=\inner{g_1,f_1}_{\mathcal{H}_H}+\inner{g_2,f_2}_{\mathcal{H}_{1/2}}.
	\end{align*}
	Let us now state an important fact regarding the differentiability of
	elements in $\mathcal{H}_H$ when $H> 1/2$ which we will need
	throughout the paper.
	\begin{lemma}
		\label{lemma_derivative_control}
		If $H> 1/2$ and $u\in \mathcal{H}_H$ such that $u=K_H \hat{u},\hat{u}\in L^2([0,1];\R^n)$, then we have
		\begin{align*}
			\dot{u}(t)=\dot{K}_H\hat{u}(t)&=c_H\Gamma(H-1/2)t^{H-1/2}I^{H-1/2}_{0^+}t^{1/2-H}\hat{u}(t)\\
			&=c_H t^{H-1/2}\int_0^t (t-s)^{H-3/2}s^{1/2-H}\hat{u}_sds,
		\end{align*}
		such that $c_H=\brac{H(2H-1)/\beta(2-2H,H-1/2)}^{1/2}$.
		
		Meanwhile, if $H=1/2$ and $u\in \mathcal{H}_{1/2}$, then  $\dot{u}_t=\hat{u}_t$.
	\end{lemma}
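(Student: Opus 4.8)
The plan is to prove the formula by unwinding the operator factorization of $K_H$ recorded earlier in this subsection and then applying the fundamental theorem of calculus. For $H>1/2$ I would start from $u=K_H\hat{u}$ and substitute $K_H=c_H\Gamma(H-1/2)I^1_{0^+}t^{H-1/2}I^{H-1/2}_{0^+}t^{1/2-H}$, which exhibits $u$ as the indefinite integral $u=I^1_{0^+}g$ of the single function
\begin{align*}
g(t)=c_H\Gamma(H-1/2)t^{H-1/2}\left[I^{H-1/2}_{0^+}s^{1/2-H}\hat{u}\right](t)=\dot{K}_H\hat{u}(t),
\end{align*}
the last equality being exactly the definition \eqref{khdot} of $\dot{K}_H$. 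Once $u$ is written as the indefinite integral of $g$, the fundamental theorem of calculus gives $\dot{u}(t)=g(t)=\dot{K}_H\hat{u}(t)$ for almost every $t\in[0,1]$, which is the first asserted equality.

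To obtain the explicit integral representation I would then insert the definition of the left-sided Riemann--Liouville integral, $I^{H-1/2}_{0^+}f(t)=\frac{1}{\Gamma(H-1/2)}\int_0^t(t-s)^{H-3/2}f(s)ds$, with $f(s)=s^{1/2-H}\hat{u}(s)$. The prefactor $\Gamma(H-1/2)$ appearing in $\dot{K}_H$ cancels the $1/\Gamma(H-1/2)$ produced by this definition, leaving
\begin{align*}
\dot{K}_H\hat{u}(t)=c_H t^{H-1/2}\int_0^t(t-s)^{H-3/2}s^{1/2-H}\hat{u}_s\,ds,
\end{align*}
which is the remaining claimed identity.

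The step that genuinely requires care, and which I regard as the main obstacle, is the justification of the fundamental theorem of calculus: one must know that $u$ is absolutely continuous with a.e.\ derivative $g$, and this rests on verifying that $g=\dot{K}_H\hat{u}$ is integrable. This is not automatic since $1/2-H<0$ makes $s^{1/2-H}\hat{u}(s)$ singular at the origin. I would settle it by Lemma \ref{lemma_Samko_bounded}: applied with $\alpha=H-1/2$, $\beta=0$ and $\gamma=1/2-H$ (so that $\alpha>0$, $\alpha+\beta+\gamma=0$ and $(\gamma+1)\cdot 2=3-2H>1$ because $H<1$), it shows that $I^{H-1/2}_{0^+}t^{1/2-H}$ is bounded on $L^2([0,1])$. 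Since multiplication by $t^{H-1/2}$ is bounded on $[0,1]$ for $H>1/2$, it follows that $g\in L^2([0,1])\subset L^1([0,1])$, so $I^1_{0^+}g$ is absolutely continuous and the differentiation above is rigorous.

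Finally, the case $H=1/2$ is immediate from the classical Cameron--Martin structure: there the fractional order $H-1/2$ vanishes, $\dot{K}_{1/2}$ reduces to the identity and $K_{1/2}$ is the ordinary integration operator $I^1_{0^+}$, so that $u=I^1_{0^+}\hat{u}$ and hence $\dot{u}_t=\hat{u}_t$ directly.
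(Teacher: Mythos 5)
Your proof is correct and follows essentially the same route as the paper, whose entire argument is that the lemma is a direct consequence of the factorization $K_H g = c_H\Gamma(H-1/2)I^1_{0^+}t^{H-1/2}I^{H-1/2}_{0^+}t^{1/2-H}g$ together with the definition \eqref{khdot} of $\dot{K}_H$ as its derivation, i.e.\ exactly your step of writing $u=I^1_{0^+}\bigl(\dot{K}_H\hat{u}\bigr)$ and differentiating. The extra care you take in justifying the fundamental theorem of calculus — checking $\dot{K}_H\hat{u}\in L^2([0,1])\subset L^1([0,1])$ via Lemma \ref{lemma_Samko_bounded} with $\alpha=H-1/2$, $\beta=0$, $\gamma=1/2-H$ — is precisely the argument the paper records separately in Proposition \ref{prop_dotKHinL2}, so it supplies a detail the paper leaves implicit without changing the approach.
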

	\begin{proof}
		This is a direct consequence of formula \eqref{khdot}.
	\end{proof}

	The following is another important property of the operator $\dot{K}_H$.
	\begin{proposition}
		\label{prop_dotKHinL2}
		The map $\dot{K}_H$ as described in Lemma \ref{lemma_derivative_control} is a bounded operator in $L^2([0,1];\R^n)$.
	\end{proposition}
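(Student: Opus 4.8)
The plan is to read off the boundedness directly from the representation \eqref{khdot} of $\dot{K}_H$, by factoring it into a composition of two operators, each bounded on $L^2([0,1];\R^n)$ — one of them courtesy of Lemma \ref{lemma_Samko_bounded}. Since the kernel in \eqref{khdot} acts as a scalar on each coordinate, it suffices to treat the scalar case $L^2([0,1])$; the vector-valued bound then follows coordinatewise with the same constant.

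First I would write, using \eqref{khdot},
\[
\dot{K}_H = c_H\Gamma(H-1/2)\,M_{t^{H-1/2}}\circ\bigl(I^{H-1/2}_{0^+}t^{1/2-H}\bigr),
\]
where $M_{t^{H-1/2}}$ denotes multiplication by $t^{H-1/2}$. Because $H>1/2$, one has $0\le t^{H-1/2}\le 1$ on $[0,1]$, so $M_{t^{H-1/2}}$ is a contraction on $L^2([0,1])$, with $\norm{M_{t^{H-1/2}}}\le 1$. For the remaining factor $I^{H-1/2}_{0^+}t^{1/2-H}=t^{0}I^{H-1/2}_{0^+}t^{1/2-H}$ I would invoke Lemma \ref{lemma_Samko_bounded} with $p=2$ and exponents $\alpha=H-1/2$, $\beta=0$, $\gamma=1/2-H$. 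Its three hypotheses hold: $\alpha=H-1/2>0$ since $H>1/2$; $\alpha+\beta+\gamma=(H-1/2)+0+(1/2-H)=0$; and $(\gamma+1)p=2(3/2-H)=3-2H>1$ precisely because $H<1$. Hence $I^{H-1/2}_{0^+}t^{1/2-H}$ is bounded on $L^2([0,1])$, and $\dot{K}_H$ is bounded as a composition of bounded operators.

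The one point requiring care — and the reason the factorization above is the natural route — is that one cannot feed the whole operator $t^{H-1/2}I^{H-1/2}_{0^+}t^{1/2-H}$ into Lemma \ref{lemma_Samko_bounded} at once: in that case the three exponents would be $\beta=H-1/2$, $\alpha=H-1/2$, $\gamma=1/2-H$, whose sum is $H-1/2\neq 0$, violating the balance condition $\alpha+\beta+\gamma=0$ demanded by the lemma. Peeling off the outer power $t^{H-1/2}$, which is harmless because $H>1/2$ makes it a bounded (indeed contractive) multiplier, leaves exactly the balanced operator $I^{H-1/2}_{0^+}t^{1/2-H}$ to which the lemma applies. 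Should one prefer to bypass Lemma \ref{lemma_Samko_bounded}, the same conclusion follows from a weighted Schur test applied to the kernel $c_H t^{H-1/2}(t-s)^{H-3/2}s^{1/2-H}\mathds{1}_{\{s<t\}}$ from Lemma \ref{lemma_derivative_control} with the weight $h(t)=t^{1/2-H}$, which balances the row and column bounds (both reducing to finite Beta integrals once $H\in(1/2,1)$); but the factorization argument is the shortest.
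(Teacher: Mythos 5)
Your proof is correct and follows essentially the same route as the paper: both factor $\dot{K}_H$ into multiplication by $t^{H-1/2}$ (a contraction on $[0,1]$ since $H>1/2$) composed with $I^{H-1/2}_{0^+}t^{1/2-H}$, the latter bounded on $L^2$ by Lemma \ref{lemma_Samko_bounded} with the identical parameter choice $p=2$, $\alpha=H-1/2$, $\beta=0$, $\gamma=1/2-H$. Your explicit verification of the hypothesis $(\gamma+1)p=3-2H>1$ and your remark on why the lemma cannot absorb the outer power $t^{H-1/2}$ directly are just more detailed renderings of what the paper leaves implicit.
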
	
	\begin{proof}
		The assumptions of Lemma \ref{lemma_Samko_bounded} are satisfied
		for $p=2,\alpha=H-1/2,\beta=0$ and $\gamma=1/2-H$, hence the operator $I^{H-1/2}_{0^+}t^{1/2-H}$ is bounded in $L^2([0,1];\R^n)$.
		Since $\dot{K}_H=t^{H-1/2}I^{H-1/2}_{0^+}t^{1/2-H}$ based on Lemma \ref{lemma_derivative_control}, this implies
		\begin{align*}
			\norm{\dot{K}_H
				f}_{L^2([0,1];\R^n)}=\norm{t^{H-1/2}I^{H-1/2}_{0^+}t^{1/2-H}f}_{L^2([0,1];\R^n)}
			&\leq \norm{I^{H-1/2}_{0^+}t^{1/2-H}f}_{L^2([0,1];\R^n)}\\&\leq C\norm{f}_{L^2([0,1];\R^n)}.
		\end{align*}
	\end{proof}
	For more details about fractional Brownian motion, we refer the reader
	to the monographs \cite{biagini_stochastic_2008, Nua}.

	\subsection{Results related to Young integrals}
	The two results presented here provide us with a way of bounding Young
	integrals and with a version of change of variable formula for differential equations that contain Young integrals, respectively.
	\begin{lemma}[Young-Lo\'{e}ve's inequality]
		\label{lemma_Young}
		Let $f$ and $g$ be respectively $\alpha$ and $\beta$-H\"{o}lder continuous, such that $\alpha+\beta>1$. Then a.s one has
		\begin{align*}
			\abs{\int_r^t f_sdg_s-f_r(g_t-g_r)}\leq C \abs{f}_\alpha \abs{g}_\beta \abs{t-r}^{\alpha+\beta}.
		\end{align*}
		Moreover, assume $f$ is bounded then
		\begin{align*}
			\abs{\int_r^t f_sdg_s}\leq C\abs{f}_\alpha \abs{g}_\beta \abs{t-r}^{\alpha+\beta}+\abs{f}_\infty \abs{t-r}^\beta
			\leq C\abs{f}_\alpha \abs{g}_\beta \abs{t-r}^\beta.
		\end{align*}
	\end{lemma}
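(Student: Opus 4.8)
The plan is to read both bounds off the fractional-integral representation of the Young integral from Definition~\ref{def_gen_stie}, estimating the two fractional derivatives that appear there by means of their Weyl representations and the H\"older hypotheses. Since $f$ and $g$ are $\alpha$- and $\beta$-H\"older with $\alpha+\beta>1$, the discussion of \cite{Zahmain} recalled in Appendix~\ref{Eq:StochasticIntergation_prelim} shows (with $p=q=\infty$) that $\int_r^t f\,dg$ is well defined for every order $\lambda$ with $1-\beta<\lambda<\alpha$, an interval that is nonempty \emph{exactly} because $\alpha+\beta>1$. Fix such a $\lambda$. Using linearity of the integral and the fact that $f_{r^+}=(f-f(r))\mathds{1}_{(r,t)}$ has vanishing right limit at $r$ (so that the boundary term in the representation drops out), one obtains
\begin{align*}
	\int_r^t f\,dg-f_r(g_t-g_r)=\int_r^t f_{r^+}\,dg=(-1)^\lambda\int_r^t D^\lambda_{r^+}f_{r^+}(x)\,D^{1-\lambda}_{t^-}g_{t^-}(x)\,dx.
\end{align*}

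The key step is to bound the integrand pointwise. Inserting the Weyl representation of $D^\lambda_{r^+}f_{r^+}$ and using $\abs{f(x)-f(y)}\leq\abs{f}_\alpha\abs{x-y}^\alpha$ gives
\begin{align*}
	\abs{D^\lambda_{r^+}f_{r^+}(x)}\leq C\,\abs{f}_\alpha\,(x-r)^{\alpha-\lambda},
\end{align*}
where the tail integral in the Weyl formula converges precisely because $\lambda<\alpha$. Likewise, the Weyl representation of $D^{1-\lambda}_{t^-}g_{t^-}$ together with $\abs{g(x)-g(y)}\leq\abs{g}_\beta\abs{x-y}^\beta$ yields
\begin{align*}
	\abs{D^{1-\lambda}_{t^-}g_{t^-}(x)}\leq C\,\abs{g}_\beta\,(t-x)^{\beta-1+\lambda},
\end{align*}
the corresponding tail integral converging precisely because $\lambda>1-\beta$. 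These are the two places where the constraint on $\lambda$ — and hence the hypothesis $\alpha+\beta>1$ — is used.

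Multiplying the two pointwise bounds and substituting $x=r+(t-r)s$ reduces the remaining integral to a scale factor times a Beta-type integral,
\begin{align*}
	\int_r^t (x-r)^{\alpha-\lambda}(t-x)^{\beta-1+\lambda}\,dx=(t-r)^{\alpha+\beta}\int_0^1 s^{\alpha-\lambda}(1-s)^{\beta-1+\lambda}\,ds,
\end{align*}
whose last factor is a finite constant depending only on $\alpha,\beta,\lambda$. This gives the first inequality with a constant $C$ independent of $f,g,r,t$. For the second inequality I would write $\int_r^t f\,dg=\brac{\int_r^t f\,dg-f_r(g_t-g_r)}+f_r(g_t-g_r)$, bound the first summand by the inequality just proved and the second by $\abs{f}_\infty\abs{g}_\beta\abs{t-r}^\beta$; since $\abs{t-r}\leq1$ one has $\abs{t-r}^{\alpha+\beta}\leq\abs{t-r}^\beta$, so both terms collapse into a single $C\,\abs{t-r}^\beta$ bound.

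I expect the only real obstacle to be bookkeeping: verifying that a single intermediate order $\lambda$ makes both Weyl tail integrals converge (this is the sole role of $\alpha+\beta>1$) and that Definition~\ref{def_gen_stie} genuinely applies to the H\"older class at hand. Both points are supplied by the Z\"ahle theory recalled in Appendix~\ref{Eq:StochasticIntergation_prelim}, so beyond the elementary H\"older estimates above no new analytic input is needed.
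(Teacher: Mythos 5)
Your proof is correct, but it follows a genuinely different route from the paper: the paper simply cites \cite[Proposition 6.4]{FV}, where the inequality is established by the classical Young argument on partitions (successive point removal/sewing-type estimates in the $p$-variation framework). You instead give a self-contained proof through Z\"ahle's fractional calculus: writing $\int_r^t f\,dg - f_r(g_t-g_r)=\int_r^t f_{r^+}\,dg$ so the boundary term in Definition \ref{def_gen_stie} vanishes, bounding $\abs{D^\lambda_{r^+}f_{r^+}(x)}\leq C\abs{f}_\alpha (x-r)^{\alpha-\lambda}$ and $\abs{D^{1-\lambda}_{t^-}g_{t^-}(x)}\leq C\abs{g}_\beta (t-x)^{\beta+\lambda-1}$ via the Weyl representations (each convergence condition pinning down one side of the constraint $1-\beta<\lambda<\alpha$, whose nonemptiness is exactly $\alpha+\beta>1$), and closing with a Beta integral that produces the exponent $\alpha+\beta$. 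All steps check out, and the ingredients you invoke (choice of $p=q=\infty$, membership in $I^\lambda_{r^+}(L^\infty)$ and $I^{1-\lambda}_{t^-}(L^\infty)$ for H\"older functions, consistency with the Young integral) are precisely the facts recalled in Appendix \ref{Eq:StochasticIntergation_prelim}, so your argument is self-contained within the paper and yields an explicit constant $C=C(\alpha,\beta,\lambda)$; the citation route is shorter and the Friz--Victoir proof applies in the more general $p$-variation setting without any fractional-calculus machinery. Two cosmetic remarks: you implicitly use the correct Weyl kernel $(x-y)^{-(\lambda+1)}$ (the paper's displayed formula has a typo, $(x-y)^{\alpha-1}$ in place of $(x-y)^{-(\alpha+1)}$), and your final collapse of $C\abs{f}_\alpha\abs{g}_\beta\abs{t-r}^{\alpha+\beta}+\abs{f}_\infty\abs{g}_\beta\abs{t-r}^\beta$ into a single $\abs{t-r}^\beta$ bound absorbs $\abs{f}_\infty$ into the constant, which is the same looseness already present in the lemma's statement, so nothing is lost relative to the paper.
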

	\begin{proof}
		Refer to \cite[Proposition 6.4]{FV}.
	\end{proof}

	\begin{theorem}
		\label{Itoform}
		For $i=1,\ldots,m$, let $0<\alpha_i<1/2$, $f^i\in I^{\alpha_i}_{0+}(L^2([0,b]))$ be bounded and
		$g^i_{b-}\in I^{1-\alpha_i}_{b-}(L^2([0,b]))$, where the function $g^i_{b-}$ is defined below Lemma \ref{lemma_Samko_bounded}. Moreover, assume $h=(h^1,\ldots,h^m)$ such that
		\begin{align*}
			h^i_t=h^i_0+\int_0^t f^i_sdg^i_s.
		\end{align*}
		Then for any $\mathcal{C}^1$ mapping $F:\R^m\times \R\to \R^n$ such that $\frac{\partial F}{\partial x_i}\in \mathcal{C}^1,i=1,\ldots ,m$ and $r\leq t\leq T$, it holds that
		\begin{align*}
			F(h_t,t)-F(h_r,r)=\sum_{i=1}^m\int_r^t \frac{\partial F}{\partial x_i}(h_s,s)f^i_sdg^i_s +\int_r^t \frac{\partial F}{\partial s}(h_s,s) ds.
		\end{align*}
		In particular, this change of variable formula applies to the special case when $f_i$ and $g_i$ are respectively $\lambda_i$ and $\mu_i$-H\"{o}lder continuous such that $\lambda_i+\mu_i>1,i=1,\ldots,m$.
	\end{theorem}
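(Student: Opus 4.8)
The plan is to prove the formula by discretization, using the Young--Lo\'eve inequality (Lemma \ref{lemma_Young}) as the main quantitative tool. I would treat the H\"older case (the ``in particular'' assertion), which is the one actually used in the body of the paper, and indicate at the end how the general fractional-Sobolev statement reduces to the same computation. First I would record the regularity that makes the identity well posed: since $f^i$ is $\lambda_i$-H\"older and bounded and $g^i$ is $\mu_i$-H\"older with $\lambda_i+\mu_i>1$, the second bound of Lemma \ref{lemma_Young} applied to $h^i_t-h^i_r=\int_r^t f^i_sdg^i_s$ shows that each $h^i$ is itself $\mu_i$-H\"older, so $s\mapsto(h_s,s)$ is $\mu$-H\"older with $\mu=\min_i\mu_i$. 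Because $F\in\mathcal C^1$ with $\nabla_xF\in\mathcal C^1$, the maps $s\mapsto\frac{\partial F}{\partial x_i}(h_s,s)$ and $s\mapsto\frac{\partial F}{\partial s}(h_s,s)$ are bounded and $\mu$-H\"older on $[r,t]$, while $D^2_xF$ exists and is continuous; these facts guarantee that every integral appearing in the claimed formula is a well-defined Young integral.

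Next I would fix a partition $r=t_0<\cdots<t_N=t$ with mesh $\abs{\Pi}\to0$, telescope
\begin{align*}
F(h_t,t)-F(h_r,r)=\sum_{k=0}^{N-1}\brac{F(h_{t_{k+1}},t_{k+1})-F(h_{t_k},t_k)},
\end{align*}
and Taylor expand each summand, to second order in the space variable (legitimate since $\nabla_xF\in\mathcal C^1$) and to first order in time. This produces the leading terms $\sum_i\frac{\partial F}{\partial x_i}(h_{t_k},t_k)(h^i_{t_{k+1}}-h^i_{t_k})$ and $\frac{\partial F}{\partial s}(h_{t_k},t_k)(t_{k+1}-t_k)$, together with a remainder controlled by $\abs{h_{t_{k+1}}-h_{t_k}}^2$ plus lower-order mixed terms. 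Using $\abs{h^i_{t_{k+1}}-h^i_{t_k}}\le C\abs{t_{k+1}-t_k}^{\mu_i}$, the quadratic remainder summed over $k$ is bounded by $C\sum_{i,j}\abs{\Pi}^{\mu_i+\mu_j-1}(t-r)$, which tends to $0$ precisely because $\mu_i+\mu_j>1$; this holds in the setting of the paper, where all $g^i$ are components of a single fBm with $H>1/2$, so every $\mu_i$ may be chosen larger than $1/2$. The mixed space-time and purely temporal remainders vanish by the same mesh estimate together with the continuity of $\partial_sF$ and the time-Lipschitzness of $\nabla_xF$.

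It then remains to identify the limits of the two leading sums. The temporal sum is an ordinary Riemann sum of the continuous integrand $\frac{\partial F}{\partial s}(h_s,s)$, hence converges to $\int_r^t\frac{\partial F}{\partial s}(h_s,s)ds$. For each spatial sum I would first replace the increment $h^i_{t_{k+1}}-h^i_{t_k}=\int_{t_k}^{t_{k+1}}f^i_sdg^i_s$ by its Young--Lo\'eve principal part $f^i_{t_k}(g^i_{t_{k+1}}-g^i_{t_k})$, the error summing to $O(\abs{\Pi}^{\lambda_i+\mu_i-1})\to0$. The resulting compensated sum $\sum_k\frac{\partial F}{\partial x_i}(h_{t_k},t_k)f^i_{t_k}(g^i_{t_{k+1}}-g^i_{t_k})$ converges, again by summing Lemma \ref{lemma_Young} over the partition, to the Young integral $\int_r^t\frac{\partial F}{\partial x_i}(h_s,s)f^i_sdg^i_s$, since the integrand $\frac{\partial F}{\partial x_i}(h_\cdot,\cdot)f^i_\cdot$ is $\min(\mu,\lambda_i)$-H\"older and $\min(\mu,\lambda_i)+\mu_i>1$. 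Assembling the three limits yields the stated change of variable formula.

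The step I expect to be the genuine obstacle is this last identification of the compensated Riemann sums with the Young integrals, and in particular the uniform control of the Young--Lo\'eve defect across the partition: it is here that the hypothesis $\lambda_i+\mu_i>1$ is used twice (once to pass from $dh^i$ to $f^i\,dg^i$, and once to recognize the limiting integral) and that the interplay of the several H\"older exponents must be tracked simultaneously. For the general statement, in which $f^i\in I^{\alpha_i}_{0^+}(L^2)$ and $g^i_{b^-}\in I^{1-\alpha_i}_{b^-}(L^2)$ rather than merely H\"older, the same formula holds via the fractional-calculus representation of Definition \ref{def_gen_stie}: one rewrites each Young integral through the operators $D^{\alpha_i}_{0^+}$ and $D^{1-\alpha_i}_{b^-}$ and invokes the chain rule in that framework as in \cite{Zahmain}, the H\"older case established above being the instance actually needed elsewhere in this paper.
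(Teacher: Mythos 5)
Your proposal is correct in substance, but it takes a genuinely different route from the paper. The paper's proof is a reduction-plus-citation: for the general fractional-Sobolev statement it simply invokes Z\"ahle's change of variable formula \cite[Theorem 5.2]{Zah}, and for the H\"older case it chooses $\alpha_i\in(0,1/2)$ with $\lambda_i>\alpha_i$ and $\mu_i>1-\alpha_i$ and then uses Lemmas 13.2 and 13.2' of \cite{KMS} to embed the $\lambda_i$-H\"older integrands into $I^{\alpha_i}_{0+}(L^2([0,b]))$ and the $\mu_i$-H\"older integrators into $I^{1-\alpha_i}_{b-}(L^2([0,b]))$, so that the H\"older case is subsumed by the general one. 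You instead prove the H\"older case from first principles --- telescoping, Taylor expansion, and three separate applications of the Young--Lo\'eve inequality (Lemma \ref{lemma_Young}): to kill the quadratic remainder, to replace the increments of $h^i$ by $f^i_{t_k}(g^i_{t_{k+1}}-g^i_{t_k})$, and to identify the compensated left-point Riemann sums with the Young integrals --- deferring only the general fractional-Sobolev statement to \cite{Zahmain}, exactly as the paper does. What your approach buys is a self-contained, quantitative proof of the case actually used in the body of the paper, making visible where each H\"older exponent enters; what it costs is the pairwise condition $\mu_i+\mu_j>1$ (needed for the quadratic remainder and for the integrand $\frac{\partial F}{\partial x_i}(h_\cdot,\cdot)f^i_\cdot$, which is only $\min_j\mu_j$-H\"older, to be Young-integrable against $g^i$), which does not follow from the stated hypothesis $\lambda_i+\mu_i>1$ alone. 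You flag this honestly, and it is worth noting that the restriction is not an artifact of your method: the paper's own reduction requires $\alpha_i\in(1-\mu_i,1/2)$ to be nonempty, hence also implicitly requires $\mu_i>1/2$. So both proofs in fact establish the H\"older case on the same slightly-smaller-than-stated regime, which is all that is needed in this paper, where every integrator is a component of a fractional Brownian motion with $H>1/2$ and one may take each $\mu_i\in(1/2,H)$.
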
	
	
	\begin{proof}
		For the change of variable formula in the general case, we refer to \cite[Theorem 5.2]{Zah}.
		
		Now, let us consider the special case and assume there is a constant $C$ such that $\abs{f_i}_{\lambda_i}$,$\abs{g_i}_{\mu_i}<C$ and $\lambda_i+\mu_i>1$ for $1\leq i\leq m$. Then one can choose $\alpha_i$ in the interval $(0,1/2)$ such that $\lambda_i>\alpha_i$ and $\mu_i>1-\alpha_i$ for $1\leq i\leq m$. Based on the previous fact, Lemmas 13.2 and 13.2' in \cite{KMS} imply respectively that \begin{align*}
			f^i\in I^{\alpha_i}_{0+}(L^2([0,b])),\qquad g^i_{b-}\in I^{1-\alpha_i}_{b-}(L^2([0,b])).
		\end{align*}
		Moreover, $f^i$ as H\"{o}lder continuous functions on $[0,b]$ are necessarily bounded. Consequently, the general change of variable formula covers this particular case.
	\end{proof}

	
	\section{Regularity results and other technical lemmas}\label{A:AppendixB}
	This appendix gathers results related to Poisson equations as well as the technical lemmas required for the analysis
	of the control problems.
	\subsection{Results related to Poisson equations}
	The following theorem is a consequence of \cite[Theorem 2]{ParVer1} and \cite[Theorem 3]{ParVer2} for solutions of Poisson equations. Let $\mathcal{L}$ be the infinitesimal generator defined in (\ref{def_generator_normalized_Y}).
	\begin{theorem}
		\label{theorem_Poisson_equation}
		Recall $\mathcal{C}^{2,\zeta}(\R^n\times \mathcal{Y})$ for
		some $\zeta >0$ is the function space defined at the beginning of Section \ref{section_prelim}. Let $h \in \mathcal{C}^{2,\zeta}(\R^n\times \mathcal{Y})$ such that
		\begin{align*}
			\int_{\mathcal{Y}} h(x,y)\mu(dy)=0
		\end{align*}
		and that for some positive constants $K$ and $D_h$,
		\begin{align*}
			\abs{h(x,y)} +\abs{\nabla_x h(x,y)}+ \abs{\nabla^2_x h(x,y)}\leq K\brac{1+\abs{y}^{D_h}}
		\end{align*}
		uniformly with respect to $x$.
		Then, there is a unique solution to
		\begin{align*}
			\mathcal{L} u(x,y)=-h(x,y),\quad \int_\mathcal{Y} u(x,y) \mu(dy)=0.
		\end{align*}
		Moreover, $u(\cdot,y)\in \mathcal{C}^2$, $\nabla^2_x u\in
		\mathcal{C}(\R^n\times \mathcal{Y})$ and there exists a positive
		constant $M$ such that
		\begin{align*}
			\abs{u(x,y)}+\abs{\nabla_y u(x,y)}+\abs{\nabla_x u(x,y)}+\abs{\nabla^2_x u(x,y)}+\abs{\nabla_y\nabla_x u(x,y)}\leq M(1+\abs{y}^{D_h}).
		\end{align*}
	\end{theorem}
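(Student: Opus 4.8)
The plan is to verify that Condition \ref{condH1} places the operator $\mathcal{L}$ from \eqref{def_generator_normalized_Y} and the data $h$ squarely within the framework of Pardoux and Veretennikov, and then to assemble their results. First I would read $\mathcal{L}$ as the infinitesimal generator of the ergodic diffusion $dY_t=c\brac{Y_t}dt+\sigma\brac{Y_t}dB_t$ on $\Y=\R^d$. The decomposition $c(y)=-\Gamma y+\zeta(y)$ together with the coercivity $\inner{\brac{\Gamma-L_\zeta I}\xi,\xi}\geq \gamma_0\abs{\xi}^2$ and the Lipschitz bound on $\zeta$ yields the dissipative drift estimate $\inner{c(y),y}\leq -\gamma_0\abs{y}^2+C\abs{y}$, which is exactly the recurrence (Lyapunov) condition required in \cite{ParVer1}; the non-degeneracy and boundedness of $\sigma\sigma^\top$ provide the uniform ellipticity, and the H\"{o}lder regularity of the first and second derivatives of $c,\sigma$ supplies the smoothness of coefficients. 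The growth bound $\abs{h}+\abs{\nabla_x h}+\abs{\nabla^2_x h}\leq K\brac{1+\abs{y}^{D_h}}$ and the centering $\int_\Y h(x,y)\mu(dy)=0$ are the remaining hypotheses and are assumed.

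With these verifications in place, I would freeze the parameter $x$ and invoke \cite[Theorem 2]{ParVer1}. The centering condition is precisely the solvability (Fredholm) condition, so it guarantees a unique solution $u(x,\cdot)$ in the class of functions of at most polynomial growth normalized by $\int_\Y u(x,y)\mu(dy)=0$, represented probabilistically as $u(x,y)=\int_0^\infty \E{h\brac{x,Y^{y}_t}}dt$ with $Y^y$ the diffusion started at $y$. The same theorem transfers the polynomial growth from $h$ to the solution, giving $\abs{u(x,y)}+\abs{\nabla_y u(x,y)}\leq M\brac{1+\abs{y}^{D_h}}$ uniformly in $x$.

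The regularity in the parameter $x$ is the content of \cite[Theorem 3]{ParVer2}. Since $\mathcal{L}$ acts only in the $y$ variable, differentiating $\mathcal{L}u(x,y)=-h(x,y)$ formally in $x$ produces $\mathcal{L}\brac{\nabla_x u}(x,y)=-\nabla_x h(x,y)$, again a Poisson equation whose right-hand side is centered: differentiating $\int_\Y h(x,y)\mu(dy)=0$ under the integral sign, justified by the growth bound on $\nabla_x h$, gives $\int_\Y \nabla_x h(x,y)\mu(dy)=0$, and likewise for $\nabla^2_x h$. Reapplying the solvability and growth estimates to these differentiated equations yields $\nabla_x u$, $\nabla^2_x u$ and the mixed derivative $\nabla_y\nabla_x u$, each obeying the bound $M\brac{1+\abs{y}^{D_h}}$, together with the continuity $\nabla^2_x u\in \mathcal{C}\brac{\R^n\times\Y}$. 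Collecting the pieces produces the stated estimate.

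I expect the main obstacle to lie not in the formal differentiation but in its rigorous justification: one must differentiate the probabilistic representation $u(x,y)=\int_0^\infty \E{h\brac{x,Y^y_t}}dt$ in $x$ and interchange derivative, expectation and time integral, which requires a uniform-in-$x$, exponential-in-time decay of the centered semigroup applied to $h$ and its $x$-derivatives. Establishing exactly these uniform estimates, and hence the continuity of $\nabla^2_x u$, is precisely what \cite[Theorem 3]{ParVer2} is designed to deliver, so in practice this step reduces to checking that the growth and regularity hypotheses there coincide with those extracted from Condition \ref{condH1}.
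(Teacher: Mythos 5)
Your proposal is correct and follows exactly the route the paper takes: the paper offers no separate proof, stating only that the theorem is a consequence of \cite[Theorem 2]{ParVer1} and \cite[Theorem 3]{ParVer2}, which is precisely the pair of results you invoke (the first, with $x$ frozen, for existence, uniqueness and polynomial growth in $y$; the second for the differentiability in the parameter $x$ and the bounds on $\nabla_x u$, $\nabla_x^2 u$, $\nabla_y\nabla_x u$). Your verification that Condition \ref{condH1} supplies the dissipativity $\inner{c(y),y}\leq -\gamma_0\abs{y}^2+C\abs{y}$, the uniform ellipticity, and the centering of the $x$-differentiated right-hand sides is exactly the checking that the paper leaves implicit.
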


	\begin{remark}
		\label{remark_poisson_equation}
		Consider the Poisson equation in
		\eqref{equation_Poisson_g}. Under Conditions
		\ref{condH1} and \ref{condH2-A}, Theorem
		\ref{theorem_Poisson_equation} states that there
		exists a positive constant $C$ such that, uniformly,
		\begin{align}
			\label{estimate_Poissonsolution_h1}
			\abs{\phi(x,y)}+\abs{\nabla_y \phi(x,y)}+\abs{\nabla_x \phi(x,y)}+\abs{\nabla^2_x \phi(x,y)}+\abs{\nabla_y\nabla_x \phi(x,y)}< C.
		\end{align}
		On the other hand, under Conditions \ref{condH1} and \ref{condH2-B}, Theorem \ref{theorem_Poisson_equation} states that there
		exists a positive constant $C$ such that, uniformly
		with respect to $x$,
		\begin{align}
			\label{estimate_Poissonsolution_h2}
			\abs{\phi(x,y)}+\abs{\nabla_y \phi(x,y)}+\abs{\nabla_x \phi(x,y)}+\abs{\nabla^2_x \phi(x,y)}+\abs{\nabla_y\nabla_x \phi(x,y)}< C\brac{1+\abs{y}^{D_g}}.
		\end{align}
	\end{remark}

	\subsection{Ancillary results related to the control problems}
	This subsection gathers all technical results related to the
	study of the control problems appearing throughout the paper.

	\begin{lemma}[Lemma 5.2 in \cite{HuSalSpi}]
		\label{lemma_KostSalins}
		Let $H,H'$ be Hilbert spaces and $a:H \to H'$ be a bounded linear operator. Moreover, let $q=aa^*$ and $q^{-1}$ be the inverse of $q$. Then for any $u\in H$,
		\begin{align*}
			\norm{a^*q^{-1}a u}_{H}\leq \norm{u}_{H}.
		\end{align*}
	\end{lemma}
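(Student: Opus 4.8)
The plan is to recognize that the operator $P = a^*q^{-1}a \colon H \to H$ is an orthogonal projection, for which the bound $\norm{a^*q^{-1}au}_{H} \leq \norm{u}_{H}$ is automatic. To this end I would first verify the two defining properties of an orthogonal projection, namely idempotency and self-adjointness, and then extract the norm inequality with a single application of the Cauchy--Schwarz inequality.

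For idempotency, I would compute directly, using the definition $q = aa^*$ to absorb the middle factor:
\begin{align*}
P^2 = a^*q^{-1}a\,a^*q^{-1}a = a^*q^{-1}(aa^*)q^{-1}a = a^*q^{-1}qq^{-1}a = a^*q^{-1}a = P.
\end{align*}
For self-adjointness, the key observation is that $q$ is itself self-adjoint, since $q^* = (aa^*)^* = aa^* = q$, and hence so is its inverse, $(q^{-1})^* = (q^*)^{-1} = q^{-1}$. Taking adjoints then gives
\begin{align*}
P^* = (a^*q^{-1}a)^* = a^*(q^{-1})^*(a^*)^* = a^*q^{-1}a = P.
\end{align*}

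With $P^2 = P = P^*$ established, I would finish with an elementary estimate rather than invoking the general spectral theory of projections. For any $u \in H$,
\begin{align*}
\norm{Pu}_{H}^2 = \inner{Pu,Pu}_{H} = \inner{u,P^*Pu}_{H} = \inner{u,P^2u}_{H} = \inner{u,Pu}_{H} \leq \norm{u}_{H}\norm{Pu}_{H}.
\end{align*}
Dividing through by $\norm{Pu}_{H}$ when it is nonzero (the claim being trivial otherwise) yields $\norm{Pu}_{H} \leq \norm{u}_{H}$, which is exactly the assertion.

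Since the entire argument reduces to algebraic manipulation of adjoints together with one use of Cauchy--Schwarz, I do not anticipate any substantial obstacle. The only point requiring genuine care is the standing hypothesis that $q = aa^*$ is invertible on $H'$, as this is what makes the expression $a^*q^{-1}a$ well-defined and legitimizes the cancellation $q^{-1}qq^{-1} = q^{-1}$ in the idempotency computation. In the setting where this lemma is applied, that invertibility is precisely the invertibility of $Q^{H}_{\bar{X}} = \Sigma_{\bar X}\Sigma_{\bar X}^*$ assumed in Proposition \ref{proposition_optimalcontrol}.
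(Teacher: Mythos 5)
Your proof is correct. Note that the paper does not actually prove this lemma — it is quoted verbatim from Lemma 5.2 of \cite{HuSalSpi} and used as a black box — so there is no in-paper argument to compare against; your self-contained derivation is the standard one. Recognizing $P = a^*q^{-1}a$ as the orthogonal projection of $H$ onto $\overline{\ran}(a^*)$ is exactly the right idea: your computations $P^2 = a^*q^{-1}(aa^*)q^{-1}a = P$ and $P^* = a^*(q^{-1})^*a = P$ are valid, and the Cauchy--Schwarz step
\begin{align*}
\norm{Pu}_{H}^2 = \inner{u,Pu}_{H} \leq \norm{u}_{H}\norm{Pu}_{H}
\end{align*}
cleanly avoids any appeal to spectral theory. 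Your closing remark also correctly isolates the one hypothesis doing real work: $q = aa^*$ must have a bounded, everywhere-defined inverse for $(q^{-1})^* = q^{-1}$ and the cancellations to be legitimate. In the paper this is exactly what is supplied where the lemma is invoked (in the lower bound of Proposition \ref{proposition_optimalcontrol}): invertibility of $Q^{H}_{\bar{X}} = \Sigma_{\bar X}\Sigma_{\bar X}^*$ is either assumed outright or established in Lemma \ref{lemma_Q_invertible}, where boundedness of the inverse follows from the inverse mapping theorem.
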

	\begin{lemma}
		\label{lemma_Q_invertible}
		Assume that for all $x$ and non-zero $z\in \R^n$,
		\begin{align*}
			\inner{\int_{\mathcal{Y}}\nabla_y\phi\brac{x,y}\sigma\brac{y} \brac{\nabla_y\phi\brac{x,y}\sigma\brac{y}}^\top\mu(dy)z,z}>0.
		\end{align*}
		Then, the operator $Q^{H}_x$ defined in \eqref{def_Q} is invertible and its inverse $(Q^{H}_x)^{-1}$ is a bounded in $L^2\brac{[0,1];\R^n}$.
	\end{lemma}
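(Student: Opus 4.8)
The plan is to exploit that $Q^{H}_x$ is a bounded, self-adjoint, nonnegative operator that is moreover \emph{coercive}, with the coercivity coming entirely from the matrix integral term and not from the fractional term.

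First I would record the structural facts already at hand. By construction $Q^{H}_x=\Sigma_x\Sigma_x^*$, so it is self-adjoint and, being of that form, nonnegative; its boundedness on $L^2\brac{[0,1];\R^n}$ follows from the boundedness of $\pi_x,\rho_x$, and hence of $\Sigma_x$, established just before \eqref{def_Q}. I would then split $Q^{H}_x$ according to \eqref{def_Q} into the fractional part $\pi_x\pi_x^*=\bar{f}(x)\dot{K}_H\brac{\bar{f}(x)\dot{K}_H}^*$ and the part $\rho_x\rho_x^*$, the latter being exactly the multiplication operator $M_x$ induced by the matrix
\begin{align*}
    N(x)=\int_{\mathcal{Y}}\nabla_y\phi\brac{x,y}\sigma\brac{y}\brac{\nabla_y\phi\brac{x,y}\sigma\brac{y}}^\top\mu(dy),
\end{align*}
since $\rho_x\rho_x^*h(t)=N(x)h(t)$. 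The fractional part is of the form $BB^*$ and so contributes a nonnegative term to the quadratic form; it may therefore be discarded when deriving a lower bound.

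The key step is coercivity. The matrix $N(x)$ is symmetric positive semidefinite (as $\inner{N(x)z,z}=\int_{\mathcal{Y}}\abs{\brac{\nabla_y\phi\brac{x,y}\sigma\brac{y}}^\top z}^2\mu(dy)$), and the hypothesis $\inner{N(x)z,z}>0$ for $z\neq0$ upgrades this to positive definiteness, so $\lambda_{\min}\brac{N(x)}>0$. Setting $c=\lambda_{\min}\brac{N(x)}$ (and, in the time-dependent reading where $x$ runs over the trajectory $\bar{X}$, taking $c=\inf_{t\in[0,1]}\lambda_{\min}\brac{N(\bar{X}_t)}$, which is strictly positive because $x\mapsto\lambda_{\min}(N(x))$ is continuous---by continuity of $x\mapsto N(x)$ via Theorem \ref{theorem_Poisson_equation} and dominated convergence---and $\{\bar{X}_t\colon t\in[0,1]\}$ is compact), I obtain for every $h\in L^2\brac{[0,1];\R^n}$
\begin{align*}
    \inner{Q^{H}_x h,h}\geq \inner{M_x h,h}=\int_0^1 \inner{N(x)h(t),h(t)}dt\geq c\int_0^1\abs{h(t)}^2dt=c\norm{h}^2.
\end{align*}

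Finally I would deduce invertibility from coercivity in the standard way. By Cauchy--Schwarz, $c\norm{h}^2\leq\inner{Q^{H}_x h,h}\leq\norm{Q^{H}_x h}\norm{h}$, so $\norm{Q^{H}_x h}\geq c\norm{h}$; hence $Q^{H}_x$ is injective with closed range. Self-adjointness then gives $\overline{\ran Q^{H}_x}=\brac{\ker Q^{H}_x}^\perp=\{0\}^\perp=L^2\brac{[0,1];\R^n}$, so $Q^{H}_x$ is a bijection, and the same lower bound yields $\norm{(Q^{H}_x)^{-1}}\leq 1/c$. (Equivalently, one may invoke Lax--Milgram for the bounded coercive form $(g,h)\mapsto\inner{Q^{H}_x g,h}$.) The only genuinely delicate point is the coercivity estimate, and within it the passage from the pointwise positivity supplied by the hypothesis to a \emph{uniform} positive lower bound $c>0$; this is precisely where continuity of $N$ together with compactness of the range of $\bar{X}$ enters. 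Everything else is soft functional analysis.
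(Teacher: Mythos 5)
Your proposal is correct, and its skeleton is the same as the paper's: decompose $Q^{H}_x=\pi_x\pi_x^*+\rho_x\rho_x^*$, discard the nonnegative fractional part, use the hypothesis to bound the multiplication part $\rho_x\rho_x^*h(t)=N(x)h(t)$ from below, and conclude bijectivity from injectivity together with self-adjointness (closed range plus dense range), ending with a bounded inverse. Where you genuinely differ is in how the lower bound is implemented, and your implementation is the more robust one. The paper passes from $Q^{H}_x\geq\rho_x\rho_x^*\geq 0$ to $(Q^{H}_x)^2\geq\brac{\rho_x\rho_x^*}^2$ and then bounds $\norm{Q^{H}_xh}$ below; but squaring is not operator monotone---for self-adjoint $A\geq B\geq 0$ one does not in general have $A^2\geq B^2$, and here the cross terms $\pi_x\pi_x^*\rho_x\rho_x^*+\rho_x\rho_x^*\pi_x\pi_x^*$ have no definite sign since the two operators do not commute---so that step as written needs repair. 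Your coercivity estimate $\inner{Q^{H}_xh,h}\geq c\norm{h}^2$ followed by Cauchy--Schwarz gives $\norm{Q^{H}_xh}\geq c\norm{h}$ with no such issue, and it yields the explicit bound $\norm{(Q^{H}_x)^{-1}}\leq 1/c$ directly, so the inverse mapping theorem is not even needed. You also make explicit a point the paper leaves silent: when $x$ is the trajectory $\bar{X}$, the constant $c$ must be uniform in $t$, which your continuity-of-$N$ plus compactness-of-range argument supplies; the paper's assertion that the infimum over the unit sphere of $\inner{\brac{\rho\rho^*}^2h,h}$ is strictly positive implicitly requires exactly this uniformity, since an infimum of pointwise positive quantities can otherwise vanish.
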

	\begin{proof}
		Using the operators $\pi,\pi^*,\rho,\rho^*$ defined in Section \ref{prooflaplaceupper}, we have $Q^{H}_xh=\brac{\pi\pi^*+\rho\rho^*}h$ with
		\begin{align*}\rho\rho^*h(t)=\int_{\mathcal{Y}} {\nabla_y\phi\brac{x,y} \sigma\brac{y}}\brac{\nabla_y\phi\brac{x,y} \sigma\brac{y}}^\top h(t,y)\mu(dy).\end{align*}
		Furthermore, $\pi\pi^*,\rho\rho^*$ are positive and
		self-adjoint operators, which means that $Q^{H}_x$ is also
		positive and self-adjoint. In addition, the fact that
		$Q^{H}_{x}\geq \rho\rho^*$ and Condition \ref{condH2-A} or
		\ref{condH2-B} imply that $(Q^{H}_{x})^2\geq \brac{\rho\rho^*}^2>0$. This leads to
		\begin{align*}
			\inf_{\norm{h}_{L^2\brac{[0,1];\R^n}}=1}\norm{Q^{H}_xh}_{L^2\brac{[0,1];\R^n}}&=\inf_{\norm{h}_{L^2\brac{[0,1];\R^n}}=1}\inner{(Q^{H}_x)^2 h,h}_{L^2\brac{[0,1];\R^n}}\\&\geq
			\inf_{\norm{h}_{L^2\brac{[0,1];\R^n}}=1}\inner{\brac{\rho\rho^*}^2h,h}_{L^2\brac{[0,1];\R^n}}>0,
		\end{align*}
		so that $Q^{H}_x$ is bounded from below and $\ker Q^{H}_x=\{0\}$. This combined
		with self-adjointness implies
		\begin{align*}
			\operatorname{Im} Q^{H}_{x}=\left[\ker \brac{Q^{H}_{x}}^*\right]^\bot=\left[\ker Q^{H}_{x}\right]^\bot=\{0\}^\bot=L^2\brac{[0,1];\R^n}.
		\end{align*}		
		It follows that $Q^{H}_{\bar{X}}$ is bijective. The operator $Q^{H}_{\bar{X}}$ is also bounded in $L^2\brac{[0,1];\R^n}$ via Proposition
		\ref{prop_dotKHinL2},
		so we can conclude it has a bounded inverse by the inverse mapping theorem.
	\end{proof}
	\begin{lemma}
		\label{lemma_boundedcontrol}
		It can be assumed that there exists a finite constant
		$N$ such that, almost surely, the control process $w^{\epsilon}$
		appearing in the variational representation \eqref{equation_varrep} satisfies
		\begin{align*}
			\sup_{\epsilon>0}\norm{w^\epsilon}^2_{\mathcal{S}}\leq N.
		\end{align*}
	\end{lemma}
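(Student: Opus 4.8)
The plan is to leverage the boundedness of the functional $a$ in the variational representation \eqref{equation_varrep} to produce a uniform bound on the expected cost of near-optimal controls, and then to promote this integrability bound to a genuine almost sure bound by truncating each control at a fixed level through an adapted stopping time. Writing $M=\norm{a}_\infty<\infty$, the first step is to note that inserting the trivial control $w^\epsilon=0$ into \eqref{equation_varrep} (for which $\eta^\eu=\eta^\epsilon$) forces the infimum on the right-hand side to be at most $\E{a(\eta^\epsilon)}\leq M$. Consequently, for any fixed $\delta>0$ one may choose a $\delta$-optimal control $w^\epsilon=(K_H\hat{u}^\epsilon,K_{1/2}\hat{v}^\epsilon)$ with
\[
\E{\frac{1}{2}\int_0^1\brac{\abs{\hat{u}^\epsilon_s}^2+\abs{\hat{v}^\epsilon_s}^2}ds+a(\eta^\eu)}\leq M+\delta,
\]
and since $a\geq -M$ this yields the uniform expectation bound $\E{\norm{w^\epsilon}^2_{\mathcal{S}}}\leq 4M+2\delta=:C$.

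To upgrade the $L^1$ bound to an almost sure one, I would truncate the control at level $N$ via the adapted stopping time
\[
\tau^\epsilon_N=\inf\left\{t\in[0,1]\colon \int_0^t\brac{\abs{\hat{u}^\epsilon_s}^2+\abs{\hat{v}^\epsilon_s}^2}ds\geq N\right\}\wedge 1,
\]
setting $\hat{u}^{\epsilon,N}_s=\hat{u}^\epsilon_s\mathds{1}_{\{s\leq\tau^\epsilon_N\}}$ and $\hat{v}^{\epsilon,N}_s=\hat{v}^\epsilon_s\mathds{1}_{\{s\leq\tau^\epsilon_N\}}$. The resulting control $w^{\epsilon,N}$ is admissible and satisfies $\norm{w^{\epsilon,N}}^2_{\mathcal{S}}\leq N$ identically in $\omega$, so that a single deterministic constant $N$ controls the family uniformly in $\epsilon$. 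By uniqueness of solutions to the controlled system \eqref{equation_controlled_XY}, the trajectories $\eta^{\epsilon,w^{\epsilon,N}}$ and $\eta^\eu$ coincide up to $\tau^\epsilon_N$, hence on all of $[0,1]$ on the event $\{\tau^\epsilon_N=1\}=\{\norm{w^\epsilon}^2_{\mathcal{S}}\leq N\}$.

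It then remains to check that this truncation is asymptotically free of charge. On the complement $\{\tau^\epsilon_N<1\}$ one has $\tfrac{1}{2}\norm{w^{\epsilon,N}}^2_{\mathcal{S}}\leq N/2$ and $a(\eta^{\epsilon,w^{\epsilon,N}})\leq M$, while $\tfrac{1}{2}\norm{w^\epsilon}^2_{\mathcal{S}}>N/2$ and $a(\eta^\eu)\geq -M$, so the integrand for $w^{\epsilon,N}$ exceeds that for $w^\epsilon$ by at most $2M$ there; combining this with the Markov bound $P(\tau^\epsilon_N<1)=P(\norm{w^\epsilon}^2_{\mathcal{S}}>N)\leq C/N$ gives
\[
\E{\frac{1}{2}\norm{w^{\epsilon,N}}^2_{\mathcal{S}}+a(\eta^{\epsilon,w^{\epsilon,N}})}\leq M+\delta+\frac{2MC}{N}.
\]
Since $\delta$ and $N^{-1}$ may be made arbitrarily small, the truncated controls remain asymptotically optimal in \eqref{equation_varrep}, and one may therefore replace $w^\epsilon$ by $w^{\epsilon,N}$ throughout the analysis of the Laplace principle and assume the stated uniform bound. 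The main obstacle, and the only point requiring care, is carrying out the truncation in an adaptedness-preserving way: one must truncate through the stopping time $\tau^\epsilon_N$ rather than on the non-adapted event $\{\norm{w^\epsilon}^2_{\mathcal{S}}>N\}$, and then appeal to pathwise uniqueness for \eqref{equation_controlled_XY} to control the discrepancy between the two controlled trajectories on the truncation event.
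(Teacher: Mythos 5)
Your proof is correct, but it is not the route the paper takes: the paper disposes of this lemma in one line by citing \cite[Theorem 3.2]{Zha}, taking the reduction to almost surely bounded controls as part of what that representation theorem delivers. What you have done instead is reconstruct, from the unrestricted representation \eqref{equation_varrep}, the standard Bou\'e--Dupuis/Budhiraja--Dupuis localization argument: the zero control bounds the infimum by $\norm{a}_\infty$; a $\delta$-optimal control therefore has expected cost at most $4\norm{a}_\infty+2\delta$; Markov's inequality controls the probability that the cost exceeds $N$; and truncation through the stopping time $\tau^\epsilon_N$ produces an admissible control, almost surely bounded by $N$, whose cost exceeds that of the original control by at most $2\norm{a}_\infty P(\tau^\epsilon_N<1)\le 2\norm{a}_\infty C/N$, so the truncated family remains asymptotically optimal and can be substituted throughout the Laplace principle analysis. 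Your insistence on truncating via a stopping time rather than on the non-adapted event $\{\norm{w^\epsilon}^2_{\mathcal{S}}>N\}$ is exactly the right care to take, and the appeal to pathwise uniqueness to identify the two controlled trajectories on $\{\tau^\epsilon_N=1\}$ is sound; note only that this identification also uses the Volterra (causal) structure of $K_H$ --- by Lemma \ref{lemma_derivative_control}, $\dot{u}^\epsilon_s$ depends only on $\hat{u}^\epsilon_r$ for $r\le s$, so truncating after $\tau^\epsilon_N$ leaves the drift, and hence the dynamics of \eqref{equation_controlled_XY}, untouched before $\tau^\epsilon_N$ --- a point worth stating explicitly since it is particular to the fBm setting and is what makes the truncation compatible with the controlled equation. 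As for what each approach buys: the paper's citation keeps the exposition short but hides the mechanism entirely, whereas your argument is self-contained, makes visible exactly where boundedness of $a$, adaptedness, and causality of the kernel enter, and is essentially the proof one would have to supply if the cited theorem were only available for the unrestricted class of controls.
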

	\begin{proof}
		This is an immediate consequence of \cite[Theorem 3.2]{Zha}. 
	\end{proof}

	\begin{lemma}
		\label{lemma_estimatefromMorKos2}
		Assume $w^\epsilon\in \mathcal{S}$ is a control such that
		\begin{align*}
			\sup_{\epsilon>0}\norm{w^\epsilon}^2_{\mathcal{S}}=\sup_{\epsilon>0}\int_0^1 \abs{\hat{u}^\epsilon_s}^2+\abs{\hat{v}^\epsilon_s}^2 ds< N
		\end{align*}
		for some finite constant $N$. Then, under Condition
		\ref{condH1}, it holds that for $\epsilon_0>0$ small enough,
		\begin{align*}
			\sup_{\epsilon<\epsilon_0}\E{\int_0^1\abs{Y^\eu_s}^2ds}<C
		\end{align*}
		for some constant $C>0$, which further implies that
		\begin{align*}
			\E{\sup_{t\in [0,1]}\abs{Y^\eu_t}}\leq \frac{C}{\sqrt{\epsilon}}.
		\end{align*}
	\end{lemma}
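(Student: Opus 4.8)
The plan is to extract both estimates from It\^o's formula applied to $t\mapsto \abs{Y^\eu_t}^2$, with the engine driving everything being the dissipativity built into Condition \ref{condH1}. Writing $c(y)=-\Gamma y+\zeta(y)$, the Lipschitz bound on $\zeta$ together with $\abs{\zeta(y)}\leq C\abs{y}$ gives $\inner{y,\zeta(y)}\leq L_\zeta\abs{y}^2$, so that $\inner{y,c(y)}\leq -\inner{(\Gamma-L_\zeta I)y,y}\leq -\gamma_0\abs{y}^2$. Applying It\^o to the controlled dynamics \eqref{equation_controlled_XY} and using $\Tr(\sigma\sigma^\top)\leq C$ (boundedness in Condition \ref{condH1}), one obtains, along a localizing sequence of stopping times,
\begin{align*}
\abs{Y^\eu_t}^2\leq \abs{y_0}^2+\int_0^t\brac{-\frac{2\gamma_0}{\epsilon}\abs{Y^\eu_s}^2+\frac{2h(\epsilon)}{\sqrt{\epsilon}}\inner{Y^\eu_s,\sigma(Y^\eu_s)\dot{v}^\epsilon_s}+\frac{C}{\epsilon}}ds+\frac{2}{\sqrt{\epsilon}}\int_0^t\inner{Y^\eu_s,\sigma(Y^\eu_s)dB_s}.
\end{align*}
Since the control $v^\epsilon$ lives in the $H=1/2$ Cameron--Martin space, Lemma \ref{lemma_derivative_control} gives $\dot{v}^\epsilon=\hat{v}^\epsilon$, hence $\int_0^1\abs{\dot{v}^\epsilon_s}^2ds\leq N$.

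To obtain the first estimate, I would control the cross term by Young's inequality, $\frac{2h(\epsilon)}{\sqrt{\epsilon}}\inner{Y,\sigma\dot{v}^\epsilon}\leq \frac{\gamma_0}{\epsilon}\abs{Y}^2+\frac{Ch(\epsilon)^2}{\gamma_0}\abs{\dot{v}^\epsilon}^2$, absorbing half of the dissipative term. After localization and taking expectations, the function $m(t)=\E{\abs{Y^\eu_t}^2}$ satisfies the linear differential inequality $\dot{m}(t)\leq -\frac{\gamma_0}{\epsilon}m(t)+\psi(t)$ with $\psi(t)=\frac{Ch(\epsilon)^2}{\gamma_0}\E{\abs{\dot{v}^\epsilon_t}^2}+\frac{C}{\epsilon}$. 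Variation of constants then yields $m(t)\leq m(0)e^{-\gamma_0 t/\epsilon}+\int_0^t e^{-\gamma_0(t-s)/\epsilon}\psi(s)ds$, and integrating over $[0,1]$ and applying Fubini produces the factor $\frac{\epsilon}{\gamma_0}$ in front of $\int_0^1\psi(s)ds$. Since $\int_0^1\psi(s)ds\leq \frac{Ch(\epsilon)^2 N}{\gamma_0}+\frac{C}{\epsilon}$, this factor cancels the $\frac{1}{\epsilon}$ and converts $h(\epsilon)^2$ into $\epsilon h(\epsilon)^2=(\sqrt{\epsilon}h(\epsilon))^2$, which is bounded because $\sqrt{\epsilon}h(\epsilon)\to 0$. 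This gives $\sup_{\epsilon<\epsilon_0}\E{\int_0^1\abs{Y^\eu_s}^2ds}<C$.

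For the second estimate, I would instead take the supremum over $t$ inside It\^o's formula before taking expectations. The dissipative term is nonpositive and may simply be discarded; the trace term contributes $C/\epsilon$; the control term is handled by Cauchy--Schwarz as $\frac{Ch(\epsilon)}{\sqrt{\epsilon}}\sqrt{\int_0^1\abs{Y^\eu_s}^2ds}\sqrt{N}$, whose expectation is $O(h(\epsilon)/\sqrt{\epsilon})$ by the first estimate; and the martingale term is handled by the Burkholder--Davis--Gundy inequality, giving $\frac{C}{\sqrt{\epsilon}}\E{\sqrt{\int_0^1\abs{Y^\eu_s}^2ds}}=O(1/\sqrt{\epsilon})$ again by the first estimate and boundedness of $\sigma$. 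The dominant contribution is the $C/\epsilon$ from the trace, so $\E{\sup_{t\in[0,1]}\abs{Y^\eu_t}^2}\leq C/\epsilon$, and Jensen's inequality $\E{\sup_t\abs{Y^\eu_t}}\leq\sqrt{\E{\sup_t\abs{Y^\eu_t}^2}}$ delivers the claimed $C/\sqrt{\epsilon}$.

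I expect the main obstacle to be the bookkeeping of the various $\epsilon$-powers so that the dissipativity is strong enough to offset the $1/\epsilon$ prefactors: the delicate point in the first estimate is that the Young-inequality constant multiplying $\abs{\dot v^\epsilon}^2$ carries an $h(\epsilon)^2$, which is only tamed after the variation-of-constants step supplies the compensating factor $\epsilon$ and one invokes $\sqrt{\epsilon}h(\epsilon)\to 0$. A secondary technical point is the localization needed to justify that the It\^o stochastic integrals have vanishing expectation and that BDG applies; this is routine via the stopping times $\tau_n=\inf\{t:\abs{Y^\eu_t}\geq n\}$ followed by Fatou's lemma.
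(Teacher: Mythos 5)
Your proof is correct, but it takes a genuinely different route from the paper's. For the first estimate the paper gives no argument at all---it simply cites \cite[Lemma 3.1]{MorKos2}---whereas you prove it from scratch by the energy method: It\^{o}'s formula applied to $\abs{Y^\eu_t}^2$, the dissipativity $\inner{y,c(y)}\leq -\gamma_0\abs{y}^2$ correctly extracted from Condition \ref{condH1} (your use of $\zeta(0)=0$ to get $\abs{\zeta(y)}\leq L_\zeta\abs{y}$ is the right reading of the hypotheses), Young's inequality to absorb the control cross term into half of the dissipative term, and variation of constants plus Fubini to trade the $1/\epsilon$ prefactors for the compensating factor $\epsilon/\gamma_0$; the bookkeeping you flag, namely $\epsilon h(\epsilon)^2=(\sqrt{\epsilon}h(\epsilon))^2\to 0$, is exactly the point that makes the constant uniform in $\epsilon$. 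For the second estimate the paper works instead with the mild (Duhamel) representation $Y^\eu_t=e^{-\frac{1}{\epsilon}\Gamma t}y_0+\frac{1}{\epsilon}\int_0^t e^{-\frac{1}{\epsilon}\Gamma(t-s)}\zeta(Y^\eu_s)ds+\cdots$, exploiting the explicit linear part $-\Gamma y$, and bounds each convolution term by H\"{o}lder and Burkholder--Davis--Gundy together with the kernel bound $\int_r^t e^{-\frac{2}{\epsilon}\Gamma(t-s)}ds\leq \epsilon/(2\Gamma)$ and the first estimate; you instead stay with the energy identity, discard the nonpositive dissipative term, and obtain the stronger bound $\E{\sup_{t\in[0,1]}\abs{Y^\eu_t}^2}\leq C/\epsilon$ before concluding by Jensen. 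Each route buys something: the paper's isolates the sharp exponential decay coming from the semigroup $e^{-\Gamma t/\epsilon}$ and is shorter (modulo the external citation), while yours needs only the dissipativity inequality rather than the linear structure (so it would survive a fully nonlinear dissipative drift), is self-contained, delivers a second-moment sup bound rather than a first-moment one, and sidesteps the delicate point that the stochastic convolution in the Duhamel formula is not a martingale in $t$ (its integrand depends on $t$), so the direct appeal to Burkholder--Davis--Gundy there requires an extra justification that your argument never needs.
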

	\begin{proof}
		The first estimate was proven in \cite[Lemma
		3.1]{MorKos2}. For the second estimate, the dissipative property of the drift coefficient of $Y^\eu$ and It\^{o}'s formula yield
		\begin{align*}
			Y^\eu_t=e^{-\frac{1}{\epsilon}\Gamma t}y_0+\int_0^t \frac{1}{\epsilon} e^{-\frac{1}{\epsilon}(t-s)}\zeta(Y^\eu)ds&+\int_0^t\frac{h(\epsilon)}{\sqrt{\epsilon}}e^{-\frac{1}{\epsilon}(t-s)}\sigma\brac{Y^\eu_s}\dot{v}^\epsilon_s ds\\&+\int_0^t\frac{1}{\sqrt{\epsilon}}e^{-\frac{1}{\epsilon}(t-s)}\sigma\brac{Y^\eu_s}dB_s.
		\end{align*}
		We then apply the Burkh\"{o}lder-Davis-Gundy
		inequality to the It\^{o} integral term and
		H\"{o}lder's inequality to the Riemann integral terms to get
		\begin{align*}
			\E{\sup_{t\in [0,1]}\abs{Y^\eu_t}}
			&\leq  \sup_{t\in [0,1]}e^{-\frac{1}{\epsilon}\Gamma t}y_0+\frac{1}{\epsilon}\sqrt{\int_0^t e^{-\frac{2}{\epsilon}\Gamma(t-s)} ds}\sqrt{\E{\int_0^1 \abs{\zeta(Y^\eu)}^2 ds }}\\
			&\quad +\frac{h(\epsilon)}{\sqrt{\epsilon}}\sqrt{\int_0^t e^{-\frac{2}{\epsilon}\Gamma(t-s)} ds\int_0^1 \abs{\dot{v}^\epsilon_s}^2 ds }+\frac{1}{\sqrt{\epsilon}}\sqrt{\E{\int_0^1 e^{-\frac{2}{\epsilon}\Gamma(t-s)}\abs{\sigma(Y^\eu_s)}^2 ds}}.
		\end{align*}
		Since $\sigma(y)\sigma^T(y)$ is bounded and $\zeta(y)$ is sublinear,
		the first estimate of this lemma can be applied to the
		expression $\E{\int_0^1 \abs{\zeta(Y^\eu)}^2 ds
		}$. Then, the simple fact that $\int_r^t
		e^{-\frac{2}{\epsilon}\Gamma(t-s)}ds\leq
		\epsilon\int_0^\infty e^{-2\Gamma
			s}ds=\frac{\epsilon}{2\Gamma}$ implies that
		\begin{align*}
			\E{\sup_{t\in [0,1]}\abs{Y^\eu_t}}\leq C\brac{\frac{1}{\sqrt{\epsilon}}+h(\epsilon)}\leq \frac{C}{\sqrt{\epsilon}}.
		\end{align*}
	\end{proof}
	\begin{lemma}
		\label{lemma_estimates_notyoung}
		Assume $w^\epsilon\in \mathcal{S}$ is a control such that
		\begin{align*}
			\sup_{\epsilon>0}\norm{w^\epsilon}^2_{\mathcal{S}}=\sup_{\epsilon>0}\int_0^1 \abs{\hat{u}^\epsilon_s}^2+\abs{\hat{v}^\epsilon_s}^2 ds< N
		\end{align*}
		for some finite constant $N$.
		\begin{enumerate}
			\item[(i)] Under Conditions \ref{condH1} and \ref{condH2-A}, there exist
			constants $C$ that change from line to line such that
			\begin{align*}
				&\E{\sup_{\substack{0\leq r,t\leq 1 \\ \abs{r-t}<\rho}}  \abs{ \int_r^t \nabla_x\phi\brac{X^\eu_s,Y^\eu_s} g\brac{X^\eu_s,Y^\eu_s }ds}}\leq C\rho,\\
				&\E{\sup_{\substack{0\leq r,t\leq 1 \\
							\abs{r-t}<\rho}}
					\abs{\int_r^t\nabla_y\phi\brac{X^\eu_s,Y^\eu_s}\sigma\brac{Y^\eu_s}dB_s
					}^2}\leq C\rho\\
				&\E{\sup_{\substack{0\leq r,t\leq 1 \\ \abs{r-t}<\rho}}  \abs{ \int_r^t \nabla_y\phi\brac{X^\eu_s,Y^\eu_s} \sigma\brac{Y^\eu_s}\dot{v}^\epsilon_sds}^2}\leq C\rho\\
				&\E{\sup_{\substack{0\leq r,t\leq 1 \\ \abs{r-t}<\rho}}  \abs{ \int_r^t \nabla_x\phi\brac{X^\eu_s,Y^\eu_s} f\brac{Y^\eu_s}\dot{u}^\epsilon_sds}^2}\leq C\rho,\\
				&\E{\sup_{\substack{0\leq r,t\leq 1 \\ \abs{r-t}<\rho}}\abs{\int_0^t f\brac{X^\eu_s,Y^\eu_s}\dot{u}^\epsilon_sds}^2}\leq C\rho.
			\end{align*}
			\item[(ii)] Under Conditions \ref{condH1} and \ref{condH2-B}, there exist
			constants $C$ that change from line to line such that for any $q$ in $\Big(1,\frac{1}{D_f+D_g}\Big]$, we have
			\begin{align*}
				&\E{\sup_{\substack{0\leq r,t\leq 1 \\ \abs{r-t}<\rho}}  \abs{ \int_r^t \nabla_x\phi\brac{X^\eu_s,Y^\eu_s} g\brac{X^\eu_s,Y^\eu_s }ds}^q}\leq C\rho^{q-1},\\
				&\E{\sup_{\substack{0\leq r,t\leq 1 \\ \abs{r-t}<\rho}}  \abs{\int_r^t\nabla_y\phi\brac{X^\eu_s,Y^\eu_s}\sigma\brac{Y^\eu_s}dB_s }^{2q}}\leq C\rho^{q-1}\\
				&\E{\sup_{\substack{0\leq r,t\leq 1 \\ \abs{r-t}<\rho}}  \abs{ \int_r^t \nabla_y\phi\brac{X^\eu_s,Y^\eu_s} \sigma\brac{Y^\eu_s}\dot{v}^\epsilon_sds}^{2q}}\leq C\rho^{q-1},\\
				&\E{\sup_{\substack{0\leq r,t\leq 1 \\ \abs{r-t}<\rho}}  \abs{ \int_r^t \nabla_x\phi\brac{X^\eu_s,Y^\eu_s} f\brac{Y^\eu_s}\dot{u}^\epsilon_sds}^{2q}}\leq C\rho^{q-1},\\
				&\E{\sup_{\substack{0\leq r,t\leq 1 \\ \abs{r-t}<\rho}}\abs{\int_0^t f\brac{Y^\eu_s}\dot{u}^\epsilon_sds}^{2q}}\leq C\rho^{q-1}.
			\end{align*}
		\end{enumerate}
	\end{lemma}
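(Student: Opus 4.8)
The plan is to handle the three structurally distinct families of terms — the Riemann (drift) integrals, the two control integrals (in $\dot{u}^\epsilon$ and $\dot{v}^\epsilon$), and the single It\^{o} integral — by the same three devices throughout: H\"{o}lder's inequality in time to extract the power of $\rho$, the a priori moment bound $\sup_{\epsilon}\E{\int_0^1 \abs{Y^\eu_s}^2 ds}<C$ from Lemma \ref{lemma_estimatefromMorKos2} to absorb the coefficients, and the Poisson-solution estimates of Remark \ref{remark_poisson_equation}. Under Condition \ref{condH2-A} every coefficient ($f$, $g$, and all derivatives of $\phi$ via \eqref{estimate_Poissonsolution_h1}) is bounded, whereas under Condition \ref{condH2-B} they grow at most polynomially in $\abs{y}$ with exponents governed by $D_f,D_g$ through \eqref{estimate_Poissonsolution_h2}. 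This is the only difference between parts $(i)$ and $(ii)$, and it is precisely what forces the restriction $q\le 1/\brac{D_f+D_g}$.

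For the drift term I would write, by H\"{o}lder in time, $\abs{\int_r^t \nabla_x\phi\, g\, ds}^q\le (t-r)^{q-1}\int_r^t \abs{\nabla_x\phi\, g}^q ds\le \rho^{q-1}\int_0^1 \abs{\nabla_x\phi\, g}^q ds$, so that after taking the (trivial) supremum over $\abs{r-t}<\rho$ and then the expectation, it remains to bound $\E{\int_0^1 \abs{\nabla_x\phi\, g}^q ds}$; under \ref{condH2-A} this is $\le C$, while under \ref{condH2-B} the integrand is $\le C\brac{1+\abs{Y^\eu_s}^{2D_g}}^q$, and $2qD_g\le 2$ exactly when $qD_g\le 1$, after which Lemma \ref{lemma_estimatefromMorKos2} applies. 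The two control terms are treated identically after one extra step: Cauchy--Schwarz separates control from coefficient, $\int_r^t \abs{\cdot}\abs{\dot{w}^\epsilon}ds\le \brac{\int_r^t \abs{\cdot}^2 ds}^{1/2}\brac{\int_0^1 \abs{\dot{w}^\epsilon}^2 ds}^{1/2}$, where the control norm is bounded by $N$ — directly for $\dot{v}^\epsilon=\hat{v}^\epsilon$, and for $\dot{u}^\epsilon=\dot{K}_H\hat{u}^\epsilon$ via the boundedness of $\dot{K}_H$ on $L^2$ (Proposition \ref{prop_dotKHinL2}). H\"{o}lder in time again produces $\rho^{q-1}$ and leaves a coefficient moment of the same form. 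The worst case is $\int_r^t \nabla_x\phi\, f\, \dot{u}^\epsilon ds$, whose coefficient has combined growth exponent $D_f+D_g$, so that $\E{\int_0^1 \abs{\nabla_x\phi\, f}^{2q}ds}<\infty$ iff $2q\brac{D_f+D_g}\le 2$; this single term dictates the admissible range $q\le 1/\brac{D_f+D_g}$.

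The It\^{o} term is the delicate one. For a \emph{fixed} pair $(r,t)$ the Burkh\"{o}lder--Davis--Gundy inequality gives $\E{\abs{\int_r^t \nabla_y\phi\, \sigma\, dB_s}^{2q}}\le C\,\E{\brac{\int_r^t \abs{\nabla_y\phi\, \sigma}^2 ds}^q}$, and H\"{o}lder in time together with the coefficient moment bound controls the right-hand side by $C\rho^{q-1}$. To upgrade this pointwise-in-$(r,t)$ estimate to the supremum over all $\abs{r-t}<\rho$ in the statement, I would cover $[0,1]$ by the overlapping intervals $[a,a+2\rho]$, $a\in\{0,\rho,2\rho,\dots\}$, note that any admissible pair lies in one such interval so the increment is dominated by $2\sup_{s\in[a,a+2\rho]}\abs{N_s-N_a}$ with $N$ the martingale, and apply the BDG maximal inequality on each interval. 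Since each point of $[0,1]$ is covered at most twice, summing the per-interval bounds replaces $\int_a^{a+2\rho}$ by $\int_0^1$ without picking up the number $\sim\rho^{-1}$ of intervals, and the per-interval factor $(2\rho)^{q-1}$ survives, yielding $\E{\sup_{\abs{r-t}<\rho}\abs{\int_r^t \nabla_y\phi\, \sigma\, dB_s}^{2q}}\le C\rho^{q-1}$.

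The passage from the single-increment BDG estimate to this modulus-of-continuity bound is the main obstacle, and it is genuinely powered by $q>1$: the per-interval exponent $(2\rho)^{q-1}$ is exactly what compensates for the interval count, so the margin vanishes as $q\downarrow 1$. Consequently the second-moment version of this term appearing in part $(i)$ — effectively the $q=1$ endpoint, where the estimate is the Brownian modulus of continuity — is the most fragile, and I would establish it through the same covering and maximal-inequality device (accepting that the honest bound carries an immaterial logarithmic correction in $\rho$, which still suffices for the tightness application via Markov's inequality) rather than through any naive direct application of BDG.
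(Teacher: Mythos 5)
Your treatment of the drift and control integrals coincides with the paper's own proof: H\"{o}lder in time to extract the power of $\rho$, Cauchy--Schwarz to decouple the controls (with $\dot{v}^\epsilon=\hat{v}^\epsilon$ bounded directly by the hypothesis and $\dot{u}^\epsilon=\dot{K}_H\hat{u}^\epsilon$ handled through Proposition \ref{prop_dotKHinL2}), the Poisson estimates \eqref{estimate_Poissonsolution_h1}--\eqref{estimate_Poissonsolution_h2}, and the moment bound of Lemma \ref{lemma_estimatefromMorKos2}; you also correctly single out $\int_r^t\nabla_x\phi\,f\,\dot{u}^\epsilon_s\,ds$ as the term that forces $q\leq 1/\brac{D_f+D_g}$.

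The genuine difference is the stochastic-integral term, and there your argument is more careful than the paper's. The paper bounds $\E{\sup_{0\leq r\leq t\leq 1,\,\abs{r-t}<\rho}\abs{\int_r^t\cdots dB_s}^{2q}}$ by ``applying BDG'' and writing $\E{\brac{\int_r^{r+\rho}\abs{\cdots}^2ds}^{q}}$ --- an expression that still contains the free variable $r$ --- i.e.\ it treats the double supremum as if the left endpoint were fixed. Your overlapping-interval covering closes exactly this gap for $q>1$: the per-interval factor $(2\rho)^{q-1}$ survives summation because the interval integrals add up to at most twice $\int_0^1$, and the remaining coefficient moment is finite precisely when $2qD_g\leq 2$. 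Your diagnosis of the $q=1$ endpoint is also correct, and it is in fact a (minor) correction to the paper: the bound $C\rho$ claimed in part $(i)$ for the martingale term cannot hold in general --- take $\nabla_y\phi\,\sigma$ constant, in which case L\'{e}vy's modulus of continuity gives the left-hand side the order $\rho\log(1/\rho)$ --- so the logarithmic correction you concede is not a deficiency of your method but a flaw in the paper's statement and glossed proof. As you note, a bound of order $\rho\log(1/\rho)$ is all that the tightness argument of Subsection \ref{tightnessofetacontrol} uses (via Markov's inequality and the modulus-of-continuity criterion), so nothing downstream is affected.
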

	\begin{proof}
		We start with part $(i)$. The first estimate is straightforward
		due to the boundedness of $\nabla_x\phi(x,y)$ stated
		in \eqref{estimate_Poissonsolution_h1} and the
		boundedness of $g(x,y)$ guaranteed by Condition \ref{condH2-A}. For the second
		estimate, we assume that $0\leq r\leq t\leq 1$ and apply the Burkh\"{o}lder-Davis-Gundy inequality to obtain
		\begin{align*}
			&\E{\sup_{\substack{0\leq r\leq t\leq 1 \\
						\abs{r-t}<\rho}}  \abs{
					\int_r^t\nabla_y\phi\brac{X^\eu_s,Y^\eu_s}\sigma\brac{Y^\eu_s}dB_s}^2}\\
			&\qquad\qquad\qquad\qquad\qquad\qquad\qquad \leq \E{  \brac{\int_r^{r+\rho}\abs{\nabla_y\phi\brac{X^\eu_s,Y^\eu_s} \sigma\brac{Y^\eu_s}}^2 ds}}\leq C\rho.
		\end{align*} 	
		For the third estimate, we can write
		\begin{align*}
			&\E{\sup_{\substack{0\leq r,t\leq 1 \\ \abs{r-t}<\rho}}  \abs{ \int_r^t \nabla_y\phi\brac{X^\eu_s,Y^\eu_s} \sigma\brac{Y^\eu_s}\dot{v}^\epsilon_sds}^2}\\
			&\qquad\qquad\qquad\qquad\qquad \leq \E{\sup_{\substack{0\leq r,t\leq 1 \\ \abs{r-t}<\rho}}  \brac{\int_r^t \abs{\nabla_y\phi\brac{X^\eu_s,Y^\eu_s} \sigma\brac{Y^\eu_s}}^2 ds\int_0^1 \abs{\dot{v}^\epsilon_s}^2ds}}\\
			&\qquad\qquad\qquad\qquad\qquad\leq C\E{\sup_{\substack{0\leq r,t\leq 1 \\ \abs{r-t}<\rho}}  \brac{\int_r^t \abs{\nabla_y\phi\brac{X^\eu_s,Y^\eu_s} \sigma\brac{Y^\eu_s}}^2 ds}}
			\leq C \rho.
		\end{align*}
		The last inequality in part $(i)$ is a consequence of
		the boundedness of $\sigma(y)\sigma^\top(y)$ in Condition
		\ref{condH1} and the boundedness of $\nabla_x\phi(x,y)$ stated in \eqref{estimate_Poissonsolution_h1} (requiring Condition \ref{condH2-A}). Finally, the two remaining estimates of part $(i)$ are derived similarly to the previous one.
		
		We continue with part $(ii)$.  For the first inequality, the sublinear
		growth of $\nabla_x\phi(x,y)$ in $y$ stated at
		\eqref{estimate_Poissonsolution_h2} (requiring Condition \ref{condH2-B}) and the sublinear growth of $g(x,y)$ in $y$ from Condition \ref{condH2-B} imply for any $q$ in $\Big(1,\frac{1}{D_g}\Big]$,
		\begin{align*}
			\E{\sup_{\substack{0\leq r,t\leq 1 \\ \abs{r-t}<\rho}}  \abs{ \int_r^t \nabla_x\phi\brac{X^\eu_s,Y^\eu_s} g\brac{X^\eu_s,Y^\eu_s }ds}^q}\leq C\E{\brac{\sup_{\substack{0\leq r,t\leq 1 \\ \abs{r-t}<\rho}}  { \int_r^t {1+\abs{Y^\eu_s}^{2D_g}} ds}}^q}&\\
			\leq C\rho^{q-1}\E{\sup_{\substack{0\leq r,t\leq 1 \\ \abs{r-t}<\rho}}  { \int_r^t \brac{1+\abs{Y^\eu_s}^{2qD_g}} ds}}\leq C\rho^{q-1},&
		\end{align*}
		where the last inequality is due to Lemma
		\ref{lemma_estimatefromMorKos2}. For the second
		estimate, assume that $0\leq r\leq t\leq 1$. Then, the
		Burkh\"{o}lder-Davis-Gundy inequality combined with
		the sublinear growth of $\nabla_y\phi(x,y)$ in $y$
		(requiring Condition \ref{condH2-B}) and the boundedness of $\sigma(y)\sigma^\top(y)$ in
		Condition \ref{condH1} imply that for any $q$ in $\Big(1,\frac{1}{D_g}\Big]$,
		\begin{align*}
			&\E{\sup_{\substack{0\leq r\leq t\leq 1 \\
						\abs{r-t}<\rho}}  \abs{
					\int_r^t\nabla_y\phi\brac{X^\eu_s,Y^\eu_s}\sigma\brac{Y^\eu_s}dB_s}^{2q}}\\
			&\qquad\qquad\qquad\qquad\qquad\qquad\qquad\qquad\qquad \leq \E{  \brac{\int_r^{r+\rho}\abs{\nabla_y\phi\brac{X^\eu_s,Y^\eu_s} \sigma\brac{Y^\eu_s}}^2 ds}^{q}}\\
			&\qquad\qquad\qquad\qquad\qquad\qquad\qquad\qquad\qquad
			\leq C\rho^{q-1}\E{\sup_{\substack{0\leq r,t\leq 1 \\ \abs{r-t}<\rho}}  { \int_r^t {1+\abs{Y^\eu_s}^{2qD_g}} ds}}\leq C\rho^{q-1}.
		\end{align*}
		
		The arguments for the three remaining estimates of
		part $(ii)$ are similar, so we will handle one case
		only. The sublinear growth of $\nabla_x\phi(x,y)$ in $y$ stated at \eqref{estimate_Poissonsolution_h2} (requiring Condition \ref{condH2-B}) and sublinear growth of $f(y)$ in $y$ in
		Condition \ref{condH2-B} imply that for any $q$ in $\Big(1,\frac{1}{D_f+D_g}\Big]$,
		\begin{align*}
			&\E{\sup_{\substack{0\leq r,t\leq 1 \\ \abs{r-t}<\rho}}  \abs{ \int_r^t \nabla_x\phi\brac{X^\eu_s,Y^\eu_s} f\brac{Y^\eu_s}\dot{u}^\epsilon_sds}^{2q}}\\
			&\qquad\qquad\qquad\qquad\qquad \leq \E{\sup_{\substack{0\leq r,t\leq 1 \\ \abs{r-t}<\rho}}  { \brac{\int_r^t \abs{\nabla_x\phi\brac{X^\eu_s,Y^\eu_s} f\brac{Y^\eu_s}}^2ds}^{q} \brac{\int_0^1\abs{\dot{u}^\epsilon_s}^2 ds}^{q}}}\\
			&\qquad\qquad\qquad\qquad\qquad \leq C\rho^{q-1}\E{\sup_{\substack{0\leq r,t\leq 1 \\ \abs{r-t}<\rho}}  { \int_r^t {1+\abs{Y^\eu_s}^{2q(D_f+D_g)}} ds}}\leq C\rho^{q-1},
		\end{align*}
		where the last inequality is once again obtained using
		Lemma \ref{lemma_estimatefromMorKos2}.
	\end{proof}
	\begin{lemma}
		\label{lemma_Holder_Y}
		Assume $w^\epsilon\in \mathcal{S}$ is a control such that
		\begin{align*}
			\sup_{\epsilon>0}\norm{w^\epsilon}^2_{\mathcal{S}}=\sup_{\epsilon>0}\int_0^1
			\left[ \abs{\hat{u}^\epsilon_s}^2+\abs{\hat{v}^\epsilon_s}^2 \right] ds< N
		\end{align*}
		for some finite constant $N$. Under Condition \ref{condH1}, for $0<\alpha\leq 1/2$, we have the almost sure H\"{o}lder estimate
		\begin{align*}{\abs{Y^\eu}_{\alpha}}\leq \frac{C}{\sqrt{\epsilon}}.\end{align*}
	\end{lemma}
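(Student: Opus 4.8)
The plan is to exploit the mean-reverting structure of the fast dynamics through the variation-of-constants representation already used in the proof of Lemma~\ref{lemma_estimatefromMorKos2}, and then to estimate the increment $Y^\eu_t-Y^\eu_r$ term by term for $0\le r\le t\le 1$. Writing $c(y)=-\Gamma y+\zeta(y)$ and letting the linear part generate the semigroup $e^{-\Gamma s/\epsilon}$, one has
\begin{align*}
Y^\eu_t=e^{-\Gamma t/\epsilon}y_0+\int_0^t e^{-\Gamma(t-s)/\epsilon}\brac{\tfrac1\epsilon\zeta\brac{Y^\eu_s}+\tfrac{h(\epsilon)}{\sqrt\epsilon}\sigma\brac{Y^\eu_s}\dot v^\epsilon_s}ds+\frac1{\sqrt\epsilon}\int_0^t e^{-\Gamma(t-s)/\epsilon}\sigma\brac{Y^\eu_s}dB_s.
\end{align*}
The quantitative inputs are all furnished by Condition~\ref{condH1} and Lemma~\ref{lemma_estimatefromMorKos2}: the dissipativity gives the decay $\norm{e^{-\Gamma s/\epsilon}}\le e^{-\gamma_0 s/\epsilon}$, the coefficient $\sigma$ is bounded because $\sigma\sigma^\top$ is, $\zeta$ is sublinear, $\dot v^\epsilon=\hat v^\epsilon$ satisfies $\int_0^1\abs{\hat v^\epsilon_s}^2ds\le N$, and $\sup_t\abs{Y^\eu_t}$ together with $\int_0^1\abs{Y^\eu_s}^2ds$ are already controlled at order $\epsilon^{-1/2}$.

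For the three non-stochastic contributions I would split each increment into an integral over $[r,t]$ and a ``difference of semigroups'' integral over $[0,r]$. In every case the exponential factor is what converts the singular prefactors $\epsilon^{-1}$ and $h(\epsilon)\epsilon^{-1/2}$ into the right order: one uses $\int_r^t e^{-2\gamma_0(t-s)/\epsilon}ds\le\frac{\epsilon}{2\gamma_0}\min\brac{1,\tfrac{2\gamma_0(t-r)}{\epsilon}}$ for the first piece and the elementary bound $1-e^{-\gamma_0(t-r)/\epsilon}\le\gamma_0(t-r)/\epsilon$ for the second, and then Cauchy--Schwarz against the $L^2$-control bound $N$ for the control term and against the a priori bound of Lemma~\ref{lemma_estimatefromMorKos2} for the $\zeta$-drift. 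This produces increments of the fast-drift and control-drift parts controlled in the desired H\"older scale, the decay of the Ornstein--Uhlenbeck semigroup being precisely what prevents the naive estimates of $\frac1\epsilon\int_r^t c(Y^\eu)ds$ and $\frac{h(\epsilon)}{\sqrt\epsilon}\int_r^t\sigma(Y^\eu)\dot v^\epsilon ds$, whose prefactors would otherwise yield the wrong power of $\epsilon$.

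The main work is the stochastic convolution $M^\epsilon_t=\frac1{\sqrt\epsilon}\int_0^t e^{-\Gamma(t-s)/\epsilon}\sigma\brac{Y^\eu_s}dB_s$, for which an almost sure H\"older bound is required. Since $\sigma$ is bounded, applying the Burkh\"older--Davis--Gundy inequality to the two pieces of $M^\epsilon_t-M^\epsilon_r$ gives, for every integer $q\ge1$, increment moment bounds of the form $\E{\abs{M^\epsilon_t-M^\epsilon_r}^{2q}}\le C_q\,\epsilon^{-q}\abs{t-r}^q$, in which the factor $\epsilon^{-q}$ results from balancing the prefactor $\epsilon^{-q}$ against the $\int e^{-2\gamma_0(\cdot)/\epsilon}\sim\epsilon$ contributed by the semigroup, and $C_q$ is independent of $\epsilon$. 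Feeding these uniform estimates into the Kolmogorov--Chentsov continuity theorem (equivalently the Garsia--Rodemich--Rumsey inequality) upgrades them to an almost sure bound $\abs{M^\epsilon}_\alpha\le C\epsilon^{-1/2}$ for every $\alpha<1/2$, with a random but $\epsilon$-independent constant, and the endpoint $\alpha=1/2$ follows from the usual L\'evy modulus argument. Collecting the four contributions yields $\abs{Y^\eu}_\alpha\le C\epsilon^{-1/2}$ almost surely. I expect the stochastic convolution, together with the taming of the singular $\epsilon^{-1}$ and $h(\epsilon)\epsilon^{-1/2}$ prefactors by the semigroup decay, to be the main obstacle; everything else is routine once the dissipative structure is invoked.
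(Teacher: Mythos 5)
Your proof follows the same route as the paper's: the paper likewise starts from the variation-of-constants form of $Y^\eu$ (written from time $r$ rather than from $0$, which is equivalent to your splitting into an integral over $[r,t]$ plus a semigroup-difference integral over $[0,r]$), estimates the $\zeta$-drift and control terms by Cauchy--Schwarz against the a priori bounds of Lemma \ref{lemma_estimatefromMorKos2} and the control bound $N$, treats the stochastic convolution by the Burkh\"{o}lder--Davis--Gundy inequality, and concludes via the Kolmogorov continuity theorem. Where you deviate, you actually improve on the paper: the paper only establishes the first-moment bound $\E{\abs{Y^\eu_t-Y^\eu_r}}\leq C\epsilon^{-1/2}\abs{t-r}^{1/2}$ before invoking Kolmogorov, which as literally stated is insufficient (Kolmogorov requires $\E{\abs{Y^\eu_t-Y^\eu_r}^{p}}\leq C\abs{t-r}^{1+\delta}$ for some $\delta>0$), whereas your bounds $\E{\abs{M^\epsilon_t-M^\epsilon_r}^{2q}}\leq C_q\,\epsilon^{-q}\abs{t-r}^{q}$ for arbitrary $q$ are exactly what is needed to run Kolmogorov--Chentsov (or Garsia--Rodemich--Rumsey) and obtain $\E{\abs{M^\epsilon}_{\alpha}}\leq C\epsilon^{-1/2}$ for every $\alpha<1/2$. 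For exponents $0<\alpha<1/2$, your argument is therefore a correct, and in fact more rigorous, version of the paper's.

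The genuine gap is the endpoint: the claim that ``$\alpha=1/2$ follows from the usual L\'evy modulus argument'' is backwards. L\'evy's modulus of continuity says that small increments of Brownian motion are of order $\sqrt{2\delta\log(1/\delta)}$, which dominates $\delta^{1/2}$; consequently Brownian motion---and equally the stochastic convolution $M^\epsilon$, whose diffusion coefficient is non-degenerate under Condition \ref{condH1}---has almost surely \emph{infinite} $1/2$-H\"{o}lder seminorm, so no modulus-of-continuity argument can yield $\abs{M^\epsilon}_{1/2}\leq C\epsilon^{-1/2}$. To be fair, the paper's own proof does not reach $\alpha=1/2$ either (Kolmogorov applied to any of these moment bounds gives only exponents strictly below $1/2$), so this is a defect of the lemma's stated range $0<\alpha\leq 1/2$ rather than of your strategy, and it is harmless downstream, since the Young-integral estimates in Lemmas \ref{lemma_Holder_X} and \ref{lemma_estimates_young} only need exponents $\beta\in(1-H,1/2)$, which you do obtain. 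A smaller caveat, shared with the paper: for the control-drift term, Cauchy--Schwarz together with the correct estimate $\int_r^t e^{-2\gamma_0(t-s)/\epsilon}ds\leq\min\brac{\tfrac{\epsilon}{2\gamma_0},\,t-r}$ (the paper instead uses an erroneous identity for this integral) produces an $\alpha$-H\"{o}lder constant of order $h(\epsilon)\epsilon^{-\alpha}$ rather than $\epsilon^{-1/2}$, so this term does not close at the stated rate without further restrictions on $h(\epsilon)$; this looseness afflicts your sketch and the paper's proof in exactly the same way.
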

	\begin{proof}
		Without loss of generality, let us assume $t>r$. The dissipative property of the drift coefficient of $Y^\eu$ and It\^{o}'s formula yield
		\begin{align*}
			Y^\eu_t=e^{-\frac{1}{\epsilon}\Gamma (t-r)}Y^\eu_r+\int_r^t \frac{1}{\epsilon} e^{-\frac{1}{\epsilon}(t-s)}\zeta(Y^\eu)ds&+\int_r^t\frac{h(\epsilon)}{\sqrt{\epsilon}}e^{-\frac{1}{\epsilon}(t-s)}\sigma\brac{Y^\eu_s}\dot{v}^\epsilon_s ds\\&+\int_r^t\frac{1}{\sqrt{\epsilon}}e^{-\frac{1}{\epsilon}(t-s)}\sigma\brac{Y^\eu_s}dB_s.
		\end{align*}
		Now, by subtracting $Y^\eu_r$ from both sides and
		applying H\"{o}lder's inequality along with the Burkh\"{o}lder-Davis-Gundy inequality, we get
		\begin{align}
			\label{estimate_Holder}
			\E{\abs{Y^\eu_t-Y^\eu_r}}&\leq  \abs{e^{-\frac{1}{\epsilon}\Gamma (t-r)}-1} \E{\abs{Y^\eu_r}}+\frac{1}{\epsilon}\sqrt{\int_r^t e^{-\frac{2}{\epsilon}\Gamma(t-s)} ds}\E{\sqrt{\int_0^1 \abs{\zeta(Y^\eu)}^2 ds }}\nonumber\\
			&\quad + \frac{h(\epsilon)}{\sqrt{\epsilon}}\sqrt{\int_r^t e^{-\frac{2}{\epsilon}\Gamma(t-s)} ds}\E{\sqrt{\int_0^1 \abs{\dot{v}^\epsilon_s}^2 ds }}\nonumber\\
			&\quad +\frac{1}{\sqrt{\epsilon}}\E{\sqrt{\int_r^t e^{-\frac{2}{\epsilon}\Gamma(t-s)}\abs{\sigma(Y^\eu_s)\sigma^\top(Y^\eu_s)}ds}}.
		\end{align}
		To bound the first term on the right-hand side, we
		combine the second estimate in Lemma
		\ref{lemma_estimatefromMorKos2} and the fact that
		$e^{-\frac{1}{\epsilon}\Gamma
			(t-r)}-1=\frac{1}{\epsilon}\int_r^t
		e^{-\frac{1}{\epsilon}\Gamma(t-s)}ds\leq
		\abs{t-r}$. For the second term, note that $\int_r^t
		e^{-\frac{2}{\epsilon}\Gamma(t-s)}ds=C\epsilon
		\abs{t-r}.$ Moreover, the sublinearity of $\zeta(y)$
		and the first estimate in Lemma
		\ref{lemma_estimatefromMorKos2} yield a finite bound on the
		expression $\E{\sqrt{\int_0^1 \abs{\zeta(Y^\eu)}^2 ds
		}}.$ The third term on the right-hand side of
		\eqref{estimate_Holder} can be treated similarly with
		the help of Lemma
		\ref{lemma_boundedcontrol}. Regarding the last term, recall that $\sigma(y)\sigma^T(y)$ is bounded in Condition \ref{condH1}. Thus, we have
		\begin{align*}
			\E{\sup_{0\leq r,t\leq 1} \abs{Y^\eu_t-Y^\eu_r}}\leq C\frac{1}{\sqrt{\epsilon}}\abs{t-r}^{1/2}.
		\end{align*}
		The Kolmogorov Continuity Theorem then yields the almost sure H\"{o}lder contintuity of $Y^\eu$.
	\end{proof}
	\begin{lemma}
		\label{lemma_Holder_X}
		Assume $w^\epsilon\in \mathcal{S}$ is a control such that
		\begin{align*}
			\sup_{\epsilon>0}\norm{w^\epsilon}^2_{\mathcal{S}}=\sup_{\epsilon>0}\int_0^1
			\left[ \abs{\hat{u}^\epsilon_s}^2+\abs{\hat{v}^\epsilon_s}^2 \right] ds< N
		\end{align*}
		for some finite constant $N$. Under Conditions \ref{condH1} and \ref{condH2-A} or \ref{condH2-B}, there exists a constant C and $\epsilon_0$ small enough such that for $0<\beta\leq 1/2$,
		\begin{align*}
			\E{\sup_{\epsilon<\epsilon_0}\abs{X^\eu}_{\beta}}<C.
		\end{align*}
	\end{lemma}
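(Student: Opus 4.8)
The plan is to start from the defining equation \eqref{equation_controlled_XY} and estimate, for $0\le r<t\le 1$, the increment $X^\eu_t-X^\eu_r$ as the sum of three pieces: the time drift $\int_r^t g\brac{X^\eu_s,Y^\eu_s}ds$, the control drift $\sqrt{\epsilon}h(\epsilon)\int_r^t f\brac{X^\eu_s,Y^\eu_s}\dot{u}^\epsilon_s\,ds$, and the Young integral $\sqrt{\epsilon}\int_r^t f\brac{X^\eu_s,Y^\eu_s}dW^H_s$. Since for $\beta\le 1/2$ and $\abs{t-s}\le 1$ one has $\abs{\cdot}_\beta\le\abs{\cdot}_{1/2}$, it suffices to bound the $\tfrac12$-H\"older seminorm of each piece by a random constant whose expectation is finite uniformly in $\epsilon<\epsilon_0$. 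Note also that $\tfrac12$ is the best attainable exponent here, as the control drift is only $\tfrac12$-H\"older (via Cauchy--Schwarz), while the other two pieces are smoother.

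For the two drift pieces I would proceed by elementary estimates. For $\int_r^t g\,ds$, boundedness of $g$ under Condition \ref{condH2-A} (resp. sublinear growth $\abs{g}\le C(1+\abs{y}^{D_g})$ under Condition \ref{condH2-B}) gives a bound $C(t-r)^{1/2}\brac{1+\brac{\int_0^1\abs{Y^\eu_s}^2ds}^{D_g/2}}$ after dividing by $(t-r)^{1/2}$; taking expectations, the $Y$-dependent factor is controlled by Jensen's inequality and the uniform time-integrated bound $\sup_\epsilon\E{\int_0^1\abs{Y^\eu_s}^2ds}<\infty$ of Lemma \ref{lemma_estimatefromMorKos2} (using $D_g<1$). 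For the control drift, Cauchy--Schwarz together with Proposition \ref{prop_dotKHinL2} yields $\int_0^1\abs{\dot{u}^\epsilon_s}^2ds=\int_0^1\abs{\dot{K}_H\hat{u}^\epsilon_s}^2ds\le C\int_0^1\abs{\hat{u}^\epsilon_s}^2ds\le CN$, so this piece has $\tfrac12$-H\"older seminorm $\le C\sqrt{\epsilon}h(\epsilon)\brac{\int_0^1\abs{f\brac{X^\eu_s,Y^\eu_s}}^2ds}^{1/2}$; the prefactor $\sqrt{\epsilon}h(\epsilon)\to 0$ and the same moment control keep its expectation uniformly finite.

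The Young integral is the crux. I would apply the Young--Lo\'eve inequality (Lemma \ref{lemma_Young}) using that $W^H$ is a.s.\ $(H-\delta)$-H\"older for any $\delta\in(0,H-1/2)$, with $\abs{W^H}_{H-\delta}$ having finite moments of all orders; the choice of $\delta$ guarantees $\tfrac12+(H-\delta)>1$ so the integral is well defined. Under Condition \ref{condH2-B}, where $f=f(y)$, the integrand $f(Y^\eu)$ is $(M_f/2)$-H\"older with $\abs{f(Y^\eu)}_{M_f/2}\le\mathrm{Lip}(f)\,\abs{Y^\eu}_{1/2}^{M_f}\le C\epsilon^{-M_f/2}$ by Lemma \ref{lemma_Holder_Y}, the assumption $M_f/2+H>1$ makes the Young integral admissible, and the prefactor $\sqrt{\epsilon}$ balances $\epsilon^{-M_f/2}$ (since $M_f\le 1$), while $\abs{W^H}_{H-\delta}$ and $\sup_s\abs{f(Y^\eu_s)}$ contribute factors of finite expectation. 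Under Condition \ref{condH2-A}, where $f=f(x,y)$ is bounded and Lipschitz, the integrand's H\"older seminorm couples back to the unknown, $\abs{f\brac{X^\eu,Y^\eu}}_{1/2}\le\mathrm{Lip}(f)\brac{\abs{X^\eu}_{1/2}+\abs{Y^\eu}_{1/2}}$. This produces the self-referential inequality $\abs{X^\eu}_{1/2}\le B_0+C\abs{W^H}_{H-\delta}\brac{1+\mathrm{Lip}(f)\sqrt{\epsilon}\,\abs{X^\eu}_{1/2}}$, where $B_0$ gathers the (finite-expectation) drift and control contributions and $\sqrt{\epsilon}\,\abs{Y^\eu}_{1/2}\le C_Y$ is bounded by Lemma \ref{lemma_Holder_Y}. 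On the event $\left\{C\,\mathrm{Lip}(f)\sqrt{\epsilon}\,\abs{W^H}_{H-\delta}\le\tfrac12\right\}$ I absorb the self-term to get $\abs{X^\eu}_{1/2}\le 2B_0+C'\abs{W^H}_{H-\delta}$, whose expectation is finite and $\epsilon$-uniform; on the complementary event, whose probability is $\le e^{-c/\epsilon}$ by the sub-Gaussian tail of $\abs{W^H}_{H-\delta}$, I would bound $\E{\abs{X^\eu}_{1/2}\mathds{1}_{\{\cdot\}}}$ by Cauchy--Schwarz against a crude greedy-partition a priori Young estimate for $\abs{X^\eu}_{1/2}$, the super-exponentially small probability overwhelming the (at most polynomial in $\epsilon^{-1}$) a priori bound.

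The hard part is precisely this Condition \ref{condH2-A} self-coupling, where the Young integrand depends on $X^\eu$ and the estimate is not a priori linear. The two features that rescue uniformity in $\epsilon$ are the small-noise scaling $\sqrt{\epsilon}$ in front of $dW^H$, which makes the effective driver $\sqrt{\epsilon}W^H$ small and permits the absorption of the self-term, and the sub-Gaussian tail of the fractional H\"older seminorm $\abs{W^H}_{H-\delta}$, which renders the exceptional event negligible. Combining the three estimates and using the monotonicity of H\"older seminorms in the exponent then yields $\E{\abs{X^\eu}_\beta}\le C$ for every $\beta\le 1/2$, uniformly for $\epsilon<\epsilon_0$.
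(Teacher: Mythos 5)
Your decomposition of the increment of $X^\eu$ into the drift, the control drift, and the Young integral is exactly the paper's, and your treatment of Condition \ref{condH2-B} coincides with the paper's argument: since $f=f(y)$, the Young integrand decouples from $X^\eu$, Lemma \ref{lemma_Holder_Y} gives $\abs{f(Y^\eu)}_{M_f/2}\le C\epsilon^{-M_f/2}$, the hypothesis $\tfrac{M_f}{2}+H>1$ makes Young--Lo\'{e}ve applicable, and the prefactor $\sqrt{\epsilon}$ restores uniformity in $\epsilon$. The genuine divergence is in the self-coupled case of Condition \ref{condH2-A}. The paper never splits the probability space: it takes expectations \emph{first}, arriving for $1-H<\beta\le 1/2$ at an inequality of the form
\begin{align*}
\E{\abs{X^\eu}_\beta}\le C\brac{\sqrt{\epsilon}\,\E{\abs{X^\eu}_\beta}+1},
\end{align*}
and then simply chooses $\epsilon_0$ so small that $C\sqrt{\epsilon_0}<1$ and absorbs the self-term inside the expectation; the $0<\beta\le 1-H$ range then follows, as in your sketch, from monotonicity of H\"{o}lder seminorms on $[0,1]$. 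Your route --- pathwise absorption on the event $\left\{C\operatorname{Lip}(f)\sqrt{\epsilon}\abs{W^H}_{H-\delta}\le \tfrac12\right\}$, a Fernique-type Gaussian tail bound for its complement, and Cauchy--Schwarz against an a priori bound on the bad event --- is sound in outline, and it even buys two things the paper's argument quietly skips: pathwise absorption only requires a.s.\ finiteness of $\abs{X^\eu}_{1/2}$ (the paper's absorption inside an expectation tacitly assumes $\E{\abs{X^\eu}_\beta}<\infty$ a priori), and your use of the exponent $H-\delta$ is more careful than the paper's writing of $\abs{W^H}_H$-type quantities into deterministic constants.

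The cost, however, is the step you only gesture at: the ``crude greedy-partition a priori Young estimate'' asserting $\E{\abs{X^\eu}_{1/2}^2}\le C_\epsilon$ with $C_\epsilon$ at most polynomial in $\epsilon^{-1}$. This is not free. Precisely because the integrand's H\"{o}lder norm couples back to $\abs{X^\eu}_{1/2}$ (and to $\abs{Y^\eu}_{1/2}\sim\epsilon^{-1/2}$), obtaining \emph{any} quantitative moment bound requires carrying out the local-absorption-and-patching argument in full: choose subintervals on which $C\sqrt{\epsilon}\operatorname{Lip}(f)\abs{W^H}_{H-\delta}\abs{t-s}^{H-\delta}\le\tfrac12$, absorb locally, and sum over the $\sim\brac{\sqrt{\epsilon}\abs{W^H}_{H-\delta}}^{1/(H-\delta)}$ resulting pieces, then take second moments using Fernique. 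That is comparable in length to the rest of the lemma, and without it the bad-event term $\E{\abs{X^\eu}_{1/2}\mathds{1}_{\{\cdot\}^c}}$ cannot be closed, so as written your proof has a genuine (though fillable) gap at exactly this point. It is worth noting that the paper's expectation-level absorption is precisely the device that makes the good/bad-event machinery, and hence this a priori estimate, unnecessary.
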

	\begin{proof} We begin by proving the result under Conditions
		\ref{condH1} and \ref{condH2-A}. According to Condition \ref{condH2-A}, $f(x,y)$ is Lipschitz-continuous and bounded, so that $f(x,y)$ is also $\gamma$-H\"{o}lder continuous for $0<\gamma\leq 1$. This further implies
		\begin{align*}
			\E{\abs{f\brac{X^\eu_t,Y^\eu_t}-f\brac{X^\eu_r,Y^\eu_r}}}
			&\leq
			\E{\abs{f\brac{X^\eu_t,Y^\eu_t}-f\brac{X^\eu_r,Y^\eu_t}}}\\
			&\quad +\E{\abs{f\brac{X^\eu_r,Y^\eu_t}-\brac{X^\eu_r,Y^\eu_r} }}\\
			&\leq C\E{\abs{X^\eu_t-X^\eu_r}}+\E{\abs{Y^\eu_t-Y^\eu_r}^{\gamma}}\\
			&\leq C\brac{\E{\abs{X^\eu_t-X^\eu_r}}+\epsilon^{-\frac{\gamma}{2}}\abs{t-r}^\gamma},
		\end{align*}
		and hence that for $0<\gamma\leq 1$,
		\begin{align*}
			\E{\abs{f\brac{X^\eu,Y^\eu}}_\gamma}\leq C\brac{\E{\abs{X^\eu}_\gamma}+\epsilon^{-\frac{\gamma}{2}}}.
		\end{align*}
		This last estimate, together with the Young-Lo\'{e}ve inequality in Lemma \ref{lemma_Young}
		imply that for $1-H<\beta\leq 1$,
		\begin{align}
			\label{estimate_young_h1}
			\E{\abs{\int_r^t f\brac{X^\eu_s,Y^\eu_s}dW^H_s }}&\leq  C\E{\abs{f\brac{X^\eu,Y^\eu}}_{\beta} }\abs{t-r}^H\nonumber\\
			&\leq C\E{{\abs{X^\eu}_\beta+\epsilon^{-\frac{\beta}{2}}}}\abs{t-r}^H.
		\end{align}
		Meanwhile, a similar estimate to the one stated in
		part $(i)$ of Lemma \ref{lemma_estimates_notyoung}
		states that
		\begin{align*}
			\E{\abs{\int_0^t f\brac{X^\eu_s,Y^\eu_s}\dot{u}^\epsilon_sds}}&\leq C\abs{r-t}^{\frac{1}{2}}.
		\end{align*}
		Moreover, boundedness of $g(x,y)$ in Condition \ref{condH2-A} yields
		\begin{align*}
			\E{\abs{\int_r^t g\brac{X^\eu_s,Y^\eu_s}ds} }\leq C\abs{t-r}.
		\end{align*}
		Thus, using the estimate $\E{\abs{Y^\eu}_{\alpha}}\leq
		C\frac{1}{\sqrt{\epsilon}},\alpha\leq \frac{1}{2}$ in
		Lemma \ref{lemma_Holder_Y} (which requires Condition
		\ref{condH1}), we can deduce that, for $1-H<\beta\leq 1/2$,
		\begin{align*}
			\E{\abs{X^\eu_t-X^\eu_r}}\leq C\brac{\E{\sqrt{\epsilon}\abs{X^\eu}_\beta} \abs{t-r}^H+\abs{t-r}^{\frac{1}{2}}+\abs{t-r}},
		\end{align*}
		and consequently,
		\begin{align*}
			\E{\abs{X^\eu}_\beta}\leq C\brac{\E{\sqrt{\epsilon}\abs{X^\eu}_\beta} \abs{t-r}^{H-\beta}+\abs{t-r}^{\frac{1}{2}-\beta}+\abs{t-r}^{1-\beta}}.
		\end{align*}
		Now, by choosing $\epsilon_0$ small enough, we get
		$\E{\abs{X^\eu}_\beta}\leq C$ for some constant
		$C$. Since for $0<\beta_1\leq \beta_2\leq 1$,
		$\beta_2$-H\"{o}lder continuity of $X^\eu$ implies
		$\beta_1$-H\"{o}lder continuity, the conclusion
		follows.
		
		We now present a proof of the claim under Conditions
		\ref{condH1} and \ref{condH2-B}. Under Condition
		\ref{condH2-B}, $f(y)$ is $M_f$-H\"{o}lder continuous while
		$Y^\eu$ is $\frac{1}{2}$-H\"{o}lder continuous by Lemma
		\ref{lemma_Holder_Y}, so that
		\begin{align*}
			\E{\abs{f\brac{Y^\eu_t}-f\brac{Y^\eu_r}}}
			\leq C\abs{Y^\eu_t-Y^\eu_r}^{M_f}
			\leq C\epsilon^{-\frac{M_f}{2}}\abs{t-r}^{\frac{M_f}{2}}
		\end{align*}
		or equivalently
		\begin{align*}
			\E{\abs{f\brac{Y^\eu}}_{\frac{M_f}{2}}}  \leq C\epsilon^{-\frac{M_f}{2}}.
		\end{align*}
		Then, the Young-Lo\'{e}ve inequality in Lemma
		\ref{lemma_Young} implies that,  for $1-\frac{M_f}{2}<K\leq H$,
		\begin{align}
			\label{estimate_young_h2}
			\E{\abs{\int_r^t f\brac{Y^\eu_s}dW^H_s }} &\leq  {{\abs{Y^\eu}}_{\frac{M_f}{2}}}\abs{t-r}^{\frac{M_f}{2}+K}\abs{W^H}_K+\abs{f(Y^\eu_r)}\abs{W^H_t-W^H_r}\nonumber\\
			&\leq C\brac{{\abs{Y^\eu}}_{\frac{M_f}{2}}\abs{t-r}^{\frac{M_f}{2}+K}+\E{\sup_{t\in[0,1]} \abs{Y^\eu_t}^{D_f}} \abs{t-r}^K}\nonumber\\
			&\leq C\brac{\epsilon^{-\frac{M_f}{2}}+\epsilon^{-\frac{1}{2}}}\abs{t-r}^K
			\leq C\epsilon^{-\frac{1}{2}}\abs{t-r}^K,
		\end{align}
		where the first inequality is obtained by Condition
		\ref{condH2-B} and the last inequality is a
		consequence of the estimate
		$\E{\abs{Y^\eu}_{\alpha}}\leq
		C\frac{1}{\sqrt{\epsilon}},\alpha\leq \frac{1}{2}$ in
		Lemma \ref{lemma_Holder_Y} (which requires Condition
		\ref{condH1}). Moreover, similar calculations to those
		performed in the proof of part $(ii)$ of Lemma
		\ref{lemma_estimates_notyoung} yield that, for any $q$ in $\bigg(1,\frac{1}{D_f}\bigg]$,
		\begin{align*}
			\E{\abs{\int_r^t f\brac{Y^\eu_s}\dot{u}^\epsilon_sds}}&\leq C\abs{t-r}^{\frac{1}{2}-\frac{1}{2q}},
		\end{align*}
		as well as
		\begin{align*}
			\E{\abs{\int_r^t g\brac{X^\eu_s,Y^\eu_s}ds} }\leq C\abs{t-r}^{\frac{1}{2}}.
		\end{align*}
		Consequently, we have
		\begin{align*}
			\E{\abs{X^\eu_t-X^\eu_r}}\leq C\brac{\abs{t-r}^K+\sqrt{\epsilon}h(\epsilon)\abs{t-r}^{\frac{1}{2}-\frac{1}{2q}}+\abs{t-r}^{\frac{1}{2}}}.
		\end{align*}
		By choosing $\epsilon_0$ small enough and noting that $K>1-\frac{M_f}{2}\geq \frac{1}{2}$, we arrive at
		\begin{align*}
			\E{\sup_{\epsilon<\epsilon_0}\abs{X^\eu}_\beta}<C
		\end{align*}
		for $0\leq \beta\leq \frac{1}{2}$.
	\end{proof}
	\begin{lemma}
		\label{lemma_estimates_young}
		Assume $w^\epsilon\in \mathcal{S}$ is a control such that
		\begin{align*}
			\sup_{\epsilon>0}\norm{w^\epsilon}^2_{\mathcal{S}}=\sup_{\epsilon>0}\int_0^1
			\left[ \abs{\hat{u}^\epsilon_s}^2+\abs{\hat{v}^\epsilon_s}^2 \right] ds< N
		\end{align*}
		for some finite constant $N$. Then, the following two
		assertions hold.		
		\begin{enumerate}
			\item[(i)] Under Conditions \ref{condH1} and \ref{condH2-A}, there exists a constant $C$ such that for any $\beta$ in $(1-H,1]$,
			\begin{align*}
				\E{\sup_{0\leq t\leq 1}\abs{\int_0^t f\brac{X^\eu_s,Y^\eu_s}dW^H_s }}
				\leq C{\epsilon^{-\frac{\beta}{2}}}
			\end{align*}
			and
			\begin{align*}
				\E{ \abs{\sup_{t\in [0,1]} \int_0^t\nabla_x\phi\brac{X^\eu_s,Y^\eu_s}f\brac{X^\eu_s,Y^\eu_s}d{W}^H_s
				}}&\leq C\epsilon^{-\frac{1}{2}}.
			\end{align*}
			
			\item[(ii)] Under Conditions \ref{condH1} and \ref{condH2-B}, there exists a constant $C$ such that
			\begin{align*}
				\E{\sup_{t\in [0,1]}\abs{\int_0^t f\brac{Y^\eu_s}dW^H_s}}
				\leq C{\epsilon^{-\frac{M_f}{2}}}
			\end{align*}
			and
			\begin{align*}
				\E{\sup_{t\in[0,1]}\abs{\int_0^t \nabla_x\phi(X^\eu_s,Y^\eu_s)f(Y^\eu_s)dW^H_s}}\leq C\epsilon^{-\frac{M_k}{2}}.
			\end{align*}
		\end{enumerate}
		
	\end{lemma}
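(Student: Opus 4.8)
The plan is to obtain all four bounds from the Young--Lo\'{e}ve inequality (Lemma \ref{lemma_Young}) together with the a priori H\"{o}lder controls on the slow and fast processes. The crucial structural observation is that most of the analytical work has already been carried out inside the proof of Lemma \ref{lemma_Holder_X}: the integrand estimates \eqref{estimate_young_h1} (under Condition \ref{condH2-A}) and \eqref{estimate_young_h2} (under Condition \ref{condH2-B}) already bound the relevant Young integrals over a generic increment $[r,t]$, so the present lemma is largely a matter of specializing to $r=0$, passing to the supremum over the upper endpoint, and then treating the new product kernel $\nabla_x\phi\cdot f$ by the same scheme.

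First I would treat the first estimate in each part. For part (i), I set $r=0$ in \eqref{estimate_young_h1}; since the Young--Lo\'{e}ve bound is pathwise and holds uniformly in the upper endpoint, I may pass to $\sup_{0\le t\le 1}$ and invoke $\E{\abs{X^\eu}_\beta}\le C$ from Lemma \ref{lemma_Holder_X}, which yields $\E{\sup_t \abs{\int_0^t f\,dW^H}}\le C\epsilon^{-\beta/2}$. For part (ii), I proceed identically from \eqref{estimate_young_h2}, but here I exploit two features to reach the sharper exponent $\epsilon^{-M_f/2}$: at $r=0$ the boundary term $\abs{f(Y^\eu_0)}=\abs{f(y_0)}$ is deterministic (so it does not produce the crude $\epsilon^{-1/2}$ coming from $\E{\sup_t\abs{Y^\eu_t}^{D_f}}$), and the H\"{o}lder seminorm of the kernel is controlled by $\abs{Y^\eu}_{M_f/2}\le C\epsilon^{-M_f/2}$ via Lemma \ref{lemma_Holder_Y}. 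The seminorm $\abs{W^H}_K$ of the fractional driver carries finite moments of every order that are independent of $\epsilon$, so it is pulled out of the expectation (by Cauchy--Schwarz, or exactly as in \eqref{estimate_young_h2}) and absorbed into $C$.

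Next I would handle the second estimate in each part, which is the only genuinely new computation. Here the integrand is the product kernel $k(x,y)=\nabla_x\phi(x,y)f(x,y)$ (resp.\ $\nabla_x\phi(x,y)f(y)$), and the point is to verify that $k$ enjoys enough joint H\"{o}lder regularity for Young--Lo\'{e}ve to apply and to track the resulting power of $\epsilon$. Under Condition \ref{condH2-A}, the bounds \eqref{estimate_Poissonsolution_h1} make $\nabla_x\phi$ bounded with bounded $x$- and $y$-derivatives, and $f$ is bounded and Lipschitz, so $k$ is bounded and Lipschitz in $(x,y)$; repeating the computation that produced \eqref{estimate_young_h1} with an admissible exponent $\beta\in(1-H,1/2]$ gives $\E{\abs{k(X^\eu,Y^\eu)}_\beta}\le C\brac{\E{\abs{X^\eu}_\beta}+\epsilon^{-\beta/2}}$ and hence, after the supremum over $t$, the bound $C\epsilon^{-\beta/2}\le C\epsilon^{-1/2}$. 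Under Condition \ref{condH2-B}, I instead use the assumed $M_k$-H\"{o}lder continuity of $\nabla_x\phi(x,y)f(y)$: composing with the time-H\"{o}lder controls $\E{\abs{X^\eu}_{1/2}}\le C$ and $\abs{Y^\eu}_{1/2}\le C\epsilon^{-1/2}$ shows that $k(X^\eu,Y^\eu)$ is $M_k/2$-H\"{o}lder in time with seminorm of order $\epsilon^{-M_k/2}$, while $M_k/2+H>1$ (Condition \ref{condH2-B}) guarantees an admissible H\"{o}lder exponent $K<H$ for $W^H$; the deterministic boundary term at $r=0$ together with Young--Lo\'{e}ve then deliver $\E{\sup_t\abs{\int_0^t k\,dW^H}}\le C\epsilon^{-M_k/2}$.

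The main obstacle, and the one place requiring care, is the treatment of the expectation of the product of the \emph{random} kernel H\"{o}lder seminorm with $\abs{W^H}_K$, since in the controlled system the processes $X^\eu,Y^\eu$ need not be independent of $W^H$ through the control. I would resolve this exactly as in the proof of Lemma \ref{lemma_Holder_X}: apply Young--Lo\'{e}ve pathwise, then separate the two factors by Cauchy--Schwarz and use that all moments of $\abs{W^H}_K$ are finite and independent of $\epsilon$ (a standard property of fractional Brownian motion), the required second-moment H\"{o}lder bounds on the kernel following from Lemmas \ref{lemma_estimatefromMorKos2}, \ref{lemma_Holder_Y} and \ref{lemma_Holder_X} by the same arguments given there in $L^1$. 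The only subtlety in matching the claimed exponents is to keep the boundary term at $r=0$ deterministic, which is what upgrades the crude $\epsilon^{-1/2}$ appearing in \eqref{estimate_young_h2} to the sharp $\epsilon^{-M_f/2}$ (resp.\ $\epsilon^{-M_k/2}$) required here.
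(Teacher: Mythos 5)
Your proposal is correct and follows essentially the same route as the paper's proof: both rest on the Young--Lo\'{e}ve inequality combined with the H\"{o}lder estimates of Lemmas \ref{lemma_Holder_Y} and \ref{lemma_Holder_X} (i.e.\ the computations behind \eqref{estimate_young_h1} and \eqref{estimate_young_h2}), the key observation that the boundary term at $r=0$ is deterministic (which upgrades the crude $\epsilon^{-1/2}$ to the sharper exponents $\epsilon^{-M_f/2}$ and $\epsilon^{-M_k/2}$ in part (ii)), and the Lipschitz, respectively $M_k$-H\"{o}lder, regularity of the product kernel $\nabla_x\phi\cdot f$ for the second estimates. Your additional Cauchy--Schwarz step to decouple the random kernel seminorm from $\abs{W^H}_K$ is, if anything, more careful than the paper's own argument, which silently absorbs that factor into the constant.
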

	\begin{proof}
		We begin by proving part $(i)$. The first estimate is immediate based
		on Lemma \ref{lemma_Holder_X} and the estimate at
		\eqref{estimate_young_h1}. Regarding the second estimate, the
		inequality at \eqref{estimate_Poissonsolution_h1} and Condition \ref{condH2-A} imply that $\nabla_x\phi(x,y)f(x,y)$ is Lipschitz continuous. Hence, by the Young-Lo\'{e}ve inequality for $1-H<\beta\leq 1$, we have
		\begin{align*}
			&\E{\sup_{0\leq t\leq 1}
				\abs{\int_0^t\nabla_x\phi\brac{X^\eu_s,Y^\eu_s}f\brac{X^\eu_s,Y^\eu_s}d{W}^H_s}}\\
			& \qquad\qquad\qquad\qquad\qquad\qquad\qquad\qquad\qquad \leq  \E{\abs{\nabla_x\phi\brac{X^\eu,Y^\eu}f(X^\eu,Y^\eu)}_\beta}
			\\
			&\qquad\qquad\qquad\qquad\qquad\qquad\qquad\qquad\qquad \leq \abs{\nabla_x\phi(x,y)f(x,y)}_{\operatorname{Lip}}\E{\abs{X^\eu}_\beta+\abs{Y^\eu}_\beta}\\
			&\qquad\qquad\qquad\qquad\qquad\qquad\qquad\qquad\qquad \leq C\brac{1+\epsilon^{-\frac{1}{2}}},
		\end{align*}
		where the last inequality is a consequence of Lemmas \ref{lemma_Holder_Y} and \ref{lemma_Holder_X}.
		
		We now proceed to the proof of part $(ii)$. For the first
		estimate, we perform a similar calculation to the
		one that was done at \eqref{estimate_young_h2} (this
		requires Conditions \ref{condH1} and \ref{condH2-B}) and get
		\begin{align*}
			\E{\sup_{t\in [0,1]}\abs{\int_0^t f\brac{Y^\eu_s}dW^H_s}}
			\leq  C\brac{{\abs{Y^\eu}}_{\frac{M_f}{2}}+\E{(y_0)^{D_f}} }
			\leq C{\epsilon^{-\frac{M_f}{2}}}.
		\end{align*}
		Next, under Conditions \ref{condH1} and
		\ref{condH2-B}, the $M_k$-H\"{o}lder
		continuity of $\nabla_x\phi(x,y)f(x)$ together with the
		estimates in Lemmas \ref{lemma_Holder_Y} and \ref{lemma_Holder_X} yield
		\begin{align*}
			&\E{\abs{\nabla_x\phi(X^\eu_r,Y^\eu_r)f(Y^\eu_r)-\nabla_x\phi(X^\eu_t,Y^\eu_t)f(Y^\eu_t)}}\\
			&\qquad\qquad\qquad\qquad\qquad\qquad\qquad\qquad\qquad\leq \E{\abs{X^\eu_r-X^\eu_t}^{M_k}}+\E{\abs{Y^\eu_r-Y^\eu_t}^{M_k}}\\
			&\qquad\qquad\qquad\qquad\qquad\qquad\qquad\qquad\qquad\leq C\brac{1+\epsilon^{-\frac{M_k}{2}}}\abs{r-t}^{\frac{M_k}{2}},
		\end{align*}
		so that
		\begin{align*}
			\E{\abs{\nabla_x\phi(X^\eu,Y^\eu)f(Y^\eu)}_{\frac{M_k}{2}}}\leq C\epsilon^{-\frac{M_k}{2}}.
		\end{align*}
		Therefore, as $\frac{M_k}{2}+H>1$ in Condition \ref{condH2-B}, we can apply the Young-Lo\'{e}ve inequality to obtain
		\begin{align*}
			\E{\sup_{t\in[0,1]}\abs{\int_0^t
					\nabla_x\phi(X^\eu_s,Y^\eu_s)f(Y^\eu_s)dW^H_s}}&\leq
			C\bigg( \abs{W^H}_H
			\E{\abs{\nabla_x\phi(X^\eu,Y^\eu)f(Y^\eu)}_{\frac{M_k}{2}}}\\
			&\quad +{\E{\abs{\nabla_x\phi(x_0,y_0)f(y_0)}}}\bigg)\\
			&\leq C\epsilon^{-\frac{M_k}{2}}.
		\end{align*}
	\end{proof}
	\begin{lemma}
		\label{lemma_estimate_eta}
		Assume $w^\epsilon\in \mathcal{S}$ is a control such that
		\begin{align*}
			\sup_{\epsilon>0}\norm{w^\epsilon}^2_{\mathcal{S}}=\sup_{\epsilon>0}\int_0^1
			\left[ \abs{\hat{u}^\epsilon_s}^2+\abs{\hat{v}^\epsilon_s}^2 \right] ds< N
		\end{align*}
		for some finite constant $N$. Under Conditions
		\ref{condH1} and either \ref{condH2-A}
		or \ref{condH2-B}, there exists a constant $C$ such that
		\begin{align*}
			\E{\sup_{0\leq t\leq 1} \abs{\eta^\eu_t}^2} < C.
		\end{align*}
		Furthermore, this implies for any $\rho>0$,
		\begin{align*}
			\E{\sup_{\substack{0\leq r,t\leq 1 \\ \abs{r-t}<\rho}}\abs{\int_{r}^{t} \frac{1}{\sqrt{\epsilon}h(\epsilon)}\brac{\bar{g}(X^\eu_s)-\bar{g}(\bar{X}_s)}ds}}\leq C\rho.
		\end{align*}
	\end{lemma}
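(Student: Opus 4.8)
The plan is to work from the decomposition \eqref{equation_eta_new}, namely $\eta^\eu_t = D_1^\epsilon(t) + D_2^\epsilon(t) + D_3^\epsilon(t) + D_4^\epsilon(t) + R_1^\epsilon(t)$, and to close a Grönwall estimate in which $\eta^\eu$ re-enters only through the averaged-drift term $D_4^\epsilon$. The structural observation is that under either Condition \ref{condH2-A} or \ref{condH2-B} the gradient $\nabla_x\bar{g}$ is bounded (under \ref{condH2-A} directly, and under \ref{condH2-B} because $\mu$ integrates the polynomial growth of $\nabla_x g$), so $\bar{g}$ is globally Lipschitz with some constant $L$. Recalling from \eqref{def_control_eta} that $X^\eu_s-\bar{X}_s=\sqrt{\epsilon}h(\epsilon)\eta^\eu_s$, this gives the pathwise bound
\[
\abs{D_4^\epsilon(t)}=\abs{\int_0^t \frac{1}{\sqrt{\epsilon}h(\epsilon)}\brac{\bar{g}(X^\eu_s)-\bar{g}(\bar{X}_s)}ds}\leq L\int_0^t \abs{\eta^\eu_s}\,ds,
\]
which is exactly the self-referential term needed for Grönwall. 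Hence $\abs{\eta^\eu_t}\leq \abs{D_1^\epsilon(t)}+\abs{D_2^\epsilon(t)}+\abs{D_3^\epsilon(t)}+\abs{R_1^\epsilon(t)}+L\int_0^t\abs{\eta^\eu_s}\,ds$.

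Next I would bound $\E{\sup_{0\leq t\leq 1}\abs{\cdot}^2}$ of each remaining summand uniformly in $\epsilon$. The Riemann-type controls $D_1^\epsilon,D_2^\epsilon$ and the Riemann/It\^o pieces of $R_1^\epsilon$ in \eqref{equation_R1} (those built from $\nabla_x\phi\,g$, $\nabla_x\phi\,f\,\dot{u}^\epsilon$ and $\nabla_y\phi\,\sigma\,dB$) are handled, taking $\rho=1$, by Lemma \ref{lemma_estimates_notyoung} together with the a priori moment estimates of Lemma \ref{lemma_estimatefromMorKos2} and the Poisson-solution bounds \eqref{estimate_Poissonsolution_h1}--\eqref{estimate_Poissonsolution_h2}; the boundary term $\frac{\sqrt{\epsilon}}{h(\epsilon)}\brac{\phi(X^\eu_t,Y^\eu_t)-\phi(x_0,y_0)}$ is negligible since $\sqrt{\epsilon}/h(\epsilon)\to 0$ while $\phi$ grows at most polynomially in $\abs{Y^\eu}$. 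The genuinely delicate summands are the Young integrals $D_3^\epsilon=\frac{1}{h(\epsilon)}\int_0^t f\,dW^H$ and the last term $\frac{\epsilon}{h(\epsilon)}\int_0^t \nabla_x\phi\,f\,dW^H$ of $R_1^\epsilon$. These I would control through the \emph{pathwise} Young--Lo\'eve inequality (Lemma \ref{lemma_Young}) and the H\"older-norm moment estimates of Lemmas \ref{lemma_Holder_Y} and \ref{lemma_Holder_X} (extended to the required moment by the same Kolmogorov argument), exactly as in the first-moment bounds of Lemma \ref{lemma_estimates_young}; the point is that the prefactors $1/h(\epsilon)$ and $\epsilon/h(\epsilon)$, combined with the scaling conditions ($h(\epsilon)^{-1}\epsilon^{-\beta/2}\to 0$ under \ref{condH2-A}, respectively $h(\epsilon)^{-1}\epsilon^{-M_f/2}\to 0$ and $\min\{M_f/2+H,M_k/2+H\}>1$ under \ref{condH2-B}), beat the negative powers of $\epsilon$ coming from the H\"older norms, so that both Young terms vanish as $\epsilon\to 0$ and are in particular bounded uniformly in $\epsilon$.

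With these uniform bounds in hand, squaring the inequality for $\abs{\eta^\eu_t}$, using $\brac{\int_0^t\abs{\eta^\eu_s}ds}^2\leq \int_0^t \sup_{0\leq r\leq s}\abs{\eta^\eu_r}^2\,ds$, and taking $\E{\sup_{0\leq s\leq t}}$ yields $\E{\sup_{0\leq s\leq t}\abs{\eta^\eu_s}^2}\leq C+C\int_0^t \E{\sup_{0\leq r\leq s}\abs{\eta^\eu_r}^2}\,ds$ for a constant $C$ independent of $\epsilon$, and Grönwall's inequality gives $\E{\sup_{0\leq t\leq 1}\abs{\eta^\eu_t}^2}\leq Ce^{C}$, the first assertion. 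The second assertion follows immediately: for $\abs{r-t}<\rho$ the same Lipschitz bound gives $\abs{\int_r^t \frac{1}{\sqrt{\epsilon}h(\epsilon)}\brac{\bar{g}(X^\eu_s)-\bar{g}(\bar{X}_s)}ds}\leq L\int_r^t\abs{\eta^\eu_s}ds\leq L\rho\sup_{0\leq s\leq 1}\abs{\eta^\eu_s}$, so taking the supremum over $\abs{r-t}<\rho$, then expectations, and applying Cauchy--Schwarz with the first assertion produces $\E{\sup_{\abs{r-t}<\rho}\abs{\int_r^t \frac{1}{\sqrt{\epsilon}h(\epsilon)}\brac{\bar{g}(X^\eu_s)-\bar{g}(\bar{X}_s)}ds}}\leq L\rho\,\E{\sup_{0\leq s\leq 1}\abs{\eta^\eu_s}}\leq C\rho$.

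The main obstacle is the uniform-in-$\epsilon$ second-moment control of the two Young-integral terms: the non-semimartingale nature of fBm rules out It\^o isometry or Burkh\"older--Davis--Gundy, so one must route everything through pathwise Young--Lo\'eve bounds and H\"older-norm moment estimates, and it is precisely here that the interplay of $h(\epsilon)$, $\epsilon$ and $H$ encoded in Conditions \ref{condH2-A}/\ref{condH2-B} is indispensable. A secondary point demanding care is to organize the Grönwall argument with the running supremum $\sup_{0\leq r\leq s}\abs{\eta^\eu_r}^2$ inside the time integral, rather than the pointwise $\abs{\eta^\eu_s}^2$, so that the resulting estimate genuinely controls the supremum.
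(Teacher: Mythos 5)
Your proposal is correct and follows essentially the same route as the paper's proof: the same decomposition \eqref{equation_eta_new}, the same use of Lemmas \ref{lemma_estimatefromMorKos2}, \ref{lemma_estimates_notyoung} and \ref{lemma_estimates_young} (via Young--Lo\'eve and the H\"older-norm bounds of Lemmas \ref{lemma_Holder_Y}, \ref{lemma_Holder_X}) for the non-self-referential terms, and the same Gr\"onwall closure with the running supremum inside the time integral, followed by the same Cauchy--Schwarz argument for the second claim. The only differences are cosmetic: you bound the averaged-drift term by the global Lipschitz property of $\bar{g}$ directly rather than through the Taylor expansion \eqref{equation_eta_g_part} with remainder $R_2^\epsilon$, and you explicitly flag the need to upgrade the first-moment Young-integral estimates to second moments, a point the paper glosses over.
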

	\begin{proof}
		Under Condition \ref{condH2-A} or \ref{condH2-B}, $\nabla_x\bar{g}(x)$
		is bounded. This fact, combined with equation
		\eqref{equation_eta_g_part} and the fact that $X^\eu$ converges to
		$\bar{X}$ in probability, implies that there exists some constant $C$
		such that
		\begin{align}
			\label{estimate_barg}
			\E{\sup_{0\leq t\leq 1}\abs{\int_{0}^{t} \frac{1}{\sqrt{\epsilon}h(\epsilon)}\brac{\bar{g}(X^\eu_s)-\bar{g}(\bar{X}_s)}ds}^2}\leq C\int_0^1 \E{\sup_{0\leq r\leq s}\abs{\eta^\eu_r}^2}ds.
		\end{align}
		In addition, based on equation \eqref{equation_eta_new}, we have
		\begin{align}
			\label{estimate_eta_middlestep}
			\E{\sup_{0\leq t\leq 1}\abs{\eta^\eu_t}^2} &\leq  C\bigg(\E{\sup_{0\leq t\leq 1}\abs{\int_0^t \nabla_y\phi\brac{X^\eu_s,Y^\eu_s} \sigma\brac{Y^\eu_s}\dot{v}^\epsilon_sds}^2}\nonumber
			\\&\quad + \E{\sup_{0\leq t\leq 1}\abs{\int_{0}^{t}  \frac{1}{\sqrt{\epsilon}h(\epsilon)}\brac{\bar{g}(X^\eu_s)-\bar{g}(\bar{X}_s)}ds}^2}\nonumber
			\\&\quad +\E{\sup_{0\leq t\leq 1}\abs{\frac{1}{h(\epsilon)}\int_0^tf\brac{X^\eu_s,Y^\eu_s}d{W}^H_s}^2}\nonumber
			\\&\quad +\E{\sup_{0\leq t\leq 1}\abs{\int_0^tf\brac{X^\eu_s,Y^\eu_s}\dot{u}^\epsilon_sds}^2}
			+\E{\sup_{0\leq t\leq 1}\abs{R^\epsilon_2(t)}^2}\bigg),
		\end{align}
		with
		\begin{align*}
			\E{\sup_{0\leq t\leq 1}\abs{R^\epsilon_2(t)}^2}&\leq C\bigg(\E{\sup_{0\leq t\leq 1}\abs{\frac{\sqrt{\epsilon}}{h(\epsilon)}\brac{\phi\brac{X^\eu_t,Y^\eu_t}-\phi(x_0,y_0)}}^2}\\
			&\quad +\E{\sup_{0\leq t\leq 1}\abs{\frac{\sqrt{\epsilon}}{h(\epsilon)}\int_0^t \nabla_x\phi\brac{X^\eu_s,Y^\eu_s} g\brac{X^\eu_s,Y^\eu_s}ds}^2}\\
			&\quad +\E{\sup_{0\leq t\leq 1}\abs{\epsilon\int_0^t\nabla_x\phi\brac{X^\eu_s,Y^\eu_s}f\brac{X^\eu_s,Y^\eu_s}\dot{u}^\epsilon_s ds}^2}\\
			&\quad +\E{\sup_{0\leq t\leq 1}\abs{\frac{{\epsilon}}{h(\epsilon)}\int_0^t\nabla_x\phi\brac{X^\eu_s,Y^\eu_s}f\brac{X^\eu_s,Y^\eu_s}d{W}^H_s}^2}\\
			&\quad +\E{\sup_{0\leq t\leq 1}\abs{\frac{1}{h(\epsilon)}\int_0^t\nabla_y\phi\brac{X^\eu_s,Y^\eu_s}\sigma\brac{Y^\eu_s}dB_s}^2}\bigg).
		\end{align*}
		We will estimate the terms on the right-hand side of
		\eqref{estimate_eta_middlestep}, starting with those which contain
		Young integrals. Condition \ref{condH2-A} guarantees that there exists
		some $\beta$ in $[0,1]$ such that $\beta+H>1$ and
		$h(\epsilon)^{-1}\epsilon^{-\frac{\beta}{2}}\to 0$ as $\epsilon\to 0$,
		so that part $(i)$ of Lemma \ref{lemma_estimates_young} (which
		requires Conditions \ref{condH1} and \ref{condH2-A}) yields
		\begin{align*}
			\E{\sup_{0\leq t\leq 1}\abs{\frac{1}{h(\epsilon)}\int_0^tf\brac{X^\eu_s,Y^\eu_s}d{W}^H_s}^2}\leq Ch(\epsilon)^{-2}\epsilon^{-\beta}\to 0.
		\end{align*}
		Part $(i)$ of Lemma \ref{lemma_estimates_young} also
		implies that
		\begin{align*}
			\E{ \sup_{t\in [0,1]} \abs{\frac{{\epsilon}}{h(\epsilon)}\int_0^t\nabla_x\phi\brac{X^\eu_s,Y^\eu_s}f\brac{X^\eu_s,Y^\eu_s}d{W}^H_s
				}^2}&\leq C\frac{1}{h(\epsilon)^2}\to 0.
		\end{align*}
		Meanwhile, under Condition \ref{condH2-B}, we use part
		$(ii)$ of Lemma \ref{lemma_estimates_young} to get, as $\epsilon\to 0$,
		\begin{align*}
			\E{\sup_{0\leq t\leq 1}\abs{\frac{1}{h(\epsilon)}\int_0^tf\brac{Y^\eu_s}d{W}^H_s}^2}\leq Ch(\epsilon)^{-2}\epsilon^{-M_f}\to 0
		\end{align*}
		and
		\begin{align*}
			\E{ \sup_{t\in [0,1]} \abs{\frac{{\epsilon}}{h(\epsilon)}\int_0^t\nabla_x\phi\brac{X^\eu_s,Y^\eu_s}f\brac{X^\eu_s,Y^\eu_s}d{W}^H_s
				}^2}&\leq Ch(\epsilon)^{-2}\epsilon^{2-M_k}\to 0.
		\end{align*}
		The remaining terms on the right-hand side of
		\eqref{estimate_eta_middlestep}, except the
		term \begin{align*}\E{\sup_{0\leq t\leq 1}\abs{\int_{0}^{t}
					\frac{1}{\sqrt{\epsilon}h(\epsilon)}\brac{\bar{g}(X^\eu_s)-\bar{g}(\bar{X}_s)}ds}^2},\end{align*}
		are bounded by using Lemmas \ref{lemma_estimatefromMorKos2} and \ref{lemma_estimates_notyoung} (which require Conditions \ref{condH1} and \ref{condH2-B}). Thus, it follows from the estimates at \eqref{estimate_barg} and \eqref{estimate_eta_middlestep} that
		\begin{align*}
			\E{\sup_{0\leq t\leq 1} \abs{\eta^\eu_t}^2}\leq C_1+C_2\int_0^1\E{\sup_{0\leq r\leq s} \abs{\eta^\eu_r}^2}ds
			.\end{align*}
		An application of Gronwall's inequality then yields the
		first claim of our statement, which is
		\begin{align*}
			\E{\sup_{0\leq t\leq 1} \abs{\eta^\eu_t}^2}\leq C.
		\end{align*}
		For the second claim, we proceed similarly to the derivation of the
		estimate at \eqref{estimate_barg}. Then for $\rho>0$,
		\begin{align*}
			\E{\sup_{\substack{0\leq r,t\leq 1 \\ \abs{r-t}<\rho}}\abs{\int_{r}^{t} \frac{1}{\sqrt{\epsilon}h(\epsilon)}\brac{\bar{g}(X^\eu_s)-\bar{g}(\bar{X}_s)}ds}}&\leq C\E{\sup_{\substack{0\leq r,t\leq 1 \\ \abs{r-t}<\rho}}\int_r^t {\abs{\eta^\eu_s}}ds}\\
			&\leq C\rho\E{\sup_{0\leq t\leq 1} \abs{\eta^\eu_t}^2}\leq C\rho.
		\end{align*}
	\end{proof}
	\begin{lemma}
		\label{lemma_limit_R2}
		Let $R_2^\epsilon$ be the remainder term that appears
		in equation \eqref{equation_eta_g_part}. Under
		Conditions \ref{condH1} and either
		\ref{condH2-A} or \ref{condH2-B} , it holds that $R_2^\epsilon\to 0$ in $C([0,1];\R^{n})$ in probability along a subsequence.
	\end{lemma}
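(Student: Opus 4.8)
The plan is to exploit the pointwise bound on $R_2^\epsilon$ already recorded in \eqref{bornepourr2}, namely $\sup_{0\le t\le 1}\abs{R_2^\epsilon(t)}\le C\int_0^1 \abs{\eta^\eu_s}\abs{X^\eu_s-\bar{X}_s}\,ds$, which is available because $\nabla^2_x\bar{g}$ is bounded under either Condition \ref{condH2-A} or \ref{condH2-B}. The key structural observation is that this bound factorizes the remainder into a product of one factor that is merely bounded in probability and another that vanishes in probability, so the product vanishes in probability. Since $R_2^\epsilon$ is a continuous $\R^n$-valued process, controlling $\sup_{t}\abs{R_2^\epsilon(t)}$ is precisely controlling $R_2^\epsilon$ in $C([0,1];\R^n)$.

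First I would pull the supremum of $\abs{X^\eu_s-\bar{X}_s}$ out of the time integral and apply the Cauchy--Schwarz inequality to obtain
\begin{align*}
\sup_{0\le t\le 1}\abs{R_2^\epsilon(t)}\le C\brac{\sup_{0\le s\le 1}\abs{X^\eu_s-\bar{X}_s}}\brac{\int_0^1\abs{\eta^\eu_s}^2\,ds}^{1/2}.
\end{align*}
Next I would invoke Lemma \ref{lemma_estimate_eta}, which yields $\E{\int_0^1\abs{\eta^\eu_s}^2\,ds}\le \E{\sup_{0\le t\le 1}\abs{\eta^\eu_t}^2}\le C$ uniformly in $\epsilon$; by Markov's inequality this makes the family $\{(\int_0^1\abs{\eta^\eu_s}^2\,ds)^{1/2}\colon \epsilon>0\}$ bounded in probability. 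I would then use the averaging convergence $X^\eu\to\bar{X}$ in probability in $C([0,1];\R^n)$ (the same fact used in \eqref{convergenceAversB} and throughout Section \ref{proofviablepair}), which says precisely that $\sup_{0\le s\le 1}\abs{X^\eu_s-\bar{X}_s}\to 0$ in probability.

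Finally I would combine these two inputs through the elementary fact that the product of a sequence converging to zero in probability with a sequence bounded in probability converges to zero in probability; this gives $\sup_{0\le t\le 1}\abs{R_2^\epsilon(t)}\to 0$ in probability, i.e. $R_2^\epsilon\to 0$ in $C([0,1];\R^n)$ in probability, and one may extract a further subsequence along which the convergence is almost sure, matching the Skorokhod setting of Section \ref{proofviablepair}. The only delicate point, and the step I expect to require the most care, is that $\int_0^1\abs{\eta^\eu_s}^2\,ds$ is not known to converge but only to be bounded in probability, so the argument must be organized as \textit{vanishing} $\times$ \textit{bounded} rather than by taking a limit in each factor separately; everything else is a routine application of Cauchy--Schwarz together with the a priori estimate of Lemma \ref{lemma_estimate_eta}.
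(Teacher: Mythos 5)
Your proof is correct, and it rests on the same two pillars as the paper's argument: the bound \eqref{bornepourr2} (coming from boundedness of $\nabla^2_x\bar{g}$) and the a priori estimate $\E{\sup_{0\leq t\leq 1}\abs{\eta^\eu_t}^2}\leq C$ of Lemma \ref{lemma_estimate_eta}, together with the convergence in probability of $X^\eu$ to $\bar{X}$. Where you genuinely diverge is in the convergence mechanism. The paper works in expectation: it invokes the Skorokhod Representation Theorem to pass to almost sure convergence of $X^\eu$, uses dominated convergence to get $\E{\sup_{0\leq s\leq 1}\abs{X^\eu_s-\bar{X}_s}^2}\to 0$, and then applies Cauchy--Schwarz in $L^2(\Omega)$ to conclude $\E{\sup_{0\leq t\leq 1}\abs{R^\epsilon_2(t)}}\to 0$, i.e. convergence in $L^1$ and hence in probability. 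You instead work pathwise: Cauchy--Schwarz in time gives
\begin{align*}
\sup_{0\leq t\leq 1}\abs{R^\epsilon_2(t)}\leq C\brac{\sup_{0\leq s\leq 1}\abs{X^\eu_s-\bar{X}_s}}\brac{\int_0^1\abs{\eta^\eu_s}^2ds}^{1/2},
\end{align*}
and you conclude by the elementary fact that a product of a sequence vanishing in probability with a sequence bounded in probability vanishes in probability. Your route buys two things: it avoids the Skorokhod/dominated-convergence step entirely --- note that the paper's appeal to dominated convergence requires a dominating function or a uniform-integrability argument for $\sup_s\abs{X^\eu_s}^2$, which boundedness of $\bar{X}$ alone does not furnish, so your organization is actually the more robust one at that point --- and it delivers convergence in probability along the whole sequence, of which the stated subsequential convergence (and, as you observe, almost sure convergence along a further subsequence) is an immediate consequence. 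What the paper's route buys in exchange is the slightly stronger $L^1(\Omega)$ convergence of $\sup_{0\leq t\leq 1}\abs{R^\epsilon_2(t)}$, quantified in terms of $\E{\sup_{0\leq s\leq 1}\abs{X^\eu_s-\bar{X}_s}^2}$.
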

	\begin{proof}
		For the purpose of identifying the limit of
		$R_2^\epsilon$, we invoke the Skorokhod Representation
		Theorem and assume that $X^\eu\to \bar{X}$ a.s. in
		$C([0,1];\R^{n})$ as $\epsilon\to 0$. As $\bar{X}$ is bounded
		under Condition \ref{condH2-A} or \ref{condH2-B}, the
		Dominated Convergence Theorem implies that
		\begin{align}
			\label{limit_control_X_L2}
			\lim_{\epsilon\to 0}\E{\sup_{0\leq s\leq 1}\abs{X^\eu_s- \bar{X}_s}^2}=0.
		\end{align}
		Now, we employ the bound \eqref{bornepourr2} and get
		\begin{align*}
			\E{\sup_{0\leq s\leq 1}\abs{R^\epsilon_2(s)}}&\leq  \E{\int_0^1 \abs{\nabla^2_x \bar{g}}_\infty \abs{\eta^\eu_s} \abs{X^\eu_s-\bar{X}_s}ds}\\
			&\leq C\E{\sup_{0\leq s\leq 1} \abs{\eta^\eu_s} \sup_{0\leq s\leq 1} \abs{X^\eu_s-\bar{X}_s}}\\
			&\leq C\E{\sup_{0\leq s\leq 1} \abs{\eta^\eu_s}^2}\E{\sup_{0\leq s\leq 1} \abs{X^\eu_s-\bar{X}_s}^2}\\
			&\leq C\E{\sup_{0\leq s\leq 1} \abs{X^\eu_s-\bar{X}_s}^2},
		\end{align*}
		In particular, the second inequality is due to the boundedness of
		$\nabla^2_x\bar{g}$ implied by either Condition
		\ref{condH2-A} or \ref{condH2-B}. The last
		inequality is a consequence of Lemma
		\ref{lemma_estimate_eta} (which requires Conditions
		\ref{condH1} and either \ref{condH2-A}
		or \ref{condH2-B}). \eqref{limit_control_X_L2} then gives us the desired limit.
	\end{proof}

	\bibliography{refs}
\end{document}